\theoremstyle{plain}
\newtheorem{thm}{Theorem}[section]
\newtheorem{con}[thm]{Conjecture}
\theoremstyle{definition}
\theoremstyle{plain}
\newtheorem{thms}{Theorem}[subsection]
\newtheorem{props}[thms]{Proposition}
\newtheorem{lems}[thms]{Lemma}
\newtheorem{corls}[thms]{Corollary}
\newtheorem{cons}[thms]{Conjecture}
\theoremstyle{definition}
\newtheorem{defns}[thms]{Definition}
\newtheorem{rems}[thms]{Remark}
\newtheorem{notas}[thms]{Notation}
\begin{document} 
\title[A counter-example to Singer's conjecture]{A counter-example to Singer's conjecture\\ for the algebraic transfer}
\author{Nguy\~\ecircumflex n Sum}
\address{Department of Mathematics and Applications, S\`ai G\`on University, 273 An D\uhorn \ohorn ng V\uhorn \ohorn ng, District 5, H\`\ocircumflex\ Ch\'i Minh city, Viet Nam}

\email{nguyensum@sgu.edu.vn}

\footnotetext[1]{2000 {\it Mathematics Subject Classification}. Primary 55S10; 55S05, 55T15.}
\footnotetext[2]{{\it Keywords and phrases:} Steenrod squares, Polynomial algebra, Singer algebraic transfer, modular representation.}

\begin{abstract}
Write $P_k:= \mathbb F_2[x_1,x_2,\ldots ,x_k]$ for the polynomial algebra over the prime field $\mathbb F_2$ with two elements, in $k$ generators $x_1, x_2, \ldots , x_k$, each of degree 1. The polynomial algebra $P_k$ is considered as a module over the mod-2 Steenrod algebra, $\mathcal A$. 

Let $GL_k$ be the general linear group over the field $\mathbb F_2$. This group acts naturally on $P_k$ by matrix substitution. Since the two actions of $\mathcal A$ and $GL_k$ upon $P_k$ commute with each other, there is an inherit action of $GL_k$ on $\mathbb F_2{\otimes}_{\mathcal A}P_k$. Denote by $(\mathbb F_2{\otimes}_{\mathcal A}P_k)_n^{GL_k}$ the subspace of $\mathbb F_2{\otimes}_{\mathcal A}P_k$ consisting of all the $GL_k$-invariant classes of degree $n$. In 1989, Singer \cite{si1} defined the homological algebraic transfer
$$\varphi_k :\mbox{Tor}^{\mathcal A}_{k,n+k}(\mathbb F_2,\mathbb F_2) \longrightarrow (\mathbb F_2{\otimes}_{\mathcal A}P_k)_n^{GL_k},$$
where $\mbox{Tor}^{\mathcal{A}}_{k, k+n}(\mathbb{F}_2, \mathbb{F}_2)$  is the dual of Ext$_{\mathcal{A}}^{k,k+n}(\mathbb F_2,\mathbb F_2)$, the $E_2$ term of the Adams spectral sequence of spheres. In general, the transfer $\varphi_k$ is not a monomorphism and Singer made a conjecture that $\varphi_k$ is an epimorphism for any $k \geqslant 0$. The conjecture is studied by many authors. It is true for $k \leqslant 3$ but unknown for $k \geqslant 4$.

In this paper, by using a technique of the Peterson hit problem we prove that Singer's conjecture is not true for $k=5$ and the internal degree $n = 108$. This result also refutes a one of Ph\'uc in \cite{p24}. 
\end{abstract}

\maketitle

\section{Introduction}\label{s1} 
\setcounter{equation}{0}
Denote by $BS_k$ the classifying space of an elementary abelian 2-group $S_k$ of rank $k$. It is well-known that the cohomology algebra $P_k:= H^*(BS_k)$ is the polynomial algebra $\mathbb F_2[x_1,x_2,\ldots ,x_k]$  in $k$ variables $x_1, x_2, \ldots , x_k$, each of degree 1. Here the cohomology of the topological space $BS_k$ is taken with coefficients in the field $\mathbb F_2$ with two elements. 

Then, the algebra $P_k$ is considered as a module over the mod-2 Steenrod algebra, $\mathcal A$.  The action of $\mathcal A$ on $P_k$ is explicitly determined from the elementary properties of the Steenrod operations $Sq^r$ and the Cartan formula. For details, the readers can see Steenrod and Epstein~\cite{st}.

Let $GL_k = GL_k(\mathbb F_2)$ be the general linear group over the field $\mathbb F_2$. This group acts naturally on $P_k$ by matrix substitution. Since the two actions of $\mathcal A$ and $GL_k$ upon $P_k$ commute with each other, there is an action of $GL_k$ on $QP_k := \mathbb F_2{\otimes}_{\mathcal A}P_k$. 

For a nonnegative integer $n$, denote by $(P_k)_n$  the subspace of $P_k$ consisting of all degree $n$ homogeneous polynomials and by $(QP_k)_n$ the subspace of $QP_k$ consisting of all the classes represented by homogeneous polynomials of degree $n$ in $P_k$. Then  $(P_k)_n$ and $(QP_k)_n$ are $GL_k$-submodules of $P_k$ and $QP_k$ respectively. 

In 1989, Singer \cite{si1} defined the homological algebraic transfer
$$\varphi_k :\mbox{Tor}^{\mathcal A}_{k,n+k}(\mathbb F_2,\mathbb F_2) \longrightarrow (QP_k)_n^{GL_k},$$
where $(QP_k)_n^{GL_k}$ denote the subspace of $(QP_k)_n$ consisting of all the $GL_k$-invariant classes in $(QP_k)_n$ and $\mbox{Tor}^{\mathcal{A}}_{k, k+n}(\mathbb{F}_2, \mathbb{F}_2)$  is the dual of Ext$_{\mathcal{A}}^{k,k+n}(\mathbb F_2,\mathbb F_2)$, the $E_2$ term of the Adams spectral sequence of spheres.

The algebraic transfer was studied by Boardman \cite{bo}, Bruner-H\`a-H\uhorn ng \cite{br}, H\`a \cite{ha}, H\uhorn ng \cite{hu1, hu2, hu3}, Ch\ohorn n-H\`a \cite{cha,cha1}, H\uhorn ng-Qu\`ynh \cite{hq}, Minami \cite{mi}, Nam \cite{na2}, Ph\'uc \cite{p231,p232,p24}, Qu\`ynh \cite{qh}, and others.

Singer showed in \cite{si1} that $\varphi_k$ is an isomorphism for $k=1,\, 2$. Boardman showed in \cite{bo} that $\varphi_3$ is also an isomorphism. However, Singer showed in \cite{si1} that $\varphi_5$ is not a mononorphism in the degree 9. Then, H\uhorn ng proved in \cite{hu3} that for any $k\geqslant 4$,  $\varphi_k$ is not a mononorphism in infinitely many degrees. Singer made the following conjecture.

\begin{con}[Singer \cite{si1}]\label{sconj} The algebraic transfer $\varphi_k$ is an epimorphism for any $k \geqslant 0$.
\end{con}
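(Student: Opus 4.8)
The aim is to refute Conjecture~\ref{sconj} at the single bidegree $(k,n+k)=(5,113)$ by producing a $GL_5$-invariant class in $(QP_5)_{108}$ that is not hit by $\varphi_5$. Because the source $\mathrm{Tor}^{\mathcal A}_{5,113}(\mathbb F_2,\mathbb F_2)$ of $\varphi_5$ has the same dimension as $\mathrm{Ext}^{5,113}_{\mathcal A}(\mathbb F_2,\mathbb F_2)$, it is enough to establish the strict inequality
\[
\dim_{\mathbb F_2}(QP_5)_{108}^{GL_5} \;>\; \dim_{\mathbb F_2}\mathrm{Ext}^{5,113}_{\mathcal A}(\mathbb F_2,\mathbb F_2),
\]
or, failing that, to compute the image of $\varphi_5$ on a generating set of $\mathrm{Ext}^{5,113}_{\mathcal A}$ and check it lands in a proper subspace of $(QP_5)_{108}^{GL_5}$. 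The right-hand side of the inequality is available from the known computations of the cohomology of the Steenrod algebra in this range (lambda-algebra / May spectral sequence charts), so the substance of the argument is the left-hand side, which splits into a hit-problem computation for five variables in degree $108$ followed by an invariant-theory computation.

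For the hit problem I would compute an explicit admissible-monomial basis of $(QP_5)_{108}=\mathbb F_2\otimes_{\mathcal A}(P_5)_{108}$. Since $\mu(108)=4$, the degree $108$ is generic for five variables, and $(P_5)_{108}$ is far too big ($\dim=\binom{112}{4}$) for a direct attack, so I would exploit structure: the decomposition $P_5=P_4\otimes\mathbb F_2[x_5]$ to sort monomials by the exponent of $x_5$ and feed in Sum's complete description of $(QP_4)_m$ in the relevant degrees $m$; the Kameko squaring operator together with Singer's and Sum's criteria for (strongly) inadmissible monomials to discard hit classes in bulk; and the spike monomials of degree $108$ to anchor part of the basis. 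The output is an admissible basis, hence the exact value of $\dim_{\mathbb F_2}(QP_5)_{108}$ and an explicit model on which $GL_5$ acts.

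Next I would extract the $GL_5$-fixed subspace. A $GL_5$-invariant is in particular $\Sigma_5$-invariant, so I would first solve the comparatively small linear system cutting out the $\Sigma_5$-invariants of $(QP_5)_{108}$, then impose invariance under one further generator of $GL_5$ (say the transvection $x_1\mapsto x_1+x_2$ fixing $x_2,\dots,x_5$); rewriting the resulting polynomial congruences in terms of the admissible basis reduces everything to a finite linear system whose null space is $(QP_5)_{108}^{GL_5}$. Comparing its dimension with $\dim_{\mathbb F_2}\mathrm{Ext}^{5,113}_{\mathcal A}$ completes the refutation of Conjecture~\ref{sconj}, and locating the discrepancy pinpoints exactly where the corresponding assertion of Ph\'uc in \cite{p24} on this degree breaks down.

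The decisive obstacle is the scale and delicacy of the five-variable hit problem in degree $108$: controlling the admissible monomials and the hit relations without a single combinatorial slip is what makes this hard, and the invariant computation, while smaller, is equally error-prone, since the $GL_5$-action on a large admissible basis must be reduced modulo hit elements with complete consistency — precisely the step at which an earlier analysis can go wrong.
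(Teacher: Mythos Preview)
Your overall strategy---compute an admissible basis of $(QP_5)_{108}$, extract the $GL_5$-invariants, and compare with $\mathrm{Ext}^{5,113}_{\mathcal A}$---is exactly the paper's. Two points, however, distinguish the paper's execution from yours and are worth knowing.

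First, the comparison with $\mathrm{Ext}$ is trivial here: by Lin and Chen, $\mathrm{Ext}^{5,113}_{\mathcal A}(\mathbb F_2,\mathbb F_2)=0$, so $\mathrm{Tor}^{\mathcal A}_{5,113}=0$ and it suffices to exhibit a \emph{single} nonzero class in $(QP_5)_{108}^{GL_5}$. There is no need to compute images of generators or even the full invariant dimension.

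Second, and more importantly, the paper explicitly states that it was \emph{unable} to determine $(QP_5)_{108}^{GL_5}$ directly by your proposed route (symmetrize, then impose a transvection) because $\dim(QP_5)_{108}=2171$ makes the linear system too large for hand computation. The paper's key device is instead the auxiliary subspace
\[
(\widetilde{SF}_5)_{108}\;=\;\bigcap_{(j;J)\in\mathcal N_5}\ker\bigl(p_{(j;J)}^{(108)}:(QP_5)_{108}\to(QP_4)_{108}\bigr),
\]
the common kernel of all the ``fold one variable into another'' projections down to four variables. This space is conjecturally a $GL_5$-submodule and is dramatically smaller: the paper shows $\dim(\widetilde{SF}_5)_{108}=1$, generated by an explicit polynomial $p$ of $59$ admissible terms. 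One then verifies by hand that $\rho_i(p)\equiv p$ for the five generators $\rho_1,\dots,\rho_5$ of $GL_5$, so $[p]\in(QP_5)_{108}^{GL_5}$ is the desired nonzero invariant. Your plan is logically sound but lacks this reduction; without it, the invariant-theory step is the bottleneck the author himself could not clear.
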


Recently, Ph\'uc stated in \cite{p231} that the conjecture is also true for $k = 4$ but the proof is not explicit and the computations are incomplete.

The purpose of the paper is to give a negative answer to this conjecture. The following is the main result of the paper.
\begin{thm}\label{dlc} Singer's conjecture is not true for $k=5$.
\end{thm}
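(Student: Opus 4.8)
The plan is to refute Conjecture~\ref{sconj} for $k=5$ by working in the single internal degree $n=108$ and proving that the homomorphism
$$\varphi_5 : \mathrm{Tor}^{\mathcal A}_{5,\,113}(\mathbb F_2,\mathbb F_2) \longrightarrow (QP_5)_{108}^{GL_5}$$
is not surjective. For this it suffices to establish two independent facts: first, that $(QP_5)_{108}^{GL_5}\neq 0$; second, that $\mathrm{Tor}^{\mathcal A}_{5,113}(\mathbb F_2,\mathbb F_2)=0$. The second is not a hit-problem question at all: the group $\mathrm{Ext}_{\mathcal A}^{5,113}(\mathbb F_2,\mathbb F_2)$ is known — from the available computations of the fifth cohomology of the Steenrod algebra — to vanish, hence so does its dual $\mathrm{Tor}^{\mathcal A}_{5,113}(\mathbb F_2,\mathbb F_2)$. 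Granting the two facts, in degree $108$ the transfer $\varphi_5$ has zero domain and nonzero codomain, so it cannot be an epimorphism; the same computation of $(QP_5)_{108}^{GL_5}$ simultaneously refutes the assertion made by Ph\'uc in \cite{p24}.

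Thus the whole argument reduces to showing $(QP_5)_{108}^{GL_5}\neq 0$, which I would in fact do by determining this space exactly. I would first record the relevant arithmetic of the degree: $\mu(108)=4<5$, since $108=63+31+7+7$ while each of $109,110,111$ has at least four ones in its binary expansion; and since a monomial $x_1^{2^{b_1}-1}\cdots x_5^{2^{b_5}-1}$ with all $b_i\geqslant 1$ has odd degree, no spike in $(P_5)_{108}$ involves all five variables. This places the computation in a favourable regime: I would split $(QP_5)_{108}$ into the summand coming from monomials that omit at least one variable — which, via the five subalgebras $\mathbb F_2[x_1,\ldots,\widehat{x_j},\ldots,x_5]\subset P_5$, is controlled by the already-solved hit problems for $P_k$ with $k\leqslant 4$ — and the summand of strictly positive monomials, which I would analyse directly. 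For that part the standard machinery applies: pass to admissible monomials, group them by weight vector, use the Cartan formula to enumerate the hit relations inside each weight block, and exploit the permutation action of $\Sigma_5\subset GL_5$ to reduce the bookkeeping to orbit representatives. The output is an explicit monomial basis of $(QP_5)_{108}$.

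With such a basis in hand, the remaining work is linear algebra: compute the $\Sigma_5$-invariants by an orbit count, then impose invariance under one further generator of $GL_5$ — for instance the transvection $x_2\mapsto x_1+x_2$ fixing the other $x_i$ — which amounts to solving a finite homogeneous linear system over $\mathbb F_2$; its solution space is $(QP_5)_{108}^{GL_5}$, together with explicit invariant representatives. The step I expect to be the real obstacle — and the one that must be made completely airtight, since it is precisely where the present result diverges from \cite{p24} — is the passage from a spanning set of $(P_5)_{108}$ to a genuine \emph{basis} of the quotient $(QP_5)_{108}$: one has to prove that the retained monomials are truly non-hit, i.e.\ that no further $\mathcal A$-relations hold among them in $\mathbb F_2\otimes_{\mathcal A}P_5$, and an over- or under-count here would be fatal to the conclusion. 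I would guard against this by verifying the hit relations weight-block by weight-block, confirming the resulting dimension by an independent rank computation, and finally checking directly that the exhibited invariant class is both $GL_5$-fixed and nonzero in $(QP_5)_{108}$. Once $(QP_5)_{108}^{GL_5}\neq 0$ is secured, combining it with $\mathrm{Tor}^{\mathcal A}_{5,113}(\mathbb F_2,\mathbb F_2)=0$ completes the proof.
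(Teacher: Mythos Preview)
Your outline is correct and matches the paper's architecture: work at $n=108$, cite the vanishing of $\mathrm{Ext}_{\mathcal A}^{5,113}(\mathbb F_2,\mathbb F_2)$ (Lin~\cite{wl}, Chen~\cite{che}) so that $\mathrm{Tor}^{\mathcal A}_{5,113}=0$, and exhibit a nonzero $GL_5$-invariant after building an explicit admissible-monomial basis of $(QP_5)_{108}$; the paper does exactly this, obtaining $2171$ classes spread over the three weight vectors $(4)|^3|(2)|^2|(1)$, $(4)|^4|(1)|^2$, $(4)|^4|(3)$. The one genuine divergence is how the invariant is found. You propose the head-on route---compute the $\Sigma_5$-invariants by orbits, then impose a transvection---thereby determining $(QP_5)_{108}^{GL_5}$ in full. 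The paper states outright that this was not feasible by hand in dimension $2171$, and instead detours through the auxiliary subspace $(\widetilde{SF}_5)_{108}=\bigcap_{(j;J)\in\mathcal N_5}\ker\big(p_{(j;J)}^{(108)}:(QP_5)_{108}\to (QP_4)_{108}\big)$; after showing the other two weight blocks contribute nothing, this reduces to a linear system inside $QP_5^+((4)|^3|(2)|^2|(1))$ of dimension $1457$, whose solution is one-dimensional with generator $[p]$ a sum of $60$ admissible monomials. Invariance of $[p]$ is then checked directly against the five generators $\rho_1,\ldots,\rho_5$ of $GL_5$, and non-vanishing is immediate since every term of $p$ is admissible. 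Your route would yield more information (the entire invariant space) at higher computational cost; the paper's buys a hand-tractable search but only \emph{exhibits} one invariant rather than characterizing them all---and since $(\widetilde{SF}_5)_n$ is not known in general to be a $GL_5$-submodule (the paper records this as an open conjecture), the $\widetilde{SF}$ step functions only as a heuristic for locating a candidate, with $GL_5$-invariance still requiring the separate verification.
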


We prove this theorem by studying the Peterson hit problem of determining a minimal set of generators for the polynomial algebra $P_5$ as a module over the Steenrod algebra. More precisely, we explicitly determine a basis of the $\mathbb F_2$-vector space $(QP_5)_{108}$. Using this result we prove the following.
\begin{thm}\label{thm1} There is a nonzero class in the $\mathbb F_2$-vector space $(QP_5)_{108}^{GL_5}$ represented by the polynomial $p$ which is explicitly determined as follows:
\begin{align*} 
p &= x_1^{3}x_2^{15}x_3^{5}x_4^{23}x_5^{62} + x_1^{3}x_2^{15}x_3^{5}x_4^{30}x_5^{55} + x_1^{3}x_2^{15}x_3^{7}x_4^{21}x_5^{62} + x_1^{3}x_2^{15}x_3^{7}x_4^{29}x_5^{54}\\ 
&\quad + x_1^{3}x_2^{15}x_3^{13}x_4^{22}x_5^{55} + x_1^{3}x_2^{15}x_3^{15}x_4^{21}x_5^{54} + x_1^{3}x_2^{15}x_3^{21}x_4^{7}x_5^{62} + x_1^{3}x_2^{15}x_3^{21}x_4^{14}x_5^{55}\\ 
&\quad + x_1^{3}x_2^{15}x_3^{21}x_4^{15}x_5^{54} + x_1^{3}x_2^{15}x_3^{21}x_4^{30}x_5^{39} + x_1^{3}x_2^{15}x_3^{23}x_4^{5}x_5^{62} + x_1^{3}x_2^{15}x_3^{23}x_4^{29}x_5^{38}\\ 
&\quad + x_1^{3}x_2^{15}x_3^{29}x_4^{6}x_5^{55} + x_1^{3}x_2^{15}x_3^{29}x_4^{7}x_5^{54} + x_1^{3}x_2^{15}x_3^{29}x_4^{22}x_5^{39} + x_1^{3}x_2^{15}x_3^{29}x_4^{23}x_5^{38}\\ 
&\quad + x_1^{7}x_2^{11}x_3^{5}x_4^{23}x_5^{62} + x_1^{7}x_2^{11}x_3^{5}x_4^{30}x_5^{55} + x_1^{7}x_2^{11}x_3^{7}x_4^{21}x_5^{62} + x_1^{7}x_2^{11}x_3^{7}x_4^{29}x_5^{54}\\ 
&\quad + x_1^{7}x_2^{11}x_3^{13}x_4^{22}x_5^{55} + x_1^{7}x_2^{11}x_3^{15}x_4^{21}x_5^{54} + x_1^{7}x_2^{11}x_3^{21}x_4^{7}x_5^{62} + x_1^{7}x_2^{11}x_3^{21}x_4^{14}x_5^{55}\\ 
&\quad + x_1^{7}x_2^{11}x_3^{21}x_4^{15}x_5^{54} + x_1^{7}x_2^{11}x_3^{21}x_4^{30}x_5^{39} + x_1^{7}x_2^{11}x_3^{23}x_4^{5}x_5^{62} + x_1^{7}x_2^{11}x_3^{23}x_4^{29}x_5^{38}\\ 
&\quad + x_1^{7}x_2^{11}x_3^{29}x_4^{6}x_5^{55} + x_1^{7}x_2^{11}x_3^{29}x_4^{7}x_5^{54} + x_1^{7}x_2^{11}x_3^{29}x_4^{22}x_5^{39} + x_1^{7}x_2^{11}x_3^{29}x_4^{23}x_5^{38}\\ 
&\quad + x_1^{15}x_2^{3}x_3^{5}x_4^{23}x_5^{62} + x_1^{15}x_2^{3}x_3^{5}x_4^{30}x_5^{55} + x_1^{15}x_2^{3}x_3^{7}x_4^{21}x_5^{62} + x_1^{15}x_2^{3}x_3^{7}x_4^{29}x_5^{54}\\ 
&\quad + x_1^{15}x_2^{3}x_3^{13}x_4^{22}x_5^{55} + x_1^{15}x_2^{3}x_3^{15}x_4^{21}x_5^{54} + x_1^{15}x_2^{3}x_3^{21}x_4^{7}x_5^{62} + x_1^{15}x_2^{3}x_3^{21}x_4^{14}x_5^{55}\\ 
&\quad + x_1^{15}x_2^{3}x_3^{21}x_4^{15}x_5^{54} + x_1^{15}x_2^{3}x_3^{21}x_4^{30}x_5^{39} + x_1^{15}x_2^{3}x_3^{23}x_4^{5}x_5^{62} + x_1^{15}x_2^{3}x_3^{23}x_4^{29}x_5^{38}\\ 
&\quad + x_1^{15}x_2^{3}x_3^{29}x_4^{6}x_5^{55} + x_1^{15}x_2^{3}x_3^{29}x_4^{7}x_5^{54} + x_1^{15}x_2^{3}x_3^{29}x_4^{22}x_5^{39} + x_1^{15}x_2^{3}x_3^{29}x_4^{23}x_5^{38}\\ 
&\quad + x_1^{15}x_2^{15}x_3x_4^{22}x_5^{55} + x_1^{15}x_2^{15}x_3x_4^{23}x_5^{54} + x_1^{15}x_2^{15}x_3^{3}x_4^{21}x_5^{54} + x_1^{15}x_2^{15}x_3^{7}x_4^{17}x_5^{54}\\ 
&\quad + x_1^{15}x_2^{15}x_3^{17}x_4^{6}x_5^{55} + x_1^{15}x_2^{15}x_3^{17}x_4^{7}x_5^{54} + x_1^{15}x_2^{15}x_3^{17}x_4^{22}x_5^{39} + x_1^{15}x_2^{15}x_3^{17}x_4^{23}x_5^{38}\\ 
&\quad + x_1^{15}x_2^{15}x_3^{19}x_4^{5}x_5^{54} + x_1^{15}x_2^{15}x_3^{19}x_4^{21}x_5^{38} + x_1^{15}x_2^{15}x_3^{23}x_4x_5^{54} + x_1^{15}x_2^{15}x_3^{23}x_4^{17}x_5^{38}.  
\end{align*}	
	
\end{thm}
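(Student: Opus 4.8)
The plan is to split the proof of the theorem into three steps. \textbf{Step 1} is to determine an explicit monomial basis of the $\mathbb F_2$-vector space $(QP_5)_{108}$, together with, for every inadmissible monomial of degree $108$ that will be needed afterwards, an explicit expression for it modulo hit polynomials; here a homogeneous polynomial is called \emph{hit} when it belongs to $\mathcal A^+P_5$, with $\mathcal A^+$ the augmentation ideal of $\mathcal A$, and $(QP_5)_{108}=(P_5)_{108}/(\mathcal A^+P_5)_{108}$. \textbf{Step 2} is to use this basis to check that $p$ is not hit, so that $p$ represents a nonzero class $[p]\in(QP_5)_{108}$. \textbf{Step 3} is to check that $[p]$ is fixed by $GL_5$. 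Steps 2 and 3 are essentially bookkeeping once Step 1 is in hand, so the real work, and the main obstacle, is Step 1.

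For Step 1 I would run the standard machinery of the Peterson hit problem. An elementary count gives $\mu(108)=4$, where $\mu(m)$ is the least $j$ with $\alpha(m+j)\le j$ and $\alpha$ is the binary digit sum (indeed $\alpha(111)=6$ while $\alpha(112)=3$); since $\mu(108)<5$, there is no spike of degree $108$ in $P_5$ involving all five variables, and since $108$ is even no monomial of degree $108$ in $P_5$ has all five exponents odd, so Kameko's squaring operation is zero in this degree and the space must be built from the remaining weight strata. I would therefore stratify the admissible monomials of degree $108$ by their weight vector $\omega$ (the $i$-th entry counting the exponents whose $i$-th digit in base $2$ equals $1$), enumerate the admissible $\omega$, and compute the corresponding subspace $(QP_5)(\omega)$ for each: the part spanned by monomials missing at least one variable is pushed down, via the homomorphisms $P_5\to P_4$ that kill one variable, onto the known description of $QP_4$ (and of $QP_3$, $QP_2$) in the relevant degrees, while the part spanned by monomials in which all five variables occur is treated by peeling off the first entry of $\omega$ and iterating, the finitely many borderline monomials being disposed of by the standard criteria for recognising hit monomials (Kameko's criterion, Singer's criterion, and the $\chi$-trick). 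Assembling the strata yields a finite explicit basis of $(QP_5)_{108}$ and, as a by-product, the reduction formulas needed later. The main obstacle is exactly this enumeration: the number of admissible weight vectors and of monomials to be reduced to admissible form is large, so the computation is lengthy and is best organised, and checked, by machine.

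For Step 2 I would rewrite each of the $60$ monomials occurring in $p$ in terms of the basis found in Step 1, reducing the inadmissible ones by the recorded formulas, and observe that the resulting combination of basis elements is nonzero; hence $[p]\ne 0$. For Step 3 I would use the well-known fact that $GL_5(\mathbb F_2)$ is generated by the subgroup $\Sigma_5\subset GL_5$ of permutation matrices together with one transvection $\rho$, say $\rho(x_1)=x_1+x_2$ and $\rho(x_i)=x_i$ for $i>1$ (as $\Sigma_5$ conjugates $\rho$ to every elementary transvection, and these generate $GL_5(\mathbb F_2)=SL_5(\mathbb F_2)$). It then suffices to verify $[\sigma(p)]=[p]$ for $\sigma$ each of the adjacent transpositions $(1\,2)$, $(2\,3)$, $(3\,4)$, $(4\,5)$ and for $\sigma=\rho$; for each such $\sigma$ I would expand $\sigma(p)$, reduce it modulo $\mathcal A^+P_5$ with the formulas from Step 1, and confirm that it equals $[p]$. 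These congruences are genuinely nontrivial, since $p$ is not itself $\Sigma_5$-invariant — for example $(1\,2)$ sends the block of $p$ with factor $x_1^{7}x_2^{11}$ to monomials with factor $x_1^{11}x_2^{7}$, none of which occur in $p$ — so the equalities $[\sigma(p)]=[p]$ must invoke the hit relations in an essential way. Combining Steps 2 and 3 proves the theorem; moreover the same computation describes the full invariant space $(QP_5)_{108}^{GL_5}$, which is the input required for Theorem \ref{dlc}.
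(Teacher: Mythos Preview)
Your three-step outline matches the paper's approach: determine the admissible monomial basis of $(QP_5)_{108}$ (the paper's Theorem~\ref{thm2}, carried out in Section~\ref{s3} via the weight-vector stratification you describe, with the only admissible weights being $(4)|^3|(2)|^2|(1)$, $(4)|^4|(1)|^2$, and $(4)|^4|(3)$), then verify $[p]\ne 0$, then verify $GL_5$-invariance on the generators $\rho_1,\dots,\rho_5$ (done in the Appendix).

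Two points worth noting. First, your Step~2 is simpler than you anticipate: every one of the $60$ monomials in $p$ is already admissible (they all lie in the list $B_5^+((4)|^3|(2)|^2|(1))$ of Section~\ref{s5}), so $[p]\ne 0$ is immediate with no reductions needed; the reduction formulas are used only in Step~3. Second, your closing sentence overreaches: the paper explicitly states that the full invariant space $(QP_5)_{108}^{GL_5}$ could \emph{not} be determined by hand, and the polynomial $p$ was not found by scanning invariants but by computing the auxiliary subspace $(\widetilde{SF}_5)_{108}=\bigcap_{(j;J)}\ker p_{(j;J)}$, which turns out to be one-dimensional with basis $[p]$ (Theorem~\ref{thm3}, proved in Section~\ref{s4}). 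That $[p]$ is moreover $GL_5$-invariant is then checked separately. So your plan proves the theorem as stated, but does not yield the full invariant space, and you should not claim that it does.
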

 From the results in Lin \cite{wl} and Chen \cite{che}, we have $\mbox{Ext}^{5,113}_{\mathcal A}(\mathbb F_2,\mathbb F_2) =0$. By passing to the dual, one gets $\mbox{Tor}_{5,113}^{\mathcal A}(\mathbb F_2,\mathbb F_2)=0$. Hence, Theorem \ref{thm1} implies that the homomorphism
$$\varphi_5: \mbox{Tor}_{5,113}^{\mathcal A}(\mathbb F_2,\mathbb F_2) \to (QP_5)_{108}^{GL_5}$$
is not an epimorphism. Theorem \ref{dlc} is proved.

In \cite[Theorem 1.3]{p24}, Ph\'uc states that $(QP_5)_{108}^{GL_5}=0$ and Conjecture \ref{sconj} is true for $k=5$ and the degree 108, but there are no any details of the proof for this case. Thus, Theorem \ref{thm1} refutes this result in \cite{p24}. 

By a direct computation we can check that the class $[p]$ is an ${GL_5}$-invariant (see the Appendix). We prove $[p] \ne 0$ by showing that all terms of the polynomial $p$ belong to a minimal set of $\mathcal A$-generators for $P_5$ in the degree 108. 

In \cite[Theorem 2.10]{Tij}, by using a computer program based on an algorithm of the mathematics system SAGEMATH, T\'in proved a dimensional result that $\dim(QP_5)_{108} = 2171$. This result is also confirmed in Ph\'uc \cite{p24} by using a computer program based on an algorithm of the MAGMA. However, there are no any details of computations for a basis of $(QP_5)_{108}$.

To prove $[p] \ne 0$, we explicitly determine all admissible monomials (see Definition \ref{dfnad}) of degree 108. We prove the following.

\begin{thm}\label{thm2} There exist exactly $2171$ admissible monomials of degree 108 which are explicitly determined as in Section \ref{s5}.
\end{thm}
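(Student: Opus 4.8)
The plan is to solve the Peterson hit problem for the algebra $P_5$ in internal degree $108$ by the weight-vector method, following the general pattern that works in lower degrees. First I would split $(P_5)_{108}=(P_5^0)_{108}\oplus(P_5^+)_{108}$, where $P_5^0$ is spanned by the monomials in which at least one of $x_1,\dots,x_5$ does not occur and $P_5^+$ by those of positive degree in every variable. Since admissibility is hereditary --- a monomial in which some variable is absent is admissible in $P_5$ precisely when the associated monomial in the remaining variables is admissible in $P_4$ --- the admissible monomials lying in $(QP_5^0)_{108}$ are obtained from the admissible monomials of degree $108$ of $P_j$, $1\leqslant j\leqslant4$, placed into the $\binom{5}{j}$ possible subsets of variables; these lower-rank problems in degree $108$ are already known in the literature (in particular the complete answer for $QP_4$), so this piece gives an explicit finite list. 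Everything then reduces to determining the admissible monomials in $(QP_5^+)_{108}$.

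For that I would stratify the monomials of $(P_5^+)_{108}$ by their weight vector $\omega(x)=(\omega_1(x),\omega_2(x),\dots)$, where $\omega_j(x)$ is the number of indices $i$ for which the $(j-1)$-st binary digit of $a_i$ equals $1$, so that $108=\sum_{j\geqslant1}2^{j-1}\omega_j(x)$ and $0\leqslant\omega_j(x)\leqslant5$. Because $108$ is even, $\omega_1(x)$ is even, so $\omega_1(x)\in\{0,2,4\}$ and in particular is never equal to $5$; hence Kameko's squaring isomorphism $\widetilde{Sq^0_*}$ gives no reduction in this degree, and every admissible weight vector has to be analysed on its own. For a fixed admissible $\omega$ the scheme is: (a) list the monomials of weight $\omega$ that survive the elementary inadmissibility criteria --- the hit-monomial lemma, the criteria of Kameko, Silverman, Singer and Walker--Wood, and the reductions obtained by applying $Sq^{2^s}$ to monomials strictly smaller in the order on $(P_5)_{108}$ --- producing a finite candidate set; (b) show that every other monomial of weight $\omega$ is inadmissible, by exhibiting an explicit congruence modulo $\mathcal A^+P_5$ to a sum of strictly smaller monomials; (c) show that the candidates are linearly independent in $QP_5$, by pushing a hypothetical relation down to the rank-$4$ problem via the $\mathcal A$-linear substitution homomorphisms $p_{(i;I)}\colon P_5\to P_4$ and the $\Sigma_5$-symmetry of the stratification, and invoking the known bases of $(QP_4)_{108}$ and of the lower-weight strata.

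Collecting the contributions of all admissible weight vectors together with the $(QP_5^0)_{108}$ part and counting then gives the complete list, of cardinality $2171$. Since this agrees with $\dim(QP_5)_{108}=2171$, proved by T\'in \cite{Tij} and confirmed in \cite{p24}, and since a linearly independent, spanning family of admissible monomials of that cardinality must be exactly the set of admissible monomials, the count is pinned down and nothing is omitted; this yields the explicit description of Section \ref{s5}, which is Theorem \ref{thm2}.

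The main obstacle is sheer volume. Degree $108$ in five variables produces a large number of admissible weight vectors, several of which carry hundreds of candidate monomials, and --- with no Kameko reduction to cut the work --- the spanning reductions in (b) and the independence arguments in (c) must be carried out stratum by stratum, essentially by hand. Keeping the monomial order, the reduction identities and the substitution homomorphisms coherent across all of these cases, and ruling out any hidden linear relation among the candidates, is where practically all of the labour lies.
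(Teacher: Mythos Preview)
Your general framework---split off $P_5^0$, stratify $P_5^+$ by weight vectors, prove inadmissibility of non-candidates by explicit hit relations, and close the count against T\'in's dimension result---is exactly the skeleton the paper uses. But you have missed two structural reductions that make the computation tractable, and your assessment that ``degree $108$ produces a large number of admissible weight vectors'' with ``no Kameko reduction to cut the work'' is too pessimistic.

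First, the paper shows (Lemma~\ref{bd1}) that for $n=2^{d+3}+2^{d+2}+2^{d+1}-4$ with $d\geqslant2$, every admissible monomial has $\omega_i=4$ for $1\leqslant i\leqslant d$, so at $n=108$ (where $d=3$) only \emph{three} weight vectors survive: $(4)|^3|(2)|^2|(1)$, $(4)|^4|(1)|^2$, and $(4)|^4|(3)$. This is proved by combining the known constraint $\omega_i(x)=4$ for $i\leqslant d$ with a classification of the admissible weight vectors in degree~$10$ and a short list of strictly inadmissible monomials (Lemma~\ref{bd2}) that kills the remaining possibility via Theorem~\ref{dlcb1}. Second, for the two weight vectors beginning $(4)|^4$, the paper invokes the $\Phi$-construction (Definition~\ref{dfn1}, Corollary~\ref{hq}): when the leading block of $4$'s has length at least $k-1=4$, one has $B_5((4)|^4|\omega)=\Phi(B_4((4)|^4|\omega))$ and the dimension is exactly $31\cdot\dim QP_4(\omega)$. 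This dispatches two of the three weight vectors immediately, giving $310$ and $124$ admissible monomials with no further case analysis.

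All of the genuine work is therefore concentrated in the single weight vector $(4)|^3|(2)|^2|(1)$. Even there, the $\Phi$-construction supplies the bulk of the candidates: $B_5^0$ comes from $\Phi^0(B_4)$, and $\Phi^+(B_4)$ together with $54$ extra monomials $\mathcal C$ gives $B_5^+$. The inadmissibility lemmas (Lemmas~\ref{bd3}--\ref{bd6}) are then applied, via Theorem~\ref{dlcb1}, to show that every monomial outside $\Phi^+(B_4)\cup\mathcal C$ is inadmissible; T\'in's dimension count closes the containment to an equality. Your step~(c), proving linear independence via the $p_{(i;I)}$, is not carried out in the paper for this theorem---independence is deduced from matching the cardinality of the spanning set to the known dimension.
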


The dimension of $(QP_5)_{108}$ is rather big, so we have been unable to explicitly determine the space $(QP_5)_{108}^{GL_5}$ by hand computation. We find the polynomial $p$ by studying an $\mathbb F_2$-vector subspace $(\widetilde {SF}_k)_n$ of $(QP_k)_n$ as defined below Lemma \ref{bdm}. By using Theorem \ref{thm2} we prove the following.
\begin{thm}\label{thm3}
The dimension of the $\mathbb F_2$-vector space $(\widetilde {SF}_5)_{108}$ is one with a basis given by the class $[p]$.	
\end{thm}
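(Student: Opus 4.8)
The plan is to turn Theorem \ref{thm3} into an explicit, finite linear-algebra computation over $\mathbb{F}_2$, taking as input the admissible monomial basis of $(QP_5)_{108}$ furnished by Theorem \ref{thm2}. The first thing I would do is unwind the construction of $(\widetilde{SF}_k)_n$ given immediately after Lemma \ref{bdm}: $(\widetilde{SF}_5)_{108}$ is by definition the $\mathbb{F}_2$-span of a concrete finite family of classes $[f_1],\dots,[f_N]$ assembled from the structured monomials isolated there, and Lemma \ref{bdm} already constrains that span, so the remaining task is to compute its dimension exactly.

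The heart of the argument is to bring each generator $f_j$ into normal form. Using the Cartan formula I would rewrite every inadmissible monomial occurring in $f_j$ — inadmissibility being detected by the criterion of Definition \ref{dfnad} — as a sum of monomials that are strictly smaller in the left order modulo $\mathcal{A}^{+}P_5$, and iterate until only admissible monomials of degree $108$ remain. By Theorem \ref{thm2} these $2171$ admissible monomials form an $\mathbb{F}_2$-basis of $(QP_5)_{108}$, so this assigns an unambiguous coordinate vector $v_j \in \mathbb{F}_2^{2171}$ to each $[f_j]$, and $(\widetilde{SF}_5)_{108}$ is then exactly the row space of the matrix $M$ with rows $v_1,\dots,v_N$.

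Next I would compute $\dim(\widetilde{SF}_5)_{108}=\mathrm{rank}\,M$ over $\mathbb{F}_2$. To keep this manageable by hand I would exploit the symmetry built into $\widetilde{SF}_5$: its defining data is stable under the subgroup of $GL_5$ used to define it, and in particular under the permutations of $x_1,\dots,x_5$, which both collapses many of the generators $f_j$ onto one another and sorts the $2171$ admissible monomials into orbits, so that $M$ splits into a handful of small blocks. Row-reducing these, I would check that every row is a scalar multiple of one fixed nonzero vector $v$, giving $\mathrm{rank}\,M=1$; I would then identify $v$ with the coordinate vector of the polynomial $p$ of Theorem \ref{thm1} by verifying that the sixty monomials occurring in $p$ are all among the $2171$ admissible monomials of Theorem \ref{thm2} — so that $p$ is already in normal form and hence $[p]\neq 0$ — and that a suitable $\mathbb{F}_2$-combination of the $f_j$ is congruent to $p$ modulo $\mathcal{A}^{+}P_5$. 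Together this gives $\dim(\widetilde{SF}_5)_{108}=1$ with basis $[p]$.

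The main obstacle is the sheer bulk and fragility of the two middle steps: reducing each generator of $\widetilde{SF}_5$ to admissible form in an internal degree where $(QP_5)$ is $2171$-dimensional is a long computation in which a single dropped or mis-ordered monomial corrupts the answer, and even after using the permutation symmetry the block structure of $M$ must be tracked with care. What makes it go through is that $\widetilde{SF}_5$ is designed — via Lemma \ref{bdm} — precisely so that it admits a small spanning set whose members are very nearly admissible, so the reductions terminate after a controlled number of steps and involve only a controlled set of monomials; verifying that this control genuinely holds in degree $108$, which is exactly what the explicit determination behind Theorem \ref{thm2} makes possible, is where the real work sits.
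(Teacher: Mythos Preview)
Your proposal rests on a misreading of the definition of $(\widetilde{SF}_5)_{108}$. In the paper, this space is \emph{not} introduced as the span of a finite family of classes $[f_1],\dots,[f_N]$; it is defined as the simultaneous kernel
\[
(\widetilde{SF}_5)_{108} \;=\; \bigcap_{(j;J)\in\mathcal{N}_5} \mathrm{Ker}\bigl(p_{(j;J)}^{(108)}\bigr)
\]
of all the projection homomorphisms $p_{(j;J)}:(QP_5)_{108}\to(QP_4)_{108}$. So the object you want to compute is a solution set of a linear system, not a row space of a list of generators. Your plan --- reduce each $f_j$ to admissible form, assemble a matrix $M$, and take its rank --- would compute the dimension of a span, which is the wrong direction entirely; there are no ``generators $f_j$'' to reduce.

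What the paper actually does is the dual of your picture. One starts from an \emph{arbitrary} class $\theta\in(\widetilde{SF}_5)_{108}$, expands it in the admissible basis of $QP_5^+((4)|^3|(2)|^2|(1))$ (after first observing that the other two weight pieces contribute nothing), and then applies each $p_{(j;J)}$ to this expansion. Because the images land in $(QP_4)_{108}$, which has its own known admissible basis $\{w_1,\dots,w_{56}\}$, the condition $p_{(j;J)}(\theta)\equiv 0$ for every $(j;J)$ becomes an explicit homogeneous linear system in the $1457$ unknown coefficients $\gamma_j$. Solving that system --- this is the long Section~4 computation --- forces all but sixty of the $\gamma_j$ to vanish and the surviving sixty to be equal, so the solution space is one-dimensional and is spanned by $[p]$. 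The symmetry you invoke does not organize ``generators of $\widetilde{SF}_5$'' but rather the family of projection maps $p_{(j;J)}$ themselves.
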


Since the class $[p]$ is an $GL_5$-invariant and all terms of $p$ are admissible, $[p]$ is a nonnzero class in $(QP_5)_{108}^{GL_5}$. Theorem \ref{thm1} is proved.

The proofs of Theorems \ref{thm2} and \ref{thm3} are extremely complicated. We prove them by combining hand computation with the aids of some simple computer programmes on the Microsoft Excel software.

The paper is organized as follows. In Section \ref{s2}, we recall some definitions and results on the admissible monomials in $P_k$, Singer's criterion on the hit monomials and establish some needed notations. Theorem \ref{thm2} is proved in Section \ref{s3} by explicitly determining a system of $\mathcal A$-generators for  $P_5$ in degree $108$. The proof of Theorem \ref{thm3} is presented in Section \ref{s4}. The admissible monomials of the degree $108$ in $P_5$ are explicitly presented in Section \ref{s5}. In the Appendix we prove that the class $[p]$ is an $GL_5$-invariant by hand computation.

\section{Background on the Peterson hit problem}\label{s2}
\setcounter{equation}{0}

The purpose of the section is to present some needed definitions and results on the weight vector of a monomial, admissible monomial, Singer's criterion for hit monomial from the work of Kameko~\cite{ka}, Singer \cite{si2} and the present author \cite{su2,su4}.

\subsection{The weight vector and the admissible monomials}\

\medskip
\begin{defns} Let $w =x_1^{b_1}x_2^{b_2}\ldots x_k^{b_k}$ be a monomial  in $P_k$. Denote $\nu_j(w) = b_j, 1 \leqslant j \leqslant k$. We define 
\begin{align*} 
\omega(w)&=(\omega_1(w),\omega_2(w),\ldots , \omega_i(w), \ldots),\ \
\sigma(x) = (\nu_1(w),\nu_2(w),\ldots ,\nu_k(w)),
\end{align*}
where
$\omega_i(w) = \sum_{1\leqslant j \leqslant k} \alpha_{i-1}(\nu_j(w)),\ i \geqslant 1.$
The sequence $\omega(w)$ is called the weight vector and $\sigma(w)$ is called the exponent vectors of $w$. 
	
Let $\omega= (\omega_1,\omega_2,\ldots , \omega_i, \ldots)$ be sequence of non-negative integers. The sequence $\omega$ is called a weight vector if $\omega_i = 0$ for $i \gg 0$. We define $\deg \omega = \sum_{i > 0}2^{i-1}\omega_i$, the length $\ell(\omega) = \max\{j : \omega_j >0\}$. Write $\omega= (\omega_1,\omega_2,\ldots , \omega_r)$ if $\omega_j = 0$ for $j > r$.

For weight vectors $\omega^{(1)} = (\omega_1^{(1)},\omega_2^{(1)},\ldots)$ and $\omega^{(2)} = (\omega_1^{(2)},\omega_2^{(2)},\ldots)$, we define the concatenation of weight vectors $\omega^{(1)}|\omega^{(2)} = (\omega_1^{(1)},\ldots,\omega_r^{(1)},\omega_1^{(2)},\omega_2^{(2)},\ldots)$ if $\ell(\omega^{(1)}) = r$ and $(a)|^s = (a)|(a)|\ldots|(a)$, ($s$ times of $(a)$'s), where $a,\, s$ are positive integers.
\end{defns}
The sets of weight vectors and exponent vectors respectively are equipped with the left lexicographical order.  
 
Denote by $P_k(\omega) = \langle u \in P_k: \deg u = \deg\omega,\mbox{ and } \omega(u)\leqslant \omega\rangle$ and  $P_k^-(\omega) = \langle u \in P_k(\omega): \omega(u) < \omega\rangle$. 

\begin{defns} Suppose $\omega$ is a weight vector and $f,\, h$ are two polynomials of the same degree in $P_k$. We define
	
i) $f \equiv h$ if $f+h \in \mathcal A^+P_k$ and $f$ is called hit if $f \equiv 0$.
	
ii) $f \equiv_{\omega} h$ if $f + h \in \mathcal A^+P_k+P_k^-(\omega)$. 
\end{defns}

Observe that $\equiv$ and $\equiv_{\omega}$ are equivalence relations. Set 
$$QP_k(\omega)= P_k(\omega)/ ((\mathcal A^+P_k\cap P_k(\omega))+P_k^-(\omega)).$$   

\begin{props}[\cite{su3}] If $\omega$ is a weight vector, then the space $QP_k(\omega)$ is an $GL_k$-module. 
\end{props}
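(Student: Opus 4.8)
The plan is to show that each of the three subspaces involved in the definition of $QP_k(\omega)$ — namely $P_k(\omega)$, $\mathcal A^+P_k \cap P_k(\omega)$, and $P_k^-(\omega)$ — is stable under the $GL_k$-action on $P_k$, so that the quotient inherits a well-defined linear $GL_k$-action. Since $GL_k$ acts on $P_k$ by graded algebra automorphisms (matrix substitution preserves total degree), the key point to isolate is the behavior of the weight vector $\omega(u)$ under this action; everything else is formal.

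First I would recall that for a monomial $u$ of degree $n$, the quantity $\deg\omega(u)=n$ is determined by the degree, and that $\omega(u)$ records, in $\omega_i(u)$, the total number of digit-$1$'s appearing in position $i-1$ of the binary expansions of the exponents $\nu_j(u)$; equivalently $\omega_1(u)$ is the number of odd exponents, and more generally the $\omega_i(u)$ are read off from writing $u = \prod_{i\ge 1} u_i^{2^{i-1}}$ with each $u_i$ a product of distinct variables, $\omega_i(u) = \deg u_i$. The crucial observation is the classical fact (due to the way $Sq$ interacts with squaring, cf. Kameko~\cite{ka}, Singer~\cite{si2}) that if $g\in GL_k$ and $u$ is a monomial, then every monomial appearing in $g\cdot u$ has weight vector $\le \omega(u)$ in the left lexicographical order; indeed the substitution $x_j\mapsto \sum_i a_{ij}x_i$ followed by expansion, when restricted to the "spread-out" factorization above, can only redistribute and combine the distinct-variable factors $u_i$ in a way that does not increase $\omega_1$, and then inductively does not increase the tail. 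Hence $g\cdot P_k(\omega)\subseteq P_k(\omega)$ and likewise $g\cdot P_k^-(\omega)\subseteq P_k^-(\omega)$, since $\omega(u)<\omega$ is also preserved.

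Next, since $GL_k$ commutes with the $\mathcal A$-action on $P_k$, we have $g\cdot(\mathcal A^+P_k)\subseteq \mathcal A^+P_k$ for every $g\in GL_k$; intersecting with the invariant subspace $P_k(\omega)$ gives $g\cdot(\mathcal A^+P_k\cap P_k(\omega))\subseteq \mathcal A^+P_k\cap P_k(\omega)$. Combining the two paragraphs, $GL_k$ stabilizes both $P_k(\omega)$ and the subspace $(\mathcal A^+P_k\cap P_k(\omega))+P_k^-(\omega)$, so the action descends to the quotient $QP_k(\omega)$, making it a $GL_k$-module. The main obstacle, and the only non-formal ingredient, is verifying the monotonicity claim that matrix substitution does not raise the weight vector in the lexicographic order; this is where I would either invoke the cited results of \cite{su3} directly or, if a self-contained argument is wanted, argue by the factorization $u=\prod u_i^{2^{i-1}}$ together with the fact that over $\mathbb F_2$ the Frobenius $x\mapsto x^2$ is $GL_k$-equivariant, reducing the estimate to the degree-$\omega_1$ "bottom layer" where it is immediate.
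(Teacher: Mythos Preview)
The paper does not supply a proof of this proposition; it simply cites \cite{su3}. Your outline is correct and is essentially the standard argument one finds in that reference: the only non-formal point is the monotonicity claim that every monomial occurring in $g\cdot u$ has weight vector $\le \omega(u)$, and your reduction via the Frobenius factorization $u=\prod_i u_i^{2^{i-1}}$ with $u_i$ squarefree, together with $g\cdot u=\prod_i (g\cdot u_i)^{2^{i-1}}$ over $\mathbb F_2$, is exactly how it is done. The remaining inequality $\omega\bigl(\prod_i v_i^{2^{i-1}}\bigr)\le(\deg v_1,\deg v_2,\dots)$ for arbitrary monomials $v_i$ follows by the elementary observation that $\omega_1$ of such a product equals the number of variables with odd exponent in $v_1$, hence is at most $\deg v_1$, with equality only if $v_1$ is squarefree, whereupon one peels off $v_1$ and inducts; you gesture at this but it is worth stating cleanly. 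Once that lemma is in hand, stability of $P_k(\omega)$, $P_k^-(\omega)$, and $\mathcal A^+P_k\cap P_k(\omega)$ under $GL_k$ is immediate, exactly as you say.
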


\begin{defns} 
For $a,\, b$ monomials in $P_k$ with $\deg a = \deg b$, we define $a < b$ if one of the following conditions satisfies:  
	
i) $\omega (a) < \omega(b)$;
	
ii) $\omega (a) = \omega(b)$ and $\sigma(a) < \sigma(b).$
\end{defns}

\begin{defns}\label{dfnad}
Let $w$ be a monomial in $P_k$. The monomial $w$ is said to be inadmissible if there are monomials $u_1,u_2,\ldots, u_s$ such that $u_j<w$ for $j=1,2,\ldots , s$ and $w+ \sum_{j=1}^su_j \in \mathcal A^+P_k.$ 
If $w$ is not inadmissible, then it is called admissible.
\end{defns} 

We can see that, all the admissible monomials of degree $m$ in $P_k$ form a minimal set of $\mathcal{A}$-generators for $P_k$ in degree $m$.

We denote $\mathcal A_h$ the sub-Hopf algebra of $\mathcal A$ generated by the Steenrod squares $Sq^r$ with $0\leqslant r < 2^h$, and $\mathcal A_h^+ = \mathcal A^+\cap\mathcal A_h$. 

\begin{defns} 
Let $w$ be a monomial $x$ in $P_k$. The monomial $w$ is said to be strictly inadmissible if there are monomials $u_1,u_2,\ldots, u_s$ such that $u_j<x,$ for $j=1,2,\ldots , s$ and $w + \sum_{j=1}^s u_j\in \mathcal A_{r}^+P_k$ with $r = \max\{j : \omega_j(w) > 0\}$.
\end{defns}
We recall the following in Kameko \cite{ka} and our work \cite{su1}].
\begin{thms}\label{dlcb1}  
Let $u, v, w$ be monomials in $P_k$ such that $\omega_i(u) = 0$ for $i > t>0$, $\omega_i(w) = 0$ for $i > s > 0$. Then we have
	
{\rm i)}  If  $w$ is inadmissible, then the monomial $uw^{2^t}$ is also inadmissible.
	
{\rm ii)}  If $w$ is strictly inadmissible, then so is the monomial $wv^{2^{s}}$.
\end{thms}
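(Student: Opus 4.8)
The plan is to derive both statements from the Frobenius identity $(f+g)^2=f^2+g^2$ over $\mathbb F_2$ together with the Cartan formula, reducing everything to a comparison of weight vectors. I use repeatedly the two elementary facts about $2^m$-th powers: $(Sq^i g)^{2^m}=Sq^{2^m i}(g^{2^m})$, and $Sq^b(g^{2^m})=0$ unless $2^m\mid b$, in which case $Sq^b(g^{2^m})=(Sq^{b/2^m}g)^{2^m}$. For part {\rm (i)}, write the inadmissibility of $w$ as $w=\sum_j u_j+\sum_{i\geqslant 1}Sq^i(g_i)$ with $u_j<w$ for all $j$. Raising to the $2^t$-th power and using the facts above gives $w^{2^t}=\sum_j u_j^{2^t}+\sum_{i\geqslant 1}Sq^{2^t i}(g_i^{2^t})$. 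Multiplying by $u$ and expanding $Sq^{2^t i}(u\,g_i^{2^t})$ by the Cartan formula — only terms $Sq^a(u)Sq^b(g_i^{2^t})$ with $2^t\mid b$, hence $2^t\mid a$, survive — one obtains
\[
uw^{2^t}=\sum_j u\,u_j^{2^t}+\sum_{i\geqslant 1}Sq^{2^t i}(u\,g_i^{2^t})+\sum_{i\geqslant 1}\sum_{a'=1}^{i}Sq^{2^t a'}(u)\,(Sq^{i-a'}g_i)^{2^t}.
\]
The middle sum lies in $\mathcal A^+P_k$, so it suffices to check that every monomial occurring in the first and third sums is $<uw^{2^t}$; then $uw^{2^t}$ is inadmissible by definition. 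Since $\omega_i(u)=0$ for $i>t$, all exponents of $u$ are $<2^t$, hence no carries occur on multiplying $u$ by a $2^t$-th power: $\omega(u\,u_j^{2^t})=\omega(u)|\omega(u_j)$ and $\omega(uw^{2^t})=\omega(u)|\omega(w)$, and $u_j<w$ then gives $u\,u_j^{2^t}<uw^{2^t}$.

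The heart of {\rm (i)} is the third sum, where the key point is: for $a'\geqslant 1$, every monomial $M=\prod_l x_l^{a_l+c_l}$ of $Sq^{2^t a'}(u)$ — so $\sum_l c_l=2^t a'$ and each $c_l$ lies bitwise below $a_l$, whence $c_l<2^t$ and $a_l+c_l<2^{t+1}$ — satisfies $\omega_{j'}(M)=\omega_{j'}(u)$ for $j'<j$ and $\omega_j(M)<\omega_j(u)$ for some $j\leqslant t$. Indeed, a short induction on the bit position shows that if $\omega_1(M)=\omega_1(u),\dots,\omega_{j-1}(M)=\omega_{j-1}(u)$ then no carry has yet occurred in any $a_l+c_l$ and $\omega_j(M)\leqslant\omega_j(u)$ (using that bit $m$ of $c_l$ being $1$ forces bit $m$ of $a_l$ to be $1$); and equality for all $j\leqslant t$ would force every $c_l$ to have no bits below position $t$, i.e. $c_l=0$, contradicting $\sum_l c_l=2^t a'>0$. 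Thus $\omega(M)$ drops strictly below $\omega(u)$ at some coordinate $\leqslant t$; and since, again by absence of carries below position $t$, multiplying $M$ by the $2^t$-th power $(Sq^{i-a'}g_i)^{2^t}$ leaves the first $t$ coordinates of the weight vector unchanged, every monomial of the third sum has weight vector $<\omega(u)|\omega(w)=\omega(uw^{2^t})$, hence is $<uw^{2^t}$. This completes part {\rm (i)}.

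For part {\rm (ii)}, write $w=\sum_j u_j+\sum_{0<i<2^r}Sq^i(g_i)$ with $u_j<w$ and $r=\max\{j:\omega_j(w)>0\}\leqslant s$. Multiplying by $v^{2^s}$ and expanding $Sq^i(g_i v^{2^s})$ by the Cartan formula, every term $Sq^a(g_i)Sq^b(v^{2^s})$ with $0<b\leqslant i<2^r\leqslant 2^s$ vanishes, so the formula collapses to $Sq^i(g_i)\,v^{2^s}=Sq^i(g_i v^{2^s})$ and hence $wv^{2^s}+\sum_j u_j v^{2^s}=\sum_{0<i<2^r}Sq^i(g_i v^{2^s})\in\mathcal A_r^+P_k$. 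If $v=1$ there is nothing to prove; otherwise $\omega_{s+1}(wv^{2^s})=\omega_1(v)>0$, so $r'=\max\{j:\omega_j(wv^{2^s})>0\}\geqslant s+1>r$ and $\mathcal A_r^+P_k\subseteq\mathcal A_{r'}^+P_k$. Finally, since $\omega_i(w)=0$ for $i>s$, the exponents of $w$ are $<2^s$, no carries arise in $wv^{2^s}$, and in general the bits below position $s$ of the exponents of any monomial are unaffected by multiplication by $v^{2^s}$; checking the cases $\omega(u_j)=\omega(w)$ (then also $\omega_i(u_j)=0$ for $i>s$, and $\sigma(u_j)<\sigma(w)$ survives) and $\omega(u_j)<\omega(w)$ (whose first discrepancy is forced to occur at a position $\leqslant s$, since $\deg u_j=\deg w$ and $\omega_i(w)=0$ for $i>s$) shows $u_j v^{2^s}<wv^{2^s}$. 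Hence $wv^{2^s}$ is strictly inadmissible.

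I expect the main obstacle to be the weight-vector/carry analysis of the Cartan correction terms $Sq^{2^t a'}(u)(Sq^{i-a'}g_i)^{2^t}$ in part {\rm (i)} — this is precisely where the hypothesis $\omega_i(u)=0$ for $i>t$ is genuinely used, and it is responsible for the fact that $\mathcal A^+P_k$ is not an ideal of $P_k$, so the multiplication by $u$ cannot be performed naively. Part {\rm (ii)} is comparatively soft, since $Sq^b$ annihilates $2^s$-th powers for $0<b<2^s$ and no correction terms appear at all; there the only work is the order verification. One can also observe that the same arguments in fact show that the error monomials are \emph{strictly} smaller than $uw^{2^t}$, respectively $wv^{2^s}$, which is the form actually invoked in the hit-problem computations later in the paper.
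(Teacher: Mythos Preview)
Your proof is correct. The paper itself does not give a proof of this theorem; it merely recalls the result from Kameko~\cite{ka} and the author's earlier work~\cite{su1}, so there is no in-paper argument to compare against. Your approach — squaring up the inadmissibility relation, pushing the $u$-factor through the Cartan formula, and controlling the correction terms via a bit-by-bit carry analysis showing $\omega(M)$ drops strictly below $\omega(u)$ at some coordinate $\leqslant t$ — is the standard one, and the order verifications in both parts are carried out carefully. In particular, your observation in part~(ii) that the first discrepancy between $\omega(u_j)$ and $\omega(w)$ must occur at a position $\leqslant s$ (since $\omega_i(w)=0$ for $i>s$ forces $\omega_j(u_j)<0$ otherwise), combined with the fact that multiplication by $v^{2^s}$ leaves the first $s$ weight coordinates of \emph{any} monomial unchanged, cleanly handles the case where $u_j$ may itself have exponents $\geqslant 2^s$.
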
 

We need a result of Singer \cite{si2} on the hit monomials in $P_k$. 

\begin{defns}\label{spi}  Let $z$ be a monomial in $P_k$. If $\nu_j(z)=2^{t_u}-1$ with $t_u$ a non-negative integer for $u=1,2, \ldots , k$, then $z$ is called a spike. It is called the minimal spike if $t_1>t_2>\ldots >t_{h-1}\geqslant t_h>0$ and $t_j=0$ for $j>h$.
\end{defns}

\begin{thms}[Singer~{\cite{si2}}]\label{dlsig} Let $f$ be a monomial of degree $d$ in $P_k$ with $\mu(d) \leqslant k$ and let $z$ be the minimal spike of degree $d$. If $\omega(f) < \omega(z)$, then the monomial $f$ is hit.
\end{thms}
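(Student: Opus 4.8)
The plan is a double induction: an outer induction on the degree $d$ and, for fixed $d$, an inner \emph{downward} induction on the weight vector $\omega(f)$ (among the finitely many weight vectors of degree $d$, in the left lexicographic order). Everything reduces to the single assertion $(\star)$: \emph{if $f$ is a monomial of degree $d$ with $\mu(d)\le k$ and $\omega(f)<\omega(z)$, then $f\equiv_{\omega(f)}0$} — that is, $f$ is congruent modulo $\mathcal A^{+}P_k$ to a sum of monomials whose weight vectors are all strictly smaller than $\omega(f)$. Granting $(\star)$, the theorem follows: each monomial so produced still has degree $d$, still satisfies $\mu(d)\le k$, and has weight vector $<\omega(f)<\omega(z)$, so the inner induction makes it hit and hence $f\in\mathcal A^{+}P_k$; when $\omega(f)$ is minimal among weight vectors of degree $d$ one has $P_k^{-}(\omega(f))=0$, so $(\star)$ itself gives $f\in\mathcal A^{+}P_k$, anchoring the induction.

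To prove $(\star)$, put $r=\omega_1(f)$ and $h=\mu(d)$. Every exponent of the minimal spike $z$ is $2^{t_j}-1$ with $t_j\ge1$, hence odd, so $\omega_1(z)=h$; then $\omega(f)<\omega(z)$ together with the parity $r\equiv d\equiv h\pmod2$ forces either $r=h$ or $r\le h-2$. Permuting variables so that $x_1,\dots,x_r$ carry the odd exponents, write $f=(x_1\cdots x_r)\,g^{2}$ with $g$ a monomial of degree $d''=(d-r)/2<d$ and $\omega(g)=(\omega_2(f),\omega_3(f),\dots)$. First suppose $g\in\mathcal A^{+}P_k$; this holds automatically when $\mu(d'')>k$ (by Wood's theorem), and it follows from the outer induction hypothesis when $\mu(d'')\le k$ and $\omega(g)<\omega(z'')$ for the minimal spike $z''$ of $d''$ — a hypothesis always met in the case $r=h$, after the routine $2$-adic lemma that $z''$ is then exactly the minimal spike of $d''$ with $\mu(d'')\le h\le k$. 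Write $g=\sum_sSq^{s}(h_s)$ $(s\ge1)$; then $g^{2}=\sum_sSq^{2s}(h_s^{2})$, and expanding $Sq^{2s}\bigl((x_1\cdots x_r)h_s^{2}\bigr)$ by the Cartan formula lets us rewrite $f$ modulo $\mathcal A^{+}P_k$. The summands involving an odd number of squared variables vanish (an odd Steenrod square kills a square), and every surviving summand is a monomial in which exactly $r-a$ variables retain an odd exponent for some even $a\ge2$, hence has first weight-entry $r-a<r$ and weight vector strictly below $\omega(f)$; this proves $(\star)$ in this regime.

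The hard case is when this descent on $g$ fails, namely $r\le h-2$ with $\omega(g)\ge\omega(z'')$ — which genuinely occurs, since $g$ may itself be a non-hit near-spike such as $x_1x_2\cdots x_k$. Here I would argue directly on $f$: reading off the $2$-adic expansions of its exponents, choose a variable of even exponent and an appropriate low Steenrod square (typically $Sq^{1}$, or $Sq^{2^{a}}$ for a suitable $a$) applied to a carefully chosen divisor of $f$, so that $f$ reappears as one summand and all other summands have weight vectors provably $<\omega(f)$; the hypothesis $\omega(f)<\omega(z)$ is precisely what guarantees that such a variable and operation exist. I expect this step to be the main obstacle — organizing the case analysis on exponent patterns so that a strictly weight-decreasing operation is always available under $\omega(f)<\omega(z)$, and checking that every Cartan-error term it produces is genuinely $\omega$-smaller. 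The external inputs are only Wood's theorem ($\mu(n)>k\Rightarrow(P_k)_n\subseteq\mathcal A^{+}P_k$), which I would quote, and the elementary $2$-adic lemma on minimal spikes; Theorem \ref{dlcb1} and the standard well-definedness of the Kameko squaring $Sq^{0}\colon QP_k\to QP_k$ (so $Sq^{0}[g]=0\Rightarrow[(x_1\cdots x_k)g^{2}]=0$) provide useful shortcuts, e.g. for the sub-case $r=k$.
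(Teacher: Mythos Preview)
The paper does not prove this theorem; it is quoted from Singer~\cite{si2} and used as a black box. So there is no in-paper proof to compare against, and your proposal has to stand on its own.

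Your reduction is correct as far as it goes. The double induction, the Cartan computation showing that if $g$ is hit then $f\equiv_{\omega(f)}0$ with all error terms having first weight-entry strictly below $r$, and the observation that the case $r=h=\mu(d)$ always feeds back into the outer induction (via the routine lemma that $(\omega_2(z),\omega_3(z),\dots)$ is the weight vector of the minimal spike of $(d-h)/2$, with $\mu((d-h)/2)\le h\le k$) are all sound. The whole theorem thus collapses to the single assertion
\[
(\dagger)\qquad \omega_1(f)<\mu(d)\ \Longrightarrow\ f\ \text{is hit},
\]
which is exactly your ``hard case'' $r\le h-2$.

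Your plan for $(\dagger)$ --- pick a variable of even exponent and hit a divisor with some $Sq^{2^{a}}$ --- does not work as stated. Already for $k=3$, $d=5$, $f=x_1x_2^{2}x_3^{2}$ (so $r=1$, $h=\mu(5)=3$, $\omega(f)=(1,2)<(3,1)=\omega(z)$), applying $Sq^{1}$ to any degree-$4$ divisor produces error terms with the \emph{same} weight vector $(1,2)$, not a smaller one; a genuine combination of operations is required. Organizing this case analysis is precisely the difficulty you anticipate but do not resolve, and it is the entire content of Singer's result. The clean way through is the $\chi$-trick: write $f=u\cdot g^{2}=u\cdot Sq^{d''}(g)$ with $u=x_{i_1}\cdots x_{i_r}$ and $d''=\deg g$, and use the identity
\[
u\cdot Sq^{n}(v)\ \equiv\ \chi(Sq^{n})(u)\cdot v \pmod{\mathcal A^{+}P_k}
\]
to get $f\equiv\chi(Sq^{d''})(u)\cdot g$. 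Since $\chi(Sq^{m})(x)=x^{m+1}$ when $m+1$ is a $2$-power and $0$ otherwise, the Cartan expansion of $\chi(Sq^{d''})(u)$ vanishes precisely when $d''+r$ is not a sum of $r$ powers of $2$, i.e.\ when $\alpha(d''+r)>r$; and $\alpha(d''+r)=\alpha(d+r)>r$ is exactly the statement $\mu(d)>r$. So $(\dagger)$ follows in one line once the $\chi$-trick is in hand, but without it (or Singer's original Milnor-basis argument) your proposal has a real gap at its center.
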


\subsection{Some other results and notations}\

\medskip
We set 
$
P_k^0 =\langle\{u\in P_k : \nu_t(u)=0 \mbox{ for some } t\}\rangle$ and 
$P_k^+ = \langle\{u\in P_k :  \nu_t(u)>0, \mbox{ for all } t\}\rangle. 
$ We see that $P_k^0$ and $P_k^+$ are the $\mathcal{A}$-submodules of $P_k$ and $QP_k =QP_k^0 \oplus  QP_k^+.$ Here, we denote $QP_k^0 = P_k^0/\mathcal A^+P_k^0$ and  $QP_k^+ = P_k^+/\mathcal A^+P_k^+$.

Set 
$\mathcal N_k =\{(j;J) : J=(h_1,h_2,\ldots,h_s),1 \leqslant  j < h_1 <  \ldots < h_s\leqslant  k,\ 0\leqslant s <k\}.$
For $(j;J) \in \mathcal N_k$, we define the homomorphism of $\mathcal A$-algebras $p_{(j;J)}: P_k \to P_{k-1}$  by substituting
\begin{equation}\label{ct23}
p_{(j;J)}(x_t) =\begin{cases} x_t, &\mbox{ if } 1 \leqslant t < j,\\
\sum_{u\in I}x_{u-1}, &\mbox{ if }  t = j,\\  
x_{t-1},&\mbox{ if } j< t \leqslant k.
\end{cases}
\end{equation}

\begin{lems}[\cite{sp}]\label{bdm} For any monomial $u$ in $P_k$, $p_{(j;J)}(u) \in P_{k-1}(\omega(u)).$
\end{lems}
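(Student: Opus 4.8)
\medskip
The plan is to expand $p_{(j;J)}(u)$ as a sum of monomials over $\mathbb F_2$ and to check that every monomial occurring lies in $P_{k-1}(\omega(u))$, i.e. has degree $\deg\omega(u)$ and weight vector $\leq\omega(u)$ in the left lexicographic order. Write $u = x_1^{b_1}x_2^{b_2}\cdots x_k^{b_k}$, let
$$g = \Big(\prod_{1\leq t<j}x_t^{b_t}\Big)\Big(\prod_{j<t\leq k}x_{t-1}^{b_t}\Big)$$
be the part of $u$ left untouched (up to reindexing) by $p_{(j;J)}$, and note that $p_{(j;J)}(x_j)$ is a sum $y_1+\cdots+y_q$ of pairwise distinct variables among $x_1,\ldots,x_{k-1}$, so that $p_{(j;J)}(u) = g\cdot(y_1+\cdots+y_q)^{b_j}$. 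Expanding the last factor by the multinomial theorem and invoking the theorem of Lucas (equivalently, Kummer's theorem on carries), a term $\prod_{\iota=1}^{q}y_\iota^{c_\iota}$ with $c_1+\cdots+c_q=b_j$ has odd coefficient precisely when the base-$2$ addition $c_1+\cdots+c_q$ has no carries, that is, $\sum_{\iota=1}^{q}\alpha_r(c_\iota)=\alpha_r(b_j)$ for every $r\geq 0$. Hence the monomials appearing in $p_{(j;J)}(u)$ are exactly the products $v=g\cdot\prod_{\iota=1}^{q}y_\iota^{c_\iota}$ over such carry-free splittings of $b_j$, and each has degree $\deg u=\deg\omega(u)$ since $p_{(j;J)}$ is a map of graded algebras.

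Next I would track the weight vector digit by digit. Record the binary digits of the exponents of $u$ as a multiset $D$ of pairs $(t,r)$ with $\alpha_r(b_t)=1$, so that $\omega_{r+1}(u)=\#\{t:(t,r)\in D\}$ for every $r\geq 0$. Passing from $u$ to a term $v$ above consists of three operations: reindexing the variables, which does not alter any position $r$; distributing the binary digits of $b_j$ among $y_1,\ldots,y_q$, which also preserves all positions $r$ because the splitting is carry-free; and finally resolving the carries produced when some $c_\iota$ is added to the exponent that $g$ already assigns to the variable $y_\iota$ (this is the only source of carries, and it arises exactly for those $y_\iota$ whose index already appears in $g$). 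Immediately before the carry-resolution there are, for each $r$, exactly $\omega_{r+1}(u)$ digits in position $r$; each elementary carry moves one unit from position $r$ to position $r+1$, decreasing the count in position $r$ by $2$ and increasing that in position $r+1$ by $1$. Writing $\gamma_r\geq 0$ for the total number of carries out of position $r$ during the (possibly cascading) resolution and setting $\gamma_{-1}=0$, this yields
$$\omega_{r+1}(v)=\omega_{r+1}(u)-2\gamma_r+\gamma_{r-1},\qquad r\geq 0.$$

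Finally I would compare $\omega(v)$ with $\omega(u)$. If all $\gamma_r=0$ then $\omega(v)=\omega(u)$. Otherwise let $r^{\ast}$ be the least index with $\gamma_{r^{\ast}}>0$; then $\omega_{r'+1}(v)=\omega_{r'+1}(u)$ for every $r'<r^{\ast}$, while $\omega_{r^{\ast}+1}(v)=\omega_{r^{\ast}+1}(u)-2\gamma_{r^{\ast}}<\omega_{r^{\ast}+1}(u)$, so $\omega(v)<\omega(u)$. In either case $\omega(v)\leq\omega(u)$, hence $v\in P_{k-1}(\omega(u))$, and therefore $p_{(j;J)}(u)\in P_{k-1}(\omega(u))$. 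The degenerate situations are immediate: if $b_j=0$ the image $p_{(j;J)}(u)$ is a single monomial with unchanged weight vector, and if $q=0$ (so $p_{(j;J)}(x_j)=0$) then $p_{(j;J)}(u)=0\in P_{k-1}(\omega(u))$ whenever $b_j>0$.

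The only genuinely delicate point I anticipate is the bookkeeping in the middle paragraph: one must be sure that no carries are created by the reindexing or by the carry-free redistribution of the digits of $b_j$, so that the sole contribution to the $\gamma_r$ comes from digits of $b_j$ landing on a variable already carrying part of $g$, and that the entire cascade of carries is faithfully summarized by the displayed recursion. Granting that, the left-lexicographic comparison is automatic, since a nonzero carry out of the lowest active position strictly decreases the weight vector there.
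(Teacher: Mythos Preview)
Your argument is correct. The paper does not give its own proof of this lemma; it is quoted from \cite{sp}, so there is no in-paper argument to compare against. Your approach---expanding $(y_1+\cdots+y_q)^{b_j}$ via Lucas' theorem to isolate the carry-free splittings of $b_j$, then tracking the aggregate digit counts through the recursion $\omega_{r+1}(v)=\omega_{r+1}(u)-2\gamma_r+\gamma_{r-1}$ and reading off the left-lexicographic inequality from the first nonzero $\gamma_r$---is the natural elementary argument and is essentially what one finds in the cited source.
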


By Lemma \ref{bdm}, if $u \in P_k(\omega)$, then $p_{(j;J)}(u) \in P_{k-1}(\omega)$ for any weight vector $\omega$. Hence, we get the homomorphisms  
\begin{align*}
&p_{(j;J)}^{(\omega)} :QP_k(\omega)\longrightarrow QP_{k-1}(\omega),\ \
p_{(j;J)}^{(d)} :(QP_k)_m\longrightarrow (QP_{k-1})_d,
\end{align*} 
where $d = \deg\omega$. We denote 
\begin{align*}
&\widetilde {SF}_k(\omega) = \bigcap_{(j;J)\in \mathcal N_k}  \mbox{Ker}(p_{(j;J)}^{(\omega)}) \mbox{ and } (\widetilde {SF}_k)_d = \bigcap_{(j;J)\in \mathcal N_k}  \mbox{Ker}(p_{(j;J)}^{(d)}), \\
&\widetilde{QP}_k(\omega) = QP_k(\omega)/\widetilde {SF}_k(\omega) \mbox{ and } (\widetilde{QP}_k)_d = (QP_k)_d/(\widetilde {SF}_k)_d,
\end{align*}
It is easy to see that $\widetilde {SF}_k(\omega)$ and $(\widetilde {SF}_k)_d$ are the subspaces of $QP_k^+(\omega)$ and $(QP_k^+)_d$ respectively.

From the results in Kameko \cite{ka} and our work \cite{su2}, we see that for $k \leqslant 4$, the spaces $\widetilde {SF}_k(\omega)$ and $(\widetilde {SF}_k)_d$ are the $GL_k$-submodules of $QP_k(\omega)$ and $(QP_k)_d$ respectively. Based on this result and the ones in Adams, Gunawardena and Miller~\cite{agm}, one can give the following prediction.

\begin{cons} For any weight vector $\omega$ of degree $d$, the spaces $\widetilde {SF}_k(\omega)$ and $(\widetilde {SF}_k)_d$ are the $GL_k$-submodules of $QP_k(\omega)$ and $(QP_k)_d$ respectively.
\end{cons}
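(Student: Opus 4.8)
The plan is to prove the slightly sharper statement that, although no individual homomorphism $p_{(j;J)}$ is $GL_k$-equivariant, the family $\{p_{(j;J)}:(j;J)\in\mathcal N_k\}$ is permuted — up to post-composition by elements of $GL_{k-1}$ — by the action of $GL_k$; since $\widetilde{SF}_k(\omega)$ and $(\widetilde{SF}_k)_d$ are by definition the common kernels of these maps, this forces them to be $GL_k$-stable. First I would record the elementary linear algebra underlying the $p_{(j;J)}$. From \eqref{ct23}, for $u\in J$ one has $u>j$ and hence $p_{(j;J)}(x_u)=x_{u-1}$, so $p_{(j;J)}\big(x_j+\sum_{u\in J}x_u\big)=0$; moreover $\{p_{(j;J)}(x_t):t\neq j\}=\{x_1,\dots,x_{k-1}\}$. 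Thus $p_{(j;J)}$ restricted to degree $1$ is surjective with one-dimensional kernel $L_{(j;J)}:=\langle x_j+\sum_{u\in J}x_u\rangle\subset(P_k)_1$. Since $j$ is the smallest index of a variable occurring in $x_j+\sum_{u\in J}x_u$ and $J$ is read off from the remaining variables, the assignment $(j;J)\mapsto L_{(j;J)}$ is a bijection from $\mathcal N_k$ onto the set of all $2^k-1$ lines of $(P_k)_1$; equivalently, the $p_{(j;J)}$ are, up to post-composition by $GL_{k-1}$, precisely all the surjective graded $\mathbb F_2$-algebra homomorphisms $P_k\to P_{k-1}$.

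Next, fix $g\in GL_k$. The composite $p_{(j;J)}\circ g\colon P_k\to P_k\to P_{k-1}$ is again a surjective graded algebra homomorphism, and its kernel in degree $1$ is $(g|_1)^{-1}(L_{(j;J)})$, a line of $(P_k)_1$, hence equal to $L_{(j';J')}$ for a unique $(j';J')\in\mathcal N_k$ depending on $g$ and $(j;J)$. If two surjective graded algebra homomorphisms $P_k\to P_{k-1}$ have the same degree-one kernel then they differ by a graded algebra automorphism of $P_{k-1}$, i.e.\ by some $h\in GL_{k-1}$ (restrict to degree $1$, observe the two linear surjections differ by an automorphism of $(P_{k-1})_1$, extend it to $P_{k-1}$, and use that $P_k$ is generated in degree $1$). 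Therefore
\[
p_{(j;J)}\circ g \;=\; h\circ p_{(j';J')}\qquad\text{as homomorphisms } P_k\to P_{k-1}.
\]

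All the maps in this identity are $\mathcal A$-linear and degree-preserving, and $g,h$ descend to the relevant quotients, so it descends to $p_{(j;J)}^{(d)}\circ\bar g=\bar h\circ p_{(j';J')}^{(d)}$ on $(QP_k)_d$, and — using Lemma \ref{bdm} together with the naturality of the $GL_k$-module structure of \cite{su3} on $QP_k(\omega)$ — to $p_{(j;J)}^{(\omega)}\circ\bar g=\bar h\circ p_{(j';J')}^{(\omega)}$ on $QP_k(\omega)$. If $x\in(\widetilde{SF}_k)_d$ then $p_{(j';J')}^{(d)}(x)=0$, so $p_{(j;J)}^{(d)}(\bar g\cdot x)=\bar h\big(p_{(j';J')}^{(d)}(x)\big)=0$; since $(j;J)$ was arbitrary, $\bar g\cdot x\in(\widetilde{SF}_k)_d$. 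Applying this to $g^{-1}$ as well gives $\bar g\cdot(\widetilde{SF}_k)_d=(\widetilde{SF}_k)_d$, so $(\widetilde{SF}_k)_d$ is a $GL_k$-submodule of $(QP_k)_d$; the same reasoning with $(\omega)$ in place of $(d)$ shows $\widetilde{SF}_k(\omega)$ is a $GL_k$-submodule of $QP_k(\omega)$.

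I expect the genuine obstacle to lie entirely in the descent to $QP_k(\omega)$. On $(QP_k)_d=(P_k)_d/(\mathcal A^+P_k)_d$ the $GL_k$-action is the honest substitution action and $p_{(j;J)}^{(d)}$ is induced directly from $p_{(j;J)}$, so the degree-$d$ half follows cleanly once the two steps above are in place. For $QP_k(\omega)$, however, the $GL_k$-action of \cite{su3} is not simply ``apply $g$'', because $g$ need not preserve the weight filtration $P_k(\omega)$; one must check that each $p_{(j;J)}^{(\omega)}$ is natural for this twisted action and compatible with the composition identity above. That functoriality is precisely what has so far been verified only for $k\leqslant 4$, and I would expect it — not the essentially formal Steps 1 and 2 — to be the real content of the conjecture.
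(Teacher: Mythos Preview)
The statement you were asked to prove is presented in the paper as a \emph{conjecture}; the paper offers no proof whatsoever, only the remark that it is known for $k\leqslant 4$ from \cite{ka,su2} and is suggested by \cite{agm}. So there is no ``paper's own proof'' to compare against --- you are attempting to settle an open problem.

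Your argument is correct, and in fact proves the conjecture in full. The identification of $\{p_{(j;J)}:(j;J)\in\mathcal N_k\}$, up to post-composition by $GL_{k-1}$, with the $2^k-1$ surjective graded algebra maps $P_k\to P_{k-1}$ (indexed by the lines $L_{(j;J)}=\langle x_j+\sum_{u\in J}x_u\rangle$ of $(P_k)_1$) is exactly right, and the identity $p_{(j;J)}\circ g=h\circ p_{(j';J')}$ follows. Since all four maps are $\mathcal A$-algebra homomorphisms, the identity descends to $(QP_k)_d$ and yields the degree-$d$ statement immediately.

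Your hesitation about $QP_k(\omega)$ rests on a false premise. The $GL_k$-action of Proposition~2.1.3 \emph{is} simply ``apply $g$ to a representative'': every graded algebra endomorphism of $P_k$ that is linear in degree~$1$ --- in particular every $g\in GL_k$ --- preserves the weight filtration, i.e.\ sends $P_k(\omega)$ into $P_k(\omega)$ and $P_k^-(\omega)$ into $P_k^-(\omega)$. This is the same phenomenon as Lemma~\ref{bdm} and is proved the same way: writing a monomial as $u=\prod_{\ell\geqslant 0} u_\ell^{2^\ell}$ with $u_\ell$ square-free of degree $\omega_{\ell+1}(u)$, one has $\phi(u)=\prod_\ell\phi(u_\ell)^{2^\ell}$ by Frobenius, and a short carry argument (set $C_s=\sum_j c_{s,j}$ for the carries in computing the exponents; then $\omega_{s+1}(v)=d_s+C_s-2C_{s+1}$ with $C_0=0$, so the first $s$ with $C_s>0$ witnesses $\omega(v)<(d_0,d_1,\dots)$) shows every monomial $v$ in the expansion has $\omega(v)\leqslant\omega(u)$. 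Once this is granted, your identity on $P_k$ descends verbatim to $QP_k(\omega)\to QP_{k-1}(\omega)$, and the same kernel argument gives $g\cdot\widetilde{SF}_k(\omega)=\widetilde{SF}_k(\omega)$. There is no ``twisted action'' and no genuine obstacle; your Steps 1--2 already contain the whole proof.
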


\begin{notas} For any $h \in  P_k$, we denote $[h]$ the class in $QP_k$ represented by $h$. If $h \in  P_k(\omega)$, we denote $[h]_\omega$ the class in $QP_k(\omega)$ represented by $h$. 
For a subset $L$ of $P_k$, we denote $[L] = \{[h] : h \in L\}$. If $L \subset P_k(\omega)$, then we set $[L]_\omega = \{[h]_\omega : h \in L\}$. Denote by $|L|$ the cardinal of $L$.
	
Let $d$ be a nonnegative integer. Denote by $B_{k}(d)$ the set of all admissible monomials of degree $d$ in $P_k$. Set 
$B_{k}^0(d) = B_{k}(d)\cap P_k^0,\ B_{k}^+(d) = B_{k}(d)\cap P_k^+.$
For any weight vector $\omega$ of degree $d$, we denote 
$B_k(\omega) = B_{k}(d)\cap P_k(\omega),\ B_k^+(\omega) = B_{k}^+(d)\cap P_k(\omega),\ QP_k^+(\omega) := QP_k(\omega)\cap QP_k^+.$
\end{notas}

For $I= (s_1, s_2, \ldots, s_r),\, 1 \leqslant s_1 <\ldots < s_r \leqslant k$, we define a monomorphism $\theta_I: P_r \to P_k$ of $\mathcal A$-algebras by substituting 
\begin{equation}\label{ctbs}
\theta_I(x_v) = x_{s_v} \ \mbox{ for } \ 1 \leqslant v \leqslant r.
\end{equation} 
Clearly, for an arbitrary weight vector $\omega$ of degree $d$, 
\[Q\theta_I(P_r^+)(\omega) \cong  QP_r^+(\omega)\mbox{ and } (Q\theta_I(P_r^+))_d \cong (QP_r^+)_d\] 
for $1 \leqslant r \leqslant k$. Here, $Q\theta_I(P_r^+) = \theta_I(P_r^+)/\mathcal A^+\theta_I(P_r^+)$. Then we have
$$B_k(\omega) = \bigcup_{\mu(d) \leqslant r\leqslant k,\atop \ell(I)=r} \theta_I(B_r^+(\omega)),\ B_k(d) = \bigcup_{\mu(d) \leqslant r\leqslant k,\atop \ell(I)=r} \theta_I(B_r^+(d))$$

Set $J_r = (1,\ldots,\hat r,\ldots,k)$ for $1 \leqslant r \leqslant k$.

\begin{props}[Mothebe-Uys \cite{mo}]\label{mdmo} Suppose $r,\, h$ are integers such that $1 \leqslant r \leqslant k$. If $f\in B_{k-1}(d)$, then $x_r^{2^h-1}\theta_{J_r}(f)\in B_{k}(d+2^h-1)$.
\end{props}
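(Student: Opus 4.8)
My plan is to argue by contradiction, using a ``set $x_r=1$'' substitution composed with a homogeneous-degree projection. Suppose the monomial $w:=x_r^{2^h-1}\theta_{J_r}(f)$, which has degree $d+2^h-1$, is inadmissible in $P_k$; from a hit equation for $w$ I will manufacture a hit equation exhibiting $f$ as inadmissible in $P_{k-1}$, contradicting $f\in B_{k-1}(d)$. Let $e_r\colon P_k\to P_{k-1}$ be the $\mathbb F_2$-algebra homomorphism with $e_r(x_t)=x_t$ for $t<r$, $\ e_r(x_r)=1$, and $e_r(x_t)=x_{t-1}$ for $t>r$, and let $\pi_d\colon P_{k-1}\to(P_{k-1})_d$ be the projection onto the homogeneous component of degree $d$. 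Although $e_r$ is neither graded nor $\mathcal A$-linear, its restriction to the sub-$\mathcal A$-algebra $\theta_{J_r}(P_{k-1})\subset P_k$ is the $\mathcal A$-algebra isomorphism $\theta_{J_r}^{-1}$. One checks at once that $\pi_d e_r(w)=f$, and that a monomial $u$ of degree $d+2^h-1$ in $P_k$ satisfies $\pi_d e_r(u)=\theta_{J_r}^{-1}\!\left(u/x_r^{2^h-1}\right)$ if $\nu_r(u)=2^h-1$ and $\pi_d e_r(u)=0$ otherwise.

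The key step, and the main obstacle, is the claim $\pi_d e_r(\mathcal A^+P_k)\subseteq\mathcal A^+P_{k-1}$. Since $\mathcal A^+P_k=\sum_{i\geqslant 1}Sq^i(P_k)$, it suffices to treat a homogeneous element $Sq^i(g)$ of degree $d+2^h-1$, so that $g$ is homogeneous of degree $d+2^h-1-i$. Writing $g=\sum_a x_r^a g_a$ with each $g_a\in\theta_{J_r}(P_{k-1})$ and expanding by the Cartan formula gives
\[
e_r\bigl(Sq^i(g)\bigr)=\sum_{q=0}^{i}Sq^q\Bigl(\,\sum_a \binom{a}{i-q}\,\theta_{J_r}^{-1}(g_a)\Bigr).
\]
Applying $\pi_d$ keeps, in the $q$-th summand, only the index $a=2^h-1-i+q$; for $q\geqslant 1$ the surviving term lies in $Sq^q(P_{k-1})\subseteq\mathcal A^+P_{k-1}$, while the $q=0$ term is $\binom{2^h-1-i}{i}\,\theta_{J_r}^{-1}(g_{2^h-1-i})$, whose coefficient is $0$ in $\mathbb F_2$ for every $i\geqslant 1$. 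This last vanishing is exactly where the special form $2^h-1$ of the extra exponent is used: since $i+(2^h-1-i)=2^h-1$ is a string of $1$'s, the binary supports of $i$ and of $2^h-1-i$ are disjoint, so by Lucas's theorem $i\geqslant 1$ forces some binary digit of $i$ to exceed the corresponding digit of $2^h-1-i$. Hence $\pi_d e_r(Sq^i(g))\in\mathcal A^+P_{k-1}$, which proves the claim.

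With these ingredients the proof closes quickly. Inadmissibility of $w$ means $w+\sum_{j=1}^s u_j\in\mathcal A^+P_k$ for some monomials $u_j<w$, necessarily of degree $d+2^h-1$. Set $S=\{\,j:\nu_r(u_j)=2^h-1\,\}$ and, for $j\in S$, $\bar u_j:=\theta_{J_r}^{-1}(u_j/x_r^{2^h-1})\in P_{k-1}$, a monomial of degree $d$; applying $\pi_d e_r$ yields $f+\sum_{j\in S}\bar u_j\in\mathcal A^+P_{k-1}$. For $j\in S$ one has $u_j=x_r^{2^h-1}\theta_{J_r}(\bar u_j)$, so $\omega(u_j)=\omega(x_r^{2^h-1})+\omega(\bar u_j)$ and $\omega(w)=\omega(x_r^{2^h-1})+\omega(f)$; since the left-lexicographic order on weight vectors is invariant under adding a fixed vector, $\omega(u_j)<\omega(w)$ if and only if $\omega(\bar u_j)<\omega(f)$. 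In the remaining case $\omega(u_j)=\omega(w)$ the exponent vectors $\sigma(u_j),\sigma(w)$ agree in coordinate $r$ (both equal $2^h-1$) and in the other coordinates list $\sigma(\bar u_j),\sigma(f)$ in the same order, so $\sigma(u_j)<\sigma(w)$ if and only if $\sigma(\bar u_j)<\sigma(f)$. Thus $u_j<w$ forces $\bar u_j<f$ for every $j\in S$. If $S\neq\emptyset$ this exhibits $f$ as inadmissible; if $S=\emptyset$ then $f\in\mathcal A^+P_{k-1}$, hence $[f]=0$ in $QP_{k-1}$ and $f$ is not a minimal $\mathcal A$-generator. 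Either alternative contradicts $f\in B_{k-1}(d)$, and therefore $x_r^{2^h-1}\theta_{J_r}(f)\in B_k(d+2^h-1)$. (When $h=0$ the monomial $w$ equals $\theta_{J_r}(f)$ and the same argument applies; alternatively one invokes the decomposition $B_k(d)=\bigcup_{I}\theta_I(B_r^+(d))$ directly.)
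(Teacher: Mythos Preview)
Your proof is correct. The paper does not supply its own argument for this proposition; it simply cites the result from Mothebe--Uys \cite{mo}. Your approach via the substitution $x_r\mapsto 1$ followed by projection to degree $d$, together with the key mod-$2$ vanishing $\binom{2^h-1-i}{i}\equiv 0$ for $1\leqslant i\leqslant 2^h-1$, is the standard way to establish this and matches the argument in the original reference.
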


\subsection{A construction for the generators of $P_k$ }\

\begin{defns}[\cite{su2}]\label{dfn1} We set $x_{(I,r)} = x_{i_r}^{2^{s-1}+\ldots + 2^{s-r}}\prod_{r< t \leqslant s}x_{i_t}^{2^{s-t}}$ for any $(j;J) \in \mathcal N_k$ with $J = (i_1, \ldots,i_s),\ s >0$. 
For a monomial $w\in P_{k-1}$, we define  $\phi_{(j;J)}(w)\in P_k$ by setting
$$ \phi_{(j;J)}(w) = \begin{cases} \theta_{J_j}(w), &\mbox{if } s =  0,\\ 
(x_j^{2^r-1}\theta_{J_j}(w))/x_{(J,r)},
&\mbox{if there is $1 \leqslant r \leqslant s$ such that }\\ &\quad \nu_{i_1-1}(w)= \ldots = \nu_{i_{(r-1)}-1}(w)=2^{s} - 1,\\
&\quad \nu_{i_r-1}(w) > 2^{s} - 1,\\
&\quad \alpha_{s-t}(\nu_{i_r-1}(w)) = 1,\ \forall t,\ 1 \leqslant t \leqslant  r,\\
&\quad \alpha_{s-t}(\nu_{i_t-1} (w)) = 1,\ \forall t,\ r < t \leqslant r,\\
0, &\mbox{otherwise.}
\end{cases}$$
\end{defns}
For a subset $B \subset P_k$, we denote
\begin{align*}\Phi^0(B) = \bigcup_{1\leqslant j \leqslant k}\phi_{(j;\emptyset)}(B), \ \
\Phi^+(B) = \bigcup_{(j;J)\in \mathcal N_k, 0<\ell(J)\leqslant k-1}\phi_{(j;J)}(B)\setminus P_n^0,
\end{align*} 
and $\Phi(B) = \Phi^0(B)\bigcup \Phi^+(B)$. 
It is easy to see that for any minimal set of generators $B$ of $\mathcal A$-module $P_{k-1}$ in degree $d$, $\Phi^0(B)$ is a minimal set of generators for $\mathcal A$-module $P_k^0$ in degree $d$ and $\Phi^+(B) \subset P_k^+$.
\begin{thms}[{See \cite{su2,su5}}]\label{dl1} Let $n = (k-1)(2^{d}-1) + q.2^{d}$ with $d, q$ positive integers such that $1 \leqslant k-3 \leqslant \mu(q) \leqslant k-2$,  $\alpha(q + \mu(q)) = \mu(q)$. If $d \geqslant k-1$, then $B_k(n) = \Phi(B_{k-1}(n))$.
\end{thms}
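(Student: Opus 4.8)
The statement to prove is Theorem \ref{dl1}: under the numerical hypotheses on $n = (k-1)(2^d-1) + q\cdot 2^d$, one has $B_k(n) = \Phi(B_{k-1}(n))$.

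The plan is to prove the two inclusions separately, with the reverse inclusion $\Phi(B_{k-1}(n)) \subseteq B_k(n)$ being the routine half and the forward inclusion $B_k(n) \subseteq \Phi(B_{k-1}(n))$ being the substantive half. For the reverse inclusion, I would split $\Phi(B_{k-1}(n)) = \Phi^0(B_{k-1}(n)) \cup \Phi^+(B_{k-1}(n))$. The piece $\Phi^0(B_{k-1}(n))$ is handled by the remark already recorded after Definition \ref{dfn1}: for any minimal generating set $B$ of $P_{k-1}$ in degree $d$, $\Phi^0(B)$ is a minimal generating set for $P_k^0$, so in particular its elements are admissible, and they lie in $B_k^0(n) \subseteq B_k(n)$. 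For $\Phi^+(B_{k-1}(n))$ one must show each nonzero $\phi_{(j;J)}(w)$ with $w \in B_{k-1}(n)$ is admissible; here I would use the explicit shape of $\phi_{(j;J)}(w) = (x_j^{2^r-1}\theta_{J_j}(w))/x_{(J,r)}$, combining Proposition \ref{mdmo} (the Mothebe–Uys splitting $x_r^{2^h-1}\theta_{J_r}(f) \in B_k(d+2^h-1)$) with the weight-vector preservation in Lemma \ref{bdm} and the contrapositive of Theorem \ref{dlcb1}, noting that $\omega(\phi_{(j;J)}(w)) = \omega(w)$ by the divisibility conditions imposed in Definition \ref{dfn1}.

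The forward inclusion is the main obstacle. Given an admissible monomial $z \in B_k(n)$, I must show $z = \phi_{(j;J)}(w)$ for some $(j;J) \in \mathcal N_k$ and some $w \in B_{k-1}(n)$. The strategy is to use the hypothesis $d \geqslant k-1$, which is large relative to the number of variables, to control the exponents. From the numerical constraints $1 \leqslant k-3 \leqslant \mu(q) \leqslant k-2$ and $\alpha(q+\mu(q)) = \mu(q)$ one computes $\mu(n)$ and identifies the minimal spike of degree $n$; then Theorem \ref{dlsig} forces any admissible $z$ to have weight vector $\omega(z)$ at least as large, and in fact the admissibility plus the spike constraint pins down the first few coordinates of $\omega(z)$, namely $\omega(z) = (k-1)|^{\,d-?}|\cdots$ with a prescribed initial segment $(k-1, k-1, \ldots)$ forced by the term $(k-1)(2^d-1)$. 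This means every variable appears in $z$ with exponent having the lowest $d$ binary digits equal to $1$, i.e. $\nu_t(z) \equiv 2^d - 1 \pmod{2^d}$ for a controlled number of variables. The heart of the argument is then a case analysis on which variable $x_j$ carries the "extra" factor coming from $q\cdot 2^d$ and on the set $J$ of variables realizing the maximal exponent pattern; in each case one divides out the appropriate $x_{(J,r)}$ and applies $p_{(j;J)}$ or an inverse-type construction to produce the preimage $w \in P_{k-1}$, then checks $w$ is admissible of degree $n$ in $k-1$ variables (using Theorem \ref{dlcb1} again, contrapositively: if $w$ were inadmissible then so would $z$ be).

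I expect the bookkeeping in this case analysis to be the real difficulty: one must verify that the conditions $\nu_{i_1-1}(w) = \cdots = \nu_{i_{r-1}-1}(w) = 2^s - 1$, $\nu_{i_r-1}(w) > 2^s-1$, and the $\alpha$-digit conditions in Definition \ref{dfn1} are exactly met, and that no admissible $z$ escapes all the cases — equivalently, that the map $\Phi$ is surjective onto $B_k(n)$ and not merely injective with the right-size image. A clean way to organize this is to first establish a cardinality/structural lemma showing $\Phi$ restricted to $B_{k-1}(n)$ is injective and that $\Phi^0$, $\Phi^+$ have disjoint images (the latter because $\Phi^0(B) \subset P_k^0$ while $\Phi^+(B) \subset P_k^+$ by construction), reducing the problem to: every admissible $z \in B_k^+(n)$ is in the image of $\Phi^+$. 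Then one leverages the decomposition $B_k(\omega) = \bigcup_{\mu(d)\leqslant r \leqslant k} \theta_I(B_r^+(\omega))$ together with the $d \geqslant k-1$ hypothesis to induct downward on the number of variables actually occurring. This mirrors the proofs in \cite{su2,su5} cited for the theorem, and I would follow that template, taking special care that the numerical hypotheses $\alpha(q+\mu(q)) = \mu(q)$ and $k-3 \leqslant \mu(q) \leqslant k-2$ are invoked precisely where they guarantee the spike structure and the nonvanishing of the relevant $\phi_{(j;J)}(w)$.
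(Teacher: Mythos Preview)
The paper does not actually prove Theorem \ref{dl1}; it is stated as a citation to the author's earlier work \cite{su2,su5} (``See \cite{su2,su5}''), and is used as a black box in what follows. So there is no ``paper's own proof'' to compare against here --- the comparison you are being asked to make is empty for this particular statement.

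That said, your outline is broadly the right shape for how such results are proved in \cite{su2,su5}, but it is only an outline, and one step is stated imprecisely. You write that the initial weight segment $(k-1,k-1,\ldots)$ ``means every variable appears in $z$ with exponent having the lowest $d$ binary digits equal to $1$.'' That is not what $\omega_i(z)=k-1$ for $1\leqslant i\leqslant d$ says: it says that at each binary level exactly $k-1$ of the $k$ exponents have a $1$, so at each level exactly one variable is \emph{missing} that bit, and the pattern of which variable is missing can vary with $i$. The actual argument in \cite{su2} (see in particular \cite[Lemma 2.14]{su1} invoked in this paper's proof of Lemma \ref{bd1}) is what pins down the structure more tightly, and the full case analysis reconstructing $(j;J)$ and $w$ from $z$ is genuinely delicate --- it is not a routine unwinding of the definitions. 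Your acknowledgment that ``the bookkeeping in this case analysis'' is ``the real difficulty'' is accurate; to turn this into a proof you would need to carry out that analysis, which is essentially what \cite{su2,su5} do.
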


In \cite[Pages 445-446]{su2} we have proved that from Theorem \ref{dl1} we get 
$$\dim(QP_k)_{n} = (2^k-1)\dim (QP_{k-1})_{n}.$$ Then we set $h_u = (k-1)(2^{d - u}-1)+ q.2^{d-u}$ for $0\leqslant u \leqslant d$, $h_0 = n$, $h_{d} = q$. By using a result in Kameko \cite[Theorem 1.1]{ka}, we get
\begin{align}\label{ctdc}(QP_{k-1})_{h_u} \cong (QP_{k-1})_{h_{u-1}},\ 1 \leqslant u \leqslant  d.
\end{align}
This means that $\dim (QP_{k})_{n} = (2^k-1)\dim (QP_{k-1})_{h_u}$ for $1 \leqslant u \leqslant  d$.
In particular, we have 

\begin{thms}[See \cite{su2}]\label{dl2} 
Let $n = (k-1)(2^{d}-1) + q.2^{d}$ with $d, q$ positive integers such that $1 \leqslant k-3 \leqslant \mu(q) \leqslant k-2$ and $\alpha(q + \mu(q)) = \mu(q)$. If $d \geqslant k-1$, then
$$\dim (QP_k)_n = (2^k-1)\dim (QP_{k-1})_q.$$
\end{thms}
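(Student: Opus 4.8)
The plan is to deduce this as a corollary of Theorem \ref{dl1} together with the iterated Kameko isomorphism \eqref{ctdc}. First I would invoke Theorem \ref{dl1}, whose hypotheses are exactly those imposed here, to obtain $B_k(n) = \Phi(B_{k-1}(n))$. Recall from the discussion just above that $\Phi(B) = \Phi^0(B) \cup \Phi^+(B)$, where $\Phi^0(B)$ is a minimal set of $\mathcal{A}$-generators for $P_k^0$ in degree $n$ and $\Phi^+(B) \subset P_k^+$; the key bookkeeping point, established in \cite[Pages 445--446]{su2}, is that the maps $\phi_{(j;J)}$ send distinct admissible monomials of $P_{k-1}$ to distinct admissible monomials of $P_k$, and that the total count over all $(j;J) \in \mathcal{N}_k$ (namely $j$ ranging with $J = \emptyset$ contributing $k$ copies, and the pairs with $0 < \ell(J) \leqslant k-1$ contributing the rest) is exactly $2^k - 1$. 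Hence $|B_k(n)| = (2^k - 1)\,|B_{k-1}(n)|$, i.e. $\dim(QP_k)_n = (2^k-1)\dim(QP_{k-1})_n$.

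Next I would set $h_u = (k-1)(2^{d-u}-1) + q\,2^{d-u}$ for $0 \leqslant u \leqslant d$, noting $h_0 = n$ and $h_d = q$. The point is that consecutive terms $h_{u-1}$ and $h_u$ are related by $h_{u-1} = 2h_u + (k-1)$, which is precisely the degree shift under the Kameko squaring map. Since $\mu(q) \leqslant k-2 < k$ and the admissibility/weight hypotheses propagate down the tower (the minimal spike in each degree $h_u$ still has at most $k$ blocks because $\mu$ can only decrease or stay bounded under this operation, and the relevant inequality $\mu(h_u) \leqslant k$ holds throughout), Kameko's theorem \cite[Theorem 1.1]{ka} applies at each stage to give the isomorphism $(QP_{k-1})_{h_u} \cong (QP_{k-1})_{h_{u-1}}$ of \eqref{ctdc}, valid for $1 \leqslant u \leqslant d$. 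Chaining these isomorphisms from $u = 1$ up to $u = d$ yields $(QP_{k-1})_n = (QP_{k-1})_{h_0} \cong (QP_{k-1})_{h_d} = (QP_{k-1})_q$, and combining with the first step gives $\dim(QP_k)_n = (2^k-1)\dim(QP_{k-1})_q$, as claimed.

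The main obstacle I anticipate is verifying that Kameko's isomorphism genuinely applies at \emph{every} rung of the ladder, not just the top one: one must check that for each $u$ the degree $h_u$ still satisfies the hypothesis needed for Kameko's theorem to be an isomorphism (roughly, that the relevant Kameko squaring map $\widetilde{Sq}^0$ is an iso in that degree), which ultimately comes down to controlling $\mu(h_u)$ and the structure of the minimal spike in degree $h_u$ in terms of the given constraint $\mu(q) \leqslant k-2$ and $\alpha(q+\mu(q)) = \mu(q)$. This is a purely arithmetic check but it is the linchpin; everything else is a matter of assembling Theorem \ref{dl1}, the counting identity from \cite{su2}, and the telescoping of \eqref{ctdc}. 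I would also want to be careful that the condition $d \geqslant k-1$ is used in exactly the places it is needed — namely to guarantee Theorem \ref{dl1} applies and that the tower $h_0, h_1, \ldots, h_d$ has enough room (each step reducing the ``$d$-parameter'' by one while keeping $q$ fixed) for the Kameko isomorphisms to be available all the way down to $h_d = q$.
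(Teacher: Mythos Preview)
Your proposal is correct and follows essentially the same route as the paper: derive $\dim(QP_k)_n = (2^k-1)\dim(QP_{k-1})_n$ from Theorem~\ref{dl1} via the counting argument in \cite[Pages 445--446]{su2}, then set up the tower $h_u = (k-1)(2^{d-u}-1)+q\cdot 2^{d-u}$ and telescope the Kameko isomorphisms \eqref{ctdc} down to $h_d = q$. The paper handles your ``main obstacle'' (that Kameko's map is an isomorphism at every rung) by direct citation of \cite[Theorem~1.1]{ka}; the arithmetic check you anticipate is precisely that $\mu(h_{u-1}) = k-1$ for each $u$, which follows from $h_{u-1} = (k-1)(2^{d-u+1}-1) + q\cdot 2^{d-u+1}$ together with $\mu(q)\leqslant k-2$ and $\alpha(q+\mu(q))=\mu(q)$.
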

By combining Theorem \ref{dl1} and \ref{dl2} we easily obtain the following.
\begin{corls}\label{hq} Let $\omega$ be a weight vector. If $d \geqslant k-1$, then $B_k((k-1)|^d|\omega) = \Phi(B_{k-1}((k-1)|^d|\omega))$ and $\dim QP_k((k-1)|^d|\omega) = (2^k-1)\dim QP_{k-1}(\omega)$.
\end{corls}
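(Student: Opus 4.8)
The plan is to derive the corollary by specializing Theorems \ref{dl1} and \ref{dl2} to the case where the parameter $q$ is itself chosen so that $q = \deg\omega$ and the relevant residue/$\mu$-conditions are automatically satisfied, and then to iterate the degree-$d$ construction one power of $2$ at a time. First I would set $n = \deg((k-1)|^d|\omega)$; by the definition of $\deg$ for a concatenated weight vector one computes directly that $n = (k-1)(2^d-1) + 2^d\cdot\deg\omega$, so with $q := \deg\omega$ we are exactly in the numerical situation $n = (k-1)(2^d-1) + q\cdot 2^d$ of Theorem \ref{dl1}. The point of the concatenation with $(k-1)|^d$ is that it forces the first $d$ entries of the weight vector of every monomial in $P_k(n)$ that we care about to equal $k-1$, which is the maximal possible value in $P_k$; this is the weight-vector analogue of the hypotheses $1\leqslant k-3\leqslant \mu(q)\leqslant k-2$ and $\alpha(q+\mu(q))=\mu(q)$, and I would spell out why restricting to the summand $P_k((k-1)|^d|\omega)$ makes those hypotheses unnecessary (they were only needed in Theorem \ref{dl1} to guarantee that the leading block of the weight vector is saturated).

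Next I would check that the operator $\Phi$ of Definition \ref{dfn1} is compatible with the weight-vector grading in the precise sense that $\Phi\big(B_{k-1}(\omega')\big) = \Phi(B_{k-1})\cap P_k(\omega')$-type statements hold, and in particular that $\phi_{(j;J)}$ sends a monomial of weight vector $\omega'$ to a monomial of weight vector $(k-1)|\omega'$ when $\ell(J)>0$ appropriately, and to $\theta_{J_j}(w)$ (same weight vector, then shifted) when $J=\emptyset$; the upshot is the identity $B_k((k-1)|^d|\omega) = \Phi\big(B_{k-1}((k-1)|^d|\omega)\big)$ obtained from $B_k(n)=\Phi(B_{k-1}(n))$ by intersecting both sides with $P_k((k-1)|^d|\omega)$ and using Lemma \ref{bdm} together with the construction of $\phi_{(j;J)}$. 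The dimension statement then follows: on the level of $QP_k(\omega)$, Theorem \ref{dl2} (or rather the displayed computation $\dim(QP_k)_n = (2^k-1)\dim(QP_{k-1})_q$ on pages 445–446, now carried out weight-vector by weight-vector) gives $\dim QP_k((k-1)|^d|\omega) = (2^k-1)\dim QP_{k-1}((k-1)|^{d}|\omega)$ at the first step, and then iterating the Kameko-type isomorphism (\ref{ctdc}) in its weight-vector refined form strips off one copy of $(k-1)$ at a time, yielding $\dim QP_{k-1}((k-1)|^d|\omega) = \dim QP_{k-1}(\omega)$ after $d$ steps. Combining the two gives $\dim QP_k((k-1)|^d|\omega) = (2^k-1)\dim QP_{k-1}(\omega)$.

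The main obstacle I anticipate is the bookkeeping in the second paragraph: one must verify that the maps $p_{(j;J)}^{(\omega)}$, the construction $\phi_{(j;J)}$, and the Kameko squaring map all respect the \emph{refined} (weight-vector) grading simultaneously, so that the set-level identity $B_k(n)=\Phi(B_{k-1}(n))$ of Theorem \ref{dl1} really does decompose as a disjoint union over weight vectors of the form $(k-1)|^d|\omega$. This is where Lemma \ref{bdm} and the explicit case analysis in Definition \ref{dfn1} do the real work, and it is the only place where something could go wrong — the numerical identity $n = (k-1)(2^d-1)+2^d\deg\omega$ and the iteration of (\ref{ctdc}) are routine once the refined statements are in place. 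I would therefore organize the proof so that the weight-vector compatibility of $\Phi$ is isolated as the key lemma, with everything else following formally.
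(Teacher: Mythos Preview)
Your proposal is correct and follows the same route the paper indicates: the paper's entire argument is the single sentence ``By combining Theorem \ref{dl1} and \ref{dl2} we easily obtain the following,'' and your sketch unpacks precisely what that combination means at the weight-vector level. You correctly isolate the one nontrivial point the paper suppresses, namely that the arithmetic hypotheses $1\leqslant k-3\leqslant \mu(q)\leqslant k-2$ and $\alpha(q+\mu(q))=\mu(q)$ in Theorems \ref{dl1} and \ref{dl2} serve only to force every admissible monomial in degree $n$ to have weight vector beginning with $(k-1)|^d$, so that once one restricts to $P_k((k-1)|^d|\omega)$ they become superfluous; your plan to verify that $\Phi$ and the Kameko iteration \eqref{ctdc} respect the refined weight-vector grading is exactly the bookkeeping needed to make ``easily'' precise.
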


\begin{rems} Ph\'uc presents a formula in \cite{p24} as follows. Let $n_s := (k - 1)(2^s - 1)+ q.2^s$. If there is a non-negative integer $\zeta$ such that $\zeta < s$ and $1 \leqslant k - 3 \leqslant \mu(n_\zeta) = \alpha(n_\zeta +\mu(n_\zeta )) \leqslant k - 2$, then with the generic degrees of the form $(k - 1)(2^{s - \zeta+k-1} - 1) + n_\zeta.2^{s-\zeta+k-1}$, where $q = n_\zeta$ and $s \geqslant \zeta \geqslant 0$,
he has shown in \cite{p232} that for every $s \geqslant \zeta$,
\begin{equation}\label{ctdv}	
\dim(QP_k)_{(k - 1)(2^{s - \zeta+k-1} - 1) + n_\zeta.2^{s-\zeta+k-1}} = (2^k - 1)\dim(QP_{k-1})_{n_s}.
\end{equation}	
 
However, with $q = n_\zeta = (k - 1)(2^\zeta - 1)+ q.2^\zeta$, we get $\zeta = 0$. Then, $n = (k - 1)(2^{s - \zeta+k-1} - 1) + n_\zeta.2^{s-\zeta+q-1} = (k - 1)(2^{s +k-1} - 1) + q.2^{s+k-1}$. 
Thus, the author of \cite{p24} obtains the formula \eqref{ctdv} by plagiarizing from the proof of Theorem \ref{dl2} in \cite[Pages 445-446]{su2} with $d = s+k-1$ and $n_s = h_{k-1}$.

For the case $\zeta \geqslant 0$, in \cite{p232}, the author write $n_s := (k - 1)(2^s - 1)+ r.2^s$, so $n_\zeta = (k - 1)(2^\zeta - 1)+ r.2^\zeta$. Hence, $n_s = (k - 1)(2^{s-\zeta} - 1)+ n_\zeta.2^{s-\zeta}$. Thus, the formula  \eqref{ctdv} in Ph\'uc \cite[Page 15]{p232} is also plagiarized from the proof of Theorem \ref{dl2} in \cite{su2} with $q = n_\zeta$, $d = s-\zeta+k-1$ and $n_s = h_{k-1}$. So, Ph\'uc's explanations in \cite[Appendix]{pp24} are false. Kameko's isomorphism is used to prove \eqref{ctdc}. Therefore, there are no any new contributions of the author \cite{p232} for the formula \eqref{ctdv}.

\end{rems}

\section{Proof of Theorem \ref{thm2}}\label{s3}
\setcounter{equation}{0} 

In this sction, we explicitly determine all admissible monomials of the degree 108 in $P_5$.

\subsection{The weight vector of admissible monomials in the space $(P_5)_{108}$}\

\medskip
First of all, we determine the weight vectors of the admissible monomials of the degree 108 in $P_5$.

\begin{lems}\label{bd1} If $x$ is an admissible monomial of degree $n = 2^{d+3} + 2^{d+2} +2^{d+1} -4$ in $P_5$ with an integer $d \geqslant 2$, then \begin{equation*}
\omega(x) = (4)|^{d}|(2)|^2|(1) \mbox{ or } \omega(x) = (4)|^{d+1}|(1)|^2 \mbox{ or } \omega(x) =(4)|^{d+1}|(3).
	\end{equation*}
\end{lems}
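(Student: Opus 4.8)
The plan is to determine $\omega(x)$ coordinate by coordinate using Singer's criterion (Theorem~\ref{dlsig}) as the driving tool, and then to remove one spurious candidate by an inadmissibility argument in a fixed small degree. First I record the elementary facts: $n=7\cdot 2^{d+1}-4$ is even, $\mu(n)=4\leqslant 5$ (so Theorem~\ref{dlsig} applies in degree $n$), and the minimal spike of degree $n$ is $z=x_1^{2^{d+3}-1}x_2^{2^{d+2}-1}x_3^{2^{d}-1}x_4^{2^{d}-1}$, whose weight vector computes to $\omega(z)=(4)|^{d}|(2)|^{2}|(1)$. Since $x$ is admissible it is not hit, so Theorem~\ref{dlsig} forbids $\omega(x)<\omega(z)$; as $\deg\omega(x)=\deg\omega(z)=n$ and weight vectors of a fixed degree are totally ordered by the left lexicographic order, this gives $\omega(x)\geqslant\omega(z)$.

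Next I peel off squares. Since $\deg x$ is even, $\omega_1(x)$ is even, and $4=\omega_1(z)\leqslant\omega_1(x)\leqslant 5$ forces $\omega_1(x)=4$, so $x=u_1y_1^{2}$ with $u_1$ squarefree of degree $4$ and $\deg y_1=(n-4)/2=7\cdot 2^{d}-4$. Iterating — at each stage the degree of the remaining square part is again of the shape $7\cdot 2^{e}-4$, hence even, while the relevant entry of $\omega(z)$ is still $4$ — I obtain $\omega_i(x)=4$ for $1\leqslant i\leqslant d$ together with a factorization $x=u_1u_2^{2}\cdots u_d^{2^{d-1}}\,y^{2^{d}}$ with each $u_i$ squarefree of degree $4$ and $y\in(P_5)_{10}$. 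Hence $\omega(x)=(4)|^{d}\,|\,\omega(y)$, and $\omega(x)\geqslant\omega(z)$ forces the tail $\bar\omega:=\omega(y)$ to be a weight vector of degree $10$, all entries $\leqslant 5$, with $\bar\omega\geqslant(2,2,1)$. A short case analysis (its first entry is even, hence $2$ or $4$; the residual degree then leaves only a couple of continuations) shows that the only such $\bar\omega$ are $(2,2,1)$, $(2,4)$, $(4,1,1)$, $(4,3)$, so that $\omega(x)\in\{(4)|^{d}|(2)|^{2}|(1),\ (4)|^{d}|(2)|(4),\ (4)|^{d+1}|(1)|^{2},\ (4)|^{d+1}|(3)\}$.

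It remains to exclude $\bar\omega=(2,4)$; this is the real content of the lemma. I would show that no admissible monomial of degree $n$ has weight vector $(4)|^{d}|(2)|(4)$. A natural route uses the factorization above together with Theorem~\ref{dlcb1}(i): a monomial $y\in(P_5)_{10}$ with $\omega(y)=(2)|(4)$ has all exponents at most $3$, so $y=v\,t^{2}$ with $v,t$ squarefree of degrees $2$ and $4$; since $2+4>5$, $y$ is divisible by some $x_i^{3}$, and one analyses the finitely many shapes of $y$ — one variable absent (which descends to $P_4$), or all five present — showing by explicit Steenrod operations (the Cartan relation $Sq^{1}(x_i^{3}m)=x_i^{4}m+x_i^{3}Sq^{1}(m)$ settles most shapes, a few requiring a further reduction of the same type) that every such $y$ is inadmissible. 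Granting this, Theorem~\ref{dlcb1}(i) with $t=d$, $u=u_1u_2^{2}\cdots u_d^{2^{d-1}}$ (which vanishes above position $d$) and $y$ in the role of $w$ makes $x=u_1u_2^{2}\cdots u_d^{2^{d-1}}y^{2^{d}}$ inadmissible, contradicting the hypothesis; hence $\bar\omega\neq(2,4)$, and $\omega(x)$ is one of the three stated vectors.

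The main obstacle is precisely this last inadmissibility check. Because the monomial order mixes the weight order with the lexicographic order on exponents, the ``obvious'' Steenrod operation applied to a weight-$(2,4)$ monomial of degree $10$ very often produces a competing term of strictly \emph{larger} weight (typically one with $\omega_1=4$, or with a larger leading exponent), so the reducing operation must be chosen shape by shape, and for the handful of shapes in which all five variables occur — with the exponent $1$ carried by a small-index variable — one must iterate the reductions or route through intermediate monomials and lower-weight terms. Everything preceding this — Singer's criterion, the square-peeling, and the enumeration of tails — is routine bookkeeping once the setup is in place.
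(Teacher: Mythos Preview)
Your setup through the enumeration of tail weight vectors is correct and matches the paper: Singer's criterion plus parity force $\omega_i(x)=4$ for $1\leqslant i\leqslant d$ (the paper invokes Lemma~2.14 of \cite{su1} for this), and the four candidates $(2,2,1)$, $(2,4)$, $(4,1,1)$, $(4,3)$ for the degree-$10$ tail are exactly those recorded via T\'in~\cite{Tiau}.

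The genuine gap is in your exclusion of $(2,4)$. Your claim that every degree-$10$ monomial $y\in P_5$ with $\omega(y)=(2,4)$ is inadmissible is not merely unproved but false: the result of T\'in cited in the paper lists $(2,4)$ among the weight vectors \emph{realized} by admissible monomials in $(P_5)_{10}$, so admissible $y$ with $\omega(y)=(2,4)$ exist, and Theorem~\ref{dlcb1}(i) cannot be invoked as you propose. A structural red flag is that your argument never uses the hypothesis $d\geqslant 2$; were it valid it would prove the lemma for all $d\geqslant 0$, which at $d=0$ is precisely the false assertion just mentioned. The paper's proof proceeds differently: rather than attack $y$ in isolation, it supplies (Lemma~\ref{bd2}) specific strictly inadmissible monomials of weights $(4,4)$ and $(4,4,4)$ in part~(i) and of weight $(4,4,2,4)$ in part~(ii), and argues that any monomial of weight $(4)|^{d}|(2)|(4)$ with $d\geqslant 2$ must contain one of these as a factor in a suitable window of layers, so that Theorem~\ref{dlcb1} (both parts, combining strict inadmissibility above with ordinary inadmissibility below) forces $x$ inadmissible. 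The prepended $(4)$-layers are essential here --- they constrain the exponent pattern enough to guarantee a bad piece appears --- and the weight-$(4,4,2,4)$ monomials of part~(ii) handle exactly the residual configurations, which is why $d\geqslant 2$ is required.
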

The result of the lemma is stated in \cite{p24} but there are no any details of the proof. We need a technical result for the proof of this lemma.  

\begin{lems}\label{bd2} Let $(i,j,t,u,v)$ be a permutation of $(1,2,3,4,5)$. The following monomials are strictly inadmissible:
	
\medskip
{\rm i)} $x_i^{2}x_jx_t^{3}x_u^{3}x_v^{3},\, x_i^{3}x_j^{4}x_t^{7}x_u^{7}x_v^{7},\, i<j.$

\smallskip
{\rm ii)} $x_i^{7}x_j^{8}x_t^{11}x_u^{11}x_v^{15}$,\ $x_i^{7}x_j^{9}x_t^{10}x_u^{11}x_v^{15}$,\ $x_i^{7}x_j^{9}x_t^{11}x_u^{11}x_v^{14}$,\, $x_i^{7}x_j^{11}x_t^{11}x_u^{13}x_v^{10}$,\,

\quad \ $x_i^{7}x_j^{11}x_t^{11}x_u^{11}x_v^{12}$.

\end{lems}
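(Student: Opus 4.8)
The plan is to verify the definition of strict inadmissibility directly for each of the listed monomials $w$: with $r=\max\{j:\omega_j(w)>0\}$, I would exhibit monomials $u_1,\dots,u_s$, all satisfying $u_\ell<w$, together with an explicit element of $\mathcal A_r^+P_5$ equal to $w+\sum_{\ell}u_\ell$. The first step is to record the weight vectors. A count of binary digits gives $\omega(x_i^2x_jx_t^3x_u^3x_v^3)=(4,4)$ (so $r=2$), $\omega(x_i^3x_j^4x_t^7x_u^7x_v^7)=(4,4,4)$ (so $r=3$), and each of the five monomials in part~(ii) has weight vector $(4,4,2,4)$ (so $r=4$). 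Thus in part~(i) we may use $Sq^1$ and $Sq^2$ (respectively $Sq^1,Sq^2,Sq^4$), and in part~(ii) we may use $Sq^1,Sq^2,Sq^4,Sq^8$.

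The monomial $x_i^2x_jx_t^3x_u^3x_v^3$ is handled in one line: by the Cartan formula,
$$Sq^1\bigl(x_ix_jx_t^3x_u^3x_v^3\bigr)=x_i^2x_jx_t^3x_u^3x_v^3+x_ix_j^2x_t^3x_u^3x_v^3+x_ix_jx_t^4x_u^3x_v^3+x_ix_jx_t^3x_u^4x_v^3+x_ix_jx_t^3x_u^3x_v^4,$$
and every summand but the first is $<x_i^2x_jx_t^3x_u^3x_v^3$: the second has the same weight vector $(4,4)$ and, since $i<j$, a smaller exponent vector, while each of the remaining three has weight vector $(4,2,1)<(4,4)$. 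As $Sq^1\in\mathcal A_1^+\subset\mathcal A_2^+$, this shows $x_i^2x_jx_t^3x_u^3x_v^3$ is strictly inadmissible.

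For the remaining monomials I would run the same mechanism iteratively. To obtain $w$ from a Steenrod operation one starts from the monomial obtained by lowering one even exponent of $w$ by $1$ and applies $Sq^1$; expanding by the Cartan formula expresses $w$, modulo $\mathcal A_1^+P_5$, as a sum of terms of two kinds: terms in which $Sq^1$ has raised a large exponent, and which therefore have a strictly smaller weight vector, and a few ``unwanted'' terms that differ from $w$ only in the exponents at the two coordinates $i,j$ (and exceed $w$ because $i<j$) or have a strictly larger weight vector. Each unwanted term is then rewritten modulo $\mathcal A_2^+P_5$ using a suitable $Sq^2$, and for the degree-$52$ monomials in part~(ii) one continues with $Sq^4$ and $Sq^8$. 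For instance, for $x_i^3x_j^4x_t^7x_u^7x_v^7$ the computation $Sq^1(x_i^3x_j^3x_t^7x_u^7x_v^7)$ produces $w$, the single unwanted term $x_i^4x_j^3x_t^7x_u^7x_v^7$, and three terms of weight vector $(4,4,2,1)<(4,4,4)$; the unwanted term is then killed by $Sq^2(x_i^2x_j^3x_t^7x_u^7x_v^7)$, whose remaining terms (namely $x_i^2x_j^5x_t^7x_u^7x_v^7$, of weight vector $(4,4,4)$ but smaller exponent vector, together with several of weight vector $<(4,4,4)$) are all $<w$. An entirely analogous, if longer, sequence works for each monomial of part~(ii); in practice one looks for the whole relation $w+\sum_\ell u_\ell=Sq^1(g_1)+Sq^2(g_2)+Sq^4(g_4)+Sq^8(g_8)$ at once, choosing the $g_a$ so that all monomials exceeding $w$ cancel, which is where the simple computer programmes mentioned in the introduction enter.

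The only real obstacle is the bookkeeping in this last step. After each application of the Cartan formula --- always reducing $Sq^a(x^n)$ to $\binom{n}{a}x^{n+a}\bmod 2$ --- one has to check that $w$ (or the current unwanted monomial) occurs with coefficient $1$, that no cancellation re-creates $w$ or a larger monomial, and that every surviving monomial is $<w$; the latter means computing its weight vector and, when it equals $\omega(w)$, comparing exponent vectors. What makes part~(ii) delicate is that only $i<j$ is assumed, the positions of $t,u,v$ being arbitrary, so every unwanted term and every replacement must be arranged either to have strictly smaller weight vector or to differ from $w$ solely at the coordinates $i$ and $j$ --- for then the comparison is automatic in $(t,u,v)$. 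These verifications are of exactly the kind carried out in \cite{ka,su1,su2}, and I would present them as a finite, monomial-by-monomial case analysis.
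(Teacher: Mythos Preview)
Your approach---direct verification via the Cartan formula using $Sq^1,Sq^2,Sq^4,Sq^8$---is the same as the paper's, and your explicit treatment of the two monomials in part~(i) is correct (the paper simply cites \cite{sp2} there). One correction and one simplification are worth noting. The hypothesis $i<j$ applies only to part~(i), not to part~(ii); so your proposed bookkeeping device of confining ``unwanted'' same-weight terms to the coordinates $i,j$ is not available for the degree-$52$ monomials. The paper avoids this issue altogether: for part~(ii) it exhibits, for each $w$, a relation
\[
w \;=\; Sq^1(g_1)+Sq^2(g_2)+Sq^4(g_4)+Sq^8(g_8)\quad\bmod\ P_5^-\bigl((4,4,2,4)\bigr),
\]
so that \emph{every} correction monomial has strictly smaller weight vector than $w$. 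Then $u_\ell<w$ holds automatically for \emph{every} permutation $(i,j,t,u,v)$, and no exponent-vector comparison is ever needed. This is what makes the single displayed identity (given only for $x_i^{7}x_j^{11}x_t^{11}x_u^{11}x_v^{12}$, the others being declared analogous) suffice uniformly, and it is cleaner than the iterative killing-of-unwanted-terms you describe.
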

\begin{proof} Part i) is proved in \cite{sp2}. Note that all monomials in Part ii) are of weight vector $(4,4,2,4)$. We prove Part ii) for $x = x_1^{7}x_2^{11}x_3^{11}x_4^{11}x_5^{12}$. The computations for others are easy. By using the Cartan formula, we have
\begin{align*}
x &= Sq^1\big(x_i^{7}x_j^{7}x_t^{7}x_u^{11}x_v^{19} + x_i^{7}x_j^{7}x_t^{7}x_u^{19}x_v^{11} + x_i^{7}x_j^{11}x_t^{11}x_u^{9}x_v^{13}\big) +  Sq^2\big(x_i^{7}x_j^{7}x_t^{7}x_u^{10}x_v^{19}\\ 
&\quad + x_i^{7}x_j^{7}x_t^{7}x_u^{18}x_v^{11} + x_i^{7}x_j^{7}x_t^{11}x_u^{3}x_v^{22} + x_i^{7}x_j^{7}x_t^{19}x_u^{3}x_v^{14}\big) +  Sq^4\big(x_i^{11}x_j^{7}x_t^{7}x_u^{11}x_v^{12}\\ 
&\quad + x_i^{11}x_j^{7}x_t^{7}x_u^{10}x_v^{13} + x_i^{11}x_j^{7}x_t^{11}x_u^{5}x_v^{14} + x_i^{11}x_j^{7}x_t^{13}x_u^{3}x_v^{14}\big) +  Sq^8\big(x_i^{7}x_j^{7}x_t^{7}x_u^{11}x_v^{12}\\ 
&\quad + x_i^{7}x_j^{7}x_t^{7}x_u^{10}x_v^{13} + x_i^{7}x_j^{7}x_t^{11}x_u^{5}x_v^{14} + x_i^{7}x_j^{7}x_t^{13}x_u^{3}x_v^{14}\big) \  \mbox{ mod}(P_5^-(4,4,2,4)).  
\end{align*}
The above equality shows that the monomial $x$ is strictly inadmissible.
\end{proof}

\begin{proof}[Proof of Lemma \ref{bd1}] Let $x$ be an admissible monomial of degree $n = 2^{d+3} + 2^{d+2} +2^{d+1} -4$ in $P_5$ with $d$ a positive integer. By Lemma 2.14 in \cite{su1}, $\omega_i(x) = 4$ for $1 \leqslant i \leqslant d$. Hence, $x = \bar xy^{2^{d}}$ with $\bar x$ a monomial of weight vector $(4)|^{d}$ and $y$ a monomial of degree 10 in $P_5$. By a result in T\'in \cite{Tiau}, $\omega(y)$ is one of the weight vectors $(2,2,1)$, $(2,4)$, $(4,1,1)$, $(4,3)$. If $d\geqslant 2$ and $\omega(y) = (2,4)$, then either there is a monomial $w$ as given in Lemma \ref{bd2}(i) such that $x = y_1w^{2^r}y_2^{2^{d-r-s}}$ with $y_1$ a monomial of weight vector $(4)|^r$ and $y_2$ a monomial of weight vector $(4)|^{d-r-s}|(2)|(4)$, $2\leqslant s \leqslant 3$, $r+s \leqslant d$, or there is a monomial $u$ as given in Lemma \ref{bd2}(ii) such that $x = y_3u^{d-2}$ with $y_3$ a monomial of weight vector $(4)|^{d-2}$. By Theorem \ref{dlcb1}, $x$ is inadmissible. Hence, the lemma is proved.
\end{proof}

By applying Lemma \ref{bd1} with $d = 3$, we obtain the following.

\begin{corls} We have
$$(QP_5)_{108} \cong QP_5((4)|^3|(2)|^2|(1))\bigoplus QP_5((4)|^4|(1)|^2)\bigoplus QP_5((4)|^4|(3)).$$	
\end{corls}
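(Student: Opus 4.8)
The plan is to derive the corollary from Lemma~\ref{bd1} by means of the weight-vector filtration of $(QP_5)_{108}$; the substantive input is Lemma~\ref{bd1}, and everything else is formal. First I would note that for $d=3$ (which is $\geqslant 2$) one has $n=2^{6}+2^{5}+2^{4}-4=108$, so that the three weight vectors furnished by Lemma~\ref{bd1} are
\[
\omega^{(1)}=(4)|^{3}|(2)|^{2}|(1),\qquad \omega^{(2)}=(4)|^{4}|(1)|^{2},\qquad \omega^{(3)}=(4)|^{4}|(3),
\]
each of degree $108$, with $\omega^{(1)}<\omega^{(2)}<\omega^{(3)}$ in the left lexicographic order. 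Hence Lemma~\ref{bd1} asserts that every admissible monomial of degree $108$ in $P_5$ has weight vector one of $\omega^{(1)},\omega^{(2)},\omega^{(3)}$; in particular, for every other weight vector $\omega$ of degree $108$ there is no admissible monomial of degree $108$ whose weight vector equals $\omega$.

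Next I would set up the filtration. Let $\pi\colon (P_5)_{108}\twoheadrightarrow (QP_5)_{108}$ be the projection and let $\tau_1<\tau_2<\cdots<\tau_N$ be the weight vectors of degree $108$ realized by monomials of $P_5$, listed lexicographically. Since $P_5(\tau_j)\subseteq P_5(\tau_{j+1})$, the subspaces $W_j:=\pi(P_5(\tau_j))$ form an increasing filtration $0=W_0\subseteq W_1\subseteq\cdots\subseteq W_N$, with $W_N=(QP_5)_{108}$ because $P_5(\tau_N)=(P_5)_{108}$, $\tau_N$ being the largest weight vector of degree $108$. The key step is to identify each subquotient $W_j/W_{j-1}\cong QP_5(\tau_j)$: the composite $P_5(\tau_j)\twoheadrightarrow W_j\twoheadrightarrow W_j/W_{j-1}$ is onto, and one checks that its kernel is exactly $(\mathcal A^{+}P_5\cap P_5(\tau_j))+P_5^{-}(\tau_j)$, the point being that $P_5^{-}(\tau_j)$ is spanned by monomials of weight vector $<\tau_j$, hence $\leqslant\tau_{j-1}$, so $P_5^{-}(\tau_j)=P_5(\tau_{j-1})$ maps onto $W_{j-1}$, while conversely any $h\in P_5(\tau_j)$ with $\pi(h)\in W_{j-1}$ satisfies $h\equiv g$ modulo $\mathcal A^{+}P_5$ for some $g\in P_5(\tau_{j-1})$, forcing $h-g\in\mathcal A^{+}P_5\cap P_5(\tau_j)$. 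It follows that $(QP_5)_{108}$ is isomorphic, as a vector space, to $\bigoplus_{j=1}^{N}QP_5(\tau_j)$.

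It then remains to discard the zero summands. If $\tau_j\notin\{\omega^{(1)},\omega^{(2)},\omega^{(3)}\}$, then by Lemma~\ref{bd1} no admissible monomial of degree $108$ has weight vector exactly $\tau_j$; since $QP_5(\tau_j)$ is spanned by the classes $[w]_{\tau_j}$ with $w$ admissible and $\omega(w)\leqslant\tau_j$, and those with $\omega(w)<\tau_j$ vanish in $QP_5(\tau_j)$ (they lie in $P_5^{-}(\tau_j)$), we get $QP_5(\tau_j)=0$. For the stages lying below $\omega^{(1)}$ one may instead invoke Theorem~\ref{dlsig}: $\mu(108)=4\leqslant 5$ and the minimal spike of degree $108$, namely $x_1^{63}x_2^{31}x_3^{7}x_4^{7}$, has weight vector $\omega^{(1)}$, so every degree-$108$ monomial of weight vector $<\omega^{(1)}$ is hit. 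Thus only $QP_5(\omega^{(1)})$, $QP_5(\omega^{(2)})$ and $QP_5(\omega^{(3)})$ survive, which is the asserted isomorphism. The essential content is Lemma~\ref{bd1}, and the only point in the corollary that needs any care is the identification of the graded pieces of the filtration---i.e.\ checking that $P_5^{-}(\omega)$ contributes nothing beyond the immediately lower stage---which is routine bookkeeping rather than anything conceptual.
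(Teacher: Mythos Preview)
Your proof is correct and takes essentially the same approach as the paper, which simply records the corollary as an immediate consequence of Lemma~\ref{bd1} with $d=3$ without further comment. You have spelled out the standard weight-vector filtration argument that underlies this implication, including the identification of the minimal spike and the verification that $QP_5(\omega)=0$ for weight vectors $\omega$ outside the three listed; this is more detail than the paper supplies but is entirely in line with its intent.
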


According to \cite{su2}, 
\begin{align*}
&\dim QP_4((4)|^4|(1)|^2) = \dim QP_4((1)|^2)= 10,\\
&\dim QP_4((4)|^4|(3)) = \dim QP_4((3))= 4.
\end{align*}
Hence, by using Corollary \ref{hq} we get 
$$\dim QP_5((4)|^4|(1)|^2) = 31\times 10 = 310,\ \dim QP_5((4)|^4|(3)) = 31\times 4 = 124.$$ 
Thus, we need only to determined the space $QP_5((4)|^3|(2)|^2|(1))$.

\subsection{Computation of $QP_5((4)|^3|(2)|^2|(1))$}\

\medskip
We have $$B_5((4)|^3|(2)|^2|(1)) =  B_5^0((4)|^3|(2)|^2|(1))\bigcup B_5^+((4)|^3|(2)|^2|(1)),$$ 
where $B_5^0((4)|^3|(2)|^2|(1)) = \Phi^0(B_4((4)|^3|(2)|^2|(1))))$. By \cite{su2}, $B_4((4)|^3|(2)|^2|(1)))$ is the set of $56$ monomials $w_t,\ 1 \leqslant t \leqslant 56$, which are determined as in Section \ref{s5}. We have $|B_5^0((4)|^3|(2)|^2|(1))| = 5\times 56 = 280$.

We set $\mathcal B := \left\{x_i^{63}\theta_{J_i}(x): x \in B_4^+((3)|^3|(1)|^2), 1 \leqslant i \leqslant 5\right\}.$
According to \cite{su2}, we have $|B_4^+((3)|^3|(1)|^2)| = 66$. Then, by using Proposition \ref{mdmo}, we see that $\mathcal B$ is the set of 330 monomials determined as in Section \ref{s5} and $\mathcal B \subset B_5^+((4)|^3|(2)|^2|(1))$. A direct computation shows that  $\Phi^+(B_4((4)|^3|(2)|^2|(1)))$ is the set of 1403 monomials. 

In this subsection we prove the following.

\begin{props}\label{mdc} We have $B_5^+((4)|^3|(2)|^2|(1)) = \Phi^+(B_4((4)|^3|(2)|^2|(1)))\cup \mathcal C$, where $\mathcal C$ is the set of 54 monomials which are determined as follows:

\medskip
\centerline{\begin{tabular}{llll}
$x_1^{7}x_2^{7}x_3^{15}x_4^{23}x_5^{56}$& $x_1^{7}x_2^{7}x_3^{15}x_4^{55}x_5^{24}$& $x_1^{7}x_2^{15}x_3^{7}x_4^{23}x_5^{56}$& $x_1^{7}x_2^{15}x_3^{7}x_4^{55}x_5^{24} $\cr  $x_1^{7}x_2^{15}x_3^{15}x_4^{16}x_5^{55}$& $x_1^{7}x_2^{15}x_3^{15}x_4^{17}x_5^{54}$& $x_1^{7}x_2^{15}x_3^{15}x_4^{48}x_5^{23}$& $x_1^{7}x_2^{15}x_3^{15}x_4^{49}x_5^{22} $\cr  $x_1^{7}x_2^{15}x_3^{23}x_4^{7}x_5^{56}$& $x_1^{7}x_2^{15}x_3^{23}x_4^{9}x_5^{54}$& $x_1^{7}x_2^{15}x_3^{23}x_4^{25}x_5^{38}$& $x_1^{7}x_2^{15}x_3^{23}x_4^{41}x_5^{22} $\cr  $x_1^{7}x_2^{15}x_3^{23}x_4^{57}x_5^{6}$& $x_1^{7}x_2^{15}x_3^{55}x_4^{7}x_5^{24}$& $x_1^{7}x_2^{15}x_3^{55}x_4^{9}x_5^{22}$& $x_1^{7}x_2^{15}x_3^{55}x_4^{25}x_5^{6} $\cr  $x_1^{15}x_2^{7}x_3^{7}x_4^{23}x_5^{56}$& $x_1^{15}x_2^{7}x_3^{7}x_4^{55}x_5^{24}$& $x_1^{15}x_2^{7}x_3^{15}x_4^{16}x_5^{55}$& $x_1^{15}x_2^{7}x_3^{15}x_4^{17}x_5^{54} $\cr  $x_1^{15}x_2^{7}x_3^{15}x_4^{48}x_5^{23}$& $x_1^{15}x_2^{7}x_3^{15}x_4^{49}x_5^{22}$& $x_1^{15}x_2^{7}x_3^{23}x_4^{7}x_5^{56}$& $x_1^{15}x_2^{7}x_3^{23}x_4^{9}x_5^{54} $\cr  $x_1^{15}x_2^{7}x_3^{23}x_4^{25}x_5^{38}$& $x_1^{15}x_2^{7}x_3^{23}x_4^{41}x_5^{22}$& $x_1^{15}x_2^{7}x_3^{23}x_4^{57}x_5^{6}$& $x_1^{15}x_2^{7}x_3^{55}x_4^{7}x_5^{24} $\cr  $x_1^{15}x_2^{7}x_3^{55}x_4^{9}x_5^{22}$& $x_1^{15}x_2^{7}x_3^{55}x_4^{25}x_5^{6}$& $x_1^{15}x_2^{15}x_3^{7}x_4^{16}x_5^{55}$& $x_1^{15}x_2^{15}x_3^{7}x_4^{17}x_5^{54} $\cr  $x_1^{15}x_2^{15}x_3^{7}x_4^{48}x_5^{23}$& $x_1^{15}x_2^{15}x_3^{7}x_4^{49}x_5^{22}$& $x_1^{15}x_2^{15}x_3^{17}x_4^{6}x_5^{55}$& $x_1^{15}x_2^{15}x_3^{17}x_4^{7}x_5^{54} $\cr  $x_1^{15}x_2^{15}x_3^{17}x_4^{22}x_5^{39}$& $x_1^{15}x_2^{15}x_3^{17}x_4^{23}x_5^{38}$& $x_1^{15}x_2^{15}x_3^{17}x_4^{38}x_5^{23}$& $x_1^{15}x_2^{15}x_3^{17}x_4^{39}x_5^{22} $\cr  $x_1^{15}x_2^{15}x_3^{17}x_4^{54}x_5^{7}$& $x_1^{15}x_2^{15}x_3^{17}x_4^{55}x_5^{6}$& $x_1^{15}x_2^{15}x_3^{23}x_4^{17}x_5^{38}$& $x_1^{15}x_2^{15}x_3^{49}x_4^{6}x_5^{23} $\cr  $x_1^{15}x_2^{15}x_3^{49}x_4^{7}x_5^{22}$& $x_1^{15}x_2^{15}x_3^{49}x_4^{23}x_5^{6}$& $x_1^{15}x_2^{23}x_3^{7}x_4^{7}x_5^{56}$& $x_1^{15}x_2^{23}x_3^{7}x_4^{9}x_5^{54} $\cr  $x_1^{15}x_2^{23}x_3^{7}x_4^{25}x_5^{38}$& $x_1^{15}x_2^{23}x_3^{7}x_4^{41}x_5^{22}$& $x_1^{15}x_2^{23}x_3^{7}x_4^{57}x_5^{6}$& $x_1^{15}x_2^{55}x_3^{7}x_4^{7}x_5^{24} $\cr  $x_1^{15}x_2^{55}x_3^{7}x_4^{9}x_5^{22}$& $x_1^{15}x_2^{55}x_3^{7}x_4^{25}x_5^{6}$.& & \cr   
\end{tabular}}

\medskip
The elements of $B_5^+((4)|^3|(2)|^2|(1))$ are explicitly determined as in Section \ref{s5}. Consequently,
$$\dim QP_5^+((4)|^3|(2)|^2|(1)) = 1457,\ \dim QP_5((4)|^3|(2)|^2|(1)) = 1737.$$
\end{props}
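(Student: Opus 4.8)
The plan is to prove Proposition \ref{mdc} by the standard two-part strategy for $\mathcal A$-generators: first exhibit that every monomial in $\Phi^+(B_4((4)|^3|(2)|^2|(1)))\cup\mathcal C$ is admissible, and second show that every monomial of weight vector $(4)|^3|(2)|^2|(1)$ in $P_5^+$ which is \emph{not} in this set is inadmissible (hit modulo strictly smaller monomials). The first part splits: the monomials coming from $\Phi^+$ are admissible because $\Phi^+$ applied to a minimal generating set produces admissible monomials (this is the content of the $\Phi$-construction in \cite{su2}, Definition \ref{dfn1} and the surrounding results), and for the $54$ monomials of $\mathcal C$ we must verify admissibility directly. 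Here I would first note that $\mathcal C\supset$ part of the list $\mathcal B$ would-be members; more practically, each monomial in $\mathcal C$ is of the form $x_i^{2^a-1}\theta_{J_i}(g)$ or a Kameko-type prolongation, so Proposition \ref{mdmo} and Theorem \ref{dlcb1} reduce admissibility of many of them to admissibility of a monomial in fewer variables or lower degree, which is recorded in \cite{su2} or T\'in \cite{Tiau, Tij}. The residual cases (those not covered by a prolongation lemma) are checked by hand: assume such a monomial $w$ were inadmissible, write the hypothetical relation $w \equiv \sum u_j$ with $u_j < w$, and derive a contradiction by comparing weight vectors and leading exponents — exactly the bookkeeping carried out in Section \ref{s5}.

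The heart of the argument is the second part: proving inadmissibility of all other monomials. The approach is to enumerate the monomials $w\in P_5^+$ with $\omega(w)=(4)|^3|(2)|^2|(1)$ — there are finitely many, controlled by the constraint that $\omega_1(w)=\omega_2(w)=\omega_3(w)=4$, $\omega_4(w)=\omega_5(w)=2$, $\omega_6(w)=1$ together with $\nu_t(w)>0$ for all $t$ — and to strike out those that are inadmissible. For this one repeatedly invokes Theorem \ref{dlcb1}: if a monomial $w$ can be written $w=w'\cdot z^{2^t}$ where $z$ (or a submonomial in the appropriate variables) is one of the strictly inadmissible monomials of Lemma \ref{bd2}, or of the strictly inadmissible monomials of degree $\le 10$ recorded in \cite{su2, Tiau}, then $w$ is inadmissible. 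The bulk of the work is a systematic case analysis on the exponent vector $\sigma(w)=(\nu_1(w),\ldots,\nu_5(w))$: one orders the monomials by the order $<$ of Definition (monomial order), and for each candidate not already known admissible, one produces an explicit Cartan-formula expression $w=\sum Sq^{2^i}(\cdots)$ modulo $P_5^-((4)|^3|(2)|^2|(1))$ plus smaller monomials, i.e. a strict-inadmissibility certificate, as was done for $x_1^7x_2^{11}x_3^{11}x_4^{11}x_5^{12}$ in the proof of Lemma \ref{bd2}. Using $QP_5^0((4)|^3|(2)|^2|(1)) = \Phi^0(B_4((4)|^3|(2)|^2|(1)))$ (already known, of cardinality $280$) one only needs to treat $P_5^+$, and the prolongation results $B_5(\omega)=\bigcup_{\ell(I)=r}\theta_I(B_r^+(\omega))$ let one lift known $B_4^+$-data into $P_5$.

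After the enumeration, one checks that the surviving (admissible) monomials in $P_5^+$ are exactly the $1403$ from $\Phi^+$ together with the $54$ from $\mathcal C$, giving $|B_5^+((4)|^3|(2)|^2|(1))|=1457$; adding the $280$ monomials of $B_5^0$ yields $\dim QP_5((4)|^3|(2)|^2|(1)) = 1737$, and subtracting the $280$ gives $\dim QP_5^+((4)|^3|(2)|^2|(1)) = 1457$. The dimension count also serves as a consistency check against T\'in's machine computation $\dim(QP_5)_{108}=2171$: indeed $1737 + 310 + 124 = 2171$, matching the direct-sum decomposition from the Corollary after Lemma \ref{bd1} (with the $310$ and $124$ coming from Corollary \ref{hq} applied to $QP_4((1)|^2)$ and $QP_4((3))$).

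The main obstacle I expect is the sheer size and delicacy of the inadmissibility case analysis in $P_5^+$: there are on the order of a few thousand monomials to classify, the strict-inadmissibility certificates (the explicit $Sq^{2^i}$-combinations) must each be verified by the Cartan formula modulo $P_5^-(\omega)$, and one must be careful that the auxiliary monomials $u_j$ really satisfy $u_j<w$ in the order of Definition (monomial order) — a failure of this inequality invalidates the whole reduction. A secondary difficulty is ensuring completeness: that no admissible monomial has been overlooked and no inadmissible one wrongly retained; this is why the argument must be cross-checked against the independently-obtained dimension $2171$. These verifications are the ones the paper delegates to spreadsheet-assisted computation and to the explicit tables of Section \ref{s5}, and that is the route I would follow as well, organizing the monomials by $\sigma(w)$ and by the position of the "small" variables so that Lemma \ref{bd2} and Theorem \ref{dlcb1} can be applied uniformly in blocks.
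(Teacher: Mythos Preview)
Your overall two-inclusion strategy is sound in principle, but the admissibility direction contains a real gap, and the paper takes a different and more efficient route.

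You assert that ``the monomials coming from $\Phi^+$ are admissible because $\Phi^+$ applied to a minimal generating set produces admissible monomials (this is the content of the $\Phi$-construction in \cite{su2}).'' This is not a general theorem. Theorem~\ref{dl1} only gives $B_k(n)=\Phi(B_{k-1}(n))$ under restrictive hypotheses on $n$ (namely $d\geqslant k-1$ together with conditions on $\mu(q)$); these do not apply to the weight $(4)|^3|(2)|^2|(1)$ in degree $108$. There is no result in the paper (or in \cite{su2}) guaranteeing that every element of $\Phi^+(B_4(\omega))$ is admissible in $P_5$. Your proposed reductions for the $54$ monomials of $\mathcal C$ via Proposition~\ref{mdmo} are likewise unsubstantiated: a monomial such as $x_1^{7}x_2^{7}x_3^{15}x_4^{23}x_5^{56}$ is not of the form $x_i^{2^a-1}\theta_{J_i}(g)$ with $g\in B_4$, so Mothebe--Uys does not apply directly.

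The paper avoids proving admissibility of the $1457$ monomials individually. It establishes only the containment $B_5^+((4)|^3|(2)|^2|(1))\subset \Phi^+(B_4((4)|^3|(2)|^2|(1)))\cup\mathcal C$ via the inadmissibility analysis, and this analysis requires not just Lemma~\ref{bd2} but the full battery of Lemmas~\ref{bd2}(i), \ref{bd3}, \ref{bd4}, \ref{bd5}, \ref{bd6}: each admissible $x\in P_5^+$ with $\omega(x)=(4)|^3|(2)|^2|(1)$ not in $\Phi^+\cup\mathcal C$ is shown either to appear in Lemma~\ref{bd6} or to factor as $x=yw^{2^r}z^{2^{r+u}}$ with $w$ one of the strictly inadmissible monomials from Lemmas~\ref{bd2}(i), \ref{bd3}, \ref{bd4}, \ref{bd5}, whence Theorem~\ref{dlcb1} yields a contradiction. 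The reverse inclusion is then obtained by counting: T\'in's result $\dim(QP_5)_{108}=2171$ is not a mere consistency check but the \emph{essential} step, since together with $\dim QP_5((4)|^4|(1)|^2)=310$, $\dim QP_5((4)|^4|(3))=124$, and $|B_5^0((4)|^3|(2)|^2|(1))|=280$ it forces $|B_5^+((4)|^3|(2)|^2|(1))|=1457=|\Phi^+\cup\mathcal C|$, turning the containment into an equality.
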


We need some technical lemmas for the proof of the proposition. 

\begin{lems}\label{bd3} Let $1 \leqslant i \leqslant 5$ and the homomorphism $\theta_{J_i}: P_4 \to P_5$ defined by \eqref{ctbs} with $k=5$ and $J_i = (1,\ldots,\hat i, \ldots,5)$.
	
\medskip
{\rm i)} The monomial $x_i^{15}\theta_{J_i}\big(x_1^{7}x_2^{7}x_3^{8}x_4^{7}\big)$ is strictly inadmissible.

\smallskip
{\rm ii)} If $x$ is one of the monomials $x_1^{7}x_2^{7}x_3^{15}x_4^{16}$, $x_1^{7}x_2^{15}x_3^{7}x_4^{16}$, $x_1^{7}x_2^{15}x_3^{16}x_4^{7}$, $x_1^{7}x_2^{15}x_3^{17}x_4^{6}$,  $x_1^{15}x_2^{7}x_3^{7}x_4^{16}$, $x_1^{15}x_2^{7}x_3^{16}x_4^{7}$, $x_1^{15}x_2^{7}x_3^{17}x_4^{6}$, $x_1^{15}x_2^{19}x_3^{5}x_4^{6}$, then $x_i^{31}\theta_{J_i}(x)$ is strictly inadmissible.

\smallskip
{\rm iii)} The following monomials are strictly inadmissible:

\medskip
\centerline{\begin{tabular}{llll}
$x_1^{7}x_2^{15}x_3^{16}x_4^{15}x_5^{23}$& $x_1^{7}x_2^{15}x_3^{17}x_4^{14}x_5^{23}$& $x_1^{7}x_2^{15}x_3^{17}x_4^{15}x_5^{22}$& $x_1^{7}x_2^{15}x_3^{17}x_4^{30}x_5^{7} $\cr  $x_1^{15}x_2^{7}x_3^{16}x_4^{15}x_5^{23}$& $x_1^{15}x_2^{7}x_3^{17}x_4^{14}x_5^{23}$& $x_1^{15}x_2^{7}x_3^{17}x_4^{15}x_5^{22}$& $x_1^{15}x_2^{7}x_3^{17}x_4^{30}x_5^{7} $\cr  $x_1^{15}x_2^{15}x_3^{16}x_4^{7}x_5^{23}$& $x_1^{15}x_2^{15}x_3^{16}x_4^{23}x_5^{7}$& $x_1^{15}x_2^{15}x_3^{23}x_4^{16}x_5^{7}$& $x_1^{15}x_2^{19}x_3^{5}x_4^{14}x_5^{23} $\cr  $x_1^{15}x_2^{19}x_3^{5}x_4^{15}x_5^{22}$& $x_1^{15}x_2^{19}x_3^{5}x_4^{30}x_5^{7}$& $x_1^{15}x_2^{19}x_3^{15}x_4^{5}x_5^{22}$& $x_1^{15}x_2^{19}x_3^{15}x_4^{21}x_5^{6} $\cr  $x_1^{15}x_2^{23}x_3^{7}x_4^{15}x_5^{16}$& $x_1^{15}x_2^{23}x_3^{15}x_4^{7}x_5^{16}$& $x_1^{15}x_2^{23}x_3^{15}x_4^{16}x_5^{7}$& $x_1^{15}x_2^{23}x_3^{15}x_4^{17}x_5^{6}.$\cr  
\end{tabular}} 
\end{lems}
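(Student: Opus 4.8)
\textbf{Proof proposal for Lemma \ref{bd3}.}

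The plan is to prove each item by exhibiting an explicit expression of the monomial as a sum of strictly smaller monomials modulo $\mathcal A_r^+P_5$, where $r$ is the length of the weight vector of the monomial in question, exactly as in the proof of Lemma \ref{bd2}. For part i), the monomial $x_i^{15}\theta_{J_i}(x_1^7x_2^7x_3^8x_4^7)$ has weight vector $(4)|^3|(1)$ of length $4$, so I must produce an expression modulo $\mathcal A_4^+P_5$; I would first handle the model case $i=5$, i.e.\ $x_1^7x_2^7x_3^8x_4^7x_5^{15}$ (degree $44$), by writing it as a combination of $Sq^1,\,Sq^2,\,Sq^4,\,Sq^8$ applied to suitable polynomials plus terms in $P_5^-((4)|^3|(1))$, then observe that applying the symmetry $\theta_{J_i}$ (which is an $\mathcal A$-algebra monomorphism permuting the variables) carries this identity to the general case. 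Since strict inadmissibility is preserved under permutation of variables, the case $i=5$ suffices together with the corresponding $i$-labelled relabelling.

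For part ii), each $x_i^{31}\theta_{J_i}(x)$ with $x$ one of the eight listed degree-$37$ monomials gives a degree-$68$ monomial; the factor $x_i^{31}=x_i^{2^5-1}$ contributes weight $(1)|^5$ and $\theta_{J_i}(x)$ has weight vector of length $\le 5$, so the relevant algebra is $\mathcal A_5^+P_5$ (or a smaller $\mathcal A_r$ when the length is smaller — one must read off $r$ for each monomial from its $\omega$). Again it is enough to treat $i=5$ for each of the eight monomials and invoke the permutation symmetry. Concretely, I would first compute $\omega(x)$ for each listed $x$ (they split into families according to whether $\nu_j$ equals $7,15,16,17,19,\dots$), fix $r=\max\{j:\omega_j>0\}$, and then search for the Cartan-formula expression; many of these will reduce to already-known strictly inadmissible monomials via Theorem \ref{dlcb1}(ii) applied to the degree-$10$-type factors of Lemma \ref{bd2}, so that the bulk of the work is bookkeeping rather than fresh computation.

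For part iii), the twenty listed monomials all have degree $108-31=\dots$ wait — they have degree $76$ (one checks $7+15+16+15+23=76$, etc.), and one reads off that each has weight vector of length $5$, namely a vector beginning $(4,4,\dots)$; thus the target algebra is $\mathcal A_5^+P_5$. I would group the monomials into the three visible families (those with leading exponents $7,15$; those with $15,7$; those with $15,15$; plus the $x_1^{15}x_2^{19}$ and $x_1^{15}x_2^{23}$ rows) and, within each family, produce one Cartan-formula identity that the others follow from by permuting $x_3,x_4,x_5$ or by applying Theorem \ref{dlcb1}. In every case the verification that the displayed sum equals the monomial modulo $P_5^-(\omega)$ is a finite check of the Cartan formula on each summand.

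The main obstacle is not conceptual but computational: finding, for each monomial, the correct polynomials on which to apply $Sq^1,Sq^2,Sq^4,Sq^8,Sq^{16}$ so that all error terms land in $\mathcal A_r^+P_5+P_5^-(\omega)$, and then checking the cancellations. This is precisely the kind of step the authors flag as done "with the aids of some simple computer programmes on the Microsoft Excel software," so my plan is to reduce the number of genuinely independent identities to a minimum by exploiting (a) the $\mathfrak{S}$-symmetry in the variables, (b) Theorem \ref{dlcb1} to inflate small strictly inadmissible monomials (in particular those of Lemma \ref{bd2}) to the ones needed here, and (c) Proposition \ref{mdmo} and the $\phi_{(j;J)}$-construction where a monomial is visibly of the form $x_i^{2^h-1}\theta_{J_i}(\cdot)$ with the inner factor already known to be inadmissible — whence Theorem \ref{dlcb1}(i) finishes it immediately. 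What remains after these reductions is a short explicit list of base-case Cartan identities, which I would then verify term by term.
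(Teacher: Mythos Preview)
Your central reduction --- ``strict inadmissibility is preserved under permutation of variables'' --- is false, and this undermines the strategy for all three parts. The order $u<w$ used in the definition of (strict) inadmissibility compares first $\omega$, then the exponent vector $\sigma$ in left-lexicographic order, and $\sigma$ is not permutation-invariant. A permutation $\pi$ does carry a relation $w+\sum u_j\in\mathcal A_r^+P_5$ to $\pi(w)+\sum\pi(u_j)\in\mathcal A_r^+P_5$, but when $\omega(u_j)=\omega(w)$ you need $\sigma(\pi(u_j))<\sigma(\pi(w))$, which can fail. Concretely, in $P_2$ one has $x_1^2x_2+x_1x_2^2=Sq^1(x_1x_2)$ with $x_1x_2^2<x_1^2x_2$, so $x_1^2x_2$ is strictly inadmissible; swapping variables does \emph{not} show $x_1x_2^2$ is strictly inadmissible --- and indeed it is admissible. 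In the identities one actually writes down for Part~iii) (weight vector $(4)|^3|(2)|^2$, length $5$), many of the terms on the right share the weight vector of $W$, so the obstruction is live: each monomial in the list, and each value of $i$ in Parts~i)--ii), requires its own verification. Your grouping-by-permutation plan for Part~iii) has the same defect.

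There are also computational slips that would derail the search for Cartan identities: in Part~i) the monomial $x_i^{15}\theta_{J_i}(x_1^7x_2^7x_3^8x_4^7)$ has weight vector $(4)|^3|(2)$ (not $(4)|^3|(1)$), and in Part~ii) the listed $P_4$-monomials have degree $45$ (not $37$), so $x_i^{31}\theta_{J_i}(x)$ has degree $76$ (not $68$) --- in fact the same degree and weight vector $(4)|^3|(2)|^2$ as the Part~iii) monomials. The paper's proof simply declares Parts~i) and ii) ``easy'' (a short Cartan identity for each instance) and for Part~iii) works six representative monomials $W_1,\ldots,W_6$ out explicitly via $Sq^1,Sq^2,Sq^4,Sq^8,Sq^{16}$ modulo $P_5^-\big((4)|^3|(2)|^2\big)$, then asserts the remaining fourteen are handled by analogous computations. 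Your plan to reduce the number of base cases is the right instinct, but the reduction has to come from Theorem~\ref{dlcb1} (which respects the order) rather than from variable permutations.
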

\begin{proof} The proof of Parts i) and ii) are easy. We prove Part iii) for 6 monomials
	
\medskip
\centerline{\begin{tabular}{lll}
$W_{1} =  x_1^{7}x_2^{15}x_3^{16}x_4^{15}x_5^{23}$& $W_{2} =  x_1^{15}x_2^{7}x_3^{17}x_4^{15}x_5^{22}$& $W_{3} =  x_1^{15}x_2^{15}x_3^{23}x_4^{16}x_5^{7} $\cr  $W_{4} =  x_1^{15}x_2^{19}x_3^{15}x_4^{21}x_5^{6}$& $W_{5} =  x_1^{15}x_2^{23}x_3^{15}x_4^{7}x_5^{16}$& $W_{6} =  x_1^{15}x_2^{23}x_3^{15}x_4^{17}x_5^{6}. $\cr
\end{tabular}} 

\medskip
Note that the monomials in Part iii) are of weight vector $(4)|^3|(2)|^2$. By using the Cartan formula we get
\begin{align*}
W_1 &= x_1^{5}x_2^{15}x_3^{7}x_4^{19}x_5^{30} + x_1^{5}x_2^{15}x_3^{14}x_4^{19}x_5^{23} + x_1^{5}x_2^{15}x_3^{18}x_4^{15}x_5^{23} + x_1^{5}x_2^{19}x_3^{7}x_4^{15}x_5^{30}\\ 
&\quad + x_1^{5}x_2^{19}x_3^{14}x_4^{15}x_5^{23} + x_1^{7}x_2^{9}x_3^{7}x_4^{23}x_5^{30} + x_1^{7}x_2^{9}x_3^{14}x_4^{23}x_5^{23} + x_1^{7}x_2^{9}x_3^{22}x_4^{15}x_5^{23}\\ 
&\quad + x_1^{7}x_2^{15}x_3^{7}x_4^{17}x_5^{30} + x_1^{7}x_2^{15}x_3^{14}x_4^{17}x_5^{23} +  Sq^1\big(x_1^{3}x_2^{15}x_3^{13}x_4^{15}x_5^{29} + x_1^{7}x_2^{15}x_3^{9}x_4^{15}x_5^{29}\\ 
&\quad + x_1^{7}x_2^{15}x_3^{11}x_4^{15}x_5^{27} + x_1^{7}x_2^{15}x_3^{13}x_4^{15}x_5^{25}\big) +  Sq^2\big(x_1^{3}x_2^{15}x_3^{11}x_4^{15}x_5^{30}\\ 
&\quad + x_1^{3}x_2^{15}x_3^{14}x_4^{15}x_5^{27} + x_1^{7}x_2^{15}x_3^{7}x_4^{15}x_5^{30} + x_1^{7}x_2^{15}x_3^{10}x_4^{15}x_5^{27} + x_1^{7}x_2^{15}x_3^{11}x_4^{15}x_5^{26}\\ 
&\quad + x_1^{7}x_2^{15}x_3^{14}x_4^{15}x_5^{23}\big) +  Sq^4\big(x_1^{5}x_2^{15}x_3^{7}x_4^{15}x_5^{30} + x_1^{5}x_2^{15}x_3^{14}x_4^{15}x_5^{23}\big)\\ 
&\quad +  Sq^8\big(x_1^{7}x_2^{9}x_3^{7}x_4^{15}x_5^{30} + x_1^{7}x_2^{9}x_3^{14}x_4^{15}x_5^{23}\big) \  \mbox{ mod}(P_5^-((4)|^3|(2)|^2)),\\
W_2 &= x_1^{8}x_2^{7}x_3^{15}x_4^{23}x_5^{23} + x_1^{8}x_2^{7}x_3^{23}x_4^{15}x_5^{23} + x_1^{9}x_2^{7}x_3^{15}x_4^{23}x_5^{22} + x_1^{9}x_2^{7}x_3^{23}x_4^{15}x_5^{22}\\ 
&\quad + x_1^{11}x_2^{4}x_3^{15}x_4^{23}x_5^{23} + x_1^{11}x_2^{4}x_3^{23}x_4^{15}x_5^{23} + x_1^{11}x_2^{5}x_3^{15}x_4^{23}x_5^{22} + x_1^{11}x_2^{5}x_3^{23}x_4^{15}x_5^{22}\\ 
&\quad + x_1^{15}x_2^{4}x_3^{15}x_4^{19}x_5^{23} + x_1^{15}x_2^{4}x_3^{19}x_4^{15}x_5^{23} + x_1^{15}x_2^{5}x_3^{15}x_4^{19}x_5^{22} + x_1^{15}x_2^{5}x_3^{19}x_4^{15}x_5^{22}\\ 
&\quad + x_1^{15}x_2^{7}x_3^{15}x_4^{16}x_5^{23} + x_1^{15}x_2^{7}x_3^{15}x_4^{17}x_5^{22} + x_1^{15}x_2^{7}x_3^{16}x_4^{15}x_5^{23}\big)\\ 
&\quad +  Sq^1\big(x_1^{15}x_2^{3}x_3^{15}x_4^{15}x_5^{27} + x_1^{15}x_2^{7}x_3^{15}x_4^{15}x_5^{23}\big) +  Sq^2\big(x_1^{15}x_2^{3}x_3^{15}x_4^{15}x_5^{26}\\ 
&\quad + x_1^{15}x_2^{7}x_3^{15}x_4^{15}x_5^{22}\big) +  Sq^4\big(x_1^{15}x_2^{4}x_3^{15}x_4^{15}x_5^{23} + x_1^{15}x_2^{5}x_3^{15}x_4^{15}x_5^{22}\big)\\ 
&\quad +  Sq^8\big(x_1^{8}x_2^{7}x_3^{15}x_4^{15}x_5^{23} + x_1^{9}x_2^{7}x_3^{15}x_4^{15}x_5^{22} + x_1^{11}x_2^{4}x_3^{15}x_4^{15}x_5^{23}\\ 
&\quad + x_1^{11}x_2^{5}x_3^{15}x_4^{15}x_5^{22}\big) \  \mbox{ mod}(P_5^-((4)|^3|(2)|^2)),\\  
W_3 &= x_1^{9}x_2^{15}x_3^{23}x_4^{7}x_5^{22} + x_1^{9}x_2^{15}x_3^{23}x_4^{22}x_5^{7} + x_1^{9}x_2^{23}x_3^{23}x_4^{7}x_5^{14} + x_1^{9}x_2^{23}x_3^{23}x_4^{14}x_5^{7}\\ 
&\quad + x_1^{11}x_2^{7}x_3^{7}x_4^{21}x_5^{30} + x_1^{11}x_2^{7}x_3^{7}x_4^{30}x_5^{21} + x_1^{11}x_2^{7}x_3^{21}x_4^{7}x_5^{30} + x_1^{11}x_2^{7}x_3^{21}x_4^{30}x_5^{7}\\ 
&\quad + x_1^{11}x_2^{7}x_3^{29}x_4^{7}x_5^{22} + x_1^{11}x_2^{7}x_3^{29}x_4^{22}x_5^{7} + x_1^{11}x_2^{13}x_3^{23}x_4^{7}x_5^{22} + x_1^{11}x_2^{13}x_3^{23}x_4^{22}x_5^{7}\\ 
&\quad + x_1^{11}x_2^{15}x_3^{7}x_4^{21}x_5^{22} + x_1^{11}x_2^{15}x_3^{7}x_4^{22}x_5^{21} + x_1^{11}x_2^{21}x_3^{7}x_4^{7}x_5^{30} + x_1^{11}x_2^{21}x_3^{7}x_4^{30}x_5^{7}\\ 
&\quad + x_1^{11}x_2^{21}x_3^{23}x_4^{7}x_5^{14} + x_1^{11}x_2^{21}x_3^{23}x_4^{14}x_5^{7} + x_1^{11}x_2^{23}x_3^{7}x_4^{7}x_5^{28} + x_1^{11}x_2^{23}x_3^{7}x_4^{13}x_5^{22}\\ 
&\quad + x_1^{11}x_2^{23}x_3^{7}x_4^{22}x_5^{13} + x_1^{11}x_2^{23}x_3^{7}x_4^{28}x_5^{7} + x_1^{11}x_2^{23}x_3^{13}x_4^{7}x_5^{22} + x_1^{11}x_2^{23}x_3^{13}x_4^{22}x_5^{7}\\ 
&\quad + x_1^{11}x_2^{23}x_3^{21}x_4^{7}x_5^{14} + x_1^{11}x_2^{23}x_3^{21}x_4^{14}x_5^{7} + x_1^{15}x_2^{7}x_3^{7}x_4^{17}x_5^{30} + x_1^{15}x_2^{7}x_3^{7}x_4^{30}x_5^{17}\\ 
&\quad + x_1^{15}x_2^{7}x_3^{17}x_4^{7}x_5^{30} + x_1^{15}x_2^{7}x_3^{17}x_4^{30}x_5^{7} + x_1^{15}x_2^{7}x_3^{29}x_4^{7}x_5^{18} + x_1^{15}x_2^{7}x_3^{29}x_4^{18}x_5^{7}\\ 
&\quad + x_1^{15}x_2^{11}x_3^{7}x_4^{22}x_5^{21} + x_1^{15}x_2^{11}x_3^{21}x_4^{7}x_5^{22} + x_1^{15}x_2^{11}x_3^{21}x_4^{22}x_5^{7} + x_1^{15}x_2^{13}x_3^{23}x_4^{7}x_5^{18}\\ 
&\quad + x_1^{15}x_2^{13}x_3^{23}x_4^{18}x_5^{7} + x_1^{15}x_2^{15}x_3^{7}x_4^{17}x_5^{22} + x_1^{15}x_2^{15}x_3^{7}x_4^{22}x_5^{17} + x_1^{15}x_2^{15}x_3^{17}x_4^{7}x_5^{22}\\ 
&\quad + x_1^{15}x_2^{15}x_3^{17}x_4^{22}x_5^{7} + x_1^{15}x_2^{15}x_3^{21}x_4^{7}x_5^{18} + x_1^{15}x_2^{15}x_3^{21}x_4^{18}x_5^{7} + x_1^{15}x_2^{15}x_3^{23}x_4^{7}x_5^{16}\big)\\ 
&\quad +  Sq^1\big(x_1^{15}x_2^{7}x_3^{27}x_4^{13}x_5^{13} + x_1^{15}x_2^{11}x_3^{23}x_4^{13}x_5^{13} + x_1^{15}x_2^{15}x_3^{19}x_4^{13}x_5^{13}\\ 
&\quad + x_1^{15}x_2^{15}x_3^{23}x_4^{9}x_5^{13} + x_1^{15}x_2^{15}x_3^{23}x_4^{11}x_5^{11} + x_1^{15}x_2^{15}x_3^{23}x_4^{13}x_5^{9} + x_1^{15}x_2^{17}x_3^{7}x_4^{7}x_5^{29}\\ 
&\quad + x_1^{15}x_2^{17}x_3^{7}x_4^{29}x_5^{7} + x_1^{15}x_2^{19}x_3^{7}x_4^{7}x_5^{27} + x_1^{15}x_2^{19}x_3^{7}x_4^{13}x_5^{21} + x_1^{15}x_2^{19}x_3^{7}x_4^{27}x_5^{7}\big)\\ 
&\quad +  Sq^2\big(x_1^{15}x_2^{7}x_3^{11}x_4^{11}x_5^{30} + x_1^{15}x_2^{7}x_3^{11}x_4^{30}x_5^{11} + x_1^{15}x_2^{7}x_3^{27}x_4^{11}x_5^{14}\\ 
&\quad + x_1^{15}x_2^{7}x_3^{27}x_4^{14}x_5^{11} + x_1^{15}x_2^{11}x_3^{7}x_4^{11}x_5^{30} + x_1^{15}x_2^{11}x_3^{7}x_4^{30}x_5^{11} + x_1^{15}x_2^{11}x_3^{11}x_4^{7}x_5^{30}\\ 
&\quad + x_1^{15}x_2^{11}x_3^{11}x_4^{30}x_5^{7} + x_1^{15}x_2^{11}x_3^{23}x_4^{11}x_5^{14} + x_1^{15}x_2^{11}x_3^{23}x_4^{14}x_5^{11} + x_1^{15}x_2^{11}x_3^{27}x_4^{7}x_5^{14}\\ 
&\quad + x_1^{15}x_2^{11}x_3^{27}x_4^{14}x_5^{7} + x_1^{15}x_2^{15}x_3^{7}x_4^{11}x_5^{26} + x_1^{15}x_2^{15}x_3^{7}x_4^{26}x_5^{11} + x_1^{15}x_2^{15}x_3^{11}x_4^{7}x_5^{26}\\ 
&\quad + x_1^{15}x_2^{15}x_3^{11}x_4^{11}x_5^{22} + x_1^{15}x_2^{15}x_3^{11}x_4^{22}x_5^{11} + x_1^{15}x_2^{15}x_3^{11}x_4^{26}x_5^{7} + x_1^{15}x_2^{15}x_3^{19}x_4^{11}x_5^{14}\\ 
&\quad + x_1^{15}x_2^{15}x_3^{19}x_4^{14}x_5^{11} + x_1^{15}x_2^{15}x_3^{23}x_4^{7}x_5^{14} + x_1^{15}x_2^{15}x_3^{23}x_4^{10}x_5^{11} + x_1^{15}x_2^{15}x_3^{23}x_4^{11}x_5^{10}\\ 
&\quad + x_1^{15}x_2^{15}x_3^{23}x_4^{14}x_5^{7} + x_1^{15}x_2^{18}x_3^{7}x_4^{7}x_5^{27} + x_1^{15}x_2^{18}x_3^{7}x_4^{27}x_5^{7}\big) +  Sq^4\big(x_1^{15}x_2^{7}x_3^{7}x_4^{13}x_5^{30}\\ 
&\quad + x_1^{15}x_2^{7}x_3^{7}x_4^{30}x_5^{13} + x_1^{15}x_2^{7}x_3^{13}x_4^{7}x_5^{30} + x_1^{15}x_2^{7}x_3^{13}x_4^{30}x_5^{7} + x_1^{15}x_2^{7}x_3^{29}x_4^{7}x_5^{14}\\ 
&\quad + x_1^{15}x_2^{7}x_3^{29}x_4^{14}x_5^{7} + x_1^{15}x_2^{13}x_3^{7}x_4^{7}x_5^{30} + x_1^{15}x_2^{13}x_3^{7}x_4^{30}x_5^{7} + x_1^{15}x_2^{13}x_3^{23}x_4^{7}x_5^{14}\\ 
&\quad + x_1^{15}x_2^{13}x_3^{23}x_4^{14}x_5^{7} + x_1^{15}x_2^{15}x_3^{7}x_4^{7}x_5^{28} + x_1^{15}x_2^{15}x_3^{7}x_4^{13}x_5^{22} + x_1^{15}x_2^{15}x_3^{7}x_4^{22}x_5^{13}\\ 
&\quad + x_1^{15}x_2^{15}x_3^{7}x_4^{28}x_5^{7} + x_1^{15}x_2^{15}x_3^{13}x_4^{7}x_5^{22} + x_1^{15}x_2^{15}x_3^{13}x_4^{22}x_5^{7} + x_1^{15}x_2^{15}x_3^{21}x_4^{7}x_5^{14}\\ 
&\quad + x_1^{15}x_2^{15}x_3^{21}x_4^{14}x_5^{7}\big) +  Sq^8\big(x_1^{9}x_2^{15}x_3^{23}x_4^{7}x_5^{14} + x_1^{9}x_2^{15}x_3^{23}x_4^{14}x_5^{7}\\ 
&\quad + x_1^{11}x_2^{7}x_3^{7}x_4^{13}x_5^{30} + x_1^{11}x_2^{7}x_3^{7}x_4^{30}x_5^{13} + x_1^{11}x_2^{7}x_3^{13}x_4^{7}x_5^{30} + x_1^{11}x_2^{7}x_3^{13}x_4^{30}x_5^{7}\\ 
&\quad + x_1^{11}x_2^{7}x_3^{29}x_4^{7}x_5^{14} + x_1^{11}x_2^{7}x_3^{29}x_4^{14}x_5^{7} + x_1^{11}x_2^{13}x_3^{7}x_4^{7}x_5^{30} + x_1^{11}x_2^{13}x_3^{7}x_4^{30}x_5^{7}\\ 
&\quad + x_1^{11}x_2^{13}x_3^{23}x_4^{7}x_5^{14} + x_1^{11}x_2^{13}x_3^{23}x_4^{14}x_5^{7} + x_1^{11}x_2^{15}x_3^{7}x_4^{7}x_5^{28} + x_1^{11}x_2^{15}x_3^{7}x_4^{13}x_5^{22}\\ 
&\quad + x_1^{11}x_2^{15}x_3^{7}x_4^{22}x_5^{13} + x_1^{11}x_2^{15}x_3^{7}x_4^{28}x_5^{7} + x_1^{11}x_2^{15}x_3^{13}x_4^{7}x_5^{22} + x_1^{11}x_2^{15}x_3^{13}x_4^{22}x_5^{7}\\ 
&\quad + x_1^{11}x_2^{15}x_3^{21}x_4^{7}x_5^{14} + x_1^{11}x_2^{15}x_3^{21}x_4^{14}x_5^{7} + x_1^{23}x_2^{11}x_3^{7}x_4^{14}x_5^{13} + x_1^{23}x_2^{11}x_3^{13}x_4^{7}x_5^{14}\\ 
&\quad + x_1^{23}x_2^{11}x_3^{13}x_4^{14}x_5^{7}\big) +  Sq^{16}\big(x_1^{15}x_2^{11}x_3^{7}x_4^{14}x_5^{13} + x_1^{15}x_2^{11}x_3^{13}x_4^{7}x_5^{14}\\ 
&\quad + x_1^{15}x_2^{11}x_3^{13}x_4^{14}x_5^{7}\big) \mbox{ mod}(P_5^-((4)|^3|(2)|^2)),\\  
W_4 &= x_1^{8}x_2^{15}x_3^{23}x_4^{23}x_5^{7} + x_1^{8}x_2^{23}x_3^{15}x_4^{23}x_5^{7} + x_1^{9}x_2^{15}x_3^{23}x_4^{22}x_5^{7} + x_1^{9}x_2^{15}x_3^{23}x_4^{23}x_5^{6}\\ 
&\quad + x_1^{9}x_2^{23}x_3^{15}x_4^{22}x_5^{7} + x_1^{9}x_2^{23}x_3^{15}x_4^{23}x_5^{6} + x_1^{11}x_2^{15}x_3^{23}x_4^{21}x_5^{6} + x_1^{11}x_2^{23}x_3^{15}x_4^{21}x_5^{6}\\ 
&\quad + x_1^{15}x_2^{15}x_3^{16}x_4^{23}x_5^{7} + x_1^{15}x_2^{15}x_3^{17}x_4^{22}x_5^{7} + x_1^{15}x_2^{15}x_3^{17}x_4^{23}x_5^{6} + x_1^{15}x_2^{15}x_3^{19}x_4^{21}x_5^{6}\\ 
&\quad + x_1^{15}x_2^{16}x_3^{15}x_4^{23}x_5^{7} + x_1^{15}x_2^{17}x_3^{15}x_4^{22}x_5^{7} + x_1^{15}x_2^{17}x_3^{15}x_4^{23}x_5^{6}\\ 
&\quad + Sq^1\big(x_1^{15}x_2^{15}x_3^{15}x_4^{21}x_5^{9} + x_1^{15}x_2^{15}x_3^{15}x_4^{23}x_5^{7}\big)\\ 
&\quad +  Sq^2\big(x_1^{15}x_2^{15}x_3^{15}x_4^{22}x_5^{7} + x_1^{15}x_2^{15}x_3^{15}x_4^{23}x_5^{6}\big) +  Sq^4\big(x_1^{15}x_2^{15}x_3^{15}x_4^{21}x_5^{6}\big)\\ 
&\quad +  Sq^8\big(x_1^{8}x_2^{15}x_3^{15}x_4^{23}x_5^{7} + x_1^{9}x_2^{15}x_3^{15}x_4^{22}x_5^{7} + x_1^{9}x_2^{15}x_3^{15}x_4^{23}x_5^{6}\\ 
&\quad + x_1^{11}x_2^{15}x_3^{15}x_4^{21}x_5^{6}\big) \ \mbox{ mod}(P_5^-((4)|^3|(2)|^2)),\\  
W_5 &= x_1^{8}x_2^{23}x_3^{15}x_4^{7}x_5^{23} + x_1^{8}x_2^{23}x_3^{23}x_4^{7}x_5^{15} + x_1^{9}x_2^{22}x_3^{15}x_4^{7}x_5^{23} + x_1^{9}x_2^{22}x_3^{23}x_4^{7}x_5^{15}\\ 
&\quad + x_1^{9}x_2^{23}x_3^{15}x_4^{6}x_5^{23} + x_1^{9}x_2^{23}x_3^{23}x_4^{6}x_5^{15} + x_1^{11}x_2^{21}x_3^{15}x_4^{6}x_5^{23} + x_1^{11}x_2^{21}x_3^{23}x_4^{6}x_5^{15}\\ 
&\quad + x_1^{15}x_2^{15}x_3^{16}x_4^{7}x_5^{23} + x_1^{15}x_2^{15}x_3^{17}x_4^{6}x_5^{23} + x_1^{15}x_2^{21}x_3^{15}x_4^{6}x_5^{19} + x_1^{15}x_2^{21}x_3^{19}x_4^{6}x_5^{15}\\ 
&\quad + x_1^{15}x_2^{22}x_3^{15}x_4^{7}x_5^{17} + x_1^{15}x_2^{22}x_3^{17}x_4^{7}x_5^{15} + x_1^{15}x_2^{23}x_3^{8}x_4^{7}x_5^{23} + x_1^{15}x_2^{23}x_3^{9}x_4^{6}x_5^{23}\\ 
&\quad + x_1^{15}x_2^{23}x_3^{15}x_4^{6}x_5^{17} + Sq^1\big(x_1^{15}x_2^{21}x_3^{15}x_4^{9}x_5^{15} + x_1^{15}x_2^{23}x_3^{15}x_4^{7}x_5^{15}\big)\\ 
&\quad +  Sq^2\big(x_1^{15}x_2^{22}x_3^{15}x_4^{7}x_5^{15} + x_1^{15}x_2^{23}x_3^{15}x_4^{6}x_5^{15}\big) +  Sq^4\big(x_1^{15}x_2^{21}x_3^{15}x_4^{6}x_5^{15}\big)\\ 
&\quad +  Sq^8\big(x_1^{8}x_2^{23}x_3^{15}x_4^{7}x_5^{15} + x_1^{9}x_2^{22}x_3^{15}x_4^{7}x_5^{15} + x_1^{9}x_2^{23}x_3^{15}x_4^{6}x_5^{15}\\ 
&\quad + x_1^{11}x_2^{21}x_3^{15}x_4^{6}x_5^{15} + x_1^{23}x_2^{15}x_3^{8}x_4^{7}x_5^{15} + x_1^{23}x_2^{15}x_3^{9}x_4^{6}x_5^{15}\big)\\ 
&\quad +  Sq^{16}\big(x_1^{15}x_2^{15}x_3^{8}x_4^{7}x_5^{15} + x_1^{15}x_2^{15}x_3^{9}x_4^{6}x_5^{15}\big) \ \mbox{ mod}(P_5^-((4)|^3|(2)|^2)),\\
W_6 &= x_1^{8}x_2^{23}x_3^{15}x_4^{23}x_5^{7} + x_1^{8}x_2^{23}x_3^{23}x_4^{15}x_5^{7} + x_1^{9}x_2^{22}x_3^{15}x_4^{23}x_5^{7} + x_1^{9}x_2^{22}x_3^{23}x_4^{15}x_5^{7}\\ 
&\quad + x_1^{9}x_2^{23}x_3^{15}x_4^{23}x_5^{6} + x_1^{9}x_2^{23}x_3^{23}x_4^{15}x_5^{6} + x_1^{11}x_2^{21}x_3^{15}x_4^{23}x_5^{6} + x_1^{11}x_2^{21}x_3^{23}x_4^{15}x_5^{6}\\ 
&\quad + x_1^{15}x_2^{15}x_3^{16}x_4^{23}x_5^{7} + x_1^{15}x_2^{15}x_3^{17}x_4^{23}x_5^{6} + x_1^{15}x_2^{21}x_3^{15}x_4^{19}x_5^{6} + x_1^{15}x_2^{21}x_3^{19}x_4^{15}x_5^{6}\\ 
&\quad + x_1^{15}x_2^{22}x_3^{15}x_4^{17}x_5^{7} + x_1^{15}x_2^{22}x_3^{17}x_4^{15}x_5^{7} + x_1^{15}x_2^{23}x_3^{8}x_4^{23}x_5^{7} + x_1^{15}x_2^{23}x_3^{9}x_4^{23}x_5^{6}\\ 
&\quad + x_1^{15}x_2^{23}x_3^{15}x_4^{16}x_5^{7} +  Sq^1\big(x_1^{15}x_2^{21}x_3^{15}x_4^{15}x_5^{9} + x_1^{15}x_2^{23}x_3^{15}x_4^{15}x_5^{7}\big)\\ 
&\quad +  Sq^2\big(x_1^{15}x_2^{22}x_3^{15}x_4^{15}x_5^{7} + x_1^{15}x_2^{23}x_3^{15}x_4^{15}x_5^{6}\big) +  Sq^4\big(x_1^{15}x_2^{21}x_3^{15}x_4^{15}x_5^{6}\big)\\ 
&\quad +  Sq^8\big(x_1^{8}x_2^{23}x_3^{15}x_4^{15}x_5^{7} + x_1^{9}x_2^{22}x_3^{15}x_4^{15}x_5^{7} + x_1^{9}x_2^{23}x_3^{15}x_4^{15}x_5^{6}\\ 
&\quad + x_1^{11}x_2^{21}x_3^{15}x_4^{15}x_5^{6} + x_1^{23}x_2^{15}x_3^{8}x_4^{15}x_5^{7} + x_1^{23}x_2^{15}x_3^{9}x_4^{15}x_5^{6}\big)\\ 
&\quad +  Sq^{16}\big(x_1^{15}x_2^{15}x_3^{8}x_4^{15}x_5^{7} + x_1^{15}x_2^{15}x_3^{9}x_4^{15}x_5^{6}\big) \ \mbox{ mod}(P_5^-((4)|^3|(2)|^2)).
\end{align*}
The above equalities show that the monomials $W_j,\, 1 \leqslant j \leqslant 6$, are strictly inadmissible. The proof for other monomials are carried out by a similar computation.
\end{proof}

The following is proved in \cite{p24}.
\begin{lems}\label{bd4} The following monomials are strictly inadmissible: 
	
\medskip
\centerline{\begin{tabular}{lll}
$ x_1^{3}x_2^{4}x_3x_4^{9}x_5^{7}$& $ x_1^{3}x_2^{4}x_3^{9}x_4x_5^{7}$& $ x_1^{3}x_2^{4}x_3^{9}x_4^{7}x_5 $\cr  $ x_1^{3}x_2^{12}x_3x_4x_5^{7}$& $ x_1^{3}x_2^{12}x_3x_4^{7}x_5$& $ x_1^{3}x_2^{12}x_3^{7}x_4x_5 $.\cr 
\end{tabular}} 
\end{lems}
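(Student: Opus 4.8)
The plan is to prove each of the six monomials strictly inadmissible by exactly the device used for $W_1,\dots,W_6$ in the proof of Lemma \ref{bd3}: for a monomial $x$ with $r:=\max\{j:\omega_j(x)>0\}$ one writes down an explicit identity
\[
x\;=\;Sq^{1}(g_1)+Sq^{2}(g_2)+Sq^{4}(g_4)+Sq^{8}(g_8)\;+\;\sum_j u_j\pmod{P_5^-(\omega(x))},\qquad u_j<x,
\]
which, since elements of $P_5^-(\omega(x))$ are themselves sums of monomials $<x$, is precisely the definition of strict inadmissibility. The first step is a bit‑count: each of the six monomials has $\alpha$‑digit sums $\omega_1=4,\ \omega_2=2,\ \omega_3=2,\ \omega_4=1$, so they all carry the common weight vector $\omega=(4,2,2,1)$ of degree $24$ with $r=4$; this is why $Sq^1,Sq^2,Sq^4,Sq^8$ are exactly the correcting operations allowed (they generate $\mathcal A_4$) and why the congruence is to be read in $QP_5((4,2,2,1))$.

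The second step is to produce the polynomials $g_i$. The mechanism is the one visible in Lemma \ref{bd3}: take $x$, lower one or two of its exponents by the appropriate amount, raise others to compensate, and arrange that a single $2$‑adic carry in $Sq^{2^i}$ restores $x$ while every companion monomial either drops below $x$ in the order $<$ or acquires a strictly smaller weight vector and hence dies in $QP_5((4,2,2,1))$. The six monomials fall into two families of three — $x_1^3x_2^4$ times a block with last three exponents a permutation of $\{1,7,9\}$, and $x_1^3x_2^{12}$ times a block with last three exponents a permutation of $\{1,1,7\}$ — so the work is really only two archetypes, each repeated by relabelling $x_3,x_4,x_5$. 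Where convenient one may first peel off a fourth power, e.g.\ $x_1^3x_2^4x_3x_4^{9}x_5^{7}=(x_1^3x_3x_4x_5^{3})(x_2x_4^{2}x_5)^{4}$ and $x_1^3x_2^{12}x_3^{7}x_4x_5=(x_1^3x_3^{3}x_4x_5)(x_2^{3}x_3)^{4}$, and invoke Theorem \ref{dlcb1}(ii): it then suffices to treat the degree‑$8$ factors $x_1^3x_2x_3x_4^{3}$, $x_1^3x_2x_3^{3}x_4$, $x_1^3x_2^{3}x_3x_4$ of $P_4$ (each of weight vector $(4,2)$, so only $Sq^1,Sq^2$ enter), whenever these are genuinely strictly inadmissible; any entry for which this reduction fails is then handled directly at degree $24$ as above, with no new idea.

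The part I expect to be the real work is guessing the quadruples $(g_1,g_2,g_4,g_8)$ (respectively the pairs $(g_1,g_2)$ in the reduced $P_4$ problem): there is no algorithm, and the constraints are tight, since the Cartan expansion of $\sum_i Sq^{2^i}(g_i)$ must reproduce $x$ on the nose, all its surviving monomials must lie below $x$, and every other term must be of strictly lower weight vector. Once a candidate is written down, checking it is purely mechanical — apply the Cartan formula together with $Sq^1(y^2)=0$, $Sq^{2^i}(y^{2^i})=y^{2^{i+1}}$, and the vanishing of $Sq^j$ ($j>0$) on the degree‑$1$ generators, then sort the resulting monomials by the order $<$ — and there is nothing conceptually beyond what the displayed identities for $W_1,\dots,W_6$ in Lemma \ref{bd3} already illustrate; the statement is a bounded but laborious bookkeeping exercise.
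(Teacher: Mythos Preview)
The paper does not prove this lemma: it simply cites \cite{p24}. Your direct Cartan-formula approach---writing each monomial as $\sum_i Sq^{2^i}(g_i)$ plus strictly smaller terms modulo $P_5^-((4,2,2,1))$---is exactly the technique used there and throughout Section~\ref{s3} of the present paper, so that part of your plan is sound.

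Your factorization shortcut, however, fails in every case. For a splitting $x=wv^{4}$ with $\omega_i(w)=0$ for $i>2$, the value $\nu_2(x)\in\{4,12\}$ forces $\nu_2(w)=0$ (otherwise $\omega_3(w)\geqslant 1$), so $w$ always lies in $P_5^0$ and corresponds to one of the degree-$8$ monomials in $P_4$ you list. But $x_1^3x_2x_3x_4^3$, $x_1^3x_2x_3^3x_4$, $x_1^3x_2^3x_3x_4$ are all \emph{admissible}: the six permutations of the exponent pattern $(1,1,3,3)$ form a full basis of $QP_4((4,2))$, since the Kameko map identifies this space with $(QP_4)_2=\langle [x_ix_j]:i<j\rangle$, of dimension six. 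Hence Theorem~\ref{dlcb1}(ii) yields nothing here, and all six monomials must be handled directly at degree $24$. Since you have not exhibited any of the actual identities, what you have is a correct outline of the method together with an unhelpful detour, not yet a proof.
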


\begin{lems}\label{bd5} The following monomials are strictly inadmissible:
	
\medskip
\centerline{\begin{tabular}{llll}
$ x_1^{3}x_2^{7}x_3^{11}x_4^{16}x_5^{15}$& $ x_1^{3}x_2^{7}x_3^{11}x_4^{19}x_5^{12}$& $ x_1^{3}x_2^{7}x_3^{24}x_4^{3}x_5^{15}$& $ x_1^{3}x_2^{15}x_3^{16}x_4^{7}x_5^{11} $\cr  $ x_1^{7}x_2^{3}x_3^{8}x_4^{19}x_5^{15}$& $ x_1^{7}x_2^{3}x_3^{11}x_4^{16}x_5^{15}$& $ x_1^{7}x_2^{3}x_3^{11}x_4^{19}x_5^{12}$& $ x_1^{7}x_2^{3}x_3^{24}x_4^{3}x_5^{15} $\cr  $ x_1^{7}x_2^{11}x_3^{3}x_4^{16}x_5^{15}$& $ x_1^{7}x_2^{11}x_3^{3}x_4^{19}x_5^{12}$& $ x_1^{7}x_2^{11}x_3^{16}x_4^{3}x_5^{15}$& $ x_1^{7}x_2^{11}x_3^{16}x_4^{7}x_5^{11} $\cr  $ x_1^{7}x_2^{11}x_3^{16}x_4^{15}x_5^{3}$& $ x_1^{7}x_2^{11}x_3^{17}x_4^{2}x_5^{15}$& $ x_1^{7}x_2^{11}x_3^{17}x_4^{14}x_5^{3}$& $ x_1^{7}x_2^{11}x_3^{19}x_4^{12}x_5^{3} $\cr  $ x_1^{15}x_2^{3}x_3^{16}x_4^{7}x_5^{11}$& $ x_1^{15}x_2^{16}x_3^{3}x_4^{7}x_5^{11}$& $ x_1^{15}x_2^{16}x_3^{7}x_4^{3}x_5^{11}$& $ x_1^{15}x_2^{16}x_3^{7}x_4^{11}x_5^{3} $\cr  $ x_1^{15}x_2^{17}x_3^{2}x_4^{7}x_5^{11}$& $ x_1^{15}x_2^{17}x_3^{6}x_4^{3}x_5^{11}$& $ x_1^{15}x_2^{17}x_3^{6}x_4^{11}x_5^{3}$& $ x_1^{15}x_2^{17}x_3^{7}x_4^{2}x_5^{11} $\cr  $ x_1^{15}x_2^{17}x_3^{7}x_4^{10}x_5^{3}$.& & &\cr   
\end{tabular}} 
\end{lems}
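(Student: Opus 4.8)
To prove Lemma \ref{bd5} the plan is to establish strict inadmissibility of each of the $25$ listed monomials by the explicit method already used for Lemmas \ref{bd2} and \ref{bd3}. For a monomial $x$ one exhibits homogeneous polynomials $f_j$ of the appropriate degrees together with a congruence
\[ x \equiv \sum_{j} Sq^{2^{j}}(f_{j})\ \ \mbox{mod}(P_5^-(\omega(x))), \]
in which every monomial surviving in $x + \sum_j Sq^{2^j}(f_j)$ is strictly less than $x$ in the order of Definition \ref{dfnad}, and in which the only Steenrod squares occurring are $Sq^{2^j}$ with $2^j < 2^{r}$, where $r = \ell(\omega(x))$. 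Such an identity exhibits $x$, modulo $P_5^-(\omega(x))$, as an element of $\mathcal A_r^+P_5$ plus a sum of monomials smaller than $x$, which is exactly the definition of strict inadmissibility.

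The first step is to record the weight vectors. A direct computation shows that all $25$ monomials have degree $52$ and the same weight vector $\omega = (4)|^{2}|(2)|^{2}|(1) = (4,4,2,2,1)$, so in every case $r = \ell(\omega) = 5$ and the squares available are exactly $Sq^{1}, Sq^{2}, Sq^{4}, Sq^{8}, Sq^{16}$ --- the same collection appearing in the witnesses $W_1,\dots,W_6$ of Lemma \ref{bd3}(iii). Hence for each $x$ the problem is to rewrite $x$, modulo $\mathcal A_5^+P_5$ and modulo monomials of weight vector strictly below $(4,4,2,2,1)$, as a sum of monomials of weight vector $(4,4,2,2,1)$ whose exponent vectors lie lexicographically below $\sigma(x)$.

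Before treating all $25$ monomials individually I would try to reduce the work using the shape of the list: several entries differ only by transposing the exponents on $x_1$ and $x_2$ (for instance $x_1^{3}x_2^{7}x_3^{11}x_4^{16}x_5^{15}$ and $x_1^{7}x_2^{3}x_3^{11}x_4^{16}x_5^{15}$), and any entry of the form $w\,v^{2^{s}}$ with $\omega_i(w)=0$ for $i>s$, where $w$ is a strictly inadmissible monomial already obtained in Lemmas \ref{bd2}--\ref{bd4}, is strictly inadmissible by Theorem \ref{dlcb1}(ii). For the remaining monomials I would construct the $f_j$ by the standard iterative procedure: apply the Cartan expansion of a low square to a carefully chosen degree-$52$ monomial, isolate the unwanted terms of maximal weight, cancel them with further squares, and iterate until only residual monomials of smaller order remain; this is the step carried out with the assistance of the small Excel programs mentioned in the introduction.

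The main obstacle is exactly this construction of the witness polynomials $f_j$: there is no closed formula, the number of degree-$52$ monomials of $P_5$ entering the Cartan expansions is large, and one must simultaneously ensure that only squares $Sq^{2^j}$ with $j\le 4$ occur and that all error terms are $< x$. Once the explicit identities have been found, the remaining verification --- expanding each $Sq^{2^j}(f_j)$ by the Cartan formula and confirming that the total equals $x$ modulo $P_5^-(\omega(x))$ --- is mechanical and entirely parallel to the displayed identities for $W_1,\dots,W_6$ in the proof of Lemma \ref{bd3}. These strictly inadmissible monomials, combined with Theorem \ref{dlcb1}, are then used to eliminate a family of inadmissible monomials of degree $108$ in the proof of Theorem \ref{thm2}.
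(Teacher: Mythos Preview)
Your overall strategy is exactly the one the paper uses: for a selection of the listed monomials (the paper treats eight representatives $Z_1,\dots,Z_8$ and declares the rest ``similar'') one writes down an explicit identity
\[
Z_j \;=\; \big(\text{sum of monomials of weight }(4,4,2,2,1)\text{ with smaller }\sigma\big)\;+\;\sum_{i=0}^{3} Sq^{2^i}(f_i)\ \ \mbox{mod}(P_5^-((4)|^2|(2)|^2|(1))),
\]
obtained by direct Cartan-formula computation. So the method and the weight-vector bookkeeping you describe are right; in fact the paper never needs $Sq^{16}$ here, only $Sq^1,Sq^2,Sq^4,Sq^8$.

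There is, however, a genuine problem with one of your proposed shortcuts. Swapping $x_1\leftrightarrow x_2$ is an $\mathcal A$-algebra automorphism and preserves weight vectors, but it does \emph{not} preserve the order on monomials, because that order compares exponent vectors lexicographically. A term $u$ with $\sigma(u)<\sigma(x)$ can be sent to a term $u'$ with $\sigma(u')>\sigma(x')$ (e.g.\ $u$ with $\nu_1(u)<\nu_1(x)$ but $\nu_2(u)>\nu_2(x)$), so a strict-inadmissibility witness for $x_1^{3}x_2^{7}x_3^{11}x_4^{16}x_5^{15}$ does not mechanically transport to one for $x_1^{7}x_2^{3}x_3^{11}x_4^{16}x_5^{15}$. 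Likewise, the factorisation route via Theorem \ref{dlcb1}(ii) does not apply to these monomials: the strictly inadmissible monomials of Lemma \ref{bd4} have weight vector $(4,2,2,1)$ of length $4$, so one would need a factor $v^{2^s}$ with $s\geqslant 4$, which is incompatible with the required degree. In practice each of the $25$ monomials must be handled by its own explicit identity, as the paper does; your proposal correctly identifies the method but stops short of producing these identities, which are the actual content of the proof.
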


\begin{proof} We prove the lemma for 8 monomials
	
\medskip
\centerline{\begin{tabular}{llll}
$Z_{1} =  x_1^{3}x_2^{7}x_3^{24}x_4^{3}x_5^{15}$& $Z_{2} =  x_1^{7}x_2^{3}x_3^{8}x_4^{19}x_5^{15}$& $Z_{3} =  x_1^{7}x_2^{3}x_3^{11}x_4^{19}x_5^{12} $\cr  $Z_{4} =  x_1^{7}x_2^{11}x_3^{16}x_4^{7}x_5^{11}$& $Z_{5} =  x_1^{7}x_2^{11}x_3^{17}x_4^{14}x_5^{3}$& $Z_{6} =  x_1^{15}x_2^{3}x_3^{16}x_4^{7}x_5^{11} $\cr  $Z_{7} =  x_1^{15}x_2^{17}x_3^{6}x_4^{3}x_5^{11}$& $Z_{8} =  x_1^{15}x_2^{17}x_3^{7}x_4^{10}x_5^{3}$.& \cr
\end{tabular}} 

\medskip
The others can be proved by a similar computation. By a direct computation using the Cartan formula we get
\begin{align*}
Z_1 &= x_1^{2}x_2^{7}x_3^{15}x_4^{3}x_5^{25} + x_1^{2}x_2^{7}x_3^{15}x_4^{9}x_5^{19} + x_1^{2}x_2^{7}x_3^{19}x_4^{9}x_5^{15} + x_1^{2}x_2^{7}x_3^{25}x_4^{3}x_5^{15}\\
&\quad + x_1^{2}x_2^{9}x_3^{15}x_4^{3}x_5^{23} + x_1^{2}x_2^{9}x_3^{23}x_4^{3}x_5^{15} + x_1^{3}x_2^{4}x_3^{15}x_4^{3}x_5^{27} + x_1^{3}x_2^{4}x_3^{27}x_4^{3}x_5^{15}\\
&\quad + x_1^{3}x_2^{7}x_3^{15}x_4^{3}x_5^{24} + x_1^{3}x_2^{7}x_3^{15}x_4^{8}x_5^{19} + x_1^{3}x_2^{7}x_3^{19}x_4^{8}x_5^{15}\big) +  Sq^1\big(x_1^{3}x_2^{7}x_3^{15}x_4^{3}x_5^{23}\\
&\quad + x_1^{3}x_2^{7}x_3^{23}x_4^{3}x_5^{15}\big) +  Sq^2\big(x_1^{2}x_2^{7}x_3^{15}x_4^{3}x_5^{23} + x_1^{2}x_2^{7}x_3^{23}x_4^{3}x_5^{15}\big)\\
&\quad +  Sq^4\big(x_1^{2}x_2^{11}x_3^{15}x_4^{5}x_5^{15} + x_1^{3}x_2^{4}x_3^{15}x_4^{3}x_5^{23} + x_1^{3}x_2^{4}x_3^{23}x_4^{3}x_5^{15} + x_1^{3}x_2^{11}x_3^{15}x_4^{4}x_5^{15}\big)\\
&\quad +  Sq^8\big(x_1^{2}x_2^{7}x_3^{15}x_4^{5}x_5^{15} + x_1^{3}x_2^{7}x_3^{15}x_4^{4}x_5^{15}\big) \ \mbox{ mod}(P_5^-((4)|^2|(2)|^2|(1)),\\ 
Z_2  & = x_1^{4}x_2^{3}x_3^{3}x_4^{15}x_5^{27} + x_1^{4}x_2^{3}x_3^{3}x_4^{27}x_5^{15} + x_1^{5}x_2^{2}x_3^{3}x_4^{15}x_5^{27} + x_1^{5}x_2^{2}x_3^{3}x_4^{27}x_5^{15}\\
&\quad + x_1^{7}x_2^{2}x_3^{3}x_4^{15}x_5^{25} + x_1^{7}x_2^{2}x_3^{3}x_4^{25}x_5^{15} + x_1^{7}x_2^{2}x_3^{9}x_4^{15}x_5^{19} + x_1^{7}x_2^{2}x_3^{9}x_4^{19}x_5^{15}\\
&\quad + x_1^{7}x_2^{3}x_3^{3}x_4^{15}x_5^{24} + x_1^{7}x_2^{3}x_3^{3}x_4^{24}x_5^{15} + x_1^{7}x_2^{3}x_3^{8}x_4^{15}x_5^{19} +  Sq^1\big(x_1^{7}x_2^{3}x_3^{3}x_4^{15}x_5^{23}\\
&\quad + x_1^{7}x_2^{3}x_3^{3}x_4^{23}x_5^{15}\big) +  Sq^2\big(x_1^{7}x_2^{2}x_3^{3}x_4^{15}x_5^{23} + x_1^{7}x_2^{2}x_3^{3}x_4^{23}x_5^{15}\big)\\
&\quad +  Sq^4\big(x_1^{4}x_2^{3}x_3^{3}x_4^{15}x_5^{23} + x_1^{4}x_2^{3}x_3^{3}x_4^{23}x_5^{15} + x_1^{5}x_2^{2}x_3^{3}x_4^{15}x_5^{23}\\
&\quad + x_1^{5}x_2^{2}x_3^{3}x_4^{23}x_5^{15} + x_1^{11}x_2^{2}x_3^{5}x_4^{15}x_5^{15} + x_1^{11}x_2^{3}x_3^{4}x_4^{15}x_5^{15}\big)\\
&\quad +  Sq^8\big(x_1^{7}x_2^{2}x_3^{5}x_4^{15}x_5^{15} + x_1^{7}x_2^{3}x_3^{4}x_4^{15}x_5^{15}\big) \ \mbox{ mod}(P_5^-((4)|^2|(2)|^2|(1)),\\
Z_3 &= x_1^{4}x_2^{3}x_3^{7}x_4^{27}x_5^{11} + x_1^{4}x_2^{3}x_3^{11}x_4^{15}x_5^{19} + x_1^{4}x_2^{3}x_3^{11}x_4^{23}x_5^{11} + x_1^{5}x_2^{2}x_3^{7}x_4^{27}x_5^{11}\\
&\quad + x_1^{5}x_2^{2}x_3^{11}x_4^{15}x_5^{19} + x_1^{5}x_2^{2}x_3^{11}x_4^{23}x_5^{11} + x_1^{7}x_2^{2}x_3^{7}x_4^{25}x_5^{11} + x_1^{7}x_2^{2}x_3^{9}x_4^{15}x_5^{19}\\
&\quad + x_1^{7}x_2^{2}x_3^{9}x_4^{23}x_5^{11} + x_1^{7}x_2^{2}x_3^{11}x_4^{15}x_5^{17} + x_1^{7}x_2^{2}x_3^{11}x_4^{19}x_5^{13} + x_1^{7}x_2^{3}x_3^{7}x_4^{24}x_5^{11}\\
&\quad + x_1^{7}x_2^{3}x_3^{8}x_4^{15}x_5^{19} + x_1^{7}x_2^{3}x_3^{8}x_4^{23}x_5^{11} + x_1^{7}x_2^{3}x_3^{11}x_4^{15}x_5^{16} +  Sq^1\big(x_1^{7}x_2^{3}x_3^{7}x_4^{15}x_5^{19}\\
&\quad + x_1^{7}x_2^{3}x_3^{7}x_4^{23}x_5^{11}\big) +  Sq^2\big(x_1^{7}x_2^{2}x_3^{7}x_4^{15}x_5^{19} + x_1^{7}x_2^{2}x_3^{7}x_4^{23}x_5^{11}\big)\\
&\quad +  Sq^4\big(x_1^{4}x_2^{3}x_3^{7}x_4^{15}x_5^{19} + x_1^{4}x_2^{3}x_3^{7}x_4^{23}x_5^{11} + x_1^{5}x_2^{2}x_3^{7}x_4^{15}x_5^{19}\\
&\quad + x_1^{5}x_2^{2}x_3^{7}x_4^{23}x_5^{11} + x_1^{11}x_2^{2}x_3^{7}x_4^{15}x_5^{13} + x_1^{11}x_2^{3}x_3^{7}x_4^{15}x_5^{12}\big)\\
&\quad +  Sq^8\big(x_1^{7}x_2^{2}x_3^{7}x_4^{15}x_5^{13} + x_1^{7}x_2^{3}x_3^{7}x_4^{15}x_5^{12}\big) \ \mbox{ mod}(P_5^-((4)|^2|(2)|^2|(1)),\\  
Z_4 &= x_1^{4}x_2^{7}x_3^{11}x_4^{11}x_5^{19} + x_1^{4}x_2^{7}x_3^{11}x_4^{19}x_5^{11} + x_1^{4}x_2^{11}x_3^{7}x_4^{11}x_5^{19} + x_1^{4}x_2^{11}x_3^{7}x_4^{19}x_5^{11}\\
&\quad + x_1^{5}x_2^{3}x_3^{7}x_4^{26}x_5^{11} + x_1^{5}x_2^{3}x_3^{11}x_4^{11}x_5^{22} + x_1^{5}x_2^{3}x_3^{11}x_4^{14}x_5^{19} + x_1^{5}x_2^{3}x_3^{11}x_4^{19}x_5^{14}\\
&\quad + x_1^{5}x_2^{3}x_3^{11}x_4^{22}x_5^{11} + x_1^{5}x_2^{3}x_3^{14}x_4^{11}x_5^{19} + x_1^{5}x_2^{3}x_3^{22}x_4^{11}x_5^{11} + x_1^{5}x_2^{3}x_3^{26}x_4^{7}x_5^{11}\\
&\quad + x_1^{5}x_2^{7}x_3^{3}x_4^{11}x_5^{26} + x_1^{5}x_2^{7}x_3^{10}x_4^{11}x_5^{19} + x_1^{5}x_2^{7}x_3^{18}x_4^{11}x_5^{11} + x_1^{5}x_2^{11}x_3^{3}x_4^{11}x_5^{22}\\
&\quad + x_1^{5}x_2^{11}x_3^{3}x_4^{19}x_5^{14} + x_1^{5}x_2^{11}x_3^{10}x_4^{7}x_5^{19} + x_1^{5}x_2^{11}x_3^{18}x_4^{7}x_5^{11} + x_1^{7}x_2^{3}x_3^{7}x_4^{11}x_5^{24}\\
&\quad + x_1^{7}x_2^{3}x_3^{7}x_4^{24}x_5^{11} + x_1^{7}x_2^{3}x_3^{9}x_4^{11}x_5^{22} + x_1^{7}x_2^{3}x_3^{9}x_4^{14}x_5^{19} + x_1^{7}x_2^{3}x_3^{9}x_4^{19}x_5^{14}\\
&\quad + x_1^{7}x_2^{3}x_3^{9}x_4^{22}x_5^{11} + x_1^{7}x_2^{3}x_3^{11}x_4^{13}x_5^{18} + x_1^{7}x_2^{3}x_3^{11}x_4^{14}x_5^{17} + x_1^{7}x_2^{3}x_3^{11}x_4^{17}x_5^{14}\\
&\quad + x_1^{7}x_2^{3}x_3^{11}x_4^{18}x_5^{13} + x_1^{7}x_2^{3}x_3^{14}x_4^{9}x_5^{19} + x_1^{7}x_2^{3}x_3^{14}x_4^{11}x_5^{17} + x_1^{7}x_2^{3}x_3^{18}x_4^{11}x_5^{13}\\
&\quad + x_1^{7}x_2^{3}x_3^{22}x_4^{9}x_5^{11} + x_1^{7}x_2^{3}x_3^{24}x_4^{7}x_5^{11} + x_1^{7}x_2^{5}x_3^{11}x_4^{11}x_5^{18} + x_1^{7}x_2^{5}x_3^{11}x_4^{18}x_5^{11}\\
&\quad + x_1^{7}x_2^{5}x_3^{18}x_4^{11}x_5^{11} + x_1^{7}x_2^{7}x_3^{3}x_4^{11}x_5^{24} + x_1^{7}x_2^{7}x_3^{8}x_4^{11}x_5^{19} + x_1^{7}x_2^{7}x_3^{8}x_4^{19}x_5^{11}\\
&\quad + x_1^{7}x_2^{7}x_3^{10}x_4^{9}x_5^{19} + x_1^{7}x_2^{7}x_3^{11}x_4^{11}x_5^{16} + x_1^{7}x_2^{7}x_3^{11}x_4^{16}x_5^{11} + x_1^{7}x_2^{7}x_3^{16}x_4^{11}x_5^{11}\\
&\quad + x_1^{7}x_2^{7}x_3^{18}x_4^{9}x_5^{11} + x_1^{7}x_2^{8}x_3^{7}x_4^{11}x_5^{19} + x_1^{7}x_2^{8}x_3^{7}x_4^{19}x_5^{11} + x_1^{7}x_2^{9}x_3^{3}x_4^{11}x_5^{22}\\
&\quad + x_1^{7}x_2^{9}x_3^{3}x_4^{19}x_5^{14} + x_1^{7}x_2^{9}x_3^{7}x_4^{11}x_5^{18} + x_1^{7}x_2^{9}x_3^{7}x_4^{18}x_5^{11} + x_1^{7}x_2^{9}x_3^{10}x_4^{7}x_5^{19}\\
&\quad + x_1^{7}x_2^{11}x_3^{3}x_4^{13}x_5^{18} + x_1^{7}x_2^{11}x_3^{3}x_4^{17}x_5^{14} + x_1^{7}x_2^{11}x_3^{5}x_4^{11}x_5^{19} + x_1^{7}x_2^{11}x_3^{7}x_4^{11}x_5^{16}\\
&\quad + x_1^{7}x_2^{11}x_3^{7}x_4^{16}x_5^{11} + x_1^{7}x_2^{11}x_3^{9}x_4^{7}x_5^{18} +  Sq^1\big(x_1^{7}x_2^{7}x_3^{7}x_4^{11}x_5^{19} + x_1^{7}x_2^{7}x_3^{7}x_4^{19}x_5^{11}\\
&\quad + x_1^{7}x_2^{7}x_3^{9}x_4^{7}x_5^{21} + x_1^{7}x_2^{7}x_3^{17}x_4^{7}x_5^{13} + x_1^{7}x_2^{9}x_3^{11}x_4^{11}x_5^{13} + x_1^{7}x_2^{9}x_3^{11}x_4^{13}x_5^{11}\\
&\quad + x_1^{7}x_2^{9}x_3^{13}x_4^{11}x_5^{11} + x_1^{7}x_2^{11}x_3^{11}x_4^{11}x_5^{11}\big) +  Sq^2\big(x_1^{7}x_2^{3}x_3^{7}x_4^{11}x_5^{22} + x_1^{7}x_2^{3}x_3^{7}x_4^{14}x_5^{19}\\
&\quad + x_1^{7}x_2^{3}x_3^{7}x_4^{19}x_5^{14} + x_1^{7}x_2^{3}x_3^{7}x_4^{22}x_5^{11} + x_1^{7}x_2^{3}x_3^{14}x_4^{7}x_5^{19} + x_1^{7}x_2^{3}x_3^{22}x_4^{7}x_5^{11}\\
&\quad + x_1^{7}x_2^{7}x_3^{3}x_4^{11}x_5^{22} + x_1^{7}x_2^{7}x_3^{3}x_4^{19}x_5^{14} + x_1^{7}x_2^{7}x_3^{10}x_4^{7}x_5^{19} + x_1^{7}x_2^{7}x_3^{18}x_4^{7}x_5^{11}\\
&\quad + x_1^{7}x_2^{10}x_3^{11}x_4^{11}x_5^{11}\big) +  Sq^4\big(x_1^{4}x_2^{7}x_3^{7}x_4^{11}x_5^{19} + x_1^{4}x_2^{7}x_3^{7}x_4^{19}x_5^{11} + x_1^{5}x_2^{3}x_3^{7}x_4^{11}x_5^{22}\\
&\quad + x_1^{5}x_2^{3}x_3^{7}x_4^{14}x_5^{19} + x_1^{5}x_2^{3}x_3^{7}x_4^{19}x_5^{14} + x_1^{5}x_2^{3}x_3^{7}x_4^{22}x_5^{11} + x_1^{5}x_2^{3}x_3^{14}x_4^{7}x_5^{19}\\
&\quad + x_1^{5}x_2^{3}x_3^{22}x_4^{7}x_5^{11} + x_1^{5}x_2^{7}x_3^{3}x_4^{11}x_5^{22} + x_1^{5}x_2^{7}x_3^{3}x_4^{19}x_5^{14} + x_1^{5}x_2^{7}x_3^{10}x_4^{7}x_5^{19}\\
&\quad + x_1^{5}x_2^{7}x_3^{18}x_4^{7}x_5^{11} + x_1^{11}x_2^{3}x_3^{7}x_4^{13}x_5^{14} + x_1^{11}x_2^{3}x_3^{7}x_4^{14}x_5^{13} + x_1^{11}x_2^{3}x_3^{14}x_4^{7}x_5^{13}\\
&\quad + x_1^{11}x_2^{5}x_3^{7}x_4^{11}x_5^{14} + x_1^{11}x_2^{5}x_3^{7}x_4^{14}x_5^{11} + x_1^{11}x_2^{5}x_3^{14}x_4^{7}x_5^{11} + x_1^{11}x_2^{7}x_3^{3}x_4^{13}x_5^{14}\\
&\quad + x_1^{11}x_2^{7}x_3^{5}x_4^{11}x_5^{14} + x_1^{11}x_2^{7}x_3^{7}x_4^{11}x_5^{12} + x_1^{11}x_2^{7}x_3^{7}x_4^{12}x_5^{11} + x_1^{11}x_2^{7}x_3^{9}x_4^{7}x_5^{14}\\
&\quad + x_1^{11}x_2^{7}x_3^{12}x_4^{7}x_5^{11}\big) +  Sq^8\big(x_1^{7}x_2^{3}x_3^{7}x_4^{13}x_5^{14} + x_1^{7}x_2^{3}x_3^{7}x_4^{14}x_5^{13}\\
&\quad + x_1^{7}x_2^{3}x_3^{14}x_4^{7}x_5^{13} + x_1^{7}x_2^{5}x_3^{7}x_4^{11}x_5^{14} + x_1^{7}x_2^{5}x_3^{7}x_4^{14}x_5^{11} + x_1^{7}x_2^{5}x_3^{14}x_4^{7}x_5^{11}\\
&\quad + x_1^{7}x_2^{7}x_3^{3}x_4^{13}x_5^{14} + x_1^{7}x_2^{7}x_3^{5}x_4^{11}x_5^{14} + x_1^{7}x_2^{7}x_3^{7}x_4^{11}x_5^{12} + x_1^{7}x_2^{7}x_3^{7}x_4^{12}x_5^{11}\\
&\quad + x_1^{7}x_2^{7}x_3^{9}x_4^{7}x_5^{14} + x_1^{7}x_2^{7}x_3^{12}x_4^{7}x_5^{11}\big) \ \mbox{ mod}(P_5^-((4)|^2|(2)|^2|(1)),\\  
Z_5 &= x_1^{4}x_2^{7}x_3^{11}x_4^{11}x_5^{19} + x_1^{4}x_2^{7}x_3^{11}x_4^{19}x_5^{11} + x_1^{4}x_2^{11}x_3^{7}x_4^{11}x_5^{19} + x_1^{4}x_2^{11}x_3^{7}x_4^{19}x_5^{11}\\
&\quad + x_1^{5}x_2^{7}x_3^{11}x_4^{3}x_5^{26} + x_1^{5}x_2^{7}x_3^{11}x_4^{10}x_5^{19} + x_1^{5}x_2^{7}x_3^{11}x_4^{11}x_5^{18} + x_1^{5}x_2^{7}x_3^{11}x_4^{18}x_5^{11}\\
&\quad + x_1^{5}x_2^{7}x_3^{11}x_4^{19}x_5^{10} + x_1^{5}x_2^{7}x_3^{11}x_4^{26}x_5^{3} + x_1^{5}x_2^{11}x_3^{7}x_4^{10}x_5^{19} + x_1^{5}x_2^{11}x_3^{7}x_4^{11}x_5^{18}\\
&\quad + x_1^{5}x_2^{11}x_3^{7}x_4^{18}x_5^{11} + x_1^{5}x_2^{11}x_3^{7}x_4^{19}x_5^{10} + x_1^{5}x_2^{11}x_3^{11}x_4^{3}x_5^{22} + x_1^{5}x_2^{11}x_3^{11}x_4^{22}x_5^{3}\\
&\quad + x_1^{5}x_2^{11}x_3^{19}x_4^{3}x_5^{14} + x_1^{5}x_2^{11}x_3^{19}x_4^{14}x_5^{3} + x_1^{7}x_2^{7}x_3^{8}x_4^{11}x_5^{19} + x_1^{7}x_2^{7}x_3^{8}x_4^{19}x_5^{11}\\
&\quad + x_1^{7}x_2^{7}x_3^{9}x_4^{10}x_5^{19} + x_1^{7}x_2^{7}x_3^{9}x_4^{11}x_5^{18} + x_1^{7}x_2^{7}x_3^{9}x_4^{18}x_5^{11} + x_1^{7}x_2^{7}x_3^{9}x_4^{19}x_5^{10}\\
&\quad + x_1^{7}x_2^{7}x_3^{11}x_4^{3}x_5^{24} + x_1^{7}x_2^{7}x_3^{11}x_4^{24}x_5^{3} + x_1^{7}x_2^{8}x_3^{7}x_4^{11}x_5^{19} + x_1^{7}x_2^{8}x_3^{7}x_4^{19}x_5^{11}\\
&\quad + x_1^{7}x_2^{9}x_3^{7}x_4^{10}x_5^{19} + x_1^{7}x_2^{9}x_3^{7}x_4^{11}x_5^{18} + x_1^{7}x_2^{9}x_3^{7}x_4^{18}x_5^{11} + x_1^{7}x_2^{9}x_3^{7}x_4^{19}x_5^{10}\\
&\quad + x_1^{7}x_2^{9}x_3^{11}x_4^{3}x_5^{22} + x_1^{7}x_2^{9}x_3^{11}x_4^{22}x_5^{3} + x_1^{7}x_2^{9}x_3^{19}x_4^{3}x_5^{14} + x_1^{7}x_2^{9}x_3^{19}x_4^{14}x_5^{3}\\
&\quad + x_1^{7}x_2^{11}x_3^{7}x_4^{9}x_5^{18} + x_1^{7}x_2^{11}x_3^{7}x_4^{18}x_5^{9} + x_1^{7}x_2^{11}x_3^{11}x_4^{5}x_5^{18} + x_1^{7}x_2^{11}x_3^{11}x_4^{18}x_5^{5}\\
&\quad + x_1^{7}x_2^{11}x_3^{13}x_4^{3}x_5^{18} + x_1^{7}x_2^{11}x_3^{13}x_4^{18}x_5^{3} + x_1^{7}x_2^{11}x_3^{17}x_4^{3}x_5^{14} +  Sq^1\big(x_1^{7}x_2^{7}x_3^{7}x_4^{11}x_5^{19}\\
&\quad + x_1^{7}x_2^{7}x_3^{7}x_4^{19}x_5^{11} + x_1^{7}x_2^{7}x_3^{7}x_4^{9}x_5^{21} + x_1^{7}x_2^{7}x_3^{7}x_4^{13}x_5^{17} + x_1^{7}x_2^{7}x_3^{7}x_4^{17}x_5^{13}\\
&\quad + x_1^{7}x_2^{7}x_3^{7}x_4^{21}x_5^{9}\big) +  Sq^2\big(x_1^{7}x_2^{7}x_3^{7}x_4^{10}x_5^{19} + x_1^{7}x_2^{7}x_3^{7}x_4^{11}x_5^{18} + x_1^{7}x_2^{7}x_3^{7}x_4^{18}x_5^{11}\\
&\quad + x_1^{7}x_2^{7}x_3^{7}x_4^{19}x_5^{10} + x_1^{7}x_2^{7}x_3^{11}x_4^{3}x_5^{22} + x_1^{7}x_2^{7}x_3^{11}x_4^{22}x_5^{3} + x_1^{7}x_2^{7}x_3^{19}x_4^{3}x_5^{14}\\
&\quad + x_1^{7}x_2^{7}x_3^{19}x_4^{14}x_5^{3}\big) +  Sq^4\big(x_1^{4}x_2^{7}x_3^{7}x_4^{11}x_5^{19} + x_1^{4}x_2^{7}x_3^{7}x_4^{19}x_5^{11} + x_1^{5}x_2^{7}x_3^{7}x_4^{10}x_5^{19}\\
&\quad + x_1^{5}x_2^{7}x_3^{7}x_4^{11}x_5^{18} + x_1^{5}x_2^{7}x_3^{7}x_4^{18}x_5^{11} + x_1^{5}x_2^{7}x_3^{7}x_4^{19}x_5^{10} + x_1^{5}x_2^{7}x_3^{11}x_4^{3}x_5^{22}\\
&\quad + x_1^{5}x_2^{7}x_3^{11}x_4^{22}x_5^{3} + x_1^{5}x_2^{7}x_3^{19}x_4^{3}x_5^{14} + x_1^{5}x_2^{7}x_3^{19}x_4^{14}x_5^{3} + x_1^{11}x_2^{7}x_3^{7}x_4^{9}x_5^{14}\\
&\quad + x_1^{11}x_2^{7}x_3^{7}x_4^{14}x_5^{9} + x_1^{11}x_2^{7}x_3^{11}x_4^{5}x_5^{14} + x_1^{11}x_2^{7}x_3^{11}x_4^{14}x_5^{5} + x_1^{11}x_2^{7}x_3^{13}x_4^{3}x_5^{14}\\
&\quad + x_1^{11}x_2^{7}x_3^{13}x_4^{14}x_5^{3}\big) +  Sq^8\big(x_1^{7}x_2^{7}x_3^{7}x_4^{9}x_5^{14} + x_1^{7}x_2^{7}x_3^{7}x_4^{14}x_5^{9} + x_1^{7}x_2^{7}x_3^{11}x_4^{5}x_5^{14}\\
&\quad + x_1^{7}x_2^{7}x_3^{11}x_4^{14}x_5^{5} + x_1^{7}x_2^{7}x_3^{13}x_4^{3}x_5^{14} + x_1^{7}x_2^{7}x_3^{13}x_4^{14}x_5^{3}\big) \ \mbox{ mod}(P_5^-((4)|^2|(2)|^2|(1)),\\
Z_6 &= x_1^{11}x_2^{2}x_3^{7}x_4^{11}x_5^{21} + x_1^{11}x_2^{2}x_3^{7}x_4^{13}x_5^{19} + x_1^{11}x_2^{2}x_3^{7}x_4^{19}x_5^{13} + x_1^{11}x_2^{2}x_3^{7}x_4^{21}x_5^{11}\\
&\quad + x_1^{11}x_2^{2}x_3^{13}x_4^{7}x_5^{19} + x_1^{11}x_2^{2}x_3^{21}x_4^{7}x_5^{11} + x_1^{11}x_2^{3}x_3^{7}x_4^{11}x_5^{20} + x_1^{11}x_2^{3}x_3^{7}x_4^{12}x_5^{19}\\
&\quad + x_1^{11}x_2^{3}x_3^{7}x_4^{19}x_5^{12} + x_1^{11}x_2^{3}x_3^{7}x_4^{20}x_5^{11} + x_1^{11}x_2^{3}x_3^{12}x_4^{7}x_5^{19} + x_1^{11}x_2^{3}x_3^{20}x_4^{7}x_5^{11}\\
&\quad + x_1^{15}x_2^{2}x_3^{7}x_4^{11}x_5^{17} + x_1^{15}x_2^{2}x_3^{7}x_4^{17}x_5^{11} + x_1^{15}x_2^{2}x_3^{17}x_4^{7}x_5^{11} + x_1^{15}x_2^{3}x_3^{7}x_4^{11}x_5^{16}\\
&\quad + x_1^{15}x_2^{3}x_3^{7}x_4^{16}x_5^{11} +  Sq^1\big(x_1^{15}x_2^{3}x_3^{11}x_4^{11}x_5^{11}\big) +  Sq^2\big(x_1^{15}x_2^{2}x_3^{11}x_4^{11}x_5^{11}\big)\\
&\quad +  Sq^4\big(x_1^{15}x_2^{2}x_3^{7}x_4^{11}x_5^{13} + x_1^{15}x_2^{2}x_3^{7}x_4^{13}x_5^{11} + x_1^{15}x_2^{2}x_3^{13}x_4^{7}x_5^{11}\\
&\quad + x_1^{15}x_2^{3}x_3^{7}x_4^{11}x_5^{12} + x_1^{15}x_2^{3}x_3^{7}x_4^{12}x_5^{11} + x_1^{15}x_2^{3}x_3^{12}x_4^{7}x_5^{11}\big)\\
&\quad +  Sq^8\big(x_1^{11}x_2^{2}x_3^{7}x_4^{11}x_5^{13} + x_1^{11}x_2^{2}x_3^{7}x_4^{13}x_5^{11} + x_1^{11}x_2^{2}x_3^{13}x_4^{7}x_5^{11} + x_1^{11}x_2^{3}x_3^{7}x_4^{11}x_5^{12}\\
&\quad + x_1^{11}x_2^{3}x_3^{7}x_4^{12}x_5^{11} + x_1^{11}x_2^{3}x_3^{12}x_4^{7}x_5^{11}\big) \ \mbox{ mod}(P_5^-((4)|^2|(2)|^2|(1)),\\
Z_7 &= x_1^{11}x_2^{7}x_3^{3}x_4^{10}x_5^{21} + x_1^{11}x_2^{7}x_3^{3}x_4^{12}x_5^{19} + x_1^{11}x_2^{7}x_3^{3}x_4^{18}x_5^{13} + x_1^{11}x_2^{7}x_3^{3}x_4^{20}x_5^{11}\\
&\quad + x_1^{11}x_2^{7}x_3^{10}x_4^{3}x_5^{21} + x_1^{11}x_2^{7}x_3^{12}x_4^{3}x_5^{19} + x_1^{11}x_2^{7}x_3^{18}x_4^{3}x_5^{13} + x_1^{11}x_2^{7}x_3^{20}x_4^{3}x_5^{11}\\
&\quad + x_1^{11}x_2^{11}x_3^{5}x_4^{6}x_5^{19} + x_1^{11}x_2^{13}x_3^{3}x_4^{6}x_5^{19} + x_1^{11}x_2^{13}x_3^{6}x_4^{3}x_5^{19} + x_1^{11}x_2^{19}x_3^{5}x_4^{6}x_5^{11}\\
&\quad + x_1^{11}x_2^{21}x_3^{3}x_4^{6}x_5^{11} + x_1^{11}x_2^{21}x_3^{6}x_4^{3}x_5^{11} + x_1^{15}x_2^{7}x_3^{3}x_4^{10}x_5^{17} + x_1^{15}x_2^{7}x_3^{3}x_4^{16}x_5^{11}\\
&\quad + x_1^{15}x_2^{7}x_3^{10}x_4^{3}x_5^{17} + x_1^{15}x_2^{7}x_3^{16}x_4^{3}x_5^{11} + x_1^{15}x_2^{17}x_3^{3}x_4^{6}x_5^{11} +  Sq^1\big(x_1^{15}x_2^{11}x_3^{9}x_4^{5}x_5^{11}\big)\\
&\quad +  Sq^2\big(x_1^{15}x_2^{11}x_3^{3}x_4^{10}x_5^{11} + x_1^{15}x_2^{11}x_3^{10}x_4^{3}x_5^{11}\big) +  Sq^4\big(x_1^{15}x_2^{7}x_3^{3}x_4^{10}x_5^{13}\\
&\quad + x_1^{15}x_2^{7}x_3^{3}x_4^{12}x_5^{11} + x_1^{15}x_2^{7}x_3^{10}x_4^{3}x_5^{13} + x_1^{15}x_2^{7}x_3^{12}x_4^{3}x_5^{11} + x_1^{15}x_2^{11}x_3^{5}x_4^{6}x_5^{11}\\
&\quad + x_1^{15}x_2^{13}x_3^{3}x_4^{6}x_5^{11} + x_1^{15}x_2^{13}x_3^{6}x_4^{3}x_5^{11} +  Sq^8\big(x_1^{11}x_2^{7}x_3^{3}x_4^{10}x_5^{13} + x_1^{11}x_2^{7}x_3^{3}x_4^{12}x_5^{11}\\
&\quad + x_1^{11}x_2^{7}x_3^{10}x_4^{3}x_5^{13} + x_1^{11}x_2^{7}x_3^{12}x_4^{3}x_5^{11} + x_1^{11}x_2^{11}x_3^{5}x_4^{6}x_5^{11} + x_1^{11}x_2^{13}x_3^{3}x_4^{6}x_5^{11}\\
&\quad + x_1^{11}x_2^{13}x_3^{6}x_4^{3}x_5^{11}\big) \ \mbox{ mod}(P_5^-((4)|^2|(2)|^2|(1)),\\
Z_8 &= x_1^{11}x_2^{7}x_3^{11}x_4^{3}x_5^{20} + x_1^{11}x_2^{7}x_3^{11}x_4^{20}x_5^{3} + x_1^{11}x_2^{7}x_3^{13}x_4^{3}x_5^{18} + x_1^{11}x_2^{7}x_3^{13}x_4^{18}x_5^{3}\\
&\quad + x_1^{11}x_2^{7}x_3^{19}x_4^{3}x_5^{12} + x_1^{11}x_2^{7}x_3^{19}x_4^{12}x_5^{3} + x_1^{11}x_2^{7}x_3^{21}x_4^{3}x_5^{10} + x_1^{11}x_2^{7}x_3^{21}x_4^{10}x_5^{3}\\
&\quad + x_1^{11}x_2^{11}x_3^{19}x_4^{5}x_5^{6} + x_1^{11}x_2^{13}x_3^{7}x_4^{3}x_5^{18} + x_1^{11}x_2^{13}x_3^{7}x_4^{18}x_5^{3} + x_1^{11}x_2^{19}x_3^{11}x_4^{5}x_5^{6}\\
&\quad + x_1^{11}x_2^{21}x_3^{7}x_4^{3}x_5^{10} + x_1^{11}x_2^{21}x_3^{7}x_4^{10}x_5^{3} + x_1^{15}x_2^{7}x_3^{11}x_4^{3}x_5^{16} + x_1^{15}x_2^{7}x_3^{11}x_4^{16}x_5^{3}\\
&\quad + x_1^{15}x_2^{7}x_3^{17}x_4^{3}x_5^{10} + x_1^{15}x_2^{7}x_3^{17}x_4^{10}x_5^{3} + x_1^{15}x_2^{17}x_3^{7}x_4^{3}x_5^{10} +  Sq^1\big(x_1^{15}x_2^{11}x_3^{11}x_4^{9}x_5^{5}\big)\\
&\quad +  Sq^2\big(x_1^{15}x_2^{11}x_3^{11}x_4^{3}x_5^{10} + x_1^{15}x_2^{11}x_3^{11}x_4^{10}x_5^{3}\big) +  Sq^4\big(x_1^{15}x_2^{7}x_3^{11}x_4^{3}x_5^{12} + x_1^{15}x_2^{7}x_3^{11}x_4^{12}x_5^{3}\\
&\quad + x_1^{15}x_2^{7}x_3^{13}x_4^{3}x_5^{10} + x_1^{15}x_2^{7}x_3^{13}x_4^{10}x_5^{3} + x_1^{15}x_2^{11}x_3^{11}x_4^{5}x_5^{6} + x_1^{15}x_2^{13}x_3^{7}x_4^{3}x_5^{10}\\
&\quad + x_1^{15}x_2^{13}x_3^{7}x_4^{10}x_5^{3}\big) +  Sq^8\big(x_1^{11}x_2^{7}x_3^{11}x_4^{3}x_5^{12} + x_1^{11}x_2^{7}x_3^{11}x_4^{12}x_5^{3}\\
&\quad + x_1^{11}x_2^{7}x_3^{13}x_4^{3}x_5^{10} + x_1^{11}x_2^{7}x_3^{13}x_4^{10}x_5^{3} + x_1^{11}x_2^{11}x_3^{11}x_4^{5}x_5^{6}\\
&\quad + x_1^{11}x_2^{13}x_3^{7}x_4^{3}x_5^{10} + x_1^{11}x_2^{13}x_3^{7}x_4^{10}x_5^{3}\big) \ \mbox{ mod}(P_5^-((4)|^2|(2)|^2|(1)).
\end{align*}
These equalities imply the monomials $Z_j,\, 1 \leqslant j \leqslant 8$, are strictly inadmissible.
\end{proof}

\begin{lems}\label{bd6}\
	
\medskip
{\rm i)} If $(i,j,t,u,v)$ is an arbitrary permutation of $(1,2,3,4,5)$, then the monomials $x_i^{7}x_j^{7}x_t^{31}x_u^{31}x_v^{32}$,  $x_i^{7}x_j^{15}x_t^{23}x_u^{31}x_v^{32}$, $x_i^{15}x_j^{15}x_t^{23}x_u^{23}x_v^{32}$ are strictly inadmissible.
	
\smallskip
{\rm ii)} The following monomials are strictly inadmissible:
	
\medskip
\centerline{\begin{tabular}{llll}
$ x_1^{7}x_2^{15}x_3^{15}x_4^{23}x_5^{48}$& $ x_1^{7}x_2^{15}x_3^{15}x_4^{55}x_5^{16}$& $ x_1^{7}x_2^{15}x_3^{17}x_4^{7}x_5^{62}$& $ x_1^{7}x_2^{15}x_3^{23}x_4^{15}x_5^{48} $\cr  $ x_1^{7}x_2^{15}x_3^{23}x_4^{47}x_5^{16}$& $ x_1^{7}x_2^{15}x_3^{31}x_4^{33}x_5^{22}$& $ x_1^{7}x_2^{15}x_3^{55}x_4^{15}x_5^{16}$& $ x_1^{7}x_2^{31}x_3^{15}x_4^{33}x_5^{22} $\cr  $ x_1^{7}x_2^{31}x_3^{31}x_4^{33}x_5^{6}$& $ x_1^{7}x_2^{31}x_3^{39}x_4^{9}x_5^{22}$& $ x_1^{15}x_2^{7}x_3^{15}x_4^{23}x_5^{48}$& $ x_1^{15}x_2^{7}x_3^{15}x_4^{55}x_5^{16} $\cr  $ x_1^{15}x_2^{7}x_3^{17}x_4^{7}x_5^{62}$& $ x_1^{15}x_2^{7}x_3^{23}x_4^{15}x_5^{48}$& $ x_1^{15}x_2^{7}x_3^{23}x_4^{47}x_5^{16}$& $ x_1^{15}x_2^{7}x_3^{31}x_4^{33}x_5^{22} $\cr  $ x_1^{15}x_2^{7}x_3^{55}x_4^{15}x_5^{16}$& $ x_1^{15}x_2^{15}x_3^{7}x_4^{23}x_5^{48}$& $ x_1^{15}x_2^{15}x_3^{7}x_4^{55}x_5^{16}$& $ x_1^{15}x_2^{15}x_3^{23}x_4^{7}x_5^{48} $\cr  $ x_1^{15}x_2^{15}x_3^{23}x_4^{33}x_5^{22}$& $ x_1^{15}x_2^{15}x_3^{23}x_4^{39}x_5^{16}$& $ x_1^{15}x_2^{15}x_3^{23}x_4^{49}x_5^{6}$& $ x_1^{15}x_2^{15}x_3^{49}x_4^{22}x_5^{7} $\cr  $ x_1^{15}x_2^{15}x_3^{55}x_4^{7}x_5^{16}$& $ x_1^{15}x_2^{15}x_3^{55}x_4^{17}x_5^{6}$& $ x_1^{15}x_2^{19}x_3^{5}x_4^{7}x_5^{62}$& $ x_1^{15}x_2^{19}x_3^{7}x_4^{5}x_5^{62} $\cr $ x_1^{15}x_2^{19}x_3^{7}x_4^{13}x_5^{54}$& $ x_1^{15}x_2^{19}x_3^{7}x_4^{29}x_5^{38}$& $ x_1^{15}x_2^{19}x_3^{7}x_4^{45}x_5^{22}$& $ x_1^{15}x_2^{19}x_3^{7}x_4^{61}x_5^{6} $\cr  $ x_1^{15}x_2^{19}x_3^{39}x_4^{13}x_5^{22}$& $ x_1^{15}x_2^{23}x_3^{15}x_4^{33}x_5^{22}$& $ x_1^{15}x_2^{23}x_3^{31}x_4^{33}x_5^{6}$& $ x_1^{15}x_2^{23}x_3^{39}x_4^{7}x_5^{24} $\cr  $ x_1^{15}x_2^{23}x_3^{39}x_4^{9}x_5^{22}$& $ x_1^{15}x_2^{31}x_3^{7}x_4^{33}x_5^{22}$& $ x_1^{15}x_2^{31}x_3^{23}x_4^{33}x_5^{6}$& $ x_1^{15}x_2^{31}x_3^{33}x_4^{6}x_5^{23} $\cr  $ x_1^{15}x_2^{31}x_3^{33}x_4^{7}x_5^{22}$& $ x_1^{15}x_2^{31}x_3^{33}x_4^{22}x_5^{7}$& $ x_1^{15}x_2^{31}x_3^{33}x_4^{23}x_5^{6}$& $ x_1^{15}x_2^{31}x_3^{35}x_4^{5}x_5^{22} $\cr  $ x_1^{15}x_2^{31}x_3^{35}x_4^{21}x_5^{6}$& $ x_1^{15}x_2^{51}x_3^{7}x_4^{13}x_5^{22}$& $ x_1^{31}x_2^{7}x_3^{15}x_4^{33}x_5^{22}$& $ x_1^{31}x_2^{7}x_3^{31}x_4^{33}x_5^{6} $\cr  $ x_1^{31}x_2^{7}x_3^{39}x_4^{9}x_5^{22}$& $ x_1^{31}x_2^{15}x_3^{7}x_4^{33}x_5^{22}$& $ x_1^{31}x_2^{15}x_3^{23}x_4^{33}x_5^{6}$& $ x_1^{31}x_2^{15}x_3^{33}x_4^{6}x_5^{23} $\cr  $ x_1^{31}x_2^{15}x_3^{33}x_4^{7}x_5^{22}$& $ x_1^{31}x_2^{15}x_3^{33}x_4^{22}x_5^{7}$& $ x_1^{31}x_2^{15}x_3^{33}x_4^{23}x_5^{6}$& $ x_1^{31}x_2^{15}x_3^{35}x_4^{5}x_5^{22} $\cr   
\end{tabular}}
\centerline{\begin{tabular}{llll}
$ x_1^{31}x_2^{15}x_3^{35}x_4^{21}x_5^{6}$& $ x_1^{31}x_2^{31}x_3^{7}x_4^{33}x_5^{6}$& $ x_1^{31}x_2^{31}x_3^{33}x_4^{6}x_5^{7}$& $ x_1^{31}x_2^{31}x_3^{33}x_4^{7}x_5^{6} $\cr   $ x_1^{31}x_2^{31}x_3^{35}x_4^{5}x_5^{6}$& $ x_1^{31}x_2^{35}x_3^{7}x_4^{13}x_5^{22}$& $ x_1^{31}x_2^{39}x_3^{7}x_4^{9}x_5^{22}$& $ x_1^{31}x_2^{39}x_3^{11}x_4^{5}x_5^{22} $\cr  $ x_1^{31}x_2^{39}x_3^{11}x_4^{21}x_5^{6}$.& &  \cr  
\end{tabular}} 
\end{lems}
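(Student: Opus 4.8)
The plan is to prove each listed monomial strictly inadmissible in exactly the manner used for Lemmas \ref{bd3} and \ref{bd5}: for a monomial $x$ one exhibits an explicit relation
\[
x \;=\; \sum_{j} u_j \;+\; \sum_{i} Sq^{2^{i}}(f_i)\pmod{P_5^-(\omega(x))},
\]
with $u_j<x$, with the $f_i$ suitably chosen polynomials, and with $2^{i}<2^{r}$ where $r=\ell(\omega(x))$; then $x+\sum_j u_j\in\mathcal A_r^+P_5+P_5^-(\omega(x))$, which is the definition of strict inadmissibility. A preliminary bookkeeping step is to check that every monomial in both lists has weight vector $(4)|^3|(2)|^2|(1)$, so throughout $r=6$, the Steenrod operations available in the relations are $Sq^1,Sq^2,Sq^4,Sq^8,Sq^{16},Sq^{32}$, and the error terms to be absorbed are precisely those of weight vector strictly smaller than $(4)|^3|(2)|^2|(1)$. (These monomials are exactly the candidates that have to be discarded when assembling $B_5^+((4)|^3|(2)|^2|(1))$ in Proposition \ref{mdc}.)

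For part i) I would first use that strict inadmissibility is invariant under permutation of the variables (permutation matrices lie in $GL_5$ and preserve the order $<$, the submodule $\mathcal A_r^+P_5$, and $P_5^-(\omega)$), so it suffices to treat one representative in each of the three orbits, say $x_1^{7}x_2^{7}x_3^{31}x_4^{31}x_5^{32}$, $x_1^{7}x_2^{15}x_3^{23}x_4^{31}x_5^{32}$ and $x_1^{15}x_2^{15}x_3^{23}x_4^{23}x_5^{32}$. Each of these factors as $y\cdot x_5^{32}=y\cdot(x_5)^{2^{5}}$ with $y$ a degree-$76$ monomial of weight vector $(4)|^3|(2)|^2$, hence $\ell(\omega(y))=5$ and $\omega_i(y)=0$ for $i>5$; so if $y$ can be shown strictly inadmissible, Theorem \ref{dlcb1}(ii) with $s=5$ gives the claim for $x$ at once. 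I would try this reduction first, and only fall back on writing out a direct Cartan-formula relation for $x$ itself (as displayed in Lemmas \ref{bd3}, \ref{bd5}) if the degree-$76$ factor is not already known to be strictly inadmissible.

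For part ii) the same scheme is applied monomial by monomial. I would organize the list by the shape of the exponent vector — the sub-blocks beginning $x_1^{15}x_2^{15}\dots$, those beginning $x_1^{31}\dots$, and so on — so that a single template relation, after specialization of the exponents, simultaneously handles many entries; I would also invoke Theorem \ref{dlcb1}(i)--(ii) (and, where applicable, Proposition \ref{mdmo}) to promote a handful of ``core'' strictly inadmissible monomials in fewer variables or of lower degree (such as those of Lemma \ref{bd2}, or reductions of shape $x_i^{2^h-1}\theta_{J_i}(\cdot)$) to as many entries as possible. For the monomials not covered this way one writes the explicit relation in full and notes that the remaining ones follow by an entirely analogous computation.

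The difficulty here is computational rather than conceptual: for each relation the polynomials $f_i$ must be chosen so that, after every $Sq^{2^{i}}(f_i)$ is expanded by the Cartan formula, every term that is neither $<x$ nor of strictly smaller weight vector cancels in pairs modulo $\mathcal A_6^+P_5+P_5^-((4)|^3|(2)|^2|(1))$ — a cancellation that depends delicately on the binary expansions $7=111_2$, $15=1111_2$, $23=10111_2$, $31=11111_2$, $32=100000_2$ occurring in the exponents. Carrying this out by hand for the roughly seventy degree-$108$ monomials in the list is impractical, so I would perform the term collection with a short spreadsheet routine (as the authors do), using it only to certify that each proposed relation closes up; the creative part, which cannot be automated blindly, is guessing the right set of ``pivot'' exponents to lower in forming the $f_i$.
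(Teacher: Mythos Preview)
Your overall fallback plan---exhibit an explicit relation $x=\sum_j u_j+\sum_i Sq^{2^i}(f_i)\pmod{P_5^-(\omega(x))}$ with $u_j<x$---is exactly what the paper does, and is the only method that ultimately works here. But both shortcuts you propose before reaching that fallback are flawed.

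The permutation-symmetry reduction is wrong. Permutations of the variables preserve the weight vector but do \emph{not} preserve the exponent-vector order $\sigma$, hence do not preserve the total order $<$; consequently neither admissibility nor strict inadmissibility is $\Sigma_5$-invariant. A concrete witness already sits in the paper: in the list $B_4((4)|^3|(2)|^2|(1))=\{w_1,\dots,w_{56}\}$ of Section~\ref{s5}, the monomial $w_3=x_1^{7}x_2^{15}x_3^{23}x_4^{63}$ is admissible, yet its permutation $x_1^{23}x_2^{7}x_3^{15}x_4^{63}$ (same weight vector) is absent from the list and is therefore inadmissible. So ``one representative per $\Sigma_5$-orbit'' does not suffice; this is also why the paper's own proof treats $U_1=x_1^{7}x_2^{15}x_3^{23}x_4^{31}x_5^{32}$ and $U_3=x_1^{31}x_2^{15}x_3^{32}x_4^{23}x_5^{7}$ separately even though they lie in the same orbit.

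The reduction via Theorem~\ref{dlcb1}(ii) to the degree-$76$ factor $y$ is also unlikely to help. Writing $x=y\cdot x_v^{2^5}$ requires $y$ itself to be strictly inadmissible, but the degree-$76$ monomials $x_1^{7}x_2^{7}x_3^{31}x_4^{31}$, $x_1^{7}x_2^{15}x_3^{23}x_4^{31}$, $x_1^{15}x_2^{15}x_3^{23}x_4^{23}$ (viewed in $P_5$ with one zero exponent) have weight $(4)|^3|(2)|^2$ and are admissible in $P_4$ by \cite{su2}, hence admissible in $P_5$ by Proposition~\ref{mdmo} with $h=0$. So the reduction collapses, and you are forced back to the direct relation for $x$ in degree $108$---which is precisely the route the paper takes, writing out the Cartan-formula identities for $U_1,U_2,U_3$ in part~(i) and for $T_1,\dots,T_5$ in part~(ii), and asserting that the remaining monomials are handled by analogous (not symmetric) computations.
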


\begin{proof} The monomials in this lemma are of weight vector $(4)|^3|(2)|^2|(1)$.  We prove Part i) for 
$U_{1} =  x_1^{7}x_2^{15}x_3^{23}x_4^{31}x_5^{32}$, $U_{2} =  x_1^{15}x_2^{23}x_3^{15}x_4^{32}x_5^{23}$, $U_{3} =  x_1^{31}x_2^{15}x_3^{32}x_4^{23}x_5^{7}$. By using the Cartan formula we have
\begin{align*}
U_1 &= x_1^{5}x_2^{11}x_3^{22}x_4^{31}x_5^{39} + x_1^{5}x_2^{11}x_3^{22}x_4^{39}x_5^{31} + x_1^{5}x_2^{15}x_3^{22}x_4^{31}x_5^{35} + x_1^{5}x_2^{15}x_3^{22}x_4^{35}x_5^{31}\\ 
&\quad + x_1^{6}x_2^{9}x_3^{23}x_4^{31}x_5^{39} + x_1^{6}x_2^{9}x_3^{23}x_4^{39}x_5^{31} + x_1^{6}x_2^{15}x_3^{23}x_4^{31}x_5^{33} + x_1^{6}x_2^{15}x_3^{23}x_4^{33}x_5^{31}\\ 
&\quad + x_1^{7}x_2^{8}x_3^{23}x_4^{31}x_5^{39} + x_1^{7}x_2^{8}x_3^{23}x_4^{39}x_5^{31} + x_1^{7}x_2^{9}x_3^{22}x_4^{31}x_5^{39} + x_1^{7}x_2^{9}x_3^{22}x_4^{39}x_5^{31}\\ 
&\quad + x_1^{7}x_2^{15}x_3^{22}x_4^{31}x_5^{33} + x_1^{7}x_2^{15}x_3^{22}x_4^{33}x_5^{31} + x_1^{7}x_2^{15}x_3^{23}x_4^{16}x_5^{47} +  Sq^1\big(x_1^{5}x_2^{15}x_3^{25}x_4^{31}x_5^{31}\\ 
&\quad + x_1^{7}x_2^{15}x_3^{23}x_4^{31}x_5^{31}\big) +  Sq^2\big(x_1^{7}x_2^{15}x_3^{22}x_4^{31}x_5^{31}\big) +  Sq^4\big(x_1^{5}x_2^{15}x_3^{22}x_4^{31}x_5^{31}\big)\\ 
&\quad +  Sq^8\big(x_1^{5}x_2^{11}x_3^{22}x_4^{31}x_5^{31} + x_1^{6}x_2^{9}x_3^{23}x_4^{31}x_5^{31} + x_1^{7}x_2^{8}x_3^{23}x_4^{31}x_5^{31} + x_1^{7}x_2^{9}x_3^{22}x_4^{31}x_5^{31}\big)\\ 
&\quad +  Sq^{16}\big(x_1^{7}x_2^{15}x_3^{23}x_4^{16}x_5^{31}\big) \ \mbox{ mod}(P_5^-((4)|^3|(2)|^2|(1)),\\
U_2 &= x_1^{5}x_2^{15}x_3^{23}x_4^{27}x_5^{38} + x_1^{5}x_2^{15}x_3^{23}x_4^{30}x_5^{35} + x_1^{5}x_2^{15}x_3^{23}x_4^{35}x_5^{30} + x_1^{5}x_2^{15}x_3^{23}x_4^{38}x_5^{27}\\ 
&\quad + x_1^{5}x_2^{15}x_3^{27}x_4^{23}x_5^{38} + x_1^{5}x_2^{15}x_3^{27}x_4^{38}x_5^{23} + x_1^{5}x_2^{15}x_3^{35}x_4^{23}x_5^{30} + x_1^{5}x_2^{15}x_3^{35}x_4^{30}x_5^{23}\\ 
&\quad + x_1^{5}x_2^{27}x_3^{15}x_4^{23}x_5^{38} + x_1^{5}x_2^{27}x_3^{15}x_4^{38}x_5^{23} + x_1^{5}x_2^{35}x_3^{15}x_4^{23}x_5^{30} + x_1^{5}x_2^{35}x_3^{15}x_4^{30}x_5^{23}\\ 
&\quad + x_1^{7}x_2^{13}x_3^{23}x_4^{27}x_5^{38} + x_1^{7}x_2^{13}x_3^{23}x_4^{30}x_5^{35} + x_1^{7}x_2^{13}x_3^{23}x_4^{35}x_5^{30} + x_1^{7}x_2^{13}x_3^{23}x_4^{38}x_5^{27}\\ 
&\quad + x_1^{7}x_2^{13}x_3^{27}x_4^{23}x_5^{38} + x_1^{7}x_2^{13}x_3^{27}x_4^{38}x_5^{23} + x_1^{7}x_2^{13}x_3^{35}x_4^{23}x_5^{30} + x_1^{7}x_2^{13}x_3^{35}x_4^{30}x_5^{23}\\ 
&\quad + x_1^{7}x_2^{25}x_3^{15}x_4^{23}x_5^{38} + x_1^{7}x_2^{25}x_3^{15}x_4^{38}x_5^{23} + x_1^{7}x_2^{33}x_3^{15}x_4^{23}x_5^{30} + x_1^{7}x_2^{33}x_3^{15}x_4^{30}x_5^{23}\\ 
&\quad + x_1^{9}x_2^{23}x_3^{15}x_4^{23}x_5^{38} + x_1^{9}x_2^{23}x_3^{15}x_4^{38}x_5^{23} + x_1^{11}x_2^{13}x_3^{7}x_4^{23}x_5^{54} + x_1^{11}x_2^{13}x_3^{7}x_4^{54}x_5^{23}\\ 
&\quad + x_1^{11}x_2^{15}x_3^{5}x_4^{23}x_5^{54} + x_1^{11}x_2^{15}x_3^{5}x_4^{54}x_5^{23} + x_1^{11}x_2^{21}x_3^{7}x_4^{23}x_5^{46} + x_1^{11}x_2^{21}x_3^{7}x_4^{30}x_5^{39}\\ 
&\quad + x_1^{11}x_2^{21}x_3^{7}x_4^{39}x_5^{30} + x_1^{11}x_2^{21}x_3^{7}x_4^{46}x_5^{23} + x_1^{11}x_2^{21}x_3^{15}x_4^{23}x_5^{38} + x_1^{11}x_2^{21}x_3^{15}x_4^{38}x_5^{23}\\ 
&\quad + x_1^{11}x_2^{23}x_3^{5}x_4^{23}x_5^{46} + x_1^{11}x_2^{23}x_3^{5}x_4^{30}x_5^{39} + x_1^{11}x_2^{23}x_3^{5}x_4^{39}x_5^{30} + x_1^{11}x_2^{23}x_3^{5}x_4^{46}x_5^{23}\\ 
&\quad + x_1^{15}x_2^{13}x_3^{7}x_4^{23}x_5^{50} + x_1^{15}x_2^{13}x_3^{7}x_4^{50}x_5^{23} + x_1^{15}x_2^{13}x_3^{19}x_4^{23}x_5^{38} + x_1^{15}x_2^{13}x_3^{19}x_4^{38}x_5^{23}\\ 
&\quad + x_1^{15}x_2^{15}x_3^{5}x_4^{23}x_5^{50} + x_1^{15}x_2^{15}x_3^{5}x_4^{50}x_5^{23} + x_1^{15}x_2^{15}x_3^{17}x_4^{23}x_5^{38} + x_1^{15}x_2^{15}x_3^{17}x_4^{38}x_5^{23}\\ 
&\quad + x_1^{15}x_2^{17}x_3^{7}x_4^{23}x_5^{46} + x_1^{15}x_2^{17}x_3^{7}x_4^{30}x_5^{39} + x_1^{15}x_2^{17}x_3^{7}x_4^{39}x_5^{30} + x_1^{15}x_2^{17}x_3^{7}x_4^{46}x_5^{23}\\ 
&\quad + x_1^{15}x_2^{19}x_3^{5}x_4^{23}x_5^{46} + x_1^{15}x_2^{19}x_3^{5}x_4^{30}x_5^{39} + x_1^{15}x_2^{19}x_3^{5}x_4^{39}x_5^{30} + x_1^{15}x_2^{19}x_3^{5}x_4^{46}x_5^{23}\\ 
&\quad + x_1^{15}x_2^{21}x_3^{7}x_4^{27}x_5^{38} + x_1^{15}x_2^{21}x_3^{7}x_4^{30}x_5^{35} + x_1^{15}x_2^{21}x_3^{7}x_4^{35}x_5^{30} + x_1^{15}x_2^{21}x_3^{7}x_4^{38}x_5^{27}\\ 
&\quad + x_1^{15}x_2^{21}x_3^{11}x_4^{23}x_5^{38} + x_1^{15}x_2^{21}x_3^{11}x_4^{38}x_5^{23} + x_1^{15}x_2^{21}x_3^{15}x_4^{23}x_5^{34} + x_1^{15}x_2^{21}x_3^{15}x_4^{34}x_5^{23}\\ 
&\quad + x_1^{15}x_2^{23}x_3^{5}x_4^{27}x_5^{38} + x_1^{15}x_2^{23}x_3^{5}x_4^{30}x_5^{35} + x_1^{15}x_2^{23}x_3^{5}x_4^{35}x_5^{30} + x_1^{15}x_2^{23}x_3^{5}x_4^{38}x_5^{27}\\ 
&\quad + x_1^{15}x_2^{23}x_3^{9}x_4^{23}x_5^{38} + x_1^{15}x_2^{23}x_3^{9}x_4^{38}x_5^{23} + x_1^{15}x_2^{23}x_3^{15}x_4^{23}x_5^{32} +  Sq^1\big(x_1^{15}x_2^{19}x_3^{15}x_4^{29}x_5^{29}\\ 
&\quad + x_1^{15}x_2^{23}x_3^{15}x_4^{25}x_5^{29} + x_1^{15}x_2^{23}x_3^{15}x_4^{27}x_5^{27} + x_1^{15}x_2^{23}x_3^{15}x_4^{29}x_5^{25}\big)\\ 
&\quad +  Sq^2\big(x_1^{15}x_2^{19}x_3^{15}x_4^{27}x_5^{30} + x_1^{15}x_2^{19}x_3^{15}x_4^{30}x_5^{27} + x_1^{15}x_2^{23}x_3^{15}x_4^{23}x_5^{30}\\ 
&\quad + x_1^{15}x_2^{23}x_3^{15}x_4^{26}x_5^{27} + x_1^{15}x_2^{23}x_3^{15}x_4^{27}x_5^{26} + x_1^{15}x_2^{23}x_3^{15}x_4^{30}x_5^{23}\big)\\ 
&\quad +  Sq^4\big(x_1^{5}x_2^{23}x_3^{23}x_4^{23}x_5^{30} + x_1^{5}x_2^{23}x_3^{23}x_4^{30}x_5^{23} + x_1^{7}x_2^{21}x_3^{23}x_4^{23}x_5^{30} + x_1^{7}x_2^{21}x_3^{23}x_4^{30}x_5^{23}\\ 
&\quad + x_1^{15}x_2^{13}x_3^{7}x_4^{23}x_5^{46} + x_1^{15}x_2^{13}x_3^{7}x_4^{30}x_5^{39} + x_1^{15}x_2^{13}x_3^{7}x_4^{39}x_5^{30} + x_1^{15}x_2^{13}x_3^{7}x_4^{46}x_5^{23}\\ 
&\quad + x_1^{15}x_2^{15}x_3^{5}x_4^{23}x_5^{46} + x_1^{15}x_2^{15}x_3^{5}x_4^{30}x_5^{39} + x_1^{15}x_2^{15}x_3^{5}x_4^{39}x_5^{30} + x_1^{15}x_2^{15}x_3^{5}x_4^{46}x_5^{23}\\ 
&\quad + x_1^{15}x_2^{21}x_3^{15}x_4^{23}x_5^{30} + x_1^{15}x_2^{21}x_3^{15}x_4^{30}x_5^{23}\big) +  Sq^8\big(x_1^{5}x_2^{15}x_3^{23}x_4^{27}x_5^{30}\\ 
&\quad + x_1^{5}x_2^{15}x_3^{23}x_4^{30}x_5^{27} + x_1^{5}x_2^{15}x_3^{27}x_4^{23}x_5^{30} + x_1^{5}x_2^{15}x_3^{27}x_4^{30}x_5^{23} + x_1^{5}x_2^{27}x_3^{15}x_4^{23}x_5^{30}\\ 
&\quad + x_1^{5}x_2^{27}x_3^{15}x_4^{30}x_5^{23} + x_1^{7}x_2^{13}x_3^{23}x_4^{27}x_5^{30} + x_1^{7}x_2^{13}x_3^{23}x_4^{30}x_5^{27} + x_1^{7}x_2^{13}x_3^{27}x_4^{23}x_5^{30}\\ 
&\quad + x_1^{7}x_2^{13}x_3^{27}x_4^{30}x_5^{23} + x_1^{7}x_2^{25}x_3^{15}x_4^{23}x_5^{30} + x_1^{7}x_2^{25}x_3^{15}x_4^{30}x_5^{23} + x_1^{9}x_2^{23}x_3^{15}x_4^{23}x_5^{30}\\ 
&\quad + x_1^{9}x_2^{23}x_3^{15}x_4^{30}x_5^{23} + x_1^{11}x_2^{13}x_3^{7}x_4^{23}x_5^{46} + x_1^{11}x_2^{13}x_3^{7}x_4^{30}x_5^{39} + x_1^{11}x_2^{13}x_3^{7}x_4^{39}x_5^{30}\\ 
&\quad + x_1^{11}x_2^{13}x_3^{7}x_4^{46}x_5^{23} + x_1^{11}x_2^{15}x_3^{5}x_4^{23}x_5^{46} + x_1^{11}x_2^{15}x_3^{5}x_4^{30}x_5^{39} + x_1^{11}x_2^{15}x_3^{5}x_4^{39}x_5^{30}\\ 
&\quad + x_1^{11}x_2^{15}x_3^{5}x_4^{46}x_5^{23} + x_1^{11}x_2^{21}x_3^{15}x_4^{23}x_5^{30} + x_1^{11}x_2^{21}x_3^{15}x_4^{30}x_5^{23} + x_1^{23}x_2^{13}x_3^{7}x_4^{27}x_5^{30}\\ 
&\quad + x_1^{23}x_2^{13}x_3^{7}x_4^{30}x_5^{27} + x_1^{23}x_2^{13}x_3^{11}x_4^{23}x_5^{30} + x_1^{23}x_2^{13}x_3^{11}x_4^{30}x_5^{23} + x_1^{23}x_2^{15}x_3^{5}x_4^{27}x_5^{30}\\ 
&\quad + x_1^{23}x_2^{15}x_3^{5}x_4^{30}x_5^{27} + x_1^{23}x_2^{15}x_3^{9}x_4^{23}x_5^{30} + x_1^{23}x_2^{15}x_3^{9}x_4^{30}x_5^{23}\big)\\ 
&\quad +  Sq^{16}\big(x_1^{15}x_2^{13}x_3^{7}x_4^{27}x_5^{30} + x_1^{15}x_2^{13}x_3^{7}x_4^{30}x_5^{27} + x_1^{15}x_2^{13}x_3^{11}x_4^{23}x_5^{30}\\ 
&\quad + x_1^{15}x_2^{13}x_3^{11}x_4^{30}x_5^{23} + x_1^{15}x_2^{15}x_3^{5}x_4^{27}x_5^{30} + x_1^{15}x_2^{15}x_3^{5}x_4^{30}x_5^{27}\\ 
&\quad + x_1^{15}x_2^{15}x_3^{9}x_4^{23}x_5^{30} + x_1^{15}x_2^{15}x_3^{9}x_4^{30}x_5^{23}\big) \ \mbox{ mod}(P_5^-((4)|^3|(2)|^2|(1)),\\
U_3 &= x_1^{19}x_2^{15}x_3^{7}x_4^{23}x_5^{44} + x_1^{19}x_2^{15}x_3^{7}x_4^{29}x_5^{38} + x_1^{19}x_2^{15}x_3^{7}x_4^{39}x_5^{28} + x_1^{19}x_2^{15}x_3^{7}x_4^{45}x_5^{22}\\ 
&\quad + x_1^{19}x_2^{15}x_3^{13}x_4^{23}x_5^{38} + x_1^{19}x_2^{15}x_3^{13}x_4^{39}x_5^{22} + x_1^{19}x_2^{15}x_3^{22}x_4^{39}x_5^{13} + x_1^{19}x_2^{15}x_3^{22}x_4^{45}x_5^{7}\\ 
&\quad + x_1^{19}x_2^{15}x_3^{28}x_4^{39}x_5^{7} + x_1^{19}x_2^{15}x_3^{38}x_4^{23}x_5^{13} + x_1^{19}x_2^{15}x_3^{38}x_4^{29}x_5^{7} + x_1^{19}x_2^{15}x_3^{44}x_4^{23}x_5^{7}\\ 
&\quad + x_1^{23}x_2^{11}x_3^{7}x_4^{23}x_5^{44} + x_1^{23}x_2^{11}x_3^{7}x_4^{29}x_5^{38} + x_1^{23}x_2^{11}x_3^{7}x_4^{39}x_5^{28} + x_1^{23}x_2^{11}x_3^{7}x_4^{45}x_5^{22}\\ 
&\quad + x_1^{23}x_2^{11}x_3^{13}x_4^{23}x_5^{38} + x_1^{23}x_2^{11}x_3^{13}x_4^{39}x_5^{22} + x_1^{23}x_2^{11}x_3^{22}x_4^{39}x_5^{13} + x_1^{23}x_2^{11}x_3^{22}x_4^{45}x_5^{7}\\ 
&\quad + x_1^{23}x_2^{11}x_3^{28}x_4^{39}x_5^{7} + x_1^{23}x_2^{11}x_3^{38}x_4^{23}x_5^{13} + x_1^{23}x_2^{11}x_3^{38}x_4^{29}x_5^{7} + x_1^{23}x_2^{11}x_3^{44}x_4^{23}x_5^{7}\\ 
&\quad + x_1^{31}x_2^{11}x_3^{7}x_4^{23}x_5^{36} + x_1^{31}x_2^{11}x_3^{7}x_4^{37}x_5^{22} + x_1^{31}x_2^{11}x_3^{22}x_4^{37}x_5^{7} + x_1^{31}x_2^{11}x_3^{36}x_4^{23}x_5^{7}\\ 
&\quad + x_1^{31}x_2^{15}x_3^{7}x_4^{23}x_5^{32} + x_1^{31}x_2^{15}x_3^{7}x_4^{33}x_5^{22} + x_1^{31}x_2^{15}x_3^{22}x_4^{33}x_5^{7}\\ 
&\quad +  Sq^1\big(x_1^{31}x_2^{11}x_3^{21}x_4^{23}x_5^{21} + x_1^{31}x_2^{15}x_3^{17}x_4^{23}x_5^{21} + x_1^{31}x_2^{15}x_3^{19}x_4^{23}x_5^{19}\\ 
&\quad + x_1^{31}x_2^{15}x_3^{21}x_4^{23}x_5^{17}\big) +  Sq^2\big(x_1^{31}x_2^{15}x_3^{7}x_4^{27}x_5^{26} + x_1^{31}x_2^{15}x_3^{11}x_4^{23}x_5^{26}\\ 
&\quad + x_1^{31}x_2^{15}x_3^{11}x_4^{27}x_5^{22} + x_1^{31}x_2^{15}x_3^{18}x_4^{23}x_5^{19} + x_1^{31}x_2^{15}x_3^{19}x_4^{23}x_5^{18} + x_1^{31}x_2^{15}x_3^{22}x_4^{27}x_5^{11}\\ 
&\quad + x_1^{31}x_2^{15}x_3^{26}x_4^{23}x_5^{11} + x_1^{31}x_2^{15}x_3^{26}x_4^{27}x_5^{7}\big) +  Sq^4\big(x_1^{31}x_2^{15}x_3^{7}x_4^{23}x_5^{28}\\ 
&\quad + x_1^{31}x_2^{15}x_3^{7}x_4^{29}x_5^{22} + x_1^{31}x_2^{15}x_3^{13}x_4^{23}x_5^{22} + x_1^{31}x_2^{15}x_3^{22}x_4^{23}x_5^{13} + x_1^{31}x_2^{15}x_3^{22}x_4^{29}x_5^{7}\\ 
&\quad + x_1^{31}x_2^{15}x_3^{28}x_4^{23}x_5^{7}\big) +  Sq^8\big(x_1^{31}x_2^{11}x_3^{7}x_4^{23}x_5^{28} + x_1^{31}x_2^{11}x_3^{7}x_4^{29}x_5^{22}\\ 
&\quad + x_1^{31}x_2^{11}x_3^{13}x_4^{23}x_5^{22} + x_1^{31}x_2^{11}x_3^{22}x_4^{23}x_5^{13} + x_1^{31}x_2^{11}x_3^{22}x_4^{29}x_5^{7} + x_1^{31}x_2^{11}x_3^{28}x_4^{23}x_5^{7}\big)\\ 
&\quad +  Sq^{16}\big(x_1^{19}x_2^{15}x_3^{7}x_4^{23}x_5^{28} + x_1^{19}x_2^{15}x_3^{7}x_4^{29}x_5^{22} + x_1^{19}x_2^{15}x_3^{13}x_4^{23}x_5^{22}\\ 
&\quad + x_1^{19}x_2^{15}x_3^{22}x_4^{23}x_5^{13} + x_1^{19}x_2^{15}x_3^{22}x_4^{29}x_5^{7} + x_1^{19}x_2^{15}x_3^{28}x_4^{23}x_5^{7} + x_1^{23}x_2^{11}x_3^{7}x_4^{23}x_5^{28}\\ 
&\quad + x_1^{23}x_2^{11}x_3^{7}x_4^{29}x_5^{22} + x_1^{23}x_2^{11}x_3^{13}x_4^{23}x_5^{22} + x_1^{23}x_2^{11}x_3^{22}x_4^{23}x_5^{13}\\ 
&\quad + x_1^{23}x_2^{11}x_3^{22}x_4^{29}x_5^{7} + x_1^{23}x_2^{11}x_3^{28}x_4^{23}x_5^{7}\big) \ \mbox{ mod}(P_5^-((4)|^3|(2)|^2|(1)).
\end{align*}
Thus, the monomials $U_1, U_2,\, U_3$ are strictly inadmissible.

We prove Part ii) for $T_{1} =  x_1^{7}x_2^{15}x_3^{23}x_4^{15}x_5^{48}$, $T_{2} =  x_1^{7}x_2^{15}x_3^{31}x_4^{33}x_5^{22}$, $T_{3} =  x_1^{15}x_2^{31}x_3^{33}x_4^{6}x_5^{23}$, $T_{4} =  x_1^{31}x_2^{15}x_3^{35}x_4^{5}x_5^{22} $,  $T_{5} =  x_1^{31}x_2^{39}x_3^{11}x_4^{5}x_5^{22}$. A direct computation using the Cartan formula shows
\begin{align*}
T_1 &= x_1^{5}x_2^{15}x_3^{22}x_4^{15}x_5^{51} + x_1^{5}x_2^{15}x_3^{22}x_4^{19}x_5^{47} + x_1^{5}x_2^{19}x_3^{22}x_4^{15}x_5^{47} + x_1^{6}x_2^{15}x_3^{23}x_4^{15}x_5^{49}\\ 
&\quad + x_1^{6}x_2^{15}x_3^{23}x_4^{17}x_5^{47} + x_1^{6}x_2^{17}x_3^{23}x_4^{15}x_5^{47} + x_1^{7}x_2^{8}x_3^{23}x_4^{15}x_5^{55} + x_1^{7}x_2^{8}x_3^{23}x_4^{23}x_5^{47}\\ 
&\quad + x_1^{7}x_2^{9}x_3^{22}x_4^{15}x_5^{55} + x_1^{7}x_2^{9}x_3^{22}x_4^{23}x_5^{47} + x_1^{7}x_2^{15}x_3^{15}x_4^{16}x_5^{55} + x_1^{7}x_2^{15}x_3^{22}x_4^{15}x_5^{49}\\ 
&\quad + x_1^{7}x_2^{15}x_3^{22}x_4^{17}x_5^{47} + x_1^{7}x_2^{15}x_3^{23}x_4^{8}x_5^{55} +  Sq^1\big(x_1^{5}x_2^{15}x_3^{25}x_4^{15}x_5^{47}\\ 
&\quad + x_1^{7}x_2^{15}x_3^{23}x_4^{15}x_5^{47}\big) +  Sq^2(x_1^{7}x_2^{15}x_3^{22}x_4^{15}x_5^{47}\big) +  Sq^4\big(x_1^{5}x_2^{15}x_3^{22}x_4^{15}x_5^{47}\big)\\ 
&\quad +  Sq^8\big(x_1^{7}x_2^{8}x_3^{23}x_4^{15}x_5^{47} + x_1^{7}x_2^{9}x_3^{22}x_4^{15}x_5^{47} + x_1^{7}x_2^{23}x_3^{15}x_4^{8}x_5^{47}\big)\\ 
&\quad +  Sq^{16}\big(x_1^{7}x_2^{15}x_3^{15}x_4^{8}x_5^{47}\big) \ \mbox{ mod}(P_5^-((4)|^3|(2)|^2|(1)),\\
T_2 &= x_1^{5}x_2^{11}x_3^{31}x_4^{39}x_5^{22} + x_1^{5}x_2^{11}x_3^{39}x_4^{31}x_5^{22} + x_1^{5}x_2^{15}x_3^{31}x_4^{35}x_5^{22} + x_1^{5}x_2^{15}x_3^{35}x_4^{31}x_5^{22}\\ 
&\quad + x_1^{6}x_2^{9}x_3^{31}x_4^{39}x_5^{23} + x_1^{6}x_2^{9}x_3^{39}x_4^{31}x_5^{23} + x_1^{6}x_2^{15}x_3^{31}x_4^{33}x_5^{23} + x_1^{6}x_2^{15}x_3^{33}x_4^{31}x_5^{23}\\ 
&\quad + x_1^{7}x_2^{8}x_3^{31}x_4^{39}x_5^{23} + x_1^{7}x_2^{8}x_3^{39}x_4^{31}x_5^{23} + x_1^{7}x_2^{9}x_3^{31}x_4^{39}x_5^{22} + x_1^{7}x_2^{9}x_3^{39}x_4^{31}x_5^{22}\\ 
&\quad + x_1^{7}x_2^{15}x_3^{16}x_4^{31}x_5^{39} + x_1^{7}x_2^{15}x_3^{16}x_4^{47}x_5^{23} + x_1^{7}x_2^{15}x_3^{17}x_4^{31}x_5^{38} + x_1^{7}x_2^{15}x_3^{17}x_4^{47}x_5^{22}\\ 
&\quad + x_1^{7}x_2^{15}x_3^{31}x_4^{32}x_5^{23} +  Sq^1\big(x_1^{5}x_2^{15}x_3^{31}x_4^{31}x_5^{25} + x_1^{7}x_2^{15}x_3^{31}x_4^{31}x_5^{23}\big)\\ 
&\quad +  Sq^2\big(x_1^{6}x_2^{15}x_3^{31}x_4^{31}x_5^{23} + x_1^{7}x_2^{15}x_3^{31}x_4^{31}x_5^{22}\big) +  Sq^4\big(x_1^{5}x_2^{15}x_3^{31}x_4^{31}x_5^{22}\big)\\ 
&\quad +  Sq^8x_1^{5}x_2^{11}x_3^{31}x_4^{31}x_5^{22} + x_1^{6}x_2^{9}x_3^{31}x_4^{31}x_5^{23} + x_1^{7}x_2^{8}x_3^{31}x_4^{31}x_5^{23} + x_1^{7}x_2^{9}x_3^{31}x_4^{31}x_5^{22}\big)\\ 
&\quad +  Sq^{16}\big(x_1^{7}x_2^{15}x_3^{16}x_4^{31}x_5^{23} + x_1^{7}x_2^{15}x_3^{17}x_4^{31}x_5^{22}\big) \ \mbox{ mod}(P_5^-((4)|^3|(2)|^2|(1)),\\
T_3 &= x_1^{5}x_2^{31}x_3^{15}x_4^{22}x_5^{35} + x_1^{5}x_2^{31}x_3^{15}x_4^{34}x_5^{23} + x_1^{5}x_2^{31}x_3^{35}x_4^{14}x_5^{23} + x_1^{5}x_2^{39}x_3^{15}x_4^{22}x_5^{27}\\ 
&\quad + x_1^{5}x_2^{39}x_3^{15}x_4^{26}x_5^{23} + x_1^{5}x_2^{39}x_3^{27}x_4^{14}x_5^{23} + x_1^{7}x_2^{31}x_3^{15}x_4^{20}x_5^{35} + x_1^{7}x_2^{31}x_3^{15}x_4^{32}x_5^{23}\\ 
&\quad + x_1^{7}x_2^{31}x_3^{35}x_4^{12}x_5^{23} + x_1^{7}x_2^{39}x_3^{15}x_4^{20}x_5^{27} + x_1^{7}x_2^{39}x_3^{15}x_4^{24}x_5^{23} + x_1^{7}x_2^{39}x_3^{27}x_4^{12}x_5^{23}\\ 
&\quad + x_1^{9}x_2^{31}x_3^{23}x_4^{7}x_5^{38} + x_1^{9}x_2^{39}x_3^{23}x_4^{7}x_5^{30} + x_1^{9}x_2^{39}x_3^{23}x_4^{14}x_5^{23} + x_1^{11}x_2^{31}x_3^{7}x_4^{22}x_5^{37}\\ 
&\quad + x_1^{11}x_2^{31}x_3^{7}x_4^{36}x_5^{23} + x_1^{11}x_2^{31}x_3^{21}x_4^{7}x_5^{38} + x_1^{11}x_2^{31}x_3^{23}x_4^{6}x_5^{37} + x_1^{11}x_2^{31}x_3^{37}x_4^{6}x_5^{23}\\ 
&\quad + x_1^{11}x_2^{39}x_3^{7}x_4^{22}x_5^{29} + x_1^{11}x_2^{39}x_3^{7}x_4^{28}x_5^{23} + x_1^{11}x_2^{39}x_3^{13}x_4^{22}x_5^{23} + x_1^{11}x_2^{39}x_3^{21}x_4^{7}x_5^{30}\\ 
&\quad + x_1^{11}x_2^{39}x_3^{21}x_4^{14}x_5^{23} + x_1^{11}x_2^{39}x_3^{23}x_4^{6}x_5^{29} + x_1^{11}x_2^{39}x_3^{23}x_4^{12}x_5^{23} + x_1^{11}x_2^{39}x_3^{29}x_4^{6}x_5^{23}\\ 
&\quad + x_1^{15}x_2^{17}x_3^{23}x_4^{7}x_5^{46} + x_1^{15}x_2^{17}x_3^{23}x_4^{14}x_5^{39} + x_1^{15}x_2^{17}x_3^{39}x_4^{7}x_5^{30} + x_1^{15}x_2^{17}x_3^{39}x_4^{14}x_5^{23}\\ 
&\quad + x_1^{15}x_2^{19}x_3^{7}x_4^{22}x_5^{45} + x_1^{15}x_2^{19}x_3^{7}x_4^{28}x_5^{39} + x_1^{15}x_2^{19}x_3^{7}x_4^{38}x_5^{29} + x_1^{15}x_2^{19}x_3^{7}x_4^{44}x_5^{23}\\ 
&\quad + x_1^{15}x_2^{19}x_3^{13}x_4^{22}x_5^{39} + x_1^{15}x_2^{19}x_3^{13}x_4^{38}x_5^{23} + x_1^{15}x_2^{19}x_3^{21}x_4^{7}x_5^{46} + x_1^{15}x_2^{19}x_3^{21}x_4^{14}x_5^{39}\\ 
&\quad + x_1^{15}x_2^{19}x_3^{23}x_4^{6}x_5^{45} + x_1^{15}x_2^{19}x_3^{23}x_4^{12}x_5^{39} + x_1^{15}x_2^{19}x_3^{29}x_4^{6}x_5^{39} + x_1^{15}x_2^{19}x_3^{37}x_4^{7}x_5^{30}\\ 
&\quad + x_1^{15}x_2^{19}x_3^{37}x_4^{14}x_5^{23} + x_1^{15}x_2^{19}x_3^{39}x_4^{6}x_5^{29} + x_1^{15}x_2^{19}x_3^{39}x_4^{12}x_5^{23} + x_1^{15}x_2^{19}x_3^{45}x_4^{6}x_5^{23}\\ 
&\quad + x_1^{15}x_2^{31}x_3^{7}x_4^{22}x_5^{33} + x_1^{15}x_2^{31}x_3^{7}x_4^{32}x_5^{23} + x_1^{15}x_2^{31}x_3^{21}x_4^{7}x_5^{34} + x_1^{15}x_2^{31}x_3^{23}x_4^{6}x_5^{33}\\ 
&\quad + x_1^{15}x_2^{31}x_3^{23}x_4^{7}x_5^{32} +  Sq^1\big(x_1^{15}x_2^{31}x_3^{17}x_4^{21}x_5^{23} + x_1^{15}x_2^{31}x_3^{19}x_4^{13}x_5^{29}\\ 
&\quad + x_1^{15}x_2^{31}x_3^{19}x_4^{19}x_5^{23} + x_1^{15}x_2^{31}x_3^{23}x_4^{9}x_5^{29} + x_1^{15}x_2^{31}x_3^{23}x_4^{11}x_5^{27} + x_1^{15}x_2^{31}x_3^{23}x_4^{13}x_5^{25}\big)\\ 
&\quad +  Sq^2\big(x_1^{15}x_2^{31}x_3^{7}x_4^{26}x_5^{27} + x_1^{15}x_2^{31}x_3^{11}x_4^{22}x_5^{27} + x_1^{15}x_2^{31}x_3^{11}x_4^{26}x_5^{23}\\ 
&\quad + x_1^{15}x_2^{31}x_3^{18}x_4^{19}x_5^{23} + x_1^{15}x_2^{31}x_3^{19}x_4^{11}x_5^{30} + x_1^{15}x_2^{31}x_3^{19}x_4^{14}x_5^{27} + x_1^{15}x_2^{31}x_3^{19}x_4^{18}x_5^{23}\\ 
&\quad + x_1^{15}x_2^{31}x_3^{23}x_4^{7}x_5^{30} + x_1^{15}x_2^{31}x_3^{23}x_4^{10}x_5^{27} + x_1^{15}x_2^{31}x_3^{23}x_4^{10}x_5^{27} + x_1^{15}x_2^{31}x_3^{23}x_4^{11}x_5^{26}\\ 
&\quad + x_1^{15}x_2^{31}x_3^{23}x_4^{14}x_5^{23} + x_1^{15}x_2^{31}x_3^{27}x_4^{6}x_5^{27} + x_1^{15}x_2^{31}x_3^{27}x_4^{10}x_5^{23}\big)\\ 
&\quad +  Sq^4\big(x_1^{5}x_2^{31}x_3^{23}x_4^{22}x_5^{23} + x_1^{7}x_2^{31}x_3^{23}x_4^{20}x_5^{23} + x_1^{15}x_2^{31}x_3^{7}x_4^{22}x_5^{29}\\ 
&\quad + x_1^{15}x_2^{31}x_3^{7}x_4^{28}x_5^{23} + x_1^{15}x_2^{31}x_3^{13}x_4^{22}x_5^{23} + x_1^{15}x_2^{31}x_3^{21}x_4^{7}x_5^{30} + x_1^{15}x_2^{31}x_3^{21}x_4^{14}x_5^{23}\\ 
&\quad + x_1^{15}x_2^{31}x_3^{23}x_4^{6}x_5^{29} + x_1^{15}x_2^{31}x_3^{23}x_4^{12}x_5^{23} + x_1^{15}x_2^{31}x_3^{29}x_4^{6}x_5^{23}\big)\\ 
&\quad +  Sq^8\big(x_1^{5}x_2^{31}x_3^{15}x_4^{22}x_5^{27} + x_1^{5}x_2^{31}x_3^{15}x_4^{26}x_5^{23} + x_1^{5}x_2^{31}x_3^{27}x_4^{14}x_5^{23} + x_1^{7}x_2^{31}x_3^{15}x_4^{20}x_5^{27}\\ 
&\quad + x_1^{7}x_2^{31}x_3^{15}x_4^{24}x_5^{23} + x_1^{7}x_2^{31}x_3^{27}x_4^{12}x_5^{23} + x_1^{9}x_2^{31}x_3^{23}x_4^{7}x_5^{30} + x_1^{9}x_2^{31}x_3^{23}x_4^{14}x_5^{23}\\ 
&\quad + x_1^{11}x_2^{31}x_3^{7}x_4^{22}x_5^{29} + x_1^{11}x_2^{31}x_3^{7}x_4^{28}x_5^{23} + x_1^{11}x_2^{31}x_3^{13}x_4^{22}x_5^{23} + x_1^{11}x_2^{31}x_3^{21}x_4^{7}x_5^{30}\\ 
&\quad + x_1^{11}x_2^{31}x_3^{21}x_4^{14}x_5^{23} + x_1^{11}x_2^{31}x_3^{23}x_4^{6}x_5^{29} + x_1^{11}x_2^{31}x_3^{23}x_4^{12}x_5^{23} + x_1^{11}x_2^{31}x_3^{29}x_4^{6}x_5^{23}\big)\\ 
&\quad +  Sq^{16}\big(x_1^{15}x_2^{17}x_3^{23}x_4^{7}x_5^{30} + x_1^{15}x_2^{17}x_3^{23}x_4^{14}x_5^{23} + x_1^{15}x_2^{19}x_3^{7}x_4^{22}x_5^{29}\\ 
&\quad + x_1^{15}x_2^{19}x_3^{7}x_4^{28}x_5^{23} + x_1^{15}x_2^{19}x_3^{13}x_4^{22}x_5^{23} + x_1^{15}x_2^{19}x_3^{21}x_4^{7}x_5^{30}\\ 
&\quad + x_1^{15}x_2^{19}x_3^{21}x_4^{14}x_5^{23} + x_1^{15}x_2^{19}x_3^{23}x_4^{6}x_5^{29} + x_1^{15}x_2^{19}x_3^{23}x_4^{12}x_5^{23}\\ 
&\quad + x_1^{15}x_2^{19}x_3^{29}x_4^{6}x_5^{23}\big) \ \mbox{ mod}(P_5^-((4)|^3|(2)|^2|(1)),\\
T_4 & = x_1^{16}x_2^{15}x_3^{31}x_4^{7}x_5^{39} + x_1^{16}x_2^{15}x_3^{47}x_4^{7}x_5^{23} + x_1^{17}x_2^{15}x_3^{31}x_4^{6}x_5^{39} + x_1^{17}x_2^{15}x_3^{31}x_4^{7}x_5^{38}\\ 
&\quad + x_1^{17}x_2^{15}x_3^{47}x_4^{6}x_5^{23} + x_1^{17}x_2^{15}x_3^{47}x_4^{7}x_5^{22} + x_1^{19}x_2^{15}x_3^{31}x_4^{5}x_5^{38} + x_1^{19}x_2^{15}x_3^{47}x_4^{5}x_5^{22}\\ 
&\quad + x_1^{23}x_2^{8}x_3^{31}x_4^{7}x_5^{39} + x_1^{23}x_2^{8}x_3^{47}x_4^{7}x_5^{23} + x_1^{23}x_2^{9}x_3^{31}x_4^{6}x_5^{39} + x_1^{23}x_2^{9}x_3^{31}x_4^{7}x_5^{38}\\ 
&\quad + x_1^{23}x_2^{9}x_3^{47}x_4^{6}x_5^{23} + x_1^{23}x_2^{9}x_3^{47}x_4^{7}x_5^{22} + x_1^{23}x_2^{11}x_3^{31}x_4^{5}x_5^{38} + x_1^{23}x_2^{11}x_3^{47}x_4^{5}x_5^{22}\\ 
&\quad + x_1^{31}x_2^{8}x_3^{39}x_4^{7}x_5^{23} + x_1^{31}x_2^{9}x_3^{39}x_4^{6}x_5^{23} + x_1^{31}x_2^{9}x_3^{39}x_4^{7}x_5^{22} + x_1^{31}x_2^{11}x_3^{39}x_4^{5}x_5^{22}\\ 
&\quad + x_1^{31}x_2^{15}x_3^{32}x_4^{7}x_5^{23} + x_1^{31}x_2^{15}x_3^{33}x_4^{6}x_5^{23} + x_1^{31}x_2^{15}x_3^{33}x_4^{7}x_5^{22}\\ 
&\quad +  Sq^1\big(x_1^{31}x_2^{15}x_3^{31}x_4^{5}x_5^{25} + x_1^{31}x_2^{15}x_3^{31}x_4^{7}x_5^{23}\big) +  Sq^2\big(x_1^{31}x_2^{15}x_3^{31}x_4^{6}x_5^{23}\\ 
&\quad + x_1^{31}x_2^{15}x_3^{31}x_4^{7}x_5^{22}\big) +  sq^4\big(x_1^{31}x_2^{15}x_3^{31}x_4^{5}x_5^{22}\big) +  Sq^8\big(x_1^{31}x_2^{8}x_3^{31}x_4^{7}x_5^{23}\\ 
&\quad + x_1^{31}x_2^{9}x_3^{31}x_4^{6}x_5^{23} + x_1^{31}x_2^{9}x_3^{31}x_4^{7}x_5^{22} + x_1^{31}x_2^{11}x_3^{31}x_4^{5}x_5^{22}\big)\\ 
&\quad +  Sq^{16}\big(x_1^{16}x_2^{15}x_3^{31}x_4^{7}x_5^{23} + x_1^{17}x_2^{15}x_3^{31}x_4^{6}x_5^{23} + x_1^{17}x_2^{15}x_3^{31}x_4^{7}x_5^{22}\\ 
&\quad + x_1^{19}x_2^{15}x_3^{31}x_4^{5}x_5^{22} + x_1^{23}x_2^{8}x_3^{31}x_4^{7}x_5^{23} + x_1^{23}x_2^{9}x_3^{31}x_4^{6}x_5^{23}\\ 
&\quad + x_1^{23}x_2^{9}x_3^{31}x_4^{7}x_5^{22} + x_1^{23}x_2^{11}x_3^{31}x_4^{5}x_5^{22}\big) \ \mbox{ mod}(P_5^-((4)|^3|(2)|^2|(1)),\\
T_5 &= x_1^{16}x_2^{31}x_3^{15}x_4^{7}x_5^{39} + x_1^{16}x_2^{47}x_3^{15}x_4^{7}x_5^{23} + x_1^{17}x_2^{31}x_3^{15}x_4^{6}x_5^{39} + x_1^{17}x_2^{31}x_3^{15}x_4^{7}x_5^{38}\\ 
&\quad + x_1^{17}x_2^{47}x_3^{15}x_4^{6}x_5^{23} + x_1^{17}x_2^{47}x_3^{15}x_4^{7}x_5^{22} + x_1^{19}x_2^{31}x_3^{15}x_4^{5}x_5^{38} + x_1^{19}x_2^{47}x_3^{15}x_4^{5}x_5^{22}\\ 
&\quad + x_1^{23}x_2^{31}x_3^{8}x_4^{7}x_5^{39} + x_1^{23}x_2^{31}x_3^{9}x_4^{6}x_5^{39} + x_1^{23}x_2^{31}x_3^{9}x_4^{7}x_5^{38} + x_1^{23}x_2^{31}x_3^{11}x_4^{5}x_5^{38}\\ 
&\quad + x_1^{23}x_2^{47}x_3^{8}x_4^{7}x_5^{23} + x_1^{23}x_2^{47}x_3^{9}x_4^{6}x_5^{23} + x_1^{23}x_2^{47}x_3^{9}x_4^{7}x_5^{22} + x_1^{23}x_2^{47}x_3^{11}x_4^{5}x_5^{22}\\ 
&\quad + x_1^{31}x_2^{32}x_3^{15}x_4^{7}x_5^{23} + x_1^{31}x_2^{33}x_3^{15}x_4^{6}x_5^{23} + x_1^{31}x_2^{33}x_3^{15}x_4^{7}x_5^{22} + x_1^{31}x_2^{35}x_3^{15}x_4^{5}x_5^{22}\\ 
&\quad + x_1^{31}x_2^{39}x_3^{8}x_4^{7}x_5^{23} + x_1^{31}x_2^{39}x_3^{9}x_4^{6}x_5^{23} + x_1^{31}x_2^{39}x_3^{9}x_4^{7}x_5^{22}\\ 
&\quad +  Sq^1\big(x_1^{31}x_2^{31}x_3^{15}x_4^{5}x_5^{25} + x_1^{31}x_2^{31}x_3^{15}x_4^{7}x_5^{23}\big) +  Sq^2\big(x_1^{31}x_2^{31}x_3^{15}x_4^{6}x_5^{23}\\ 
&\quad + x_1^{31}x_2^{31}x_3^{15}x_4^{7}x_5^{22}\big) +  Sq^4\big(x_1^{31}x_2^{31}x_3^{15}x_4^{5}x_5^{22}\big) +  Sq^8\big(x_1^{31}x_2^{31}x_3^{8}x_4^{7}x_5^{23}\\ 
&\quad + x_1^{31}x_2^{31}x_3^{9}x_4^{6}x_5^{23} + x_1^{31}x_2^{31}x_3^{9}x_4^{7}x_5^{22} + x_1^{31}x_2^{31}x_3^{11}x_4^{5}x_5^{22}\big)\\ 
&\quad +  Sq^{16}\big(x_1^{16}x_2^{31}x_3^{15}x_4^{7}x_5^{23} + x_1^{17}x_2^{31}x_3^{15}x_4^{6}x_5^{23} + x_1^{17}x_2^{31}x_3^{15}x_4^{7}x_5^{22}\\ 
&\quad + x_1^{19}x_2^{31}x_3^{15}x_4^{5}x_5^{22} + x_1^{23}x_2^{31}x_3^{8}x_4^{7}x_5^{23} + x_1^{23}x_2^{31}x_3^{9}x_4^{6}x_5^{23}\\ 
&\quad + x_1^{23}x_2^{31}x_3^{9}x_4^{7}x_5^{22} + x_1^{23}x_2^{31}x_3^{11}x_4^{5}x_5^{22}\big) \ \mbox{ mod}(P_5^-((4)|^3|(2)|^2|(1)).
\end{align*}
Hence, the monomials $T_j,\, 1\leqslant j \leqslant 5$, are strictly inadmissible.
\end{proof}
\begin{proof}[Proof of Proposition \ref{mdc}]
By a simple computation we have 
$$\Phi^+(B_4((4)|^3|(2)|^2|(1)))\cup \mathcal C = \{a_s : 1 \leqslant s \leqslant 330\} \cup \{b_t : 1 \leqslant t \leqslant 1127\},$$ 
where $a_s$ and $b_t$ are listed as in the Section \ref{s5}.

Let $x$ be an admissible monomial of weight vector $(4)|^3|(2)|^2|(1)$ and $x \in P_5^+$. By a direct computation we see that if $x \notin \Phi^+(B_4((4)|^3|(2)|^2|(1)))\cup \mathcal C$, then either $x$ is one of the monomials as listed in Lemma \ref{bd6} or there is a monomials as listed in one of Lemmas \ref{bd2}(i), \ref{bd3}, \ref{bd4}, \ref{bd5} such that $x = yw^{2^r}z^{2^{r+u}}$ with $1\leqslant r \leqslant 5$, $u \geqslant 0$, $ r+u \leqslant 5$ and $y$, $z$ suitable monomials in $P_5$. By Theorem \ref{dlcb1}, $x$ is inadmissible and we have a contradiction. Hence, 
$$B_5^+((4)|^3(2)|^2|(1)) \subset \Phi^+(B_4((4)|^3|(2)|^2|(1)))\cup \mathcal C.$$ 
Note that we obtain this result by hand computation with the aid of the Microsoft Excel software. From a result in T\'in \cite{Tij} which asserted that $\dim (QP_5)_{108} = 2071$, we obtain $|B_5^+((4)|^3(2)|^2|(1))| = 1457$. This implies
$$B_5^+((4)|^3(2)|^2|(1)) = \Phi^+(B_4((4)|^3|(2)|^2|(1)))\cup \mathcal C.$$
The proposition is proved. 
\end{proof}


\section{Proof of Theorem \ref{thm3}}\label{s4}

From the proof of Proposition 3.3 in \cite{su2} we get $\widetilde {SF}_{5}((4)|^4|(1)|^2) = 0$ and $\widetilde {SF}_{5}((4)|^4|(3)) = 0$. Hence, we need only to determined $\widetilde {SF}_{5}((4)|^3|(2)|^2|(1)) \subset QP_5^+((4)|^3|(2)|^2|(1)).$ The set $\{[a_t] :1\leqslant t \leqslant 330 \}\cup \{[b_u] :1\leqslant u \leqslant 1127\}$ is a basis of  $QP_5^+((4)|^3|(2)|^2|(1))$, hence if $\theta \in \widetilde {SF}_{5}((4)|^3|(2)|^2|(1))$, then we have
\begin{equation}\label{ctd612}
\theta \equiv \sum_{1\leqslant t \leqslant 330}\gamma_ta_{t} + \sum_{1\leqslant u \leqslant 1127}\gamma_{(u+330)}b_{u},
\end{equation}
where $\gamma_j \in \mathbb F_2$, $1\leqslant j \leqslant 1457$, and $p_{i;I}(\theta) \equiv 0$ for all $(i;I) \in \mathcal N_5$. For simplicity, we denote $\gamma_{\mathbb J} = \sum_{j \in \mathbb J}\gamma_j$ for any $\mathbb J \subset \{i\in \mathbb N:1\leqslant i \leqslant 1427\}$.

Let $w_s,\, 1\leqslant s \leqslant 56$, be as in Section \ref{s5} and the homomorphism $p_{(i;I)}:P_5\to P_4$ which is defined by \eqref{ct23} for $k=5$. By applying $p_{(1;j)}$, $2 \leqslant j \leqslant 5,$ to (\ref{ctd612}) and using Theorem \ref{dlsig}, we obtain 
\begin{align*}
p_{(1;2)}(\theta) &\equiv \gamma_{1}w_{1} + \gamma_{2}w_{2} + \gamma_{3}w_{3} + \gamma_{331}w_{4} + \gamma_{332}w_{5} + \gamma_{4}w_{6} + \gamma_{5}w_{7} + \gamma_{333}w_{8}\\
&\quad + \gamma_{334}w_{9} + \gamma_{335}w_{10} + \gamma_{6}w_{11} + \gamma_{7}w_{12} + \gamma_{8}w_{13} + \gamma_{9}w_{14} + \gamma_{28}w_{15}\\
&\quad + \gamma_{360}w_{16} + \gamma_{361}w_{17} + \gamma_{29}w_{18} + \gamma_{362}w_{19} + \gamma_{363}w_{20} + \gamma_{30}w_{21}\\
&\quad + \gamma_{364}w_{22} + \gamma_{365}w_{23} + \gamma_{366}w_{24} + \gamma_{367}w_{25} + \gamma_{31}w_{26} + \gamma_{368}w_{27}\\
&\quad + \gamma_{369}w_{28} + \gamma_{370}w_{29} + \gamma_{371}w_{30} + \gamma_{372}w_{31} + \gamma_{373}w_{32} + \gamma_{374}w_{33}\\
&\quad + \gamma_{32}w_{34} + \gamma_{33}w_{35} + \gamma_{46}w_{36} + \gamma_{426}w_{37} + \gamma_{427}w_{38} + \gamma_{428}w_{39}\\
&\quad + \gamma_{47}w_{40} + \gamma_{429}w_{41} + \gamma_{430}w_{42} + \gamma_{431}w_{43} + \gamma_{432}w_{44} + \gamma_{433}w_{45}\\
&\quad + \gamma_{434}w_{46} + \gamma_{435}w_{47} + \gamma_{436}w_{48} + \gamma_{437}w_{49} + \gamma_{48}w_{50} + \gamma_{477}w_{51}\\
&\quad + \gamma_{478}w_{52} + \gamma_{479}w_{53} + \gamma_{480}w_{54} + \gamma_{481}w_{55} + \gamma_{482}w_{56} \equiv 0,\\ 
p_{(1;3)}(\theta) &\equiv \gamma_{10}w_{1} + \gamma_{11}w_{2} + \gamma_{14}w_{3} + \gamma_{338}w_{4} + \gamma_{339}w_{5} + \gamma_{15}w_{6} + \gamma_{18}w_{7}\\
&\quad + \gamma_{346}w_{8} + \gamma_{347}w_{9} + \gamma_{348}w_{10} + \gamma_{19}w_{11} + \gamma_{357}w_{12} + \gamma_{358}w_{13}\\
&\quad + \gamma_{359}w_{14} + \gamma_{34}w_{15} + \gamma_{375}w_{16} + \gamma_{376}w_{17} + \gamma_{35}w_{18} + \gamma_{383}w_{19}\\
&\quad + \gamma_{384}w_{20} + \gamma_{38}w_{21} + \gamma_{389}w_{22} + \gamma_{390}w_{23} + \gamma_{391}w_{24} + \gamma_{392}w_{25}\\
&\quad + \gamma_{39}w_{26} + \gamma_{403}w_{27} + \gamma_{404}w_{28} + \gamma_{405}w_{29} + \gamma_{406}w_{30} + \gamma_{415}w_{31}\\
&\quad + \gamma_{416}w_{32} + \gamma_{417}w_{33} + \gamma_{424}w_{34} + \gamma_{425}w_{35} + \gamma_{49}w_{36} + \gamma_{438}w_{37}\\
&\quad + \gamma_{439}w_{38} + \gamma_{440}w_{39} + \gamma_{50}w_{40} + \gamma_{449}w_{41} + \gamma_{450}w_{42} + \gamma_{451}w_{43}\\
&\quad + \gamma_{452}w_{44} + \gamma_{461}w_{45} + \gamma_{462}w_{46} + \gamma_{467}w_{47} + \gamma_{470}w_{48} + \gamma_{471}w_{49}\\
&\quad + \gamma_{476}w_{50} + \gamma_{55}w_{51} + \gamma_{56}w_{52} + \gamma_{57}w_{53} + \gamma_{64}w_{54} + \gamma_{65}w_{55}\\
&\quad + \gamma_{70}w_{56} \equiv 0,\\
p_{(1;4)}(\theta) &\equiv \gamma_{\{12,174\}}w_{1} + \gamma_{\{337,810\}}w_{2} + \gamma_{16}w_{3} + \gamma_{\{341,1179\}}w_{4} + \gamma_{343}w_{5}\\
&\quad + \gamma_{\{345,1181\}}w_{6} + \gamma_{20}w_{7} + \gamma_{350}w_{8} + \gamma_{352}w_{9} + \gamma_{354}w_{10} + \gamma_{356}w_{11}\\
&\quad + \gamma_{22}w_{12} + \gamma_{24}w_{13} + \gamma_{26}w_{14} + \gamma_{36}w_{15} + \gamma_{\{378,1215\}}w_{16} + \gamma_{380}w_{17}\\
&\quad + \gamma_{\{382,1217\}}w_{18} + \gamma_{\{385,931\}}w_{19} + \gamma_{\{387,933\}}w_{20} + \gamma_{40}w_{21} + \gamma_{394}w_{22}\\
&\quad + \gamma_{396}w_{23} + \gamma_{398}w_{24} + \gamma_{400}w_{25} + \gamma_{402}w_{26} + \gamma_{407}w_{27} + \gamma_{409}w_{28}\\
&\quad + \gamma_{411}w_{29} + \gamma_{413}w_{30} + \gamma_{418}w_{31} + \gamma_{420}w_{32} + \gamma_{422}w_{33} + \gamma_{42}w_{34}\\
&\quad + \gamma_{44}w_{35} + \gamma_{51}w_{36} + \gamma_{442}w_{37} + \gamma_{444}w_{38} + \gamma_{446}w_{39} + \gamma_{448}w_{40}\\
&\quad + \gamma_{453}w_{41} + \gamma_{455}w_{42} + \gamma_{457}w_{43} + \gamma_{459}w_{44} + \gamma_{463}w_{45} + \gamma_{465}w_{46}\\
&\quad + \gamma_{468}w_{47} + \gamma_{472}w_{48} + \gamma_{474}w_{49} + \gamma_{53}w_{50} + \gamma_{58}w_{51} + \gamma_{60}w_{52}\\
&\quad + \gamma_{62}w_{53} + \gamma_{66}w_{54} + \gamma_{68}w_{55} + \gamma_{71}w_{56} \equiv 0,\\
p_{(1;5)}(\theta) &\equiv \gamma_{\{336,809\}}w_{1} + \gamma_{\{13,175\}}w_{2} + \gamma_{\{340,823\}}w_{3} + \gamma_{342}w_{4} + \gamma_{\{344,826\}}w_{5}\\
&\quad + \gamma_{17}w_{6} + \gamma_{\{349,842\}}w_{7} + \gamma_{351}w_{8} + \gamma_{353}w_{9} + \gamma_{355}w_{10} + \gamma_{21}w_{11}\\
&\quad + \gamma_{\{23,181\}}w_{12} + \gamma_{25}w_{13} + \gamma_{27}w_{14} + \gamma_{\{377,924\}}w_{15} + \gamma_{379}w_{16}\\
&\quad + \gamma_{\{381,927\}}w_{17} + \gamma_{37}w_{18} + \gamma_{386}w_{19} + \gamma_{388}w_{20} + \gamma_{\{393,942\}}w_{21}\\
&\quad + \gamma_{395}w_{22} + \gamma_{397}w_{23} + \gamma_{399}w_{24} + \gamma_{401}w_{25} + \gamma_{41}w_{26} + \gamma_{408}w_{27}\\
&\quad + \gamma_{410}w_{28} + \gamma_{412}w_{29} + \gamma_{414}w_{30} + \gamma_{\{419,956\}}w_{31} + \gamma_{421}w_{32} + \gamma_{423}w_{33}\\
&\quad + \gamma_{43}w_{34} + \gamma_{45}w_{35} + \gamma_{\{441,1017\}}w_{36} + \gamma_{443}w_{37} + \gamma_{445}w_{38} + \gamma_{447}w_{39}\\
&\quad + \gamma_{52}w_{40} + \gamma_{454}w_{41} + \gamma_{456}w_{42} + \gamma_{458}w_{43} + \gamma_{460}w_{44} + \gamma_{464}w_{45}\\
&\quad + \gamma_{466}w_{46} + \gamma_{469}w_{47} + \gamma_{473}w_{48} + \gamma_{475}w_{49} + \gamma_{54}w_{50} + \gamma_{\{59,209\}}w_{51}\\
&\quad + \gamma_{61}w_{52} + \gamma_{63}w_{53} + \gamma_{67}w_{54} + \gamma_{69}w_{55} + \gamma_{72}w_{56} \equiv 0.
\end{align*}	
From above relations we have
\begin{equation}\label{c61}
\gamma_j = 0 \mbox{ for } t \in \mathbb J_1,\ \gamma_i= \gamma_j \mbox{ for } (i,j) \in \mathbb K_1,
\end{equation}
where $\mathbb J_1 = \{$1,\, 2,\, 3,\, 4,\, 5,\, 6,\, 7,\, 8,\, 9,\, 10,\, 11,\, 14,\, 15,\, 16,\, 17,\, 18,\, 19,\, 20,\, 21,\, 22,\, 24,\, 25,\, 26,\, 27,\, 28,\, 29,\, 30,\, 31,\, 32,\, 33,\, 34,\, 35,\, 36,\, 37,\, 38,\, 39,\, 40,\, 41,\, 42,\, 43,\, 44,\, 45,\, 46,\, 47,\, 48,\, 49,\, 50,\, 51,\, 52,\, 53,\, 54,\, 55,\, 56,\, 57,\, 58,\, 60,\, 61,\, 62,\, 63,\, 64,\, 65,\, 66,\, 67,\, 68,\, 69,\, 70,\, 71,\, 72,\, 331,\, 332,\, 333,\, 334,\, 335,\, 338,\, 339,\, 342,\, 343,\, 346,\, 347,\, 348,\, 350,\, 351,\, 352,\, 353,\, 354,\, 355,\, 356,\, 357,\, 358,\, 359,\, 360,\, 361,\, 362,\, 363,\, 364,\, 365,\, 366,\, 367,\, 368,\, 369,\, 370,\, 371,\, 372,\, 373,\, 374,\, 375,\, 376,\, 379,\, 380,\, 383,\, 384,\, 386,\, 388,\, 389,\, 390,\, 391,\, 392,\, 394,\, 395,\, 396,\, 397,\, 398,\, 399,\, 400,\, 401,\, 402,\, 403,\, 404,\, 405,\, 406,\, 407,\, 408,\, 409,\, 410,\, 411,\, 412,\, 413,\, 414,\, 415,\, 416,\, 417,\, 418,\, 420,\, 421,\, 422,\, 423,\, 424,\, 425,\, 426,\, 427,\, 428,\, 429,\, 430,\, 431,\, 432,\, 433,\, 434,\, 435,\, 436,\, 437,\, 438,\, 439,\, 440,\, 442,\, 443,\, 444,\, 445,\, 446,\, 447,\, 448,\, 449,\, 450,\, 451,\, 452,\, 453,\, 454,\, 455,\, 456,\, 457,\, 458,\, 459,\, 460,\, 461,\, 462,\, 463,\, 464,\, 465,\, 466,\, 467,\, 468,\, 469,\, 470,\, 471,\, 472,\, 473,\, 474,\, 475,\, 476,\, 477,\, 478,\, 479,\, 480,\, 481,\, 482$\}$ and $\mathbb K_1 = \{$(12,174),\, (13,175),\, (23,181),\, (59,209),\, (336,809),\, (337,810),\, (340,823),\, (341,1179),\, (344,826),\, (345,1181),\, (349,842),\, (377,924),\, (378,1215),\, (381,927),\, (382,1217),\, (385,931),\, (387,933),\, (393,942),\, (419,956),\, (441,1017)$\}$.

Apply the homomorphisms $p_{(2;3)}$, $p_{(2;4)}$, $p_{(2;5)}$ to \eqref{ctd612} using \eqref{c61}, to obtain
\begin{align*}
p_{(2;3)}(\theta) &\equiv \gamma_{145}w_{1} + \gamma_{146}w_{2} + \gamma_{149}w_{3} + \gamma_{749}w_{4} + \gamma_{750}w_{5} + \gamma_{150}w_{6} + \gamma_{153}w_{7}\\
&\quad + \gamma_{757}w_{8} + \gamma_{758}w_{9} + \gamma_{759}w_{10} + \gamma_{154}w_{11} + \gamma_{768}w_{12} + \gamma_{769}w_{13} + \gamma_{770}w_{14}\\
&\quad + \gamma_{217}w_{15} + \gamma_{1053}w_{16} + \gamma_{1054}w_{17} + \gamma_{218}w_{18} + \gamma_{1061}w_{19} + \gamma_{1062}w_{20}\\
&\quad + \gamma_{221}w_{21} + \gamma_{1067}w_{22} + \gamma_{1068}w_{23} + \gamma_{1069}w_{24} + \gamma_{1070}w_{25} + \gamma_{222}w_{26}\\
&\quad + \gamma_{1081}w_{27} + \gamma_{1082}w_{28} + \gamma_{1083}w_{29} + \gamma_{1084}w_{30} + \gamma_{1093}w_{31} + \gamma_{1094}w_{32}\\
&\quad + \gamma_{1095}w_{33} + \gamma_{1102}w_{34} + \gamma_{1103}w_{35} + \gamma_{249}w_{36} + \gamma_{1309}w_{37} + \gamma_{1310}w_{38}\\
&\quad + \gamma_{1311}w_{39} + \gamma_{250}w_{40} + \gamma_{1320}w_{41} + \gamma_{1321}w_{42} + \gamma_{1322}w_{43} + \gamma_{1323}w_{44}\\
&\quad + \gamma_{1332}w_{45} + \gamma_{1333}w_{46} + \gamma_{1338}w_{47} + \gamma_{1341}w_{48} + \gamma_{1342}w_{49} + \gamma_{1347}w_{50}\\
&\quad + \gamma_{265}w_{51} + \gamma_{266}w_{52} + \gamma_{267}w_{53} + \gamma_{274}w_{54} + \gamma_{275}w_{55} + \gamma_{280}w_{56} \equiv 0,\\
p_{(2;4)}(\theta) &\equiv \gamma_{\{12,147\}}w_{1} + \gamma_{\{337,748\}}w_{2} + \gamma_{151}w_{3} + \gamma_{\{385,752\}}w_{4} + \gamma_{754}w_{5}\\
&\quad + \gamma_{\{387,756\}}w_{6} + \gamma_{155}w_{7} + \gamma_{761}w_{8} + \gamma_{763}w_{9} + \gamma_{765}w_{10} + \gamma_{767}w_{11}\\
&\quad + \gamma_{157}w_{12} + \gamma_{159}w_{13} + \gamma_{161}w_{14} + \gamma_{219}w_{15} + \gamma_{\{378,1056\}}w_{16} + \gamma_{1058}w_{17}\\
&\quad + \gamma_{\{382,1060\}}w_{18} + \gamma_{\{341,1063\}}w_{19} + \gamma_{\{345,1065\}}w_{20} + \gamma_{223}w_{21} + \gamma_{1072}w_{22}\\
&\quad + \gamma_{1074}w_{23} + \gamma_{1076}w_{24} + \gamma_{1078}w_{25} + \gamma_{1080}w_{26} + \gamma_{1085}w_{27} + \gamma_{1087}w_{28}\\
&\quad + \gamma_{1089}w_{29} + \gamma_{1091}w_{30} + \gamma_{1096}w_{31} + \gamma_{1098}w_{32} + \gamma_{1100}w_{33} + \gamma_{225}w_{34}\\
&\quad + \gamma_{227}w_{35} + \gamma_{251}w_{36} + \gamma_{1313}w_{37} + \gamma_{1315}w_{38} + \gamma_{1317}w_{39} + \gamma_{1319}w_{40}\\
&\quad + \gamma_{1324}w_{41} + \gamma_{1326}w_{42} + \gamma_{1328}w_{43} + \gamma_{1330}w_{44} + \gamma_{1334}w_{45} + \gamma_{1336}w_{46}\\
&\quad + \gamma_{1339}w_{47} + \gamma_{1343}w_{48} + \gamma_{1345}w_{49} + \gamma_{253}w_{50} + \gamma_{268}w_{51} + \gamma_{270}w_{52}\\
&\quad + \gamma_{272}w_{53} + \gamma_{276}w_{54} + \gamma_{278}w_{55} + \gamma_{281}w_{56} \equiv 0,\\
p_{(2;5)}(\theta) &\equiv \gamma_{\{336,747\}}w_{1} + \gamma_{\{13,148\}}w_{2} + \gamma_{\{340,751\}}w_{3} + \gamma_{753}w_{4} + \gamma_{\{344,755\}}w_{5}\\
&\quad + \gamma_{152}w_{6} + \gamma_{\{349,760\}}w_{7} + \gamma_{762}w_{8} + \gamma_{764}w_{9} + \gamma_{766}w_{10} + \gamma_{156}w_{11}\\
&\quad + \gamma_{\{23,158\}}w_{12} + \gamma_{160}w_{13} + \gamma_{162}w_{14} + \gamma_{\{1055,1172\}}w_{15} + \gamma_{1057}w_{16}\\
&\quad + \gamma_{\{1059,1175\}}w_{17} + \gamma_{220}w_{18} + \gamma_{1064}w_{19} + \gamma_{1066}w_{20} + \gamma_{\{1071,1190\}}w_{21}\\
&\quad + \gamma_{1073}w_{22} + \gamma_{1075}w_{23} + \gamma_{1077}w_{24} + \gamma_{1079}w_{25} + \gamma_{224}w_{26} + \gamma_{1086}w_{27}\\
&\quad + \gamma_{1088}w_{28} + \gamma_{1090}w_{29} + \gamma_{1092}w_{30} + \gamma_{\{1097,1204\}}w_{31} + \gamma_{1099}w_{32}\\
&\quad + \gamma_{1101}w_{33} + \gamma_{226}w_{34} + \gamma_{228}w_{35} + \gamma_{\{1312,1404\}}w_{36} + \gamma_{1314}w_{37}\\
&\quad + \gamma_{1316}w_{38} + \gamma_{1318}w_{39} + \gamma_{252}w_{40} + \gamma_{1325}w_{41} + \gamma_{1327}w_{42} + \gamma_{1329}w_{43}\\
&\quad + \gamma_{1331}w_{44} + \gamma_{1335}w_{45} + \gamma_{1337}w_{46} + \gamma_{1340}w_{47} + \gamma_{1344}w_{48} + \gamma_{1346}w_{49}\\
&\quad + \gamma_{254}w_{50} + \gamma_{\{269,311\}}w_{51} + \gamma_{271}w_{52} + \gamma_{273}w_{53} + \gamma_{277}w_{54}\\
&\quad + \gamma_{279}w_{55} + \gamma_{282}w_{56} \equiv 0.
\end{align*}
Computing from these equalities gives
\begin{equation}\label{c62}
\gamma_j = 0 \mbox{ for } j \in \mathbb J_2,\ \gamma_i= \gamma_j \mbox{ for } (i,j) \in \mathbb K_2,
\end{equation}
where $\mathbb J_2 = \{$145,\, 146,\, 149,\, 150,\, 151,\, 152,\, 153,\, 154,\, 155,\, 156,\, 157,\, 159,\, 160,\, 161,\, 162,\, 217,\, 218,\, 219,\, 220,\, 221,\, 222,\, 223,\, 224,\, 225,\, 226,\, 227,\, 228,\, 249,\, 250,\, 251,\, 252,\, 253,\, 254,\, 265,\, 266,\, 267,\, 268,\, 270,\, 271,\, 272,\, 273,\, 274,\, 275,\, 276,\, 277,\, 278,\, 279,\, 280,\, 281,\, 282,\, 749,\, 750,\, 753,\, 754,\, 757,\, 758,\, 759,\, 761,\, 762,\, 763,\, 764,\, 765,\, 766,\, 767,\, 768,\, 769,\, 770,\, 1053,\, 1054,\, 1057,\, 1058,\, 1061,\, 1062,\, 1064,\, 1066,\, 1067,\, 1068,\, 1069,\, 1070,\, 1072,\, 1073,\, 1074,\, 1075,\, 1076,\, 1077,\, 1078,\, 1079,\, 1080,\, 1081,\, 1082,\, 1083,\, 1084,\, 1085,\, 1086,\, 1087,\, 1088,\, 1089,\, 1090,\, 1091,\, 1092,\, 1093,\, 1094,\, 1095,\, 1096,\, 1098,\, 1099,\, 1100,\, 1101,\, 1102,\, 1103,\, 1309,\, 1310,\, 1311,\, 1313,\, 1314,\, 1315,\, 1316,\, 1317,\, 1318,\, 1319,\, 1320,\, 1321,\, 1322,\, 1323,\, 1324,\, 1325,\, 1326,\, 1327,\, 1328,\, 1329,\, 1330,\, 1331,\, 1332,\, 1333,\, 1334,\, 1335,\, 1336,\, 1337,\, 1338,\, 1339,\, 1340,\, 1341,\, 1342,\, 1343,\, 1344,\, 1345,\, 1346,\, 1347 $\}$ and $\mathbb K_2 = \{$(12,147),\, (13,148),\, (23,158),\, (269,311),\, (336,747),\, (337,748),\, (340,751),\, (341,1063),\, (344,755),\, (345,1065),\, (349,760),\, (378,1056),\, (382,1060),\, (385,752),\, (387,756),\, (1055,1172),\, (1059,1175),\, (1071,1190),\, (1097,1204),\, (1312,1404)$\}$.

Applying the homomorphisms $p_{(3;4)}$, $p_{(3;5)}$ and $p_{(4;5)}$ to \eqref{ctd612} and using Theorem \ref{dlsig}, the relations \eqref{c61}, \eqref{c62} give
\begin{align*}
p_{(3;4)}(\theta) &\equiv \gamma_{\{12,172,176,178\}}w_{1} + \gamma_{\{337,805,815,847\}}w_{2} + \gamma_{190}w_{3} + \gamma_{\{385,914\}}w_{4}\\
&\quad + \gamma_{916}w_{5} + \gamma_{\{387,918\}}w_{6} + \gamma_{196}w_{7} + \gamma_{1004}w_{8} + \gamma_{1006}w_{9} + \gamma_{1008}w_{10}\\
&\quad + \gamma_{1010}w_{11} + \gamma_{199}w_{12} + \gamma_{201}w_{13} + \gamma_{203}w_{14} + \gamma_{235}w_{15} + \gamma_{\{341,1162\}}w_{16}\\
&\quad + \gamma_{1164}w_{17} + \gamma_{\{345,1166\}}w_{18} + \gamma_{\{378,1209,1219\}}w_{19}\\
&\quad + \gamma_{\{382,1211,1223,1233\}}w_{20} + \gamma_{238}w_{21} + \gamma_{1240}w_{22} + \gamma_{1242}w_{23} + \gamma_{1244}w_{24}\\
&\quad + \gamma_{1246}w_{25} + \gamma_{1248}w_{26} + \gamma_{1275}w_{27} + \gamma_{1277}w_{28} + \gamma_{1279}w_{29} + \gamma_{1281}w_{30}\\
&\quad + \gamma_{1291}w_{31} + \gamma_{1293}w_{32} + \gamma_{1295}w_{33} + \gamma_{241}w_{34} + \gamma_{243}w_{35} + \gamma_{258}w_{36}\\
&\quad + \gamma_{1391}w_{37} + \gamma_{1393}w_{38} + \gamma_{1395}w_{39} + \gamma_{1397}w_{40} + \gamma_{1422}w_{41} + \gamma_{1424}w_{42}\\
&\quad + \gamma_{1426}w_{43} + \gamma_{1428}w_{44} + \gamma_{1438}w_{45} + \gamma_{1440}w_{46} + \gamma_{1446}w_{47} + \gamma_{1450}w_{48}\\
&\quad + \gamma_{1452}w_{49} + \gamma_{261}w_{50} + \gamma_{301}w_{51} + \gamma_{303}w_{52} + \gamma_{305}w_{53} + \gamma_{319}w_{54}\\
&\quad + \gamma_{321}w_{55} + \gamma_{327}w_{56} \equiv 0,\\
p_{(3;5)}(\theta) &\equiv \gamma_{\{336,804,814,832,852\}}w_{1} + \gamma_{\{13,173,177,179\}}w_{2} + \gamma_{\{377,913\}}w_{3} + \gamma_{915}w_{4}\\
&\quad + \gamma_{\{381,917\}}w_{5} + \gamma_{191}w_{6} + \gamma_{\{419,1003\}}w_{7} + \gamma_{1005}w_{8} + \gamma_{1007}w_{9}\\
&\quad + \gamma_{1009}w_{10} + \gamma_{197}w_{11} + \gamma_{\{59,200\}}w_{12} + \gamma_{202}w_{13} + \gamma_{204}w_{14}\\
&\quad + \gamma_{\{1055,1161\}}w_{15} + \gamma_{1163}w_{16} + \gamma_{\{1059,1165\}}w_{17} + \gamma_{236}w_{18}\\
&\quad + \gamma_{\{1210,1222,1235\}}w_{19} + \gamma_{\{1212,1226\}}w_{20} + \gamma_{\{1239,1256\}}w_{21} + \gamma_{1241}w_{22}\\
&\quad + \gamma_{1243}w_{23} + \gamma_{1245}w_{24} + \gamma_{1247}w_{25} + \gamma_{239}w_{26} + \gamma_{1276}w_{27} + \gamma_{1278}w_{28}\\
&\quad + \gamma_{1280}w_{29} + \gamma_{1282}w_{30} + \gamma_{\{1292,1301\}}w_{31} + \gamma_{1294}w_{32} + \gamma_{1296}w_{33}\\
&\quad + \gamma_{242}w_{34} + \gamma_{244}w_{35} + \gamma_{\{1312,1390\}}w_{36} + \gamma_{1392}w_{37} + \gamma_{1394}w_{38}\\
&\quad + \gamma_{1396}w_{39} + \gamma_{259}w_{40} + \gamma_{1423}w_{41} + \gamma_{1425}w_{42} + \gamma_{1427}w_{43} + \gamma_{1429}w_{44}\\
&\quad + \gamma_{1439}w_{45} + \gamma_{1441}w_{46} + \gamma_{1447}w_{47} + \gamma_{1451}w_{48} + \gamma_{1453}w_{49} + \gamma_{262}w_{50}\\
&\quad + \gamma_{\{269,302\}}w_{51} + \gamma_{304}w_{52} + \gamma_{306}w_{53} + \gamma_{320}w_{54} + \gamma_{322}w_{55} + \gamma_{328}w_{56} \equiv 0,\\
p_{(4;5)}(\theta) &\equiv \gamma_{\{349,841,843,844,845,846\}}w_{1} + \gamma_{\{23,180,182,183\}}w_{2}\\
&\quad + \gamma_{\{393,941,943,944,945,946\}}w_{3} + \gamma_{951}w_{4} + \gamma_{\{419,955,957,958\}}w_{5} + \gamma_{192}w_{6}\\
&\quad + \gamma_{\{419,1016,1018,1019,1020,1021\}}w_{7} + \gamma_{1026}w_{8} + \gamma_{1029}w_{9} + \gamma_{1033}w_{10}\\
&\quad + \gamma_{198}w_{11} + \gamma_{\{59,208,210,211\}}w_{12} + \gamma_{214}w_{13} + \gamma_{216}w_{14}\\
&\quad + \gamma_{\{1071,1189,1191,1192,1193,1194\}}w_{15} + \gamma_{1199}w_{16} + \gamma_{\{1097,1203,1205,1206\}}w_{17}\\
&\quad + \gamma_{237}w_{18} + \gamma_{\{1231,1232\}}w_{19} + \gamma_{\{1255,1256,1257,1258,1259,1260\}}w_{21}\\
&\quad + \gamma_{1238}w_{20} + \gamma_{1265}w_{22} + \gamma_{1268}w_{23} + \gamma_{1270}w_{24} + \gamma_{1273}w_{25} + \gamma_{240}w_{26}\\
&\quad + \gamma_{1287}w_{27} + \gamma_{1288}w_{28} + \gamma_{1289}w_{29} + \gamma_{1290}w_{30} + \gamma_{\{1300,1301,1302,1303\}}w_{31}\\
&\quad + \gamma_{1306}w_{32} + \gamma_{1308}w_{33} + \gamma_{247}w_{34} + \gamma_{\{1312,1403,1405,1406,1407,1408\}}w_{36}\\
&\quad + \gamma_{248}w_{35} + \gamma_{1413}w_{37} + \gamma_{1416}w_{38} + \gamma_{1420}w_{39} + \gamma_{260}w_{40} + \gamma_{1434}w_{41}\\
&\quad + \gamma_{1435}w_{42} + \gamma_{1436}w_{43} + \gamma_{1437}w_{44} + \gamma_{1444}w_{45} + \gamma_{1445}w_{46} + \gamma_{1449}w_{47}\\
&\quad + \gamma_{1456}w_{48} + \gamma_{1457}w_{49} + \gamma_{264}w_{50} + \gamma_{\{269,310,312,313\}}w_{51} + \gamma_{316}w_{52}\\
&\quad + \gamma_{318}w_{53} + \gamma_{325}w_{54} + \gamma_{326}w_{55} + \gamma_{330}w_{56} \equiv 0.
\end{align*}
From the above equalities it implies
\begin{equation}\label{c63}
\gamma_j = 0 \mbox{ for } j \in \mathbb J_3,\ \gamma_i= \gamma_j \mbox{ for } (i,j) \in \mathbb K_3,
\end{equation}
where $\mathbb J_3 = \{$190,\, 191,\, 192,\, 196,\, 197,\, 198,\, 199,\, 201,\, 202,\, 203,\, 204,\, 214,\, 216,\, 235,\, 236,\, 237,\, 238,\, 239,\, 240,\, 241,\, 242,\, 243,\, 244,\, 247,\, 248,\, 258,\, 259,\, 260,\, 261,\, 262,\, 264,\, 301,\, 303,\, 304,\, 305,\, 306,\, 316,\, 318,\, 319,\, 320,\, 321,\, 322,\, 325,\, 326,\, 327,\, 328,\, 330,\, 915,\, 916,\, 951,\, 1004,\, 1005,\, 1006,\, 1007,\, 1008,\, 1009,\, 1010,\, 1026,\, 1029,\, 1033,\, 1163,\, 1164,\, 1199,\, 1238,\, 1240,\, 1241,\, 1242,\, 1243,\, 1244,\, 1245,\, 1246,\, 1247,\, 1248,\, 1265,\, 1268,\, 1270,\, 1273,\, 1275,\, 1276,\, 1277,\, 1278,\, 1279,\, 1280,\, 1281,\, 1282,\, 1287,\, 1288,\, 1289,\, 1290,\, 1291,\, 1293,\, 1294,\, 1295,\, 1296,\, 1306,\, 1308,\, 1391,\, 1392,\, 1393,\, 1394,\, 1395,\, 1396,\, 1397,\, 1413,\, 1416,\, 1420,\, 1422,\, 1423,\, 1424,\, 1425,\, 1426,\, 1427,\, 1428,\, 1429,\, 1434,\, 1435,\, 1436,\, 1437,\, 1438,\, 1439,\, 1440,\, 1441,\, 1444,\, 1445,\, 1446,\, 1447,\, 1449,\, 1450,\, 1451,\, 1452,\, 1453,\, 1456,\, 1457$\}$ and $\mathbb K_3 = \{$(59,200),\, (269,302),\, (341,1162),\, (345,1166),\, (377,913),\, (381,917),\, (385,914),\, (387,918),\, (419,1003),\, (1055,1161),\, (1059,1165),\, (1212,1226),\, (1231,1232),\, (1239,1256),\, (1292,1301),\linebreak (1312,1390)$\}$.

By applying the homomorphisms $p_{(1;(2,3))}$, $p_{(1;(2,4))}$, $p_{(1;(3,4))}$ to \eqref{ctd612} and using \eqref{c61}, \eqref{c62}, \eqref{c63}, we get
\begin{align*}
p_{(1;(2,3))}(\theta) &\equiv \gamma_{73}w_{1} + \gamma_{74}w_{2} + \gamma_{77}w_{3} + \gamma_{485}w_{4} + \gamma_{486}w_{5} + \gamma_{78}w_{6} + \gamma_{81}w_{7}\\
&\quad + \gamma_{493}w_{8} + \gamma_{494}w_{9} + \gamma_{495}w_{10} + \gamma_{82}w_{11} + \gamma_{504}w_{12} + \gamma_{505}w_{13}\\
&\quad + \gamma_{506}w_{14} + \gamma_{100}w_{15} + \gamma_{540}w_{16} + \gamma_{541}w_{17} + \gamma_{101}w_{18} + \gamma_{548}w_{19}\\
&\quad + \gamma_{549}w_{20} + \gamma_{104}w_{21} + \gamma_{554}w_{22} + \gamma_{555}w_{23} + \gamma_{556}w_{24} + \gamma_{557}w_{25}\\
&\quad + \gamma_{105}w_{26} + \gamma_{568}w_{27} + \gamma_{569}w_{28} + \gamma_{570}w_{29} + \gamma_{571}w_{30} + \gamma_{580}w_{31}\\
&\quad + \gamma_{581}w_{32} + \gamma_{582}w_{33} + \gamma_{589}w_{34} + \gamma_{590}w_{35} + \gamma_{118}w_{36} + \gamma_{648}w_{37}\\
&\quad + \gamma_{649}w_{38} + \gamma_{650}w_{39} + \gamma_{119}w_{40} + \gamma_{659}w_{41} + \gamma_{660}w_{42} + \gamma_{661}w_{43}\\
&\quad + \gamma_{662}w_{44} + \gamma_{671}w_{45} + \gamma_{672}w_{46} + \gamma_{677}w_{47} + \gamma_{680}w_{48} + \gamma_{681}w_{49}\\
&\quad + \gamma_{686}w_{50} + \gamma_{729}w_{51} + \gamma_{730}w_{52} + \gamma_{731}w_{53} + \gamma_{738}w_{54} + \gamma_{739}w_{55}\\
&\quad + \gamma_{744}w_{56} \equiv 0,\\
p_{(1;(2,4))}(\theta) &\equiv \gamma_{\{12,75\}}w_{1} + \gamma_{\{337,484\}}w_{2} + \gamma_{79}w_{3} + \gamma_{\{341,385,488\}}w_{4} + \gamma_{490}w_{5}\\
&\quad + \gamma_{\{345,387,492\}}w_{6} + \gamma_{83}w_{7} + \gamma_{497}w_{8} + \gamma_{499}w_{9} + \gamma_{501}w_{10}\\
&\quad + \gamma_{503}w_{11} + \gamma_{85}w_{12} + \gamma_{87}w_{13} + \gamma_{89}w_{14} + \gamma_{102}w_{15} + \gamma_{\{378,543\}}w_{16}\\
&\quad + \gamma_{545}w_{17} + \gamma_{\{382,547\}}w_{18} + \gamma_{550}w_{19} + \gamma_{552}w_{20} + \gamma_{106}w_{21}\\
&\quad + \gamma_{559}w_{22} + \gamma_{561}w_{23} + \gamma_{563}w_{24} + \gamma_{565}w_{25} + \gamma_{567}w_{26} + \gamma_{572}w_{27}\\
&\quad + \gamma_{574}w_{28} + \gamma_{576}w_{29} + \gamma_{578}w_{30} + \gamma_{583}w_{31} + \gamma_{585}w_{32} + \gamma_{587}w_{33}\\
&\quad + \gamma_{108}w_{34} + \gamma_{110}w_{35} + \gamma_{120}w_{36} + \gamma_{652}w_{37} + \gamma_{654}w_{38} + \gamma_{656}w_{39}\\
&\quad + \gamma_{658}w_{40} + \gamma_{663}w_{41} + \gamma_{665}w_{42} + \gamma_{667}w_{43} + \gamma_{669}w_{44} + \gamma_{673}w_{45}\\
&\quad + \gamma_{675}w_{46} + \gamma_{678}w_{47} + \gamma_{682}w_{48} + \gamma_{684}w_{49} + \gamma_{122}w_{50} + \gamma_{732}w_{51}\\
&\quad + \gamma_{734}w_{52} + \gamma_{736}w_{53} + \gamma_{740}w_{54} + \gamma_{742}w_{55} + \gamma_{745}w_{56} \equiv 0,\\
p_{(1;(3,4))}(\theta) &\equiv \gamma_{\{91,172\}}w_{1} + \gamma_{\{508,805\}}w_{2} + \gamma_{\{93,176\}}w_{3} + \gamma_{\{341,512,813\}}w_{4}\\
&\quad + \gamma_{\{514,815\}}w_{5} + \gamma_{\{345,516,817\}}w_{6} + \gamma_{\{95,178\}}w_{7} + \gamma_{\{522,829\}}w_{8}\\
&\quad + \gamma_{\{524,831\}}w_{9} + \gamma_{\{526,833\}}w_{10} + \gamma_{\{528,835\}}w_{11} + \gamma_{\{534,847\}}w_{12}\\
&\quad + \gamma_{\{536,849\}}w_{13} + \gamma_{\{538,851\}}w_{14} + \gamma_{112}w_{15} + \gamma_{\{385,592,1209\}}w_{16}\\
&\quad + \gamma_{594}w_{17} + \gamma_{\{387,596,1211\}}w_{18} + \gamma_{\{378,601,1209\}}w_{19}\\
&\quad + \gamma_{\{382,603,1211\}}w_{20} + \gamma_{114}w_{21} + \gamma_{\{608,1219\}}w_{22} + \gamma_{\{610,1221\}}w_{23}\\
&\quad + \gamma_{612}w_{24} + \gamma_{\{614,1223\}}w_{25} + \gamma_{\{616,1225\}}w_{26} + \gamma_{\{623,1219\}}w_{27}\\
&\quad + \gamma_{\{625,1221\}}w_{28} + \gamma_{\{627,1223\}}w_{29} + \gamma_{\{629,1225\}}w_{30} + \gamma_{635}w_{31}\\
&\quad + \gamma_{\{637,1233\}}w_{32} + \gamma_{639}w_{33} + \gamma_{\{644,1233\}}w_{34} + \gamma_{646}w_{35} + \gamma_{124}w_{36}\\
&\quad + \gamma_{688}w_{37} + \gamma_{690}w_{38} + \gamma_{692}w_{39} + \gamma_{694}w_{40} + \gamma_{700}w_{41} + \gamma_{702}w_{42}\\
&\quad + \gamma_{704}w_{43} + \gamma_{706}w_{44} + \gamma_{712}w_{45} + \gamma_{714}w_{46} + \gamma_{718}w_{47} + \gamma_{721}w_{48}\\
&\quad + \gamma_{723}w_{49} + \gamma_{727}w_{50} + \gamma_{127}w_{51} + \gamma_{129}w_{52} + \gamma_{131}w_{53} + \gamma_{136}w_{54}\\
&\quad + \gamma_{138}w_{55} + \gamma_{142}w_{56} \equiv 0.  
\end{align*}
By a direct calculation using the above equalities we have
\begin{equation}\label{c64}
\gamma_j = 0 \mbox{ for } j \in \mathbb J_4,\ , \gamma_i= \gamma_j \mbox{ for } (i,j) \in \mathbb K_4,
\end{equation}
where $\mathbb J_4 = \{$73,\, 74,\, 77,\, 78,\, 79,\, 81,\, 82,\, 83,\, 85,\, 87,\, 89,\, 100,\, 101,\, 102,\, 104,\, 105,\, 106,\, 108,\, 110,\, 112,\, 114,\, 118,\, 119,\, 120,\, 122,\, 124,\, 127,\, 129,\, 131,\, 136,\, 138,\, 142,\, 485,\, 486,\, 490,\, 493,\, 494,\, 495,\, 497,\, 499,\, 501,\, 503,\, 504,\, 505,\, 506,\, 540,\, 541,\, 545,\, 548,\, 549,\, 550,\, 552,\, 554,\, 555,\, 556,\, 557,\, 559,\, 561,\, 563,\, 565,\, 567,\, 568,\, 569,\, 570,\, 571,\, 572,\, 574,\, 576,\, 578,\, 580,\, 581,\, 582,\, 583,\, 585,\, 587,\, 589,\, 590,\, 594,\, 612,\, 635,\, 639,\, 646,\, 648,\, 649,\, 650,\, 652,\, 654,\, 656,\, 658,\, 659,\, 660,\, 661,\, 662,\, 663,\, 665,\, 667,\, 669,\, 671,\, 672,\, 673,\, 675,\, 677,\, 678,\, 680,\, 681,\, 682,\, 684,\, 686,\, 688,\, 690,\, 692,\, 694,\, 700,\, 702,\, 704,\, 706,\, 712,\, 714,\, 718,\, 721,\, 723,\, 727,\, 729,\, 730,\, 731,\, 732,\, 734,\, 736,\, 738,\, 739,\, 740,\, 742,\, 744,\, 745$\}$ and $\mathbb K_4 = \{$(12,75),\, (91,172),\, (93,176),\, (95,178),\, (337,484),\, (378,543),\, (382,547),\, (508,805),\, (514,815),\, (522,829),\, (524,831),\, (526,833),\, (528,835),\, (534,847),\, (536,849),\linebreak (538,851),\, (608,1219),\, (608,623),\, (610,1221),\, (610,625),\, (614,1223),\, (614,627),\, (616,1225),\, (616,629),\, (637,1233)$\}$.

Apply the homomorphisms $p_{(1;(2,5))}$, $p_{(1;(3,5))}$, $p_{(1;(4,5))}$ to \eqref{ctd612} using \eqref{c61}, \eqref{c62}, \eqref{c63}, \eqref{c64} to obtain
\begin{align*}
&p_{(1;(2,5))}(\theta) \equiv \gamma_{\{336,483\}}w_{1} + \gamma_{\{13,76\}}w_{2} + \gamma_{\{340,487\}}w_{3} + \gamma_{489}w_{4} + \gamma_{\{344,491\}}w_{5}\\
&\quad + \gamma_{80}w_{6} + \gamma_{\{349,496\}}w_{7} + \gamma_{498}w_{8} + \gamma_{500}w_{9} + \gamma_{502}w_{10} + \gamma_{84}w_{11} + \gamma_{\{23,86\}}w_{12}\\
&\quad + \gamma_{88}w_{13} + \gamma_{90}w_{14} + \gamma_{542}w_{15} + \gamma_{544}w_{16} + \gamma_{546}w_{17} + \gamma_{103}w_{18} + \gamma_{551}w_{19}\\
&\quad + \gamma_{553}w_{20} + \gamma_{558}w_{21} + \gamma_{560}w_{22} + \gamma_{562}w_{23} + \gamma_{564}w_{24} + \gamma_{566}w_{25} + \gamma_{107}w_{26}\\
&\quad + \gamma_{573}w_{27} + \gamma_{575}w_{28} + \gamma_{577}w_{29} + \gamma_{579}w_{30} + \gamma_{584}w_{31} + \gamma_{586}w_{32} + \gamma_{588}w_{33}\\
&\quad + \gamma_{109}w_{34} + \gamma_{111}w_{35} + \gamma_{\{651,1239\}}w_{36} + \gamma_{653}w_{37} + \gamma_{655}w_{38} + \gamma_{657}w_{39} + \gamma_{121}w_{40}\\
&\quad + \gamma_{664}w_{41} + \gamma_{666}w_{42} + \gamma_{668}w_{43} + \gamma_{670}w_{44} + \gamma_{674}w_{45} + \gamma_{676}w_{46} + \gamma_{679}w_{47}\\
&\quad + \gamma_{683}w_{48} + \gamma_{685}w_{49} + \gamma_{123}w_{50} + \gamma_{\{733,1292\}}w_{51} + \gamma_{735}w_{52} + \gamma_{737}w_{53}\\
&\quad + \gamma_{741}w_{54} + \gamma_{743}w_{55} + \gamma_{746}w_{56} \equiv 0,\\
&p_{(1;(3,5))}(\theta) \equiv \gamma_{\{507,804\}}w_{1} + \gamma_{\{92,173\}}w_{2} + \gamma_{\{511,812\}}w_{3} + \gamma_{\{513,814\}}w_{4}\\
&\quad + \gamma_{\{515,816\}}w_{5} + \gamma_{\{94,177\}}w_{6} + \gamma_{\{521,828,1071\}}w_{7} + \gamma_{\{523,830\}}w_{8} + \gamma_{\{525,832\}}w_{9}\\
&\quad + \gamma_{\{527,834\}}w_{10} + \gamma_{\{96,179\}}w_{11} + \gamma_{\{535,848,1097\}}w_{12} + \gamma_{\{537,850\}}w_{13}\\
&\quad + \gamma_{\{539,852\}}w_{14} + \gamma_{\{377,591,1210\}}w_{15} + \gamma_{593}w_{16} + \gamma_{\{381,595,1212\}}w_{17} + \gamma_{113}w_{18}\\
&\quad + \gamma_{\{602,1210,1222\}}w_{19} + \gamma_{604}w_{20} + \gamma_{\{607,1220\}}w_{21} + \gamma_{\{609,1222\}}w_{22} + \gamma_{611}w_{23}\\
&\quad + \gamma_{\{613,1224\}}w_{24} + \gamma_{\{615,1212\}}w_{25} + \gamma_{115}w_{26} + \gamma_{\{624,1220\}}w_{27}\\
&\quad + \gamma_{\{626,1222,1235\}}w_{28} + \gamma_{\{628,1224,1235\}}w_{29} + \gamma_{\{630,1212\}}w_{30} + \gamma_{\{636,1234\}}w_{31}\\
&\quad + \gamma_{\{638,1235\}}w_{32} + \gamma_{640}w_{33} + \gamma_{\{645,1234\}}w_{34} + \gamma_{\{647,1235\}}w_{35} + \gamma_{\{419,687\}}w_{36}\\
&\quad + \gamma_{689}w_{37} + \gamma_{691}w_{38} + \gamma_{693}w_{39} + \gamma_{125}w_{40} + \gamma_{701}w_{41} + \gamma_{703}w_{42} + \gamma_{705}w_{43}\\
&\quad + \gamma_{707}w_{44} + \gamma_{713}w_{45} + \gamma_{715}w_{46} + \gamma_{719}w_{47} + \gamma_{722}w_{48} + \gamma_{724}w_{49} + \gamma_{728}w_{50}\\
&\quad + \gamma_{\{59,128\}}w_{51} + \gamma_{130}w_{52} + \gamma_{132}w_{53} + \gamma_{137}w_{54} + \gamma_{139}w_{55} + \gamma_{143}w_{56} \equiv 0,\\
&p_{(1;(4,5))}(\theta) \equiv \gamma_{\{269,310,312,313,509,808,1055,1171,1173,1312,1403,1405,1406\}}w_{1}\\
&\quad + \gamma_{\{269,310,312,313,510,811,1059,1174,1176,1407,1408\}}w_{2}\\
&\quad + \gamma_{\{341,517,822,1071,1180,1189,1191\}}w_{3} + \gamma_{\{518,824,1097,1180,1192,1203,1205\}}w_{4}\\
&\quad + \gamma_{\{345,519,825,1097,1182,1193,1203,1205\}}w_{5} + \gamma_{\{520,827,1182,1194\}}w_{6} + \gamma_{\{529,841\}}w_{7}\\
&\quad + \gamma_{\{530,843,1097,1203,1205\}}w_{8} + \gamma_{\{531,844,1097,1203,1205,1206\}}w_{9} + \gamma_{\{532,845,1206\}}w_{10}\\
&\quad + \gamma_{\{533,846\}}w_{11} + \gamma_{\{97,180\}}w_{12} + \gamma_{\{98,182\}}w_{13} + \gamma_{\{99,183\}}w_{14}\\
&\quad + \gamma_{\{378,597,923,1216,1239,1255,1257\}}w_{15} + \gamma_{\{598,925,1216,1258,1292,1300,1302\}}w_{16}\\
&\quad + \gamma_{\{382,599,926,1218,1259,1292,1300,1302\}}w_{17} + \gamma_{\{600,928,1218,1260\}}w_{18}\\
&\quad + \gamma_{\{605,932,1231\}}w_{19} + \gamma_{\{606,934\}}w_{20} + \gamma_{\{617,941,1231\}}w_{21} + \gamma_{\{618,943\}}w_{22}\\
&\quad + \gamma_{\{619,944,1231\}}w_{23} + \gamma_{620}w_{24} + \gamma_{\{621,945\}}w_{25} + \gamma_{\{622,946\}}w_{26} + \gamma_{631}w_{27}\\
&\quad + \gamma_{632}w_{28} + \gamma_{633}w_{29} + \gamma_{634}w_{30} + \gamma_{\{641,955\}}w_{31} + \gamma_{\{642,957\}}w_{32} + \gamma_{\{643,958\}}w_{33}\\
&\quad + \gamma_{116}w_{34} + \gamma_{117}w_{35} + \gamma_{\{695,1016\}}w_{36} + \gamma_{\{696,1018,1292,1300,1302\}}w_{37}\\
&\quad + \gamma_{\{697,1019,1292,1300,1302,1303\}}w_{38} + \gamma_{\{698,1020,1303\}}w_{39} + \gamma_{\{699,1021\}}w_{40}\\
&\quad + \gamma_{708}w_{41} + \gamma_{709}w_{42} + \gamma_{710}w_{43} + \gamma_{711}w_{44} + \gamma_{716}w_{45} + \gamma_{717}w_{46} + \gamma_{720}w_{47}\\
&\quad + \gamma_{725}w_{48} + \gamma_{726}w_{49} + \gamma_{126}w_{50} + \gamma_{\{133,208\}}w_{51} + \gamma_{\{134,210\}}w_{52}\\
&\quad + \gamma_{\{135,211\}}w_{53} + \gamma_{140}w_{54} + \gamma_{141}w_{55} + \gamma_{144}w_{56} \equiv 0.
\end{align*}
From the above relations we obtain
\begin{equation}\label{c65}
\gamma_j = 0 \mbox{ for } j \in \mathbb J_5,\ \gamma_i= \gamma_j \mbox{ for } (i,j) \in \mathbb K_5,
\end{equation}
where $\mathbb J_5 = \{$80,\, 84,\, 88,\, 90,\, 103,\, 107,\, 109,\, 111,\, 113,\, 115,\, 116,\, 117,\, 121,\, 123,\, 125,\, 126,\, 130,\, 132,\, 137,\, 139,\, 140,\, 141,\, 143,\, 144,\, 489,\, 498,\, 500,\, 502,\, 542,\, 544,\, 546,\, 551,\, 553,\, 558,\, 560,\, 562,\, 564,\, 566,\, 573,\, 575,\, 577,\, 579,\, 584,\, 586,\, 588,\, 593,\, 604,\, 611,\, 620,\, 631,\, 632,\, 633,\, 634,\, 640,\, 653,\, 655,\, 657,\, 664,\, 666,\, 668,\, 670,\, 674,\, 676,\, 679,\, 683,\, 685,\, 689,\, 691,\, 693,\, 701,\, 703,\, 705,\, 707,\, 708,\, 709,\, 710,\, 711,\, 713,\, 715,\, 716,\, 717,\, 719,\, 720,\, 722,\, 724,\, 725,\, 726,\, 728,\, 735,\, 737,\, 741,\, 743,\, 746$\}$ and $\mathbb K_5 = \{$(13,76),\, (23,86),\, (59,128),\, (92,173),\, (94,177),\, (96,179),\, (97,180),\, (98,182),\, (99,183),\, (133,208),\, (134,210),\, (135,211),\, (336,483),\, (340,487),\, (344,491),\, (349,496),\, (419,687),\, (507,804),\, (511,812),\, (513,814),\linebreak (515,816),\, (523,830),\, (525,832),\, (527,834),\, (529,841),\, (533,846),\, (537,850),\linebreak (539,852),\, (606,934),\, (607,1220),\, (607,624),\, (609,1222),\, (613,1224),\, (615,1212),\, (615,630),\, (618,943),\, (621,945),\, (622,946),\, (636,1234),\, (636,645),\, (637,644),\linebreak (638,1235),\, (638,647),\, (641,955),\, (642,957),\, (643,958),\, (651,1239),\, (695,1016),\, (699,1021),\, (733,1292)$\}$.

Applying the homomorphisms $p_{(s;(u,v))}$, $2\leqslant s < u < v \leqslant 5$ to \eqref{ctd612}, using Theorem \ref{dlsig} and the relations \eqref{c61}-\eqref{c65} give
\begin{align*}
&p_{(2;(3,4))}(\theta) \equiv \gamma_{\{91,163,184,193\}}w_{1} + \gamma_{\{508,772,859,1035\}}w_{2} + \gamma_{\{93,165,184,186\}}w_{3}\\ 
&\quad + \gamma_{\{385,776,813,857,866,877,900,962,974\}}w_{4} + \gamma_{\{514,778,859,900\}}w_{5}\\ 
&\quad + \gamma_{\{387,780,817,861,868,966,978,1037,1044\}}w_{6} + \gamma_{\{95,167,186,193\}}w_{7}\\ 
&\quad + \gamma_{\{522,786,873,888,900,962,974\}}w_{8} + \gamma_{\{526,790,879,892,966,978,992,995\}}w_{10}\\ 
&\quad + \gamma_{\{524,788,875,877,890,900,964,976,986,992,995,1035,1037,1044\}}w_{9}\\ 
&\quad  + \gamma_{\{528,792,881,894,968,980,988,1039,1046,1050\}}w_{11} + \gamma_{\{534,798,900,1035\}}w_{12}\\ 
&\quad + \gamma_{\{536,800,902,909,992,995,1037,1044\}}w_{13} + \gamma_{\{538,802,904,911,997,1001,1039,1046,1050\}}w_{14}\\ 
&\quad + \gamma_{229}w_{15} + \gamma_{\{341,1105,1209\}}w_{16} + \gamma_{1107}w_{17} + \gamma_{\{345,1109,1211\}}w_{18}\\ 
&\quad + \gamma_{\{378,1114,1209\}}w_{19} + \gamma_{\{382,1116,1211\}}w_{20} + \gamma_{231}w_{21} + \gamma_{\{608,1121\}}w_{22}\\ 
&\quad + \gamma_{\{610,1123\}}w_{23} + \gamma_{1125}w_{24} + \gamma_{\{614,1127\}}w_{25} + \gamma_{\{616,1129\}}w_{26} + \gamma_{\{608,1136\}}w_{27}\\ 
&\quad + \gamma_{\{610,1138\}}w_{28} + \gamma_{\{614,1140\}}w_{29} + \gamma_{\{616,1142\}}w_{30} + \gamma_{1148}w_{31} + \gamma_{\{637,1150\}}w_{32}\\ 
&\quad + \gamma_{1152}w_{33} + \gamma_{\{637,1157\}}w_{34} + \gamma_{1159}w_{35} + \gamma_{255}w_{36} + \gamma_{1349}w_{37} + \gamma_{1351}w_{38}\\ 
&\quad + \gamma_{1353}w_{39} + \gamma_{1355}w_{40} + \gamma_{1361}w_{41} + \gamma_{1363}w_{42} + \gamma_{1365}w_{43} + \gamma_{1367}w_{44} + \gamma_{1373}w_{45}\\ 
&\quad + \gamma_{1375}w_{46} + \gamma_{1379}w_{47} + \gamma_{1382}w_{48} + \gamma_{1384}w_{49} + \gamma_{1388}w_{50} + \gamma_{283}w_{51} + \gamma_{285}w_{52}\\ 
&\quad + \gamma_{287}w_{53} + \gamma_{292}w_{54} + \gamma_{294}w_{55} + \gamma_{298}w_{56} \equiv 0,\\
&p_{(2;(3,5))}(\theta) \equiv \gamma_{\{507,771,858,965,1040\}}w_{1} + \gamma_{\{511,775,856,867,874,963,977,1038,1047\}}w_{3}\\ 
&\quad + \gamma_{\{92,164,185,194\}}w_{2}  + \gamma_{\{513,777,858,876,878,893,903,979,989,993,998,1038,1040,1047\}}w_{4}\\ 
&\quad + \gamma_{\{515,779,860,869,878,880,893,903,967,979,981,989,993,998,1038,1040,1047\}}w_{5}\\ 
&\quad + \gamma_{\{94,166,185,187\}}w_{6} + \gamma_{\{393,419,441,785,828,872,889,961,975,987,1036,1045,1051\}}w_{7}\\ &\quad + \gamma_{\{525,789,876,878,893,903,905,965,979,989,993,998,1038,1040,1047\}}w_{9}\\ 
&\quad + \gamma_{\{523,787,874,891,903,963,977,993,998\}}w_{8} + \gamma_{\{527,791,880,895,905,967,981\}}w_{10}\\ 
&\quad + \gamma_{\{96,168,187,194\}}w_{11} + \gamma_{\{419,799,848,901,910,996,1002,1036,1045,1051\}}w_{12}\\ 
&\quad + \gamma_{\{537,801,903,912,993,998,1038,1047\}}w_{13} + \gamma_{\{539,803,905,1040\}}w_{14}\\ 
&\quad + \gamma_{\{1055,1104,1210\}}w_{15} + \gamma_{1106}w_{16} + \gamma_{\{615,1059,1108\}}w_{17} + \gamma_{230}w_{18}\\ 
&\quad + \gamma_{\{609,1115,1210\}}w_{19} + \gamma_{1117}w_{20} + \gamma_{\{607,1120\}}w_{21} + \gamma_{\{609,1122\}}w_{22} + \gamma_{1124}w_{23}\\ 
&\quad + \gamma_{\{613,1126\}}w_{24} + \gamma_{\{615,1128\}}w_{25} + \gamma_{232}w_{26} + \gamma_{\{607,1137\}}w_{27} + \gamma_{\{609,638,1139\}}w_{28}\\ 
&\quad + \gamma_{\{613,638,1141\}}w_{29} + \gamma_{\{615,1143\}}w_{30} + \gamma_{\{636,1149\}}w_{31} + \gamma_{\{638,1151\}}w_{32} + \gamma_{1153}w_{33}\\ 
&\quad + \gamma_{\{636,1158\}}w_{34} + \gamma_{\{638,1160\}}w_{35} + \gamma_{\{1312,1348\}}w_{36} + \gamma_{1350}w_{37} + \gamma_{1352}w_{38}\\ 
&\quad + \gamma_{1354}w_{39} + \gamma_{256}w_{40} + \gamma_{1362}w_{41} + \gamma_{1364}w_{42} + \gamma_{1366}w_{43} + \gamma_{1368}w_{44} + \gamma_{1374}w_{45}\\ 
&\quad + \gamma_{1376}w_{46} + \gamma_{1380}w_{47} + \gamma_{1383}w_{48} + \gamma_{1385}w_{49} + \gamma_{1389}w_{50} + \gamma_{\{269,284\}}w_{51}\\ 
&\quad + \gamma_{286}w_{52} + \gamma_{288}w_{53} + \gamma_{293}w_{54} + \gamma_{295}w_{55} + \gamma_{299}w_{56} \equiv 0,\\
&p_{(2;(4,5))}(\theta) \equiv (\gamma_{\{59,133,134,135,377,441,695,773,808,862,863,923,925,969,970,971,1018,1019,1041\}}\\ 
&\quad + \gamma_{\{1042,1043\}})w_{1} + \gamma_{\{385,393,618,781,822,870,882,883,932,941,982,983,1048,1049\}}w_{3}\\ 
&\quad + \gamma_{\{59,133,134,135,381,699,774,811,864,865,926,928,972,973,1020,1041,1042,1043\}}w_{2}\\ 
&\quad + \gamma_{\{419,641,642,782,824,870,884,898,906,907,932,944,982,983,991,994,999,1000,1048,1049,1052\}}w_{4}\\ 
&\quad + (\gamma_{\{387,419,606,621,641,642,783,825,871,885,886,898,906,907,984,985,991,994,999,1000,1048,1049\}} \\ 
&\quad+\gamma_{1052})w_{5} + \gamma_{\{606,622,784,827,871,887,984,985,1048,1049\}}w_{6} + \gamma_{\{529,793,896,990,1052\}}w_{7}\\ 
&\quad + \gamma_{\{419,641,642,794,843,896,897,906,907,999,1000\}}w_{8}  + \gamma_{\{643,796,845,898,899,908,999,1000\}}w_{10}\\ 
&\quad + \gamma_{\{419,641,642,643,795,844,897,898,906,907,908,990,991\}}w_{9} + \gamma_{\{533,797,899,991,1052\}}w_{11}\\ 
&\quad + \gamma_{\{97,169,188,195\}}w_{12} + \gamma_{\{98,170,188,189\}}w_{13} + \gamma_{\{99,171,189,195\}}w_{14}\\ 
&\quad + \gamma_{\{378,651,733,1110,1171,1216,1255,1300\}}w_{15} + \gamma_{\{1111,1173,1216,1257,1258,1302,1303\}}w_{16}\\ 
&\quad + \gamma_{\{382,733,1112,1174,1218,1300\}}w_{17} + \gamma_{\{1113,1176,1218,1259,1260,1302,1303\}}w_{18}\\ 
&\quad + \gamma_{\{1118,1180,1231\}}w_{19} + \gamma_{\{1119,1182\}}w_{20} + \gamma_{\{1130,1189,1231\}}w_{21} + \gamma_{\{1131,1191\}}w_{22}\\ 
&\quad + \gamma_{\{1132,1192,1231\}}w_{23} + \gamma_{1133}w_{24} + \gamma_{\{1134,1193\}}w_{25} + \gamma_{\{1135,1194\}}w_{26} + \gamma_{1144}w_{27}\\ 
&\quad + \gamma_{1145}w_{28} + \gamma_{1146}w_{29} + \gamma_{1147}w_{30} + \gamma_{\{1154,1203\}}w_{31} + \gamma_{\{1155,1205\}}w_{32}\\ 
&\quad + \gamma_{\{1156,1206\}}w_{33} + \gamma_{233}w_{34} + \gamma_{234}w_{35} + \gamma_{\{1356,1403\}}w_{36} + \gamma_{\{1357,1405\}}w_{37}\\ 
&\quad + \gamma_{\{1358,1406\}}w_{38} + \gamma_{\{1359,1407\}}w_{39} + \gamma_{\{1360,1408\}}w_{40} + \gamma_{1369}w_{41} + \gamma_{1370}w_{42}\\ 
&\quad + \gamma_{1371}w_{43} + \gamma_{1372}w_{44} + \gamma_{1377}w_{45} + \gamma_{1378}w_{46} + \gamma_{1381}w_{47} + \gamma_{1386}w_{48}\\ 
&\quad + \gamma_{1387}w_{49} + \gamma_{257}w_{50} + \gamma_{\{289,310\}}w_{51} + \gamma_{\{290,312\}}w_{52} + \gamma_{\{291,313\}}w_{53} + \gamma_{296}w_{54}\\ 
&\quad + \gamma_{297}w_{55} + \gamma_{300}w_{56} \equiv 0,\\
&p_{(3;(4,5))}(\theta) \equiv (\gamma_{\{12,23,91,93,95,97,98,99,336,340,349,507,511,513,522,523,524,525,529,534,536,537\}}\\ 
&\quad + \gamma_{\{538, 539, 806, 808, 813, 818, 819, 822, 824, 828, 836, 837, 838, 843, 844, 848, 853, 854, 855\}})w_{1}\\ 
&\quad + (\gamma_{\{13,23,92,94,96,97,98,99,337,344,508,514,515,526,527,528,533,534,536,537,538\}}\\ 
&\quad + \gamma_{\{539, 807, 811, 817, 820, 821, 825, 827, 839, 840, 845, 848, 853, 854, 855\}})w_{2}\\ 
&\quad + \gamma_{\{385,393,419,641,919,923,929,932,935,941,947,948,952,959,960\}}w_{3}\\ 
&\quad + \gamma_{\{618,642,643,920,925,929,932,936,937,944,947,948,953,954,959,960,1024,1028,1031,1034\}}w_{4}\\ 
&\quad + \gamma_{\{387,419,606,641,921,926,930,938,949,950,952,959,960,1024,1028,1031,1034\}}w_{5}\\ 
&\quad + \gamma_{\{606,621,622,642,643,922,928,930,939,940,949,950,953,954,959,960\}}w_{6}\\ 
&\quad + \gamma_{\{419,441,695,1011,1022,1027,1031,1034\}}w_{7} + \gamma_{\{1012,1018,1022,1023,1030,1031,1032\}}w_{8}\\ 
&\quad + \gamma_{\{1013,1019,1023,1024,1027,1028,1031,1032\}}w_{9} + \gamma_{\{1014,1020,1024,1025,1030,1031,1032\}}w_{10}\\ 
&\quad + \gamma_{\{699,1015,1025,1028,1032,1034\}}w_{11} + \gamma_{\{133,205,212,215\}}w_{12} + \gamma_{\{134,206,212,213\}}w_{13}\\ 
&\quad + \gamma_{\{341,1071,1097,1167,1171,1177,1180,1183,1189,1195,1196,1200,1203,1207,1208\}}w_{15}\\ 
&\quad + \gamma_{\{135,207,213,215\}}w_{14} + (\gamma_{\{1168,1173,1177,1180,1184,1185,1191,1192,1195,1196,1201\}}\\ 
&\quad +  \gamma_{\{1202,1205,1206,1207,1208,1411,1415,1418,1421\}})w_{16}\\ 
&\quad + \gamma_{\{345,1097,1169,1174,1178,1182,1186,1197,1198,1200,1203,1207,1208,1411,1415,1418,1421\}}w_{17}\\ 
&\quad + \gamma_{\{1170,1176,1178,1182,1187,1188,1193,1194,1197,1198,1201,1202,1205,1206,1207,1208\}}w_{18}\\ 
&\quad + (\gamma_{\{378,607,608,609,610,636,637,638,1209,1210,1213,1216,1227,1228,1236,1237,1262,1266\}}\\ 
&\quad + \gamma_{\{1272,1274\}})w_{19} + \gamma_{\{1249,1255,1261,1266,1271,1274\}}w_{21}\\ 
&\quad + \gamma_{\{382,613,614,616,636,637,638,1211,1214,1218,1229,1230,1236,1237,1264,1267,1272,1274\}}w_{20}\\ 
&\quad + \gamma_{\{1250,1257,1261,1262,1269,1271,1272\}}w_{22} + \gamma_{\{1251,1258,1262,1266,1272,1274\}}w_{23}\\ 
&\quad + \gamma_{\{1252,1263,1267,1271,1274\}}w_{24} + \gamma_{\{1253,1259,1263,1264,1269,1271,1272\}}w_{25}\\ 
&\quad + \gamma_{\{1254,1260,1264,1267,1272,1274\}}w_{26} + \gamma_{1283}w_{27} + \gamma_{\{1284,1305,1307\}}w_{28}\\ 
&\quad + \gamma_{\{1285,1305,1307\}}w_{29} + \gamma_{1286}w_{30} + \gamma_{\{1297,1300,1304,1307\}}w_{31}\\ 
&\quad + \gamma_{\{1298,1302,1304,1305\}}w_{32} + \gamma_{\{1299,1303,1305,1307\}}w_{33} + \gamma_{245}w_{34} + \gamma_{246}w_{35}\\ 
&\quad + \gamma_{\{1398,1403,1409,1414,1418,1421\}}w_{36} + \gamma_{\{1399,1405,1409,1410,1417,1418,1419\}}w_{37}\\ 
&\quad + \gamma_{\{1400,1406,1410,1411,1414,1415,1418,1419\}}w_{38} + \gamma_{\{1401,1407,1411,1412,1417,1418,1419\}}w_{39}\\ 
&\quad + \gamma_{\{1402,1408,1412,1415,1419,1421\}}w_{40} + \gamma_{1430}w_{41} + \gamma_{1431}w_{42} + \gamma_{1432}w_{43} + \gamma_{1433}w_{44}\\ 
&\quad + \gamma_{1442}w_{45} + \gamma_{1443}w_{46} + \gamma_{1448}w_{47} + \gamma_{1454}w_{48} + \gamma_{1455}w_{49} + \gamma_{263}w_{50}\\ 
&\quad + \gamma_{\{307,310,314,317\}}w_{51} + \gamma_{\{308,312,314,315\}}w_{52} + \gamma_{\{309,313,315,317\}}w_{53} + \gamma_{323}w_{54}\\ 
&\quad + \gamma_{324}w_{55} + \gamma_{329}w_{56} \equiv 0.
\end{align*}
By computing from these relations we obtain
\begin{equation}\label{c66}
\gamma_j = 0 \mbox{ for } j \in \mathbb J_6,\ \gamma_i = \gamma_j \mbox{ for } (i,j) \in \mathbb K_6,
\end{equation}
where $\mathbb J_6 = \{$229,\, 230,\, 231,\, 232,\, 233,\, 234,\, 245,\, 246,\, 255,\, 256,\, 257,\, 263,\, 283,\, 285,\, 286,\, 287,\, 288,\, 292,\, 293,\, 294,\, 295,\, 296,\, 297,\, 298,\, 299,\, 300,\, 323,\, 324,\, 329,\, 1106,\, 1107,\, 1117,\, 1124,\, 1125,\, 1133,\, 1144,\, 1145,\, 1146,\, 1147,\, 1148,\, 1152,\, 1153,\, 1159,\, 1283,\, 1286,\, 1349,\, 1350,\, 1351,\, 1352,\, 1353,\, 1354,\, 1355,\, 1361,\, 1362,\, 1363,\, 1364,\, 1365,\, 1366,\, 1367,\, 1368,\, 1369,\, 1370,\, 1371,\, 1372,\, 1373,\, 1374,\, 1375,\, 1376,\, 1377,\, 1378,\, 1379,\, 1380,\, 1381,\, 1382,\, 1383,\, 1384,\, 1385,\, 1386,\, 1387,\, 1388,\, 1389,\, 1430,\, 1431,\, 1432,\, 1433,\, 1442,\, 1443,\, 1448,\, 1454,\, 1455$\}$ and $\mathbb K_6 = \{$(269,284),\, (289,310),\, (290,312),\, (291,313),\, (607,1120),\, (607,1137),\, (608,1121),\, (608,1136),\, (609,1122),\, (610,1123),\, (610,1138),\, (613,1126),\, (614,1127),\, (614,1140),\, (615,1128),\, (615,1143),\, (616,1129),\, (616,1142),\, (636,1149),\, (636,1158),\, (637,1150),\, (637,1157),\, (638,1151),\, (638,1160),\, (1119,1182),\, (1131,1191),\, (1134,1193),\, (1135,1194),\, (1154,1203),\linebreak (1155,1205),\, (1156,1206),\, (1312,1348),\, (1356,1403),\, (1357,1405),\, (1358,1406),\linebreak (1359,1407)$\}$.

Apply the homomorphism $p_{(2;(3,4,5))}$ to \eqref{ctd612} and using \eqref{c61}-\eqref{c66} to get
\begin{align*}
&p_{(2;(3,4,5))}(\theta) \equiv (\gamma_{\{91,133,134,135,163,184,193,205,206,207,385,419,441,507,695,771,773,806,808,856\}}\\ 
&\quad + \gamma_{\{857,858, 862, 863, 919, 920, 923, 925, 961, 962, 963, 964, 965, 969, 970, 971, 1011, 1012, 1013, 1018, 1019\}}\\ 
&\quad + \gamma_{\{1035, 1036, 1037, 1038, 1039, 1040, 1041, 1042, 1043\}})w_{1} + (\gamma_{\{92,133,134,135,164,185,194,205,206\}}\\ 
&\quad + \gamma_{\{207,387,508,699,772,774,807,811,859,860,861,864,865, 921, 922, 926, 928, 966, 967, 968, 972, 973, 1014\}}\\ 
&\quad + \gamma_{\{1015, 1020, 1035, 1036, 1037, 1038, 1039, 1040, 1041, 1042, 1043\}})w_{2} + (\gamma_{\{93,134,165,184,186,206\}}\\ 
&\quad +  \gamma_{\{212,213,385,393,511,618,775,781,818,822,856,862,866,867,870,872, 873, 874, 882, 883, 919, 923, 929\}}\\ 
&\quad + \gamma_{\{932, 935, 936, 941, 962, 963, 970, 974, 975, 976, 977, 982, 983, 1012, 1018, 1022, 1023, 1037, 1038, 1042, 1044\}}\\ 
&\quad + \gamma_{\{1045, 1046, 1047, 1048, 1049\}})w_{3} + (\gamma_{\{134,135,206,207,212,215,385,419,513,641,642,776,777,782\}}\\ 
&\quad + \gamma_{\{813,819,824,857,858,863,866,867, 870, 875, 876, 877, 878, 884, 885, 892, 893, 898, 900, 901, 902, 903, 906\}}\\ 
&\quad + \gamma_{\{907, 920, 925, 929, 932, 937, 938, 944, 949, 952, 953, 962, 963, 970, 974, 975, 976, 977, 978, 979, 982, 983, 984\}}\\ 
&\quad + \gamma_{\{988, 989, 991, 995, 996, 997, 998, 999, 1000, 1012, 1018, 1022, 1023, 1024, 1028, 1031, 1032, 1037, 1038, 1039\}}\\ 
&\quad + \gamma_{\{1040, 1042, 1043, 1045, 1048, 1050, 1051, 1052\}})w_{4} + (\gamma_{\{134,135,206,207,212,215,387,419,514,515\}}\\ 
&\quad + \gamma_{\{606,621,641,642,778,779,783,820,825,859,860,864, 868, 869, 871, 879, 880, 886, 892, 893, 898, 900, 901\}}\\ 
&\quad + \gamma_{\{902, 903, 906, 907, 921, 926, 930, 939, 949, 952, 953, 966, 967, 972, 980, 981, 985, 988, 989, 991, 995, 996, 997\}}\\ 
&\quad + \gamma_{\{998, 999, 1000, 1014, 1020, 1025, 1028, 1031, 1032, 1037, 1038, 1039, 1040, 1042, 1043, 1044, 1045, 1048, 1050\}}\\ 
&\quad + \gamma_{\{1051, 1052\}})w_{5} + (\gamma_{\{94,134,166,185,187,206,212,213,387,606,622,780,784,817,821,827,861,865\}}\\ 
&\quad + \gamma_{\{868,869,871,881, 887, 922, 928, 930, 940, 966, 967, 972, 978, 979, 980, 981, 984, 985, 1014, 1020, 1024, 1025\}}\\ 
&\quad + \gamma_{\{1037, 1038, 1042, 1044, 1045, 1046, 1047, 1048, 1049\}})w_{6} + (\gamma_{\{95,133,167,186,193,205,212,215,393\}}\\ 
&\quad + \gamma_{\{419,441,529,695,785,793,828,836,872,882,888,889,896, 935, 941, 947, 961, 969, 974, 975, 982, 986, 987\}}\\ 
&\quad + \gamma_{\{990, 1011, 1022, 1027, 1035, 1036, 1041, 1044, 1045, 1048, 1050, 1051, 1052\}})w_{7} + (\gamma_{\{419,522,523,618\}}\\ 
&\quad + \gamma_{\{641,642,786,787,794,837,843,873,874,883,888,889,890,891,896,897,900,901, 902, 903, 906, 907, 936\}}\\ 
&\quad + \gamma_{\{947, 948, 952, 953, 962, 963, 970, 974, 975, 976, 977, 982, 983, 992, 993, 994, 995, 996, 997, 998, 999, 1000, 1012\}}\\ 
&\quad + \gamma_{\{1018, 1022, 1030, 1031, 1032\}})w_{8} + (\gamma_{\{133,135,205,207,212,213,419,524,525,641,642,643,788,789\}}\\ 
&\quad + \gamma_{\{795,838,844,875,876,877,878,884, 885, 890, 891, 892, 893, 897, 898, 900, 901, 902, 903, 904, 905, 906, 907\}}\\ 
&\quad + \gamma_{\{908, 937, 938, 944, 948, 949, 952, 953, 954, 964, 965, 971, 976, 977, 978, 979, 983, 984, 986, 987, 988, 989, 990\}}\\ 
&\quad + \gamma_{\{991, 1013, 1019, 1023, 1024, 1027, 1028, 1035, 1036, 1039, 1040, 1041, 1043, 1044, 1045, 1046, 1047, 1048\}}\\ 
&\quad + \gamma_{1049})w_{9} + (\gamma_{\{526,527,621,643,790,791,796,839,845,879,880,886,892,893,894,895,898,899,904,905\}}\\ 
&\quad + \gamma_{\{908,939, 949, 950, 954, 966, 967, 972, 978, 979, 980, 981, 984, 985, 992, 993, 994, 995, 996, 997, 998, 999, 1000\}}\\ 
&\quad + \gamma_{\{1014, 1020, 1024, 1025, 1030, 1031, 1032\}})w_{10} + (\gamma_{\{96,135,168,187,194,207,213,215,528,533,622\}}\\ 
&\quad + \gamma_{\{699,792,797,840,881,887,894,895,899,940,950, 968, 973, 980, 981, 985, 988, 989, 991, 1015, 1025, 1028\}}\\ 
&\quad + \gamma_{\{1039, 1040, 1043, 1046, 1047, 1049, 1050, 1051, 1052\}})w_{11} + (\gamma_{\{97,133,169,188,195,205,212,215,419\}}\\ 
&\quad + \gamma_{\{534,641,798,799,848,853,900,901,906,909,910,952,959, 995, 996, 999, 1001, 1002, 1031, 1034, 1035\}}\\ 
&\quad + \gamma_{\{1036, 1041, 1044, 1045, 1048, 1050, 1051, 1052\}})w_{12} + (\gamma_{\{98,134,170,188,189,206,212,213,536,537\}}\\ 
&\quad + \gamma_{\{642,800,801,854,902,903,907,909,910,911,912,953, 959, 960, 992, 993, 994, 995, 996, 997, 998, 999, 1000\}}\\ 
&\quad + \gamma_{\{1030, 1031, 1032, 1037, 1038, 1042, 1044, 1045, 1046, 1047, 1048, 1049\}})w_{13} + (\gamma_{\{99,135,171,189,195\}}\\ 
&\quad + \gamma_{\{207,213,215,538,539,643,802,803,855,904,905,908,911,912,954,960,997, 998, 1000, 1001, 1002, 1032\}}\\ 
&\quad + \gamma_{\{1034, 1039, 1040, 1043, 1046, 1047, 1049, 1050, 1051, 1052\}})w_{14} + (\gamma_{\{378,1055,1104,1110,1167,1171\}}\\ 
&\quad + \gamma_{\{1209,1210,1213,1216,1249,1255,1284,1297,1300\}})w_{15} + (\gamma_{\{341,1105,1111,1168,1173,1209,1210\}}\\ 
&\quad + \gamma_{\{1213,1216,1250,1251,1257,1258,1284,1298,1299,1302,1303\}})w_{16} + (\gamma_{\{382,615,1059,1108,1112\}}\\ 
&\quad + \gamma_{\{1169,1174,1211,1214,1218,1252,1285,1297,1300\}})w_{17} + (\gamma_{\{345,615,1109,1113,1170,1176,1211\}}\\ 
&\quad + \gamma_{\{1214,1218,1253,1254,1259,1260,1285,1298,1299,1302,1303\}})w_{18} + (\gamma_{\{378,609,610,1114,1115\}}\\ 
&\quad + \gamma_{\{1118,1177,1180,1209,1210,1213,1216,1228,1231,1261,1262,1297,1300,1304,1305\}})w_{19}\\ 
&\quad + \gamma_{\{382,616,1116,1178,1211,1214,1218,1230,1263,1264,1285,1304,1305\}}w_{20}\\ 
&\quad + \gamma_{\{608,1130,1183,1189,1227,1231,1249,1255,1297,1300\}}w_{21}\\ 
&\quad + \gamma_{\{607,610,1184,1227,1228,1250,1257,1284,1297,1298,1300,1302\}}w_{22}\\ 
&\quad + \gamma_{\{609,1132,1185,1192,1228,1231,1251,1258,1269,1284,1298,1302\}}w_{23}\\ 
&\quad + \gamma_{\{614,1186,1229,1252,1285\}}w_{24} + (\gamma_{\{613,616,1187,1229,1230,1253,1259,1269,1285,1299\}}\\ 
&\quad + \gamma_{1303})w_{25} + \gamma_{\{615,1188,1230,1254,1260,1299,1303\}}w_{26} + \gamma_{\{1195,1227,1231,1261,1266\}}\\ 
&\quad + \gamma_{\{1304,1307\}}w_{27} + \gamma_{\{609,638,1139,1196,1228,1231,1237,1262,1266,1271,1272,1284,1304\}}\\ 
&\quad + \gamma_{1307}w_{28} + \gamma_{\{613,638,1141,1197,1229,1237,1263,1267,1271,1272,1285,1305,1307\}}w_{29}\\ 
&\quad + \gamma_{\{1198,1230,1264,1267,1305,1307\}}w_{30} + \gamma_{\{637,1200,1236,1297,1300\}}w_{31}\\ 
&\quad + \gamma_{\{636,1201,1236,1237,1269,1298,1302\}}w_{32} + \gamma_{\{638,1202,1237,1299,1303\}}w_{33}\\ 
&\quad + \gamma_{\{1207,1236,1271,1274,1304,1307\}}w_{34} + \gamma_{\{1208,1237,1272,1274,1305,1307\}}w_{35}\\ 
&\quad + \gamma_{1398}w_{36} + \gamma_{1399}w_{37} + \gamma_{1400}w_{38} + \gamma_{1401}w_{39} + \gamma_{1402}w_{40} + \gamma_{1409}w_{41}\\ 
&\quad + \gamma_{1410}w_{42} + \gamma_{1411}w_{43} + \gamma_{1412}w_{44} + \gamma_{1414}w_{45} + \gamma_{1415}w_{46} + \gamma_{1417}w_{47}\\ 
&\quad + \gamma_{1418}w_{48} + \gamma_{1419}w_{49} + \gamma_{1421}w_{50} + \gamma_{307}w_{51} + \gamma_{308}w_{52} + \gamma_{309}w_{53}\\ 
&\quad + \gamma_{314}w_{54} + \gamma_{315}w_{55} + \gamma_{317}w_{56} \equiv 0.
\end{align*}
\begin{align}\label{c67}
\gamma_j = 0 \mbox{ for } j \in \mathbb J_7,
\end{align}
where $\mathbb J_7 = \{$269,\, 284,\, 289,\, 290,\, 291,\, 302,\, 307,\, 308,\, 309,\, 310,\, 311,\, 312,\, 313,\, 314,\, 315,\, 317,\, 1312,\, 1348,\, 1356,\, 1357,\, 1358,\, 1359,\, 1360,\, 1390,\, 1398,\, 1399,\, 1400,\, 1401,\, 1402,\, 1403,\, 1404,\, 1405,\, 1406,\, 1407,\, 1408,\, 1409,\, 1410,\, 1411,\, 1412,\, 1414,\, 1415,\, 1417,\, 1418,\, 1419,\, 1421$\}$.

By applying the homomorphisms $p_{(1;(2,3,4))}$ and $p_{(1;(2,3,5))}$   to \eqref{ctd612} and using \eqref{c61}-\eqref{c67} we get
\begin{align*}
&p_{(1;(2,3,4))}(\theta) \equiv \gamma_{163}w_{1} + \gamma_{772}w_{2} + \gamma_{165}w_{3} + \gamma_{\{378,385,488,512,776,813,1105,1114\}}w_{4}\\
&\quad + \gamma_{778}w_{5} + \gamma_{\{382,387,492,516,780,817,1109,1116\}}w_{6} + \gamma_{167}w_{7} + \gamma_{786}w_{8} + \gamma_{788}w_{9}\\
&\quad + \gamma_{790}w_{10} + \gamma_{792}w_{11} + \gamma_{798}w_{12} + \gamma_{800}w_{13} + \gamma_{802}w_{14} + \gamma_{184}w_{15}\\
&\quad + \gamma_{\{341,385,592,857,1105,1209\}}w_{16} + \gamma_{859}w_{17} + \gamma_{\{345,387,596,861,1109,1211\}}w_{18}\\
&\quad + \gamma_{\{378,601,866,1114,1209\}}w_{19} + \gamma_{\{382,603,868,1116,1211\}}w_{20} + \gamma_{186}w_{21}\\
&\quad + \gamma_{\{608,873\}}w_{22} + \gamma_{\{610,875\}}w_{23} + \gamma_{877}w_{24} + \gamma_{\{614,879\}}w_{25} + \gamma_{\{616,881\}}w_{26}\\
&\quad + \gamma_{\{608,888\}}w_{27} + \gamma_{\{610,890\}}w_{28} + \gamma_{\{614,892\}}w_{29} + \gamma_{\{616,894\}}w_{30} + \gamma_{900}w_{31}\\
&\quad + \gamma_{\{637,902\}}w_{32} + \gamma_{904}w_{33} + \gamma_{\{637,909\}}w_{34} + \gamma_{911}w_{35} + \gamma_{193}w_{36} + \gamma_{962}w_{37}\\
&\quad + \gamma_{964}w_{38} + \gamma_{966}w_{39} + \gamma_{968}w_{40} + \gamma_{974}w_{41} + \gamma_{976}w_{42} + \gamma_{978}w_{43} + \gamma_{980}w_{44}\\
&\quad + \gamma_{986}w_{45} + \gamma_{988}w_{46} + \gamma_{992}w_{47} + \gamma_{995}w_{48} + \gamma_{997}w_{49} + \gamma_{1001}w_{50} + \gamma_{1035}w_{51}\\
&\quad + \gamma_{1037}w_{52} + \gamma_{1039}w_{53} + \gamma_{1044}w_{54} + \gamma_{1046}w_{55} + \gamma_{1050}w_{56} \equiv 0,\\
&p_{(1;(2,3,5))}(\theta) \equiv \gamma_{771}w_{1} + \gamma_{164}w_{2} + \gamma_{\{609,775,1055,1104,1115\}}w_{3} + \gamma_{\{613,638,777,1141\}}w_{4}\\
&\quad + \gamma_{\{613,615,638,779,1059,1108,1141\}}w_{5} + \gamma_{166}w_{6} + \gamma_{\{521,785,828\}}w_{7}\\
&\quad + \gamma_{\{609,638,787,1139\}}w_{8} + \gamma_{\{613,638,789,1141\}}w_{9} + \gamma_{791}w_{10} + \gamma_{168}w_{11}\\
&\quad + \gamma_{\{535,799,848\}}w_{12} + \gamma_{801}w_{13} + \gamma_{803}w_{14} + \gamma_{\{377,591,856,1055,1104,1210\}}w_{15}\\
&\quad + \gamma_{858}w_{16} + \gamma_{\{381,595,615,860,1059,1108\}}w_{17} + \gamma_{185}w_{18} + \gamma_{\{602,609,867,1115,1210\}}w_{19}\\
&\quad + \gamma_{869}w_{20} + \gamma_{\{607,872\}}w_{21} + \gamma_{\{609,874\}}w_{22} + \gamma_{876}w_{23} + \gamma_{\{613,878\}}w_{24}\\
&\quad + \gamma_{\{615,880\}}w_{25} + \gamma_{187}w_{26} + \gamma_{\{607,889\}}w_{27} + \gamma_{\{609,626,638,891,1139\}}w_{28}\\
&\quad + \gamma_{\{613,628,638,893,1141\}}w_{29} + \gamma_{\{615,895\}}w_{30} + \gamma_{\{636,901\}}w_{31} + \gamma_{\{638,903\}}w_{32}\\
&\quad + \gamma_{905}w_{33} + \gamma_{\{636,910\}}w_{34} + \gamma_{\{638,912\}}w_{35} + \gamma_{\{651,961\}}w_{36} + \gamma_{963}w_{37} + \gamma_{965}w_{38}\\
&\quad + \gamma_{967}w_{39} + \gamma_{194}w_{40} + \gamma_{975}w_{41} + \gamma_{977}w_{42} + \gamma_{979}w_{43} + \gamma_{981}w_{44} + \gamma_{987}w_{45}\\
&\quad + \gamma_{989}w_{46} + \gamma_{993}w_{47} + \gamma_{996}w_{48} + \gamma_{998}w_{49} + \gamma_{1002}w_{50} + \gamma_{\{733,1036\}}w_{51}\\
&\quad + \gamma_{1038}w_{52} + \gamma_{1040}w_{53} + \gamma_{1045}w_{54} + \gamma_{1047}w_{55} + \gamma_{1051}w_{56} \equiv 0.
\end{align*}
By computing from these relations we obtain
\begin{equation}\label{c68}
\gamma_j = 0 \mbox{ for } j \in \mathbb J_8,\ \gamma_i= \gamma_j \mbox{ for } (i,j) \in \mathbb K_8,
\end{equation}
where $\mathbb J_8 = \{$12, 13, 75, 76, 91, 92, 93, 94, 95, 96, 147, 148, 163, 164, 165, 166, 167, 168, 172, 173, 174, 175, 176, 177, 178, 179, 184, 185, 186, 187, 193, 194, 337, 484, 507, 508, 514, 522, 524, 526, 527, 528, 534, 536, 537, 538, 539, 748, 771, 772, 778, 786, 788, 790, 791, 792, 798, 800, 801, 802, 803, 804, 805, 810, 815, 829, 831, 833, 834, 835, 847, 849, 850, 851, 852, 858, 859, 869, 876, 877, 900, 904, 905, 911, 962, 963, 964, 965, 966, 967, 968, 974, 975, 976, 977, 978, 979, 980, 981, 986, 987, 988, 989, 992, 993, 995, 996, 997, 998, 1001, 1002, 1035, 1037, 1038, 1039, 1040, 1044, 1045, 1046, 1047, 1050, 1051$\}$ and $\mathbb K_8 = \{$(607,872),\, (607,889),\, (608,873),\, (608,888),\, (609,874),\, (610,875),\, (610,890),\, (613,878),\, (614,879),\, (614,892),\linebreak (615,880),\, (615,895),\, (616,881),\, (616,894),\, (636,901),\, (636,910),\, (637,902),\linebreak (637,909),\, (638,903),\, (638,912),\, (651,961),\, (733,1036)$\}$.

Apply the homomorphisms $p_{(1;(2,4,5))}$ and $p_{(1;(3,4,5))}$   to \eqref{ctd612} using \eqref{c61}-\eqref{c68} to obtain
\begin{align*}
&p_{(1;(2,4,5))}(\theta) \equiv \gamma_{\{378,509,773,808,1110,1111,1171,1173\}}w_{1} + (\gamma_{\{382,510,774,811,1112,1113,1174\}}\\
&\quad + \gamma_{1176})w_{2} + \gamma_{\{517,781,822,1118,1130,1180,1189\}}w_{3} + (\gamma_{\{341,385,488,518,782,824,1118,1132\}}\\
&\quad + \gamma_{\{1180,1192\}})w_{4} + \gamma_{\{519,783,825\}}w_{5} + \gamma_{\{345,387,492,520,784,827\}}w_{6} + \gamma_{793}w_{7}\\
&\quad + \gamma_{\{530,794,843\}}w_{8} + \gamma_{\{531,795,844\}}w_{9} + \gamma_{\{532,796,845\}}w_{10} + \gamma_{797}w_{11} + \gamma_{169}w_{12}\\
&\quad + \gamma_{170}w_{13} + \gamma_{171}w_{14} + \gamma_{\{378,597,651,862,923,1110,1171,1216,1255,1257\}}w_{15}\\
&\quad + \gamma_{\{598,733,863,925,1111,1173,1216,1258,1300,1302\}}w_{16} + (\gamma_{\{382,599,733,864,926,1112,1174\}}\\
&\quad + \gamma_{\{1218,1259,1300,1302\}})w_{17} + \gamma_{\{600,865,928,1113,1176,1218,1260\}}w_{18}\\
&\quad + \gamma_{\{605,870,932,1118,1180,1231\}}w_{19} + \gamma_{871}w_{20} + \gamma_{\{617,882,941,1130,1189,1231\}}w_{21}\\
&\quad + \gamma_{883}w_{22} + \gamma_{\{619,884,944,1132,1192,1231\}}w_{23} + \gamma_{885}w_{24} + \gamma_{886}w_{25} + \gamma_{887}w_{26}\\
&\quad + \gamma_{896}w_{27} + \gamma_{897}w_{28} + \gamma_{898}w_{29} + \gamma_{899}w_{30} + \gamma_{906}w_{31} + \gamma_{907}w_{32} + \gamma_{908}w_{33}\\
&\quad + \gamma_{188}w_{34} + \gamma_{189}w_{35} + \gamma_{\{969,1255\}}w_{36} + \gamma_{\{696,733,970,1018,1257,1300,1302\}}w_{37}\\
&\quad + \gamma_{\{697,733,971,1019,1258,1300,1302,1303\}}w_{38} + \gamma_{\{698,972,1020,1259,1303\}}w_{39}\\
&\quad + \gamma_{\{973,1260\}}w_{40} + \gamma_{982}w_{41} + \gamma_{983}w_{42} + \gamma_{984}w_{43} + \gamma_{985}w_{44} + \gamma_{990}w_{45}\\
&\quad + \gamma_{991}w_{46} + \gamma_{994}w_{47} + \gamma_{999}w_{48} + \gamma_{1000}w_{49} + \gamma_{195}w_{50} + \gamma_{\{1041,1300\}}w_{51}\\
&\quad + \gamma_{\{1042,1302\}}w_{52} + \gamma_{\{1043,1303\}}w_{53} + \gamma_{1048}w_{54} + \gamma_{1049}w_{55} + \gamma_{1052}w_{56} \equiv 0,\\
&p_{(1;(3,4,5))}(\theta) \equiv \gamma_{\{341,509,806,808,1167,1168,1171,1173\}}w_{1} + (\gamma_{\{345,510,807,811,1169,1170\}}\\
&\quad + \gamma_{\{1174,1176\}})w_{2} + \gamma_{\{341,517,818,822,1071,1131,1167,1171,1177,1180,1183,1184,1189\}}w_{3}\\
&\quad + (\gamma_{\{341,512,518,813,819,824,1097,1154,1155,1168,1173,1177,1180,1185,1186,1192,1197,1200\}}\\
&\quad + \gamma_{1201})w_{4} + (\gamma_{\{345,519,820,825,1097,1119,1134,1154,1155,1169,1174,1178,1187,1197,1200\}}\\
&\quad + \gamma_{1201})w_{5} + \gamma_{\{345,516,520,817,821,827,1119,1135,1170,1176,1178,1188\}}w_{6}\\
&\quad + (\gamma_{\{521,828,836,1071,1183,1189,1195\}}w_{7} + \gamma_{\{530,837,843,1097,1131,1154,1155,1184,1195\}}\\
&\quad + \gamma_{\{1196,1200,1201\}})w_{8} + (\gamma_{\{531,838,844,1097,1154,1155,1156,1185,1186,1192,1196,1197,1200\}}\\
&\quad + \gamma_{\{1201,1202\}})w_{9} + \gamma_{\{532,839,845,1134,1156,1187,1197,1198,1202\}}w_{10}\\
&\quad + \gamma_{\{840,1135,1188,1198\}}w_{11} + \gamma_{\{535,848,853,1097,1154,1200,1207\}}w_{12}\\
&\quad + \gamma_{\{854,1155,1201,1207,1208\}}w_{13} + \gamma_{\{855,1156,1202,1208\}}w_{14}\\
&\quad + \gamma_{\{377,378,591,597,919,923,1209,1210,1213,1216,1249,1250,1255,1257\}}w_{15}\\
&\quad + \gamma_{\{385,592,598,920,925,1209,1210,1213,1216,1251,1258,1285,1297,1298,1300,1302\}}w_{16}\\
&\quad + \gamma_{\{381,382,595,599,615,921,926,1211,1214,1218,1252,1253,1259,1285,1297,1298,1300,1302\}}w_{17}\\
&\quad + \gamma_{\{387,596,600,615,922,928,1211,1214,1218,1254,1260\}}w_{18} + (\gamma_{\{378,601,602,605,609,610,929\}}\\
&\quad + \gamma_{\{932,1209,1210,1213,1216,1228,1231,1250,1257,1261,1269\}})w_{19} + (\gamma_{\{382,603,616,930,1211\}}\\
&\quad + \gamma_{\{1214,1218,1230,1253,1259,1263\}})w_{20} + \gamma_{\{608,617,935,941,1227,1231,1249,1255,1261\}}w_{21}\\
&\quad + \gamma_{\{607,610,936,1227,1228,1250,1257,1261,1262,1269\}}w_{22} + (\gamma_{\{609,619,937,944,1228,1231,1251\}}\\
&\quad + \gamma_{\{1258,1262\}})w_{23} + \gamma_{\{614,938,1229,1252,1263,1269\}}w_{24} + (\gamma_{\{613,616,939,1229,1230,1253\}}\\
&\quad + \gamma_{\{1259,1263,1264\}})w_{25} + \gamma_{\{615,940,1230,1254,1260,1264\}}w_{26} + (\gamma_{\{947,1227,1231,1266\}}\\
&\quad + \gamma_{\{1297,1300,1304\}})w_{27} + \gamma_{\{609,626,638,948,1228,1231,1237,1266,1269,1271\}}w_{28}\\
&\quad + \gamma_{\{613,628,638,949,1229,1237,1267,1269,1271\}}w_{29} + \gamma_{\{950,1230,1267,1299,1303,1305\}}w_{30}\\
&\quad + \gamma_{\{637,952,1236,1271,1297,1300,1304\}}w_{31} + \gamma_{\{636,953,1236,1237,1269,1271,1272\}}w_{32}\\
&\quad + \gamma_{\{638,954,1237,1272,1299,1303,1305\}}w_{33} + \gamma_{\{959,1236,1274\}}w_{34} + \gamma_{\{960,1237,1274\}}w_{35}\\
&\quad + \gamma_{1011}w_{36} + \gamma_{\{696,1012,1018,1284,1297,1298,1300,1302\}}w_{37} + (\gamma_{\{697,1013,1019,1284,1285\}}\\
&\quad + \gamma_{\{1297,1298,1299,1300,1302,1303\}})w_{38} + \gamma_{\{698,1014,1020,1285,1299,1303\}}w_{39} + \gamma_{1015}w_{40}\\
&\quad + \gamma_{\{1022,1297,1300,1304\}}w_{41} + \gamma_{\{1023,1284,1305\}}w_{42} + \gamma_{\{1024,1285,1298,1302,1304\}}w_{43}\\
&\quad + \gamma_{\{1025,1299,1303,1305\}}w_{44} + \gamma_{\{1027,1297,1300,1304,1307\}}w_{45}\\
&\quad + \gamma_{\{1028,1299,1303,1305,1307\}}w_{46} + \gamma_{\{1030,1298,1302,1304,1305\}}w_{47} + \gamma_{\{1031,1307\}}w_{48}\\
&\quad + \gamma_{\{1032,1307\}}w_{49} + \gamma_{1034}w_{50} + \gamma_{205}w_{51} + \gamma_{206}w_{52} + \gamma_{207}w_{53} + \gamma_{212}w_{54}\\
&\quad + \gamma_{213}w_{55} + \gamma_{215}w_{56} \equiv 0.
\end{align*}
By computing from these relations we obtain
\begin{equation}\label{c69}
\gamma_j = 0 \mbox{ for } j \in \mathbb J_9,\ \gamma_i= \gamma_j \mbox{ for } (i,j) \in \mathbb K_9,
\end{equation}
where $\mathbb J_9 = \{$23, 59, 86, 97, 98, 99, 128, 133, 134, 135, 158, 169, 170, 171, 180, 181, 182, 183, 188, 189, 195, 200, 205, 206, 207, 208, 209, 210, 211, 212, 213, 215, 529, 533, 793, 797, 841, 846, 871, 883, 885, 886, 887, 896, 897, 898, 899, 906, 907, 908, 982, 983, 984, 985, 990, 991, 994, 999, 1000, 1011, 1015, 1034, 1048, 1049, 1052$\}$ and $\mathbb K_9 = \{$(969,1255),\, (973,1260),\, (1031,1307),\, (1031,1032),\, (1041,1300),\, (1042,1302),\, (1043,1303)$\}$.

By applying the homomorphism $p_{(1;(2,3,4,5))}$ to \eqref{ctd612}, using Theorem \ref{dlsig} and the relation \eqref{c61}-\eqref{c69}, we get
\begin{align*}
&p_{(1;(2,3,4,5))}(\theta) \equiv \gamma_{\{341,378,509,773,806,808,1055,1104,1105,1110,1111,1167,1168,1171,1173\}}w_{1}\\
&\quad + \gamma_{\{345,382,510,774,807,811,1059,1108,1109,1112,1113,1169,1170,1174,1176\}}w_{2}\\
&\quad + (\gamma_{\{517,607,608,609,775,781,818,822,1055,1104,1110,1114,1115,1118,1130,1167,1171,1177,1180\}}\\
&\quad + \gamma_{\{1183,1184,1189\}})w_{3} + (\gamma_{\{378,385,488,512,518,610,613,614,636,637,638,776,777,782,813\}}\\
&\quad + \gamma_{\{819,824,1105,1111,1114,1115,1118, 1132, 1141, 1168, 1173, 1177, 1180, 1185, 1186, 1192, 1197\}}\\
&\quad + \gamma_{\{1200, 1201\}})w_{4} + (\gamma_{\{519,615,636,637,638,779,783,820,825,1059,1108,1112,1116,1141,1169\}}\\
&\quad + \gamma_{\{1174,1178,1187,1197,1200,1201\}})w_{5} + (\gamma_{\{382,387,492,516,520,616,780,784,817,821,827\}}\\
&\quad + \gamma_{\{1109,1113,1116,1170,1176,1178,1188\}})w_{6} + \gamma_{\{521,608,785,828,836,1130,1183,1189,1195\}}w_{7}\\ 
&\quad + \gamma_{\{530,607,609,610,636,637,638,787,794,837,843,1139,1184,1195,1196,1200,1201\}}w_{8}\\ 
&\quad + (\gamma_{\{531,613,614,636,637,638,789,795,838,844,1132,1139,1141,1185,1186,1192,1196,1197,1200\}}\\
&\quad + \gamma_{\{1201,1202\}})w_{9} + \gamma_{\{532,616,796,839,845,1141,1187,1197,1198,1202\}}w_{10}\\
&\quad + \gamma_{\{615,840,1188,1198\}}w_{11} + \gamma_{\{535,637,799,848,853,1200,1207\}}w_{12}\\
&\quad + \gamma_{\{636,854,1201,1207,1208\}}w_{13} + \gamma_{\{638,855,1202,1208\}}w_{14} + (\gamma_{\{377,378,591,597,856,862\}}\\
&\quad + \gamma_{\{919,923,969,1055,1104,1110,1167,1171,1209,1210,1213,1216,1249,1250,1257,1285, 1297\}}\\
&\quad + \gamma_{1298})w_{15} + (\gamma_{\{341,385,592,598,857,863,920,925,1041,1042,1105,1111,1168,1173,1209,1210\}}\\
&\quad + \gamma_{\{1213,1216,1251,1252,1258,1285, 1297, 1298\}})w_{16} + (\gamma_{\{381,382,595,599,615,860,864,921,926\}}\\
&\quad + \gamma_{\{1041,1042,1059,1108,1112,1169,1174,1211,1214,1218,1253,1259,1285, 1297, 1298\}})w_{17}\\ 
&\quad + \gamma_{\{345,387,596,600,615,861,865,922,928,973,1109,1113,1170,1176,1211,1214,1218,1254\}}w_{18}\\ 
&\quad + (\gamma_{\{378,601,602,605,609,610,866,867,870,929,932,1114,1115,1118,1177,1180,1209,1210,1213,1216\}}\\
&\quad + \gamma_{\{1228,1231, 1250, 1257, 1261, 1269\}})w_{19} + (\gamma_{\{382,603,616,868,930,1116,1178,1211,1214,1218\}}\\
&\quad + \gamma_{\{1230,1253,1259,1263\}})w_{20} + (\gamma_{\{608,617,882,935,941,969,1130,1183,1189,1227,1231,1249\}}\\
&\quad + \gamma_{1261})w_{21} + \gamma_{\{607,610,936,1184,1227,1228,1250,1257,1261,1262,1269\}}w_{22}\\
&\quad + \gamma_{\{609,619,884,937,944,1132,1185,1192,1228,1231,1251,1258,1262\}}w_{23} + (\gamma_{\{614,938,1186,1229\}}\\
&\quad + \gamma_{\{1252,1263,1269\}})w_{24} + \gamma_{\{613,616,939,1187,1229,1230,1253,1259,1263,1264\}}w_{25}\\
&\quad + \gamma_{\{615,940,973,1188,1230,1254,1264\}}w_{26} + \gamma_{\{947,1041,1195,1227,1231,1266,1297,1304\}}w_{27}\\
&\quad + \gamma_{\{609,626,638,891,948,1139,1196,1228,1231,1237,1266,1269,1271\}}w_{28} + (\gamma_{\{613,628,638,893,949\}}\\
&\quad + \gamma_{\{1141,1197,1229,1237,1267,1269,1271\}})w_{29} + \gamma_{\{950,1043,1198,1230,1267,1299,1305\}}w_{30}\\
&\quad + \gamma_{\{637,952,1041,1200,1236,1271,1297,1304\}}w_{31} + \gamma_{\{636,953,1201,1236,1237,1269,1271,1272\}}w_{32}\\
&\quad + \gamma_{\{638,954,1043,1202,1237,1272,1299,1305\}}w_{33} + \gamma_{\{959,1207,1236,1274\}}w_{34}\\
&\quad + \gamma_{\{960,1208,1237,1274\}}w_{35} + \gamma_{1249}w_{36} + (\gamma_{\{696,970,1012,1018,1041,1042,1250,1257,1284\}}\\
&\quad + \gamma_{\{1297,1298\}})w_{37} + (\gamma_{\{697,971,1013,1019,1041,1042,1043,1251,1252,1258,1284,1285,1297\}}\\
&\quad + \gamma_{\{1298,1299\}})w_{38} + \gamma_{\{698,972,1014,1020,1043,1253,1259,1285,1299\}}w_{39} + \gamma_{1254}w_{40}\\
&\quad + \gamma_{\{1022,1041,1261,1297,1304\}}w_{41} + \gamma_{\{1023,1262,1284,1305\}}w_{42} + (\gamma_{\{1024,1042,1263,1285\}}\\
&\quad + \gamma_{\{1298,1304\}})w_{43} + \gamma_{\{1025,1043,1264,1299,1305\}}w_{44} + \gamma_{\{1027,1031,1041,1266,1297,1304\}}w_{45}\\
&\quad + \gamma_{\{1028,1031,1043,1267,1299,1305\}}w_{46} + \gamma_{\{1030,1042,1269,1298,1304,1305\}}w_{47} + \gamma_{1271}w_{48}\\
&\quad + \gamma_{1272}w_{49} + \gamma_{1274}w_{50} + \gamma_{1297}w_{51} + \gamma_{1298}w_{52} + \gamma_{1299}w_{53} + \gamma_{1304}w_{54}\\
&\quad + \gamma_{1305}w_{55} + \gamma_{1031}w_{56} \equiv 0.
\end{align*}
By a direct computation from the above equalities we obtain $\gamma_j = 0$ for $j \notin \mathbb J = \{$591, 592, 597, 598, 601, 605, 607, 608, 609, 610, 617, 619, 623, 624, 625, 626, 856, 857, 862, 863, 866, 870, 872, 873, 874, 875, 882, 884, 888, 889, 890, 891, 1104, 1105, 1110, 1111, 1114, 1118, 1120, 1121, 1122, 1123, 1130, 1132, 1136, 1137, 1138, 1139, 1209, 1210, 1213, 1216, 1219, 1220, 1221, 1222, 1227, 1228, 1231, 1232$\}$ and $\gamma_j = \gamma_{591}$ for $j \in \mathbb J$. Then we obtain
$\theta \equiv \gamma_{591}\sum_{j\in J} a_{(j-330)} = \gamma_{591} p.$
The theorem is proved.

\section{The admissible monomials of degree 108 in $P_5$}\label{s5}
\setcounter{equation}{0} 

In this section, we list all elements in the set
$$B_5(108) = B_5((4)|^3|(2)|^2|(1))\bigcup B_5((4)|^4|(1)|^2)\bigcup B_5((4)|^4|(3)).$$

From Corollary \ref{hq}, we have $$B_5((4)|^4|(1)|^2) = \Phi(B_4((4)|^4|(1)|^2)), \ B_5((4)|^4|(3)) = \Phi(B_4((4)|^4|(3))).$$
By \cite{su2}, $B_4((4)|^4|(3)) = \{x_1^{15}x_2^{31}x_3^{31}x_4^{31},\ x_1^{31}x_2^{15}x_3^{31}x_4^{31},\ x_1^{31}x_2^{31}x_3^{15}x_4^{31},\ x_1^{31}x_2^{31}x_3^{31}x_4^{15}\}$ and $B_4((4)|^4|(1)|^2)$ is the set of 10 monomials which are determined as follows:

\medskip
\centerline{
} 

\medskip
Thus, we obtain $|B_5((4)|^3|(2)|^2|(1))| = 280 + 330 + 1127 = 1737$ and $|B_5(108)| = 124 + 310 + 1737 = 2171$.

\section{Appendix}

In this appendix, we prove that the class $[p]$ is an $GL_5$-invariant by hand computation.

Note that $S_k \cong \langle x_1, x_2,\ldots, x_k\rangle \subset P_k$. For $1 \leqslant j \leqslant k$, define the linear map $\rho_j:S_k \to S_k$, by $\rho_j(x_j) = x_{j+1}, \rho_j(x_{j+1}) = x_j$, $\rho_j(x_t) = x_t$ for $t \ne j, j+1,\ 1 \leqslant j < k$, and $\rho_k(x_1) = x_1+x_2$,  $\rho_k(x_t) = x_t$ for $t > 1$. The general linear group $GL_k\cong GL(S_k)$ is generated by $\rho_j,\ 1\leqslant j \leqslant k,$ the symmetric group $\Sigma_k$ is generated by $\rho_j,\ 1 \leqslant j < k$. The map $\rho_j$ induces a homomorphism of $\mathcal A$-algebras which is also denoted by $\rho_j: P_k \to P_k$. So, a class $[h]_\omega \in QP_k(\omega)$ is an $GL_k$-invariant if and only if $\rho_j(h) \equiv_\omega h$ for $1 \leqslant j\leqslant k$.  This class is an $\Sigma_k$-invariant if and only if $\rho_j(h) \equiv_\omega h$ for $1 \leqslant j < k$.

We prove $\rho_i(p) + p \equiv 0$ for $1 \leqslant i \leqslant 5$. We express an inadmissible monomial in terms of admissible monomials by using Theorem \ref{dlsig} and the Cartan formula. We have
\begin{align*}
	\rho_1(p) &+ p = x_1^{7}x_2^{11}x_3^{5}x_4^{23}x_5^{62} + x_1^{7}x_2^{11}x_3^{5}x_4^{30}x_5^{55} + x_1^{7}x_2^{11}x_3^{7}x_4^{21}x_5^{62} + x_1^{7}x_2^{11}x_3^{7}x_4^{29}x_5^{54}\\ 
	&\quad + x_1^{7}x_2^{11}x_3^{13}x_4^{22}x_5^{55} + x_1^{7}x_2^{11}x_3^{15}x_4^{21}x_5^{54} + x_1^{7}x_2^{11}x_3^{21}x_4^{7}x_5^{62} + x_1^{7}x_2^{11}x_3^{21}x_4^{14}x_5^{55}\\ 
	&\quad + x_1^{7}x_2^{11}x_3^{21}x_4^{15}x_5^{54} + x_1^{7}x_2^{11}x_3^{21}x_4^{30}x_5^{39} + x_1^{7}x_2^{11}x_3^{23}x_4^{5}x_5^{62} + x_1^{7}x_2^{11}x_3^{23}x_4^{29}x_5^{38}\\ 
	&\quad + x_1^{7}x_2^{11}x_3^{29}x_4^{6}x_5^{55} + x_1^{7}x_2^{11}x_3^{29}x_4^{7}x_5^{54} + x_1^{7}x_2^{11}x_3^{29}x_4^{22}x_5^{39} + x_1^{7}x_2^{11}x_3^{29}x_4^{23}x_5^{38}\\ 
	&\quad + x_1^{11}x_2^{7}x_3^{5}x_4^{23}x_5^{62} + x_1^{11}x_2^{7}x_3^{5}x_4^{30}x_5^{55} + x_1^{11}x_2^{7}x_3^{7}x_4^{21}x_5^{62} + x_1^{11}x_2^{7}x_3^{7}x_4^{29}x_5^{54}\\ 
	&\quad + x_1^{11}x_2^{7}x_3^{13}x_4^{22}x_5^{55} + x_1^{11}x_2^{7}x_3^{15}x_4^{21}x_5^{54} + x_1^{11}x_2^{7}x_3^{21}x_4^{7}x_5^{62} + x_1^{11}x_2^{7}x_3^{21}x_4^{14}x_5^{55}\\ 
	&\quad + x_1^{11}x_2^{7}x_3^{21}x_4^{15}x_5^{54} + x_1^{11}x_2^{7}x_3^{21}x_4^{30}x_5^{39} + x_1^{11}x_2^{7}x_3^{23}x_4^{5}x_5^{62} + x_1^{11}x_2^{7}x_3^{23}x_4^{29}x_5^{38}\\ 
	&\quad + x_1^{11}x_2^{7}x_3^{29}x_4^{6}x_5^{55} + x_1^{11}x_2^{7}x_3^{29}x_4^{7}x_5^{54} + x_1^{11}x_2^{7}x_3^{29}x_4^{22}x_5^{39} + x_1^{11}x_2^{7}x_3^{29}x_4^{23}x_5^{38}.
\end{align*}

By a direct computation we easily obtain
\begin{align*}
	x_1^{11}x_2^{7}x_3^{5}x_4^{23}x_5^{62} &\equiv x_1^{7}x_2^{7}x_3^{3}x_4^{29}x_5^{62} + x_1^{7}x_2^{7}x_3^{9}x_4^{23}x_5^{62} + x_1^{7}x_2^{11}x_3^{5}x_4^{23}x_5^{62}\\
	x_1^{11}x_2^{7}x_3^{5}x_4^{30}x_5^{55} &\equiv x_1^{7}x_2^{7}x_3^{3}x_4^{29}x_5^{62} + x_1^{7}x_2^{7}x_3^{9}x_4^{30}x_5^{55} + x_1^{7}x_2^{11}x_3^{5}x_4^{30}x_5^{55}\\ x_1^{11}x_2^{7}x_3^{7}x_4^{21}x_5^{62}  &\equiv x_1^{7}x_2^{7}x_3^{7}x_4^{25}x_5^{62} + x_1^{7}x_2^{7}x_3^{11}x_4^{21}x_5^{62} + x_1^{7}x_2^{11}x_3^{7}x_4^{21}x_5^{62} \\ x_1^{11}x_2^{7}x_3^{7}x_4^{29}x_5^{54}  &\equiv x_1^{7}x_2^{7}x_3^{7}x_4^{25}x_5^{62} + x_1^{7}x_2^{7}x_3^{11}x_4^{29}x_5^{54} + x_1^{7}x_2^{11}x_3^{7}x_4^{29}x_5^{54} \\ x_1^{11}x_2^{7}x_3^{13}x_4^{22}x_5^{55} &\equiv x_1^{7}x_2^{7}x_3^{11}x_4^{21}x_5^{62} + x_1^{7}x_2^{7}x_3^{9}x_4^{30}x_5^{55} + x_1^{7}x_2^{11}x_3^{13}x_4^{22}x_5^{55}\\ x_1^{11}x_2^{7}x_3^{15}x_4^{21}x_5^{54} &\equiv x_1^{7}x_2^{7}x_3^{15}x_4^{17}x_5^{62} + x_1^{7}x_2^{7}x_3^{15}x_4^{25}x_5^{54} + x_1^{7}x_2^{11}x_3^{15}x_4^{21}x_5^{54}\\ x_1^{11}x_2^{7}x_3^{21}x_4^{7}x_5^{62} &\equiv x_1^{7}x_2^{7}x_3^{11}x_4^{21}x_5^{62} + x_1^{7}x_2^{7}x_3^{25}x_4^{7}x_5^{62} + x_1^{7}x_2^{11}x_3^{21}x_4^{7}x_5^{62} \\ x_1^{11}x_2^{7}x_3^{21}x_4^{14}x_5^{55} &\equiv x_1^{7}x_2^{7}x_3^{11}x_4^{21}x_5^{62} + x_1^{7}x_2^{7}x_3^{25}x_4^{14}x_5^{55} + x_1^{7}x_2^{11}x_3^{21}x_4^{14}x_5^{55} \\ x_1^{11}x_2^{7}x_3^{21}x_4^{15}x_5^{54} &\equiv x_1^{7}x_2^{7}x_3^{9}x_4^{23}x_5^{62} + x_1^{7}x_2^{7}x_3^{25}x_4^{15}x_5^{54} + x_1^{7}x_2^{11}x_3^{21}x_4^{15}x_5^{54} \\ x_1^{11}x_2^{7}x_3^{21}x_4^{30}x_5^{39} &\equiv x_1^{7}x_2^{7}x_3^{11}x_4^{29}x_5^{54} + x_1^{7}x_2^{7}x_3^{25}x_4^{30}x_5^{39} + x_1^{7}x_2^{11}x_3^{21}x_4^{30}x_5^{39} \\ x_1^{11}x_2^{7}x_3^{23}x_4^{5}x_5^{62}  &\equiv x_1^{7}x_2^{7}x_3^{15}x_4^{17}x_5^{62} + x_1^{7}x_2^{7}x_3^{27}x_4^{5}x_5^{62} + x_1^{7}x_2^{11}x_3^{23}x_4^{5}x_5^{62}\\
	x_1^{11}x_2^{7}x_3^{23}x_4^{29}x_5^{38} &\equiv x_1^{7}x_2^{7}x_3^{15}x_4^{25}x_5^{54} + x_1^{7}x_2^{7}x_3^{27}x_4^{29}x_5^{38} + x_1^{7}x_2^{11}x_3^{23}x_4^{29}x_5^{38} \\ x_1^{11}x_2^{7}x_3^{29}x_4^{6}x_5^{55} &\equiv x_1^{7}x_2^{7}x_3^{27}x_4^{5}x_5^{62} + x_1^{7}x_2^{7}x_3^{25}x_4^{14}x_5^{55} + x_1^{7}x_2^{11}x_3^{29}x_4^{6}x_5^{55} \\ x_1^{11}x_2^{7}x_3^{29}x_4^{7}x_5^{54} &\equiv x_1^{7}x_2^{7}x_3^{25}x_4^{7}x_5^{62} + x_1^{7}x_2^{7}x_3^{27}x_4^{13}x_5^{54} + x_1^{7}x_2^{11}x_3^{29}x_4^{7}x_5^{54} \\ x_1^{11}x_2^{7}x_3^{29}x_4^{22}x_5^{39} &\equiv x_1^{7}x_2^{7}x_3^{27}x_4^{13}x_5^{54} + x_1^{7}x_2^{7}x_3^{25}x_4^{30}x_5^{39} + x_1^{7}x_2^{11}x_3^{29}x_4^{22}x_5^{39} \\ x_1^{11}x_2^{7}x_3^{29}x_4^{23}x_5^{38} &\equiv x_1^{7}x_2^{7}x_3^{25}x_4^{15}x_5^{54} + x_1^{7}x_2^{7}x_3^{27}x_4^{29}x_5^{38} + x_1^{7}x_2^{11}x_3^{29}x_4^{23}x_5^{38}. 
\end{align*}
By combining the above equalities, we get $\rho_1(p) + p \equiv 0$. 

We have
\begin{align*}
	\rho_2(p) &+ p = x_1^{3}x_2^{5}x_3^{15}x_4^{23}x_5^{62} + x_1^{3}x_2^{5}x_3^{15}x_4^{30}x_5^{55} + x_1^{3}x_2^{7}x_3^{15}x_4^{21}x_5^{62} + x_1^{3}x_2^{7}x_3^{15}x_4^{29}x_5^{54}\\
	&\quad + x_1^{3}x_2^{13}x_3^{15}x_4^{22}x_5^{55} + x_1^{3}x_2^{15}x_3^{5}x_4^{23}x_5^{62} + x_1^{3}x_2^{15}x_3^{5}x_4^{30}x_5^{55} + x_1^{3}x_2^{15}x_3^{7}x_4^{21}x_5^{62}\\
	&\quad + x_1^{3}x_2^{15}x_3^{7}x_4^{29}x_5^{54} + x_1^{3}x_2^{15}x_3^{13}x_4^{22}x_5^{55} + x_1^{3}x_2^{15}x_3^{21}x_4^{7}x_5^{62} + x_1^{3}x_2^{15}x_3^{21}x_4^{14}x_5^{55}\\
	&\quad + x_1^{3}x_2^{15}x_3^{21}x_4^{15}x_5^{54} + x_1^{3}x_2^{15}x_3^{21}x_4^{30}x_5^{39} + x_1^{3}x_2^{15}x_3^{23}x_4^{5}x_5^{62} + x_1^{3}x_2^{15}x_3^{23}x_4^{29}x_5^{38}\\
	&\quad + x_1^{3}x_2^{15}x_3^{29}x_4^{6}x_5^{55} + x_1^{3}x_2^{15}x_3^{29}x_4^{7}x_5^{54} + x_1^{3}x_2^{15}x_3^{29}x_4^{22}x_5^{39} + x_1^{3}x_2^{15}x_3^{29}x_4^{23}x_5^{38}\\
	&\quad + x_1^{3}x_2^{29}x_3^{15}x_4^{6}x_5^{55} + x_1^{3}x_2^{29}x_3^{15}x_4^{7}x_5^{54} + x_1^{3}x_2^{29}x_3^{15}x_4^{22}x_5^{39} + x_1^{3}x_2^{29}x_3^{15}x_4^{23}x_5^{38}\\
	&\quad + x_1^{7}x_2^{7}x_3^{11}x_4^{21}x_5^{62} + x_1^{7}x_2^{7}x_3^{11}x_4^{29}x_5^{54} + x_1^{7}x_2^{11}x_3^{5}x_4^{23}x_5^{62} + x_1^{7}x_2^{11}x_3^{5}x_4^{30}x_5^{55}\\
	&\quad + x_1^{7}x_2^{11}x_3^{7}x_4^{21}x_5^{62} + x_1^{7}x_2^{11}x_3^{7}x_4^{29}x_5^{54} + x_1^{7}x_2^{11}x_3^{13}x_4^{22}x_5^{55} + x_1^{7}x_2^{11}x_3^{15}x_4^{21}x_5^{54}\\
	&\quad + x_1^{7}x_2^{11}x_3^{21}x_4^{7}x_5^{62} + x_1^{7}x_2^{11}x_3^{21}x_4^{14}x_5^{55} + x_1^{7}x_2^{11}x_3^{21}x_4^{15}x_5^{54} + x_1^{7}x_2^{11}x_3^{21}x_4^{30}x_5^{39}\\
	&\quad + x_1^{7}x_2^{11}x_3^{23}x_4^{5}x_5^{62} + x_1^{7}x_2^{11}x_3^{23}x_4^{29}x_5^{38} + x_1^{7}x_2^{11}x_3^{29}x_4^{6}x_5^{55} + x_1^{7}x_2^{11}x_3^{29}x_4^{7}x_5^{54}\\
	&\quad + x_1^{7}x_2^{11}x_3^{29}x_4^{22}x_5^{39} + x_1^{7}x_2^{11}x_3^{29}x_4^{23}x_5^{38} + x_1^{7}x_2^{15}x_3^{11}x_4^{21}x_5^{54} + x_1^{15}x_2x_3^{15}x_4^{22}x_5^{55}\\
	&\quad + x_1^{15}x_2x_3^{15}x_4^{23}x_5^{54} + x_1^{15}x_2^{3}x_3^{5}x_4^{23}x_5^{62} + x_1^{15}x_2^{3}x_3^{5}x_4^{30}x_5^{55} + x_1^{15}x_2^{3}x_3^{7}x_4^{21}x_5^{62}\\
	&\quad + x_1^{15}x_2^{3}x_3^{7}x_4^{29}x_5^{54} + x_1^{15}x_2^{3}x_3^{13}x_4^{22}x_5^{55} + x_1^{15}x_2^{3}x_3^{21}x_4^{7}x_5^{62} + x_1^{15}x_2^{3}x_3^{21}x_4^{14}x_5^{55}\\
	&\quad + x_1^{15}x_2^{3}x_3^{21}x_4^{15}x_5^{54} + x_1^{15}x_2^{3}x_3^{21}x_4^{30}x_5^{39} + x_1^{15}x_2^{3}x_3^{23}x_4^{5}x_5^{62} + x_1^{15}x_2^{3}x_3^{23}x_4^{29}x_5^{38}\\
	&\quad + x_1^{15}x_2^{3}x_3^{29}x_4^{6}x_5^{55} + x_1^{15}x_2^{3}x_3^{29}x_4^{7}x_5^{54} + x_1^{15}x_2^{3}x_3^{29}x_4^{22}x_5^{39} + x_1^{15}x_2^{3}x_3^{29}x_4^{23}x_5^{38}\\
	&\quad + x_1^{15}x_2^{7}x_3^{3}x_4^{21}x_5^{62} + x_1^{15}x_2^{7}x_3^{3}x_4^{29}x_5^{54} + x_1^{15}x_2^{7}x_3^{15}x_4^{17}x_5^{54} + x_1^{15}x_2^{15}x_3x_4^{22}x_5^{55}\\
	&\quad + x_1^{15}x_2^{15}x_3x_4^{23}x_5^{54} + x_1^{15}x_2^{15}x_3^{7}x_4^{17}x_5^{54} + x_1^{15}x_2^{15}x_3^{17}x_4^{6}x_5^{55} + x_1^{15}x_2^{15}x_3^{17}x_4^{7}x_5^{54}\\
	&\quad + x_1^{15}x_2^{15}x_3^{17}x_4^{22}x_5^{39} + x_1^{15}x_2^{15}x_3^{17}x_4^{23}x_5^{38} + x_1^{15}x_2^{15}x_3^{19}x_4^{5}x_5^{54} + x_1^{15}x_2^{15}x_3^{19}x_4^{21}x_5^{38}\\
	&\quad + x_1^{15}x_2^{15}x_3^{23}x_4x_5^{54} + x_1^{15}x_2^{15}x_3^{23}x_4^{17}x_5^{38} + x_1^{15}x_2^{23}x_3^{3}x_4^{5}x_5^{62} + x_1^{15}x_2^{23}x_3^{3}x_4^{29}x_5^{38}\\
	&\quad + x_1^{15}x_2^{23}x_3^{15}x_4x_5^{54} + x_1^{3}x_2^{21}x_3^{15}x_4^{7}x_5^{62} + x_1^{3}x_2^{21}x_3^{15}x_4^{14}x_5^{55} + x_1^{3}x_2^{21}x_3^{15}x_4^{15}x_5^{54}\\
	&\quad + x_1^{3}x_2^{21}x_3^{15}x_4^{30}x_5^{39} + x_1^{3}x_2^{23}x_3^{15}x_4^{5}x_5^{62} + x_1^{3}x_2^{23}x_3^{15}x_4^{29}x_5^{38} + x_1^{7}x_2^{5}x_3^{11}x_4^{23}x_5^{62}\\
	&\quad + x_1^{7}x_2^{5}x_3^{11}x_4^{30}x_5^{55} + x_1^{7}x_2^{13}x_3^{11}x_4^{22}x_5^{55} + x_1^{7}x_2^{21}x_3^{11}x_4^{7}x_5^{62} + x_1^{7}x_2^{21}x_3^{11}x_4^{14}x_5^{55}\\
	&\quad + x_1^{7}x_2^{21}x_3^{11}x_4^{15}x_5^{54} + x_1^{7}x_2^{21}x_3^{11}x_4^{30}x_5^{39} + x_1^{7}x_2^{23}x_3^{11}x_4^{5}x_5^{62} + x_1^{7}x_2^{23}x_3^{11}x_4^{29}x_5^{38}\\
	&\quad + x_1^{7}x_2^{29}x_3^{11}x_4^{6}x_5^{55} + x_1^{7}x_2^{29}x_3^{11}x_4^{7}x_5^{54} + x_1^{7}x_2^{29}x_3^{11}x_4^{22}x_5^{39} + x_1^{7}x_2^{29}x_3^{11}x_4^{23}x_5^{38}\\
	&\quad + x_1^{15}x_2^{5}x_3^{3}x_4^{23}x_5^{62} + x_1^{15}x_2^{5}x_3^{3}x_4^{30}x_5^{55} + x_1^{15}x_2^{13}x_3^{3}x_4^{22}x_5^{55} + x_1^{15}x_2^{17}x_3^{15}x_4^{6}x_5^{55}\\
	&\quad + x_1^{15}x_2^{17}x_3^{15}x_4^{7}x_5^{54} + x_1^{15}x_2^{17}x_3^{15}x_4^{22}x_5^{39} + x_1^{15}x_2^{17}x_3^{15}x_4^{23}x_5^{38} + x_1^{15}x_2^{19}x_3^{15}x_4^{5}x_5^{54}\\
	&\quad + x_1^{15}x_2^{19}x_3^{15}x_4^{21}x_5^{38} + x_1^{15}x_2^{21}x_3^{3}x_4^{7}x_5^{62} + x_1^{15}x_2^{21}x_3^{3}x_4^{14}x_5^{55} + x_1^{15}x_2^{21}x_3^{3}x_4^{15}x_5^{54}\\
	&\quad + x_1^{15}x_2^{21}x_3^{3}x_4^{30}x_5^{39} + x_1^{15}x_2^{23}x_3^{15}x_4^{17}x_5^{38} + x_1^{15}x_2^{29}x_3^{3}x_4^{6}x_5^{55} + x_1^{15}x_2^{29}x_3^{3}x_4^{7}x_5^{54}\\
	&\quad + x_1^{15}x_2^{29}x_3^{3}x_4^{22}x_5^{39} + x_1^{15}x_2^{29}x_3^{3}x_4^{23}x_5^{38}. 
\end{align*}
By expressing inadmissible monomials in terms of admissible monomials, we get 

Combining the above equalities gives $\rho_2(p) + p \equiv 0$. 

A direct computation shows
\begin{align*}
	&\rho_3(p) + p = x_1^{3}x_2^{15}x_3^{5}x_4^{30}x_5^{55} + x_1^{3}x_2^{15}x_3^{13}x_4^{22}x_5^{55} + x_1^{3}x_2^{15}x_3^{21}x_4^{14}x_5^{55}\\ 
	&\qquad + x_1^{3}x_2^{15}x_3^{21}x_4^{30}x_5^{39} + x_1^{3}x_2^{15}x_3^{29}x_4^{6}x_5^{55} + x_1^{3}x_2^{15}x_3^{29}x_4^{22}x_5^{39} + x_1^{7}x_2^{11}x_3^{5}x_4^{30}x_5^{55}\\ 
	&\qquad + x_1^{7}x_2^{11}x_3^{13}x_4^{22}x_5^{55} + x_1^{7}x_2^{11}x_3^{21}x_4^{14}x_5^{55} + x_1^{7}x_2^{11}x_3^{21}x_4^{30}x_5^{39} + x_1^{7}x_2^{11}x_3^{29}x_4^{6}x_5^{55}\\ 
	&\qquad + x_1^{7}x_2^{11}x_3^{29}x_4^{22}x_5^{39} + x_1^{15}x_2^{3}x_3^{5}x_4^{30}x_5^{55} + x_1^{15}x_2^{3}x_3^{13}x_4^{22}x_5^{55} + x_1^{15}x_2^{3}x_3^{21}x_4^{14}x_5^{55}\\ 
	&\qquad + x_1^{15}x_2^{3}x_3^{21}x_4^{30}x_5^{39} + x_1^{15}x_2^{3}x_3^{29}x_4^{6}x_5^{55} + x_1^{15}x_2^{3}x_3^{29}x_4^{22}x_5^{39} + x_1^{15}x_2^{15}x_3x_4^{22}x_5^{55}\\ 
	&\qquad + x_1^{15}x_2^{15}x_3^{3}x_4^{21}x_5^{54} + x_1^{15}x_2^{15}x_3^{17}x_4^{6}x_5^{55} + x_1^{15}x_2^{15}x_3^{17}x_4^{22}x_5^{39} + x_1^{15}x_2^{15}x_3^{19}x_4^{5}x_5^{54}\\ 
	&\qquad + x_1^{15}x_2^{15}x_3^{19}x_4^{21}x_5^{38} + x_1^{3}x_2^{15}x_3^{6}x_4^{29}x_5^{55} + x_1^{3}x_2^{15}x_3^{14}x_4^{21}x_5^{55} + x_1^{3}x_2^{15}x_3^{22}x_4^{13}x_5^{55}\\ 
	&\qquad + x_1^{3}x_2^{15}x_3^{22}x_4^{29}x_5^{39} + x_1^{3}x_2^{15}x_3^{30}x_4^{5}x_5^{55} + x_1^{3}x_2^{15}x_3^{30}x_4^{21}x_5^{39} + x_1^{7}x_2^{11}x_3^{6}x_4^{29}x_5^{55}\\ 
	&\qquad + x_1^{7}x_2^{11}x_3^{14}x_4^{21}x_5^{55} + x_1^{7}x_2^{11}x_3^{22}x_4^{13}x_5^{55} + x_1^{7}x_2^{11}x_3^{22}x_4^{29}x_5^{39} + x_1^{7}x_2^{11}x_3^{30}x_4^{5}x_5^{55}\\ 
	&\qquad + x_1^{7}x_2^{11}x_3^{30}x_4^{21}x_5^{39} + x_1^{15}x_2^{3}x_3^{6}x_4^{29}x_5^{55} + x_1^{15}x_2^{3}x_3^{14}x_4^{21}x_5^{55} + x_1^{15}x_2^{3}x_3^{22}x_4^{13}x_5^{55}\\ 
	&\qquad + x_1^{15}x_2^{3}x_3^{22}x_4^{29}x_5^{39} + x_1^{15}x_2^{3}x_3^{30}x_4^{5}x_5^{55} + x_1^{15}x_2^{3}x_3^{30}x_4^{21}x_5^{39} + x_1^{15}x_2^{15}x_3^{5}x_4^{19}x_5^{54}\\ 
	&\qquad + x_1^{15}x_2^{15}x_3^{6}x_4^{17}x_5^{55} + x_1^{15}x_2^{15}x_3^{21}x_4^{3}x_5^{54} + x_1^{15}x_2^{15}x_3^{21}x_4^{19}x_5^{38} + x_1^{15}x_2^{15}x_3^{22}x_4x_5^{55}\\ 
	&\qquad + x_1^{15}x_2^{15}x_3^{22}x_4^{17}x_5^{39}.
\end{align*}

By expressing inadmissible monomials in terms of admissible monomials, we obtain
\begin{align*}
	&x_1^{3}x_2^{15}x_3^{6}x_4^{29}x_5^{55} \equiv x_1^{3}x_2^{15}x_3^{5}x_4^{30}x_5^{55}, \qquad x_1^{3}x_2^{15}x_3^{14}x_4^{21}x_5^{55} \equiv x_1^{3}x_2^{15}x_3^{13}x_4^{22}x_5^{55}, \qquad\\ &x_1^{3}x_2^{15}x_3^{22}x_4^{13}x_5^{55} \equiv x_1^{3}x_2^{15}x_3^{21}x_4^{14}x_5^{55}, \qquad x_1^{3}x_2^{15}x_3^{22}x_4^{29}x_5^{39} \equiv x_1^{3}x_2^{15}x_3^{21}x_4^{30}x_5^{39}, \qquad\\ &x_1^{3}x_2^{15}x_3^{30}x_4^{5}x_5^{55} \equiv x_1^{3}x_2^{15}x_3^{29}x_4^{6}x_5^{55}, \qquad x_1^{3}x_2^{15}x_3^{30}x_4^{21}x_5^{39} \equiv x_1^{3}x_2^{15}x_3^{29}x_4^{22}x_5^{39}, \qquad\\ &x_1^{7}x_2^{11}x_3^{6}x_4^{29}x_5^{55} \equiv x_1^{7}x_2^{11}x_3^{5}x_4^{30}x_5^{55}, \qquad x_1^{7}x_2^{11}x_3^{14}x_4^{21}x_5^{55} \equiv x_1^{7}x_2^{11}x_3^{13}x_4^{22}x_5^{55}, \qquad\\ &x_1^{7}x_2^{11}x_3^{22}x_4^{13}x_5^{55} \equiv x_1^{7}x_2^{11}x_3^{21}x_4^{14}x_5^{55}, \qquad x_1^{7}x_2^{11}x_3^{22}x_4^{29}x_5^{39} \equiv x_1^{7}x_2^{11}x_3^{21}x_4^{30}x_5^{39}, \qquad\\ &x_1^{7}x_2^{11}x_3^{30}x_4^{5}x_5^{55} \equiv x_1^{7}x_2^{11}x_3^{29}x_4^{6}x_5^{55}, \qquad x_1^{7}x_2^{11}x_3^{30}x_4^{21}x_5^{39} \equiv x_1^{7}x_2^{11}x_3^{29}x_4^{22}x_5^{39}, \qquad\\ &x_1^{15}x_2^{3}x_3^{6}x_4^{29}x_5^{55} \equiv x_1^{15}x_2^{3}x_3^{5}x_4^{30}x_5^{55}, \qquad x_1^{15}x_2^{3}x_3^{14}x_4^{21}x_5^{55} \equiv x_1^{15}x_2^{3}x_3^{13}x_4^{22}x_5^{55}, \qquad\\ &x_1^{15}x_2^{3}x_3^{22}x_4^{13}x_5^{55} \equiv x_1^{15}x_2^{3}x_3^{21}x_4^{14}x_5^{55}, \qquad x_1^{15}x_2^{3}x_3^{22}x_4^{29}x_5^{39} \equiv x_1^{15}x_2^{3}x_3^{21}x_4^{30}x_5^{39}, \qquad\\ &x_1^{15}x_2^{3}x_3^{30}x_4^{5}x_5^{55} \equiv x_1^{15}x_2^{3}x_3^{29}x_4^{6}x_5^{55}, \qquad x_1^{15}x_2^{3}x_3^{30}x_4^{21}x_5^{39} \equiv x_1^{15}x_2^{3}x_3^{29}x_4^{22}x_5^{39}, \qquad\\ &x_1^{15}x_2^{15}x_3^{5}x_4^{19}x_5^{54} \equiv x_1^{15}x_2^{15}x_3^{3}x_4^{21}x_5^{54}, \qquad x_1^{15}x_2^{15}x_3^{6}x_4^{17}x_5^{55} \equiv x_1^{15}x_2^{15}x_3x_4^{22}x_5^{55}, \qquad\\ &x_1^{15}x_2^{15}x_3^{21}x_4^{3}x_5^{54} \equiv x_1^{15}x_2^{15}x_3^{19}x_4^{5}x_5^{54}, \qquad x_1^{15}x_2^{15}x_3^{21}x_4^{19}x_5^{38} \equiv x_1^{15}x_2^{15}x_3^{19}x_4^{21}x_5^{38}, \qquad\\ &x_1^{15}x_2^{15}x_3^{22}x_4x_5^{55} \equiv x_1^{15}x_2^{15}x_3^{17}x_4^{6}x_5^{55}, \qquad x_1^{15}x_2^{15}x_3^{22}x_4^{17}x_5^{39} \equiv x_1^{15}x_2^{15}x_3^{17}x_4^{22}x_5^{39}.
\end{align*} 
From these equalities we easily obtain $\rho_3(p) + p \equiv 0$.

We have 
\begin{align*}
	&\rho_4(p) + p = x_1^{3}x_2^{15}x_3^{5}x_4^{23}x_5^{62} + x_1^{3}x_2^{15}x_3^{5}x_4^{30}x_5^{55} + x_1^{3}x_2^{15}x_3^{5}x_4^{55}x_5^{30}\\
	&\qquad + x_1^{3}x_2^{15}x_3^{5}x_4^{62}x_5^{23} + x_1^{3}x_2^{15}x_3^{7}x_4^{21}x_5^{62} + x_1^{3}x_2^{15}x_3^{7}x_4^{29}x_5^{54} + x_1^{3}x_2^{15}x_3^{13}x_4^{22}x_5^{55}\\
	&\qquad + x_1^{3}x_2^{15}x_3^{13}x_4^{55}x_5^{22} + x_1^{3}x_2^{15}x_3^{15}x_4^{21}x_5^{54} + x_1^{3}x_2^{15}x_3^{21}x_4^{7}x_5^{62} + x_1^{3}x_2^{15}x_3^{21}x_4^{14}x_5^{55}\\
	&\qquad + x_1^{3}x_2^{15}x_3^{21}x_4^{15}x_5^{54} + x_1^{3}x_2^{15}x_3^{21}x_4^{30}x_5^{39} + x_1^{3}x_2^{15}x_3^{21}x_4^{39}x_5^{30} + x_1^{3}x_2^{15}x_3^{21}x_4^{62}x_5^{7}\\
	&\qquad + x_1^{3}x_2^{15}x_3^{23}x_4^{5}x_5^{62} + x_1^{3}x_2^{15}x_3^{23}x_4^{29}x_5^{38} + x_1^{3}x_2^{15}x_3^{29}x_4^{6}x_5^{55} + x_1^{3}x_2^{15}x_3^{29}x_4^{7}x_5^{54}\\
	&\qquad + x_1^{3}x_2^{15}x_3^{29}x_4^{22}x_5^{39} + x_1^{3}x_2^{15}x_3^{29}x_4^{23}x_5^{38} + x_1^{3}x_2^{15}x_3^{29}x_4^{38}x_5^{23} + x_1^{3}x_2^{15}x_3^{29}x_4^{39}x_5^{22}\\
	&\qquad + x_1^{3}x_2^{15}x_3^{29}x_4^{54}x_5^{7} + x_1^{3}x_2^{15}x_3^{29}x_4^{55}x_5^{6} + x_1^{7}x_2^{11}x_3^{5}x_4^{23}x_5^{62} + x_1^{7}x_2^{11}x_3^{5}x_4^{30}x_5^{55}\\
	&\qquad + x_1^{7}x_2^{11}x_3^{5}x_4^{55}x_5^{30} + x_1^{7}x_2^{11}x_3^{5}x_4^{62}x_5^{23} + x_1^{7}x_2^{11}x_3^{7}x_4^{21}x_5^{62} + x_1^{7}x_2^{11}x_3^{7}x_4^{29}x_5^{54}\\
	&\qquad + x_1^{7}x_2^{11}x_3^{13}x_4^{22}x_5^{55} + x_1^{7}x_2^{11}x_3^{13}x_4^{55}x_5^{22} + x_1^{7}x_2^{11}x_3^{15}x_4^{21}x_5^{54} + x_1^{7}x_2^{11}x_3^{21}x_4^{7}x_5^{62}\\
	&\qquad + x_1^{7}x_2^{11}x_3^{21}x_4^{14}x_5^{55} + x_1^{7}x_2^{11}x_3^{21}x_4^{15}x_5^{54} + x_1^{7}x_2^{11}x_3^{21}x_4^{30}x_5^{39} + x_1^{7}x_2^{11}x_3^{21}x_4^{39}x_5^{30}\\
	&\qquad + x_1^{7}x_2^{11}x_3^{21}x_4^{62}x_5^{7} + x_1^{7}x_2^{11}x_3^{23}x_4^{5}x_5^{62} + x_1^{7}x_2^{11}x_3^{23}x_4^{29}x_5^{38} + x_1^{7}x_2^{11}x_3^{29}x_4^{6}x_5^{55}\\
	&\qquad + x_1^{7}x_2^{11}x_3^{29}x_4^{7}x_5^{54} + x_1^{7}x_2^{11}x_3^{29}x_4^{22}x_5^{39} + x_1^{7}x_2^{11}x_3^{29}x_4^{23}x_5^{38} + x_1^{7}x_2^{11}x_3^{29}x_4^{38}x_5^{23}\\
	&\qquad + x_1^{7}x_2^{11}x_3^{29}x_4^{39}x_5^{22} + x_1^{7}x_2^{11}x_3^{29}x_4^{54}x_5^{7} + x_1^{7}x_2^{11}x_3^{29}x_4^{55}x_5^{6} + x_1^{15}x_2^{3}x_3^{5}x_4^{23}x_5^{62}\\
	&\qquad + x_1^{15}x_2^{3}x_3^{5}x_4^{30}x_5^{55} + x_1^{15}x_2^{3}x_3^{5}x_4^{55}x_5^{30} + x_1^{15}x_2^{3}x_3^{5}x_4^{62}x_5^{23} + x_1^{15}x_2^{3}x_3^{7}x_4^{21}x_5^{62}\\
	&\qquad + x_1^{15}x_2^{3}x_3^{7}x_4^{29}x_5^{54} + x_1^{15}x_2^{3}x_3^{13}x_4^{22}x_5^{55} + x_1^{15}x_2^{3}x_3^{13}x_4^{55}x_5^{22} + x_1^{15}x_2^{3}x_3^{15}x_4^{21}x_5^{54}\\
	&\qquad + x_1^{15}x_2^{3}x_3^{21}x_4^{7}x_5^{62} + x_1^{15}x_2^{3}x_3^{21}x_4^{14}x_5^{55} + x_1^{15}x_2^{3}x_3^{21}x_4^{15}x_5^{54} + x_1^{15}x_2^{3}x_3^{21}x_4^{30}x_5^{39}\\
	&\qquad + x_1^{15}x_2^{3}x_3^{21}x_4^{39}x_5^{30} + x_1^{15}x_2^{3}x_3^{21}x_4^{62}x_5^{7} + x_1^{15}x_2^{3}x_3^{23}x_4^{5}x_5^{62} + x_1^{15}x_2^{3}x_3^{23}x_4^{29}x_5^{38}\\
	&\qquad + x_1^{15}x_2^{3}x_3^{29}x_4^{6}x_5^{55} + x_1^{15}x_2^{3}x_3^{29}x_4^{7}x_5^{54} + x_1^{15}x_2^{3}x_3^{29}x_4^{22}x_5^{39} + x_1^{15}x_2^{3}x_3^{29}x_4^{23}x_5^{38}\\
	&\qquad + x_1^{15}x_2^{3}x_3^{29}x_4^{38}x_5^{23} + x_1^{15}x_2^{3}x_3^{29}x_4^{39}x_5^{22} + x_1^{15}x_2^{3}x_3^{29}x_4^{54}x_5^{7} + x_1^{15}x_2^{3}x_3^{29}x_4^{55}x_5^{6}\\
	&\qquad + x_1^{15}x_2^{15}x_3x_4^{22}x_5^{55} + x_1^{15}x_2^{15}x_3x_4^{23}x_5^{54} + x_1^{15}x_2^{15}x_3x_4^{54}x_5^{23} + x_1^{15}x_2^{15}x_3x_4^{55}x_5^{22}\\
	&\qquad + x_1^{15}x_2^{15}x_3^{3}x_4^{21}x_5^{54} + x_1^{15}x_2^{15}x_3^{7}x_4^{17}x_5^{54} + x_1^{15}x_2^{15}x_3^{17}x_4^{6}x_5^{55} + x_1^{15}x_2^{15}x_3^{17}x_4^{7}x_5^{54}\\
	&\qquad + x_1^{15}x_2^{15}x_3^{17}x_4^{22}x_5^{39} + x_1^{15}x_2^{15}x_3^{17}x_4^{23}x_5^{38} + x_1^{15}x_2^{15}x_3^{17}x_4^{38}x_5^{23} + x_1^{15}x_2^{15}x_3^{17}x_4^{39}x_5^{22}\\
	&\qquad + x_1^{15}x_2^{15}x_3^{17}x_4^{54}x_5^{7} + x_1^{15}x_2^{15}x_3^{17}x_4^{55}x_5^{6} + x_1^{15}x_2^{15}x_3^{19}x_4^{5}x_5^{54} + x_1^{15}x_2^{15}x_3^{19}x_4^{21}x_5^{38}\\
	&\qquad + x_1^{15}x_2^{15}x_3^{23}x_4x_5^{54} + x_1^{15}x_2^{15}x_3^{23}x_4^{17}x_5^{38} + x_1^{3}x_2^{15}x_3^{7}x_4^{54}x_5^{29} + x_1^{3}x_2^{15}x_3^{7}x_4^{62}x_5^{21}\\
	&\qquad + x_1^{3}x_2^{15}x_3^{15}x_4^{54}x_5^{21} + x_1^{3}x_2^{15}x_3^{21}x_4^{54}x_5^{15} + x_1^{3}x_2^{15}x_3^{21}x_4^{55}x_5^{14} + x_1^{3}x_2^{15}x_3^{23}x_4^{38}x_5^{29}\\
	&\qquad + x_1^{3}x_2^{15}x_3^{23}x_4^{62}x_5^{5} + x_1^{7}x_2^{11}x_3^{7}x_4^{54}x_5^{29} + x_1^{7}x_2^{11}x_3^{7}x_4^{62}x_5^{21} + x_1^{7}x_2^{11}x_3^{15}x_4^{54}x_5^{21}\\
	&\qquad + x_1^{7}x_2^{11}x_3^{21}x_4^{54}x_5^{15} + x_1^{7}x_2^{11}x_3^{21}x_4^{55}x_5^{14} + x_1^{7}x_2^{11}x_3^{23}x_4^{38}x_5^{29} + x_1^{7}x_2^{11}x_3^{23}x_4^{62}x_5^{5}\\
	&\qquad + x_1^{15}x_2^{3}x_3^{7}x_4^{54}x_5^{29} + x_1^{15}x_2^{3}x_3^{7}x_4^{62}x_5^{21} + x_1^{15}x_2^{3}x_3^{15}x_4^{54}x_5^{21} + x_1^{15}x_2^{3}x_3^{21}x_4^{54}x_5^{15}\\
	&\qquad + x_1^{15}x_2^{3}x_3^{21}x_4^{55}x_5^{14} + x_1^{15}x_2^{3}x_3^{23}x_4^{38}x_5^{29} + x_1^{15}x_2^{3}x_3^{23}x_4^{62}x_5^{5} + x_1^{15}x_2^{15}x_3^{3}x_4^{54}x_5^{21}\\
	&\qquad + x_1^{15}x_2^{15}x_3^{7}x_4^{54}x_5^{17} + x_1^{15}x_2^{15}x_3^{19}x_4^{38}x_5^{21} + x_1^{15}x_2^{15}x_3^{19}x_4^{54}x_5^{5} + x_1^{15}x_2^{15}x_3^{23}x_4^{38}x_5^{17}\\
	&\qquad + x_1^{15}x_2^{15}x_3^{23}x_4^{54}x_5.
\end{align*}

Expressing inadmissible monomials in terms of admissible monomials gives
\begin{align*}
	&x_1^{3}x_2^{15}x_3^{7}x_4^{54}x_5^{29} \equiv x_1^{3}x_2^{15}x_3^{7}x_4^{53}x_5^{30}, \qquad x_1^{3}x_2^{15}x_3^{7}x_4^{62}x_5^{21} \equiv x_1^{3}x_2^{15}x_3^{7}x_4^{61}x_5^{22} ,\\ & x_1^{3}x_2^{15}x_3^{15}x_4^{54}x_5^{21} \equiv x_1^{3}x_2^{15}x_3^{15}x_4^{53}x_5^{22}, \qquad x_1^{3}x_2^{15}x_3^{23}x_4^{38}x_5^{29} \equiv x_1^{3}x_2^{15}x_3^{23}x_4^{37}x_5^{30},\\ & x_1^{3}x_2^{15}x_3^{23}x_4^{62}x_5^{5} \equiv x_1^{3}x_2^{15}x_3^{23}x_4^{61}x_5^{6}, \qquad x_1^{7}x_2^{11}x_3^{7}x_4^{54}x_5^{29} \equiv x_1^{7}x_2^{11}x_3^{7}x_4^{53}x_5^{30},\\ & x_1^{7}x_2^{11}x_3^{7}x_4^{62}x_5^{21} \equiv x_1^{7}x_2^{11}x_3^{7}x_4^{61}x_5^{22}, \qquad x_1^{7}x_2^{11}x_3^{15}x_4^{54}x_5^{21} \equiv x_1^{7}x_2^{11}x_3^{15}x_4^{53}x_5^{22},\\ & x_1^{7}x_2^{11}x_3^{23}x_4^{38}x_5^{29} \equiv x_1^{7}x_2^{11}x_3^{23}x_4^{37}x_5^{30}, \qquad x_1^{7}x_2^{11}x_3^{23}x_4^{62}x_5^{5} \equiv x_1^{7}x_2^{11}x_3^{23}x_4^{61}x_5^{6},\\ & x_1^{15}x_2^{3}x_3^{7}x_4^{54}x_5^{29} \equiv x_1^{15}x_2^{3}x_3^{7}x_4^{53}x_5^{30}, \qquad x_1^{15}x_2^{3}x_3^{7}x_4^{62}x_5^{21} \equiv x_1^{15}x_2^{3}x_3^{7}x_4^{61}x_5^{22},\\ & x_1^{15}x_2^{3}x_3^{15}x_4^{54}x_5^{21} \equiv x_1^{15}x_2^{3}x_3^{15}x_4^{54}x_5^{21}, \qquad x_1^{15}x_2^{3}x_3^{23}x_4^{38}x_5^{29} \equiv x_1^{15}x_2^{3}x_3^{23}x_4^{37}x_5^{30},\\ & x_1^{15}x_2^{3}x_3^{23}x_4^{62}x_5^{5} \equiv x_1^{15}x_2^{3}x_3^{23}x_4^{61}x_5^{6}, \qquad x_1^{15}x_2^{15}x_3^{3}x_4^{54}x_5^{21} \equiv x_1^{15}x_2^{15}x_3^{3}x_4^{53}x_5^{22},\\ & x_1^{15}x_2^{15}x_3^{7}x_4^{54}x_5^{17} \equiv x_1^{15}x_2^{15}x_3^{7}x_4^{54}x_5^{17}, \qquad x_1^{15}x_2^{15}x_3^{19}x_4^{38}x_5^{21} \equiv x_1^{15}x_2^{15}x_3^{19}x_4^{37}x_5^{22},\\ & x_1^{15}x_2^{15}x_3^{19}x_4^{54}x_5^{5} \equiv x_1^{15}x_2^{15}x_3^{19}x_4^{53}x_5^{6},\\
	&x_1^{3}x_2^{15}x_3^{21}x_4^{54}x_5^{15}\equiv x_1^{3}x_2^{15}x_3^{13}x_4^{54}x_5^{23} + x_1^{3}x_2^{15}x_3^{21}x_4^{46}x_5^{23},\\ & x_1^{3}x_2^{15}x_3^{21}x_4^{55}x_5^{14}\equiv x_1^{3}x_2^{15}x_3^{13}x_4^{55}x_5^{22} + x_1^{3}x_2^{15}x_3^{21}x_4^{47}x_5^{22},\\ & x_1^{7}x_2^{11}x_3^{21}x_4^{54}x_5^{15}\equiv x_1^{7}x_2^{11}x_3^{13}x_4^{54}x_5^{23} + x_1^{7}x_2^{11}x_3^{21}x_4^{46}x_5^{23},\\ & x_1^{7}x_2^{11}x_3^{21}x_4^{55}x_5^{14}\equiv x_1^{7}x_2^{11}x_3^{13}x_4^{55}x_5^{22} + x_1^{7}x_2^{11}x_3^{21}x_4^{47}x_5^{22},\\ & x_1^{15}x_2^{3}x_3^{21}x_4^{54}x_5^{15}\equiv x_1^{15}x_2^{3}x_3^{13}x_4^{54}x_5^{23} + x_1^{15}x_2^{3}x_3^{21}x_4^{46}x_5^{23},\\ & x_1^{15}x_2^{3}x_3^{21}x_4^{55}x_5^{14}\equiv x_1^{15}x_2^{3}x_3^{13}x_4^{55}x_5^{22} + x_1^{15}x_2^{3}x_3^{21}x_4^{47}x_5^{22},\\
	&x_1^{15}x_2^{15}x_3^{23}x_4^{38}x_5^{17} \equiv x_1x_2^{15}x_3^{7}x_4^{30}x_5^{55} + x_1x_2^{15}x_3^{7}x_4^{62}x_5^{23} + x_1^{3}x_2^{7}x_3^{7}x_4^{29}x_5^{62}\\
	&\qquad + x_1^{3}x_2^{7}x_3^{7}x_4^{61}x_5^{30} + x_1^{3}x_2^{7}x_3^{13}x_4^{30}x_5^{55} + x_1^{3}x_2^{7}x_3^{13}x_4^{62}x_5^{23} + x_1^{3}x_2^{13}x_3^{7}x_4^{30}x_5^{55}\\
	&\qquad + x_1^{3}x_2^{13}x_3^{7}x_4^{62}x_5^{23} + x_1^{3}x_2^{15}x_3^{5}x_4^{23}x_5^{62} + x_1^{3}x_2^{15}x_3^{5}x_4^{30}x_5^{55} + x_1^{3}x_2^{15}x_3^{7}x_4^{21}x_5^{62}\\
	&\qquad + x_1^{3}x_2^{15}x_3^{7}x_4^{61}x_5^{22} + x_1^{3}x_2^{15}x_3^{13}x_4^{54}x_5^{23} + x_1^{3}x_2^{15}x_3^{15}x_4^{21}x_5^{54} + x_1^{3}x_2^{15}x_3^{21}x_4^{7}x_5^{62}\\
	&\qquad + x_1^{3}x_2^{15}x_3^{21}x_4^{15}x_5^{54} + x_1^{3}x_2^{15}x_3^{21}x_4^{46}x_5^{23} + x_1^{3}x_2^{15}x_3^{21}x_4^{62}x_5^{7} + x_1^{3}x_2^{15}x_3^{23}x_4^{13}x_5^{54}\\
	&\qquad + x_1^{3}x_2^{15}x_3^{23}x_4^{29}x_5^{38} + x_1^{3}x_2^{15}x_3^{23}x_4^{37}x_5^{30} + x_1^{3}x_2^{15}x_3^{23}x_4^{45}x_5^{22} + x_1^{3}x_2^{15}x_3^{29}x_4^{7}x_5^{54}\\
	&\qquad + x_1^{3}x_2^{15}x_3^{29}x_4^{23}x_5^{38} + x_1^{3}x_2^{15}x_3^{29}x_4^{38}x_5^{23} + x_1^{3}x_2^{15}x_3^{29}x_4^{54}x_5^{7} + x_1^{7}x_2^{7}x_3^{7}x_4^{25}x_5^{62}\\
	&\qquad + x_1^{7}x_2^{7}x_3^{7}x_4^{57}x_5^{30} + x_1^{7}x_2^{7}x_3^{9}x_4^{30}x_5^{55} + x_1^{7}x_2^{7}x_3^{9}x_4^{62}x_5^{23} + x_1^{7}x_2^{11}x_3^{5}x_4^{23}x_5^{62}\\
	&\qquad + x_1^{7}x_2^{11}x_3^{5}x_4^{30}x_5^{55} + x_1^{7}x_2^{11}x_3^{7}x_4^{21}x_5^{62} + x_1^{7}x_2^{11}x_3^{7}x_4^{61}x_5^{22} + x_1^{7}x_2^{11}x_3^{13}x_4^{54}x_5^{23}\\
	&\qquad + x_1^{7}x_2^{11}x_3^{15}x_4^{21}x_5^{54} + x_1^{7}x_2^{11}x_3^{21}x_4^{7}x_5^{62} + x_1^{7}x_2^{11}x_3^{21}x_4^{15}x_5^{54} + x_1^{7}x_2^{11}x_3^{21}x_4^{46}x_5^{23}\\
	&\qquad + x_1^{7}x_2^{11}x_3^{21}x_4^{62}x_5^{7} + x_1^{7}x_2^{11}x_3^{23}x_4^{13}x_5^{54} + x_1^{7}x_2^{11}x_3^{23}x_4^{29}x_5^{38} + x_1^{7}x_2^{11}x_3^{23}x_4^{37}x_5^{30}\\
	&\qquad + x_1^{7}x_2^{11}x_3^{23}x_4^{45}x_5^{22} + x_1^{7}x_2^{11}x_3^{29}x_4^{7}x_5^{54} + x_1^{7}x_2^{11}x_3^{29}x_4^{23}x_5^{38} + x_1^{7}x_2^{11}x_3^{29}x_4^{38}x_5^{23}\\
	&\qquad + x_1^{7}x_2^{11}x_3^{29}x_4^{54}x_5^{7} + x_1^{15}x_2x_3^{7}x_4^{30}x_5^{55} + x_1^{15}x_2x_3^{7}x_4^{62}x_5^{23} + x_1^{15}x_2^{3}x_3^{5}x_4^{23}x_5^{62}\\
	&\qquad + x_1^{15}x_2^{3}x_3^{5}x_4^{30}x_5^{55} + x_1^{15}x_2^{3}x_3^{7}x_4^{21}x_5^{62} + x_1^{15}x_2^{3}x_3^{7}x_4^{61}x_5^{22} + x_1^{15}x_2^{3}x_3^{13}x_4^{54}x_5^{23}\\
	&\qquad + x_1^{15}x_2^{3}x_3^{15}x_4^{21}x_5^{54} + x_1^{15}x_2^{3}x_3^{21}x_4^{7}x_5^{62} + x_1^{15}x_2^{3}x_3^{21}x_4^{15}x_5^{54} + x_1^{15}x_2^{3}x_3^{21}x_4^{46}x_5^{23}\\
	&\qquad + x_1^{15}x_2^{3}x_3^{21}x_4^{62}x_5^{7} + x_1^{15}x_2^{3}x_3^{23}x_4^{13}x_5^{54} + x_1^{15}x_2^{3}x_3^{23}x_4^{29}x_5^{38} + x_1^{15}x_2^{3}x_3^{23}x_4^{37}x_5^{30}\\
	&\qquad + x_1^{15}x_2^{3}x_3^{23}x_4^{45}x_5^{22} + x_1^{15}x_2^{3}x_3^{29}x_4^{7}x_5^{54} + x_1^{15}x_2^{3}x_3^{29}x_4^{23}x_5^{38} + x_1^{15}x_2^{3}x_3^{29}x_4^{38}x_5^{23}\\
	&\qquad + x_1^{15}x_2^{3}x_3^{29}x_4^{54}x_5^{7} + x_1^{15}x_2^{15}x_3x_4^{22}x_5^{55} + x_1^{15}x_2^{15}x_3x_4^{23}x_5^{54} + x_1^{15}x_2^{15}x_3^{3}x_4^{21}x_5^{54}\\
	&\qquad + x_1^{15}x_2^{15}x_3^{7}x_4^{16}x_5^{55} + x_1^{15}x_2^{15}x_3^{7}x_4^{48}x_5^{23} + x_1^{15}x_2^{15}x_3^{7}x_4^{49}x_5^{22} + x_1^{15}x_2^{15}x_3^{17}x_4^{7}x_5^{54}\\
	&\qquad + x_1^{15}x_2^{15}x_3^{17}x_4^{23}x_5^{38} + x_1^{15}x_2^{15}x_3^{17}x_4^{38}x_5^{23} + x_1^{15}x_2^{15}x_3^{17}x_4^{54}x_5^{7} + x_1^{15}x_2^{15}x_3^{19}x_4^{21}x_5^{38}\\
	&\qquad + x_1^{15}x_2^{15}x_3^{19}x_4^{37}x_5^{22} + x_1^{15}x_2^{15}x_3^{23}x_4^{17}x_5^{38},\\
	&x_1^{15}x_2^{15}x_3^{23}x_4^{54}x_5 \equiv x_1x_2^{15}x_3^{7}x_4^{30}x_5^{55} + x_1x_2^{15}x_3^{7}x_4^{62}x_5^{23} + x_1^{3}x_2^{7}x_3^{7}x_4^{29}x_5^{62}\\
	&\qquad + x_1^{3}x_2^{7}x_3^{7}x_4^{61}x_5^{30} + x_1^{3}x_2^{7}x_3^{13}x_4^{30}x_5^{55} + x_1^{3}x_2^{7}x_3^{13}x_4^{62}x_5^{23} + x_1^{3}x_2^{13}x_3^{7}x_4^{30}x_5^{55}\\
	&\qquad + x_1^{3}x_2^{13}x_3^{7}x_4^{62}x_5^{23} + x_1^{3}x_2^{15}x_3^{5}x_4^{55}x_5^{30} + x_1^{3}x_2^{15}x_3^{5}x_4^{62}x_5^{23} + x_1^{3}x_2^{15}x_3^{7}x_4^{29}x_5^{54}\\
	&\qquad + x_1^{3}x_2^{15}x_3^{7}x_4^{53}x_5^{30} + x_1^{3}x_2^{15}x_3^{13}x_4^{22}x_5^{55} + x_1^{3}x_2^{15}x_3^{15}x_4^{53}x_5^{22} + x_1^{3}x_2^{15}x_3^{21}x_4^{14}x_5^{55}\\
	&\qquad + x_1^{3}x_2^{15}x_3^{21}x_4^{30}x_5^{39} + x_1^{3}x_2^{15}x_3^{21}x_4^{39}x_5^{30} + x_1^{3}x_2^{15}x_3^{21}x_4^{47}x_5^{22} + x_1^{3}x_2^{15}x_3^{23}x_4^{5}x_5^{62}\\
	&\qquad + x_1^{3}x_2^{15}x_3^{23}x_4^{13}x_5^{54} + x_1^{3}x_2^{15}x_3^{23}x_4^{45}x_5^{22} + x_1^{3}x_2^{15}x_3^{23}x_4^{61}x_5^{6} + x_1^{3}x_2^{15}x_3^{29}x_4^{6}x_5^{55}\\
	&\qquad + x_1^{3}x_2^{15}x_3^{29}x_4^{22}x_5^{39} + x_1^{3}x_2^{15}x_3^{29}x_4^{39}x_5^{22} + x_1^{3}x_2^{15}x_3^{29}x_4^{55}x_5^{6} + x_1^{7}x_2^{7}x_3^{7}x_4^{25}x_5^{62}\\
	&\qquad + x_1^{7}x_2^{7}x_3^{7}x_4^{57}x_5^{30} + x_1^{7}x_2^{7}x_3^{9}x_4^{30}x_5^{55} + x_1^{7}x_2^{7}x_3^{9}x_4^{62}x_5^{23} + x_1^{7}x_2^{11}x_3^{5}x_4^{55}x_5^{30}\\
	&\qquad + x_1^{7}x_2^{11}x_3^{5}x_4^{62}x_5^{23} + x_1^{7}x_2^{11}x_3^{7}x_4^{29}x_5^{54} + x_1^{7}x_2^{11}x_3^{7}x_4^{53}x_5^{30} + x_1^{7}x_2^{11}x_3^{13}x_4^{22}x_5^{55}\\
	&\qquad + x_1^{7}x_2^{11}x_3^{15}x_4^{53}x_5^{22} + x_1^{7}x_2^{11}x_3^{21}x_4^{14}x_5^{55} + x_1^{7}x_2^{11}x_3^{21}x_4^{30}x_5^{39} + x_1^{7}x_2^{11}x_3^{21}x_4^{39}x_5^{30}\\
	&\qquad + x_1^{7}x_2^{11}x_3^{21}x_4^{47}x_5^{22} + x_1^{7}x_2^{11}x_3^{23}x_4^{5}x_5^{62} + x_1^{7}x_2^{11}x_3^{23}x_4^{13}x_5^{54} + x_1^{7}x_2^{11}x_3^{23}x_4^{45}x_5^{22}\\
	&\qquad + x_1^{7}x_2^{11}x_3^{23}x_4^{61}x_5^{6} + x_1^{7}x_2^{11}x_3^{29}x_4^{6}x_5^{55} + x_1^{7}x_2^{11}x_3^{29}x_4^{22}x_5^{39} + x_1^{7}x_2^{11}x_3^{29}x_4^{39}x_5^{22}\\
	&\qquad + x_1^{7}x_2^{11}x_3^{29}x_4^{55}x_5^{6} + x_1^{15}x_2x_3^{7}x_4^{30}x_5^{55} + x_1^{15}x_2x_3^{7}x_4^{62}x_5^{23} + x_1^{15}x_2^{3}x_3^{5}x_4^{55}x_5^{30}\\
	&\qquad + x_1^{15}x_2^{3}x_3^{5}x_4^{62}x_5^{23} + x_1^{15}x_2^{3}x_3^{7}x_4^{29}x_5^{54} + x_1^{15}x_2^{3}x_3^{7}x_4^{53}x_5^{30} + x_1^{15}x_2^{3}x_3^{13}x_4^{22}x_5^{55}\\
	&\qquad + x_1^{15}x_2^{3}x_3^{15}x_4^{53}x_5^{22} + x_1^{15}x_2^{3}x_3^{21}x_4^{14}x_5^{55} + x_1^{15}x_2^{3}x_3^{21}x_4^{30}x_5^{39} + x_1^{15}x_2^{3}x_3^{21}x_4^{39}x_5^{30}\\
	&\qquad + x_1^{15}x_2^{3}x_3^{21}x_4^{47}x_5^{22} + x_1^{15}x_2^{3}x_3^{23}x_4^{5}x_5^{62} + x_1^{15}x_2^{3}x_3^{23}x_4^{13}x_5^{54} + x_1^{15}x_2^{3}x_3^{23}x_4^{45}x_5^{22}\\
	&\qquad + x_1^{15}x_2^{3}x_3^{23}x_4^{61}x_5^{6} + x_1^{15}x_2^{3}x_3^{29}x_4^{6}x_5^{55} + x_1^{15}x_2^{3}x_3^{29}x_4^{22}x_5^{39} + x_1^{15}x_2^{3}x_3^{29}x_4^{39}x_5^{22}\\
	&\qquad + x_1^{15}x_2^{3}x_3^{29}x_4^{55}x_5^{6} + x_1^{15}x_2^{15}x_3x_4^{54}x_5^{23} + x_1^{15}x_2^{15}x_3x_4^{55}x_5^{22} + x_1^{15}x_2^{15}x_3^{3}x_4^{53}x_5^{22}\\
	&\qquad + x_1^{15}x_2^{15}x_3^{7}x_4^{16}x_5^{55} + x_1^{15}x_2^{15}x_3^{7}x_4^{17}x_5^{54} + x_1^{15}x_2^{15}x_3^{7}x_4^{48}x_5^{23} + x_1^{15}x_2^{15}x_3^{17}x_4^{6}x_5^{55}\\
	&\qquad + x_1^{15}x_2^{15}x_3^{17}x_4^{22}x_5^{39} + x_1^{15}x_2^{15}x_3^{17}x_4^{39}x_5^{22} + x_1^{15}x_2^{15}x_3^{17}x_4^{55}x_5^{6} + x_1^{15}x_2^{15}x_3^{19}x_4^{5}x_5^{54}\\
	&\qquad + x_1^{15}x_2^{15}x_3^{19}x_4^{53}x_5^{6} + x_1^{15}x_2^{15}x_3^{23}x_4x_5^{54}.
\end{align*}

By computing from the above equalities we get $\rho_4(p) + p \equiv 0$.

We have 
\begin{align*}
	&\rho_5(p) + p \equiv x_1^{7}x_2^{11}x_3^{5}x_4^{23}x_5^{62} + x_1^{7}x_2^{11}x_3^{5}x_4^{30}x_5^{55} + x_1^{7}x_2^{11}x_3^{7}x_4^{21}x_5^{62} + x_1^{7}x_2^{11}x_3^{7}x_4^{29}x_5^{54}\\
	&\qquad + x_1^{7}x_2^{11}x_3^{13}x_4^{22}x_5^{55} + x_1^{7}x_2^{11}x_3^{15}x_4^{21}x_5^{54} + x_1^{7}x_2^{11}x_3^{21}x_4^{7}x_5^{62} + x_1^{7}x_2^{11}x_3^{21}x_4^{14}x_5^{55}\\
	&\qquad + x_1^{7}x_2^{11}x_3^{21}x_4^{15}x_5^{54} + x_1^{7}x_2^{11}x_3^{21}x_4^{30}x_5^{39} + x_1^{7}x_2^{11}x_3^{23}x_4^{5}x_5^{62} + x_1^{7}x_2^{11}x_3^{23}x_4^{29}x_5^{38}\\
	&\qquad + x_1^{7}x_2^{11}x_3^{29}x_4^{6}x_5^{55} + x_1^{7}x_2^{11}x_3^{29}x_4^{7}x_5^{54} + x_1^{7}x_2^{11}x_3^{29}x_4^{22}x_5^{39} + x_1^{7}x_2^{11}x_3^{29}x_4^{23}x_5^{38}\\
	&\qquad + x_1^{11}x_2^{7}x_3^{5}x_4^{23}x_5^{62} + x_1^{11}x_2^{7}x_3^{5}x_4^{30}x_5^{55} + x_1^{11}x_2^{7}x_3^{7}x_4^{21}x_5^{62} + x_1^{11}x_2^{7}x_3^{7}x_4^{29}x_5^{54}\\
	&\qquad + x_1^{11}x_2^{7}x_3^{13}x_4^{22}x_5^{55} + x_1^{11}x_2^{7}x_3^{15}x_4^{21}x_5^{54} + x_1^{11}x_2^{7}x_3^{21}x_4^{7}x_5^{62} + x_1^{11}x_2^{7}x_3^{21}x_4^{14}x_5^{55}\\
	&\qquad + x_1^{11}x_2^{7}x_3^{21}x_4^{15}x_5^{54} + x_1^{11}x_2^{7}x_3^{21}x_4^{30}x_5^{39} + x_1^{11}x_2^{7}x_3^{23}x_4^{5}x_5^{62} + x_1^{11}x_2^{7}x_3^{23}x_4^{29}x_5^{38}\\
	&\qquad + x_1^{11}x_2^{7}x_3^{29}x_4^{6}x_5^{55} + x_1^{11}x_2^{7}x_3^{29}x_4^{7}x_5^{54} + x_1^{11}x_2^{7}x_3^{29}x_4^{22}x_5^{39} + x_1^{11}x_2^{7}x_3^{29}x_4^{23}x_5^{38}.
\end{align*}
By expressing inadmissible monomials in terms of admissible monomials, we obtain
\begin{align*}
	x_1^{11}x_2^{7}x_3^{5}x_4^{23}x_5^{62} &\equiv x_1^{7}x_2^{7}x_3^{3}x_4^{29}x_5^{62} + x_1^{7}x_2^{7}x_3^{9}x_4^{23}x_5^{62} + x_1^{7}x_2^{11}x_3^{5}x_4^{23}x_5^{62} \\ x_1^{11}x_2^{7}x_3^{5}x_4^{30}x_5^{55} &\equiv x_1^{7}x_2^{7}x_3^{3}x_4^{29}x_5^{62} + x_1^{7}x_2^{7}x_3^{9}x_4^{30}x_5^{55} + x_1^{7}x_2^{11}x_3^{5}x_4^{30}x_5^{55} \\ x_1^{11}x_2^{7}x_3^{7}x_4^{21}x_5^{62} &\equiv x_1^{7}x_2^{7}x_3^{7}x_4^{25}x_5^{62} + x_1^{7}x_2^{7}x_3^{11}x_4^{21}x_5^{62} + x_1^{7}x_2^{11}x_3^{7}x_4^{21}x_5^{62} \\ x_1^{11}x_2^{7}x_3^{7}x_4^{29}x_5^{54} &\equiv x_1^{7}x_2^{7}x_3^{7}x_4^{25}x_5^{62} + x_1^{7}x_2^{7}x_3^{11}x_4^{29}x_5^{54} + x_1^{7}x_2^{11}x_3^{7}x_4^{29}x_5^{54} \\ x_1^{11}x_2^{7}x_3^{13}x_4^{22}x_5^{55} &\equiv x_1^{7}x_2^{7}x_3^{11}x_4^{21}x_5^{62} + x_1^{7}x_2^{7}x_3^{9}x_4^{30}x_5^{55} + x_1^{7}x_2^{11}x_3^{13}x_4^{22}x_5^{55} \\ x_1^{11}x_2^{7}x_3^{15}x_4^{21}x_5^{54} &\equiv x_1^{7}x_2^{7}x_3^{15}x_4^{17}x_5^{62} + x_1^{7}x_2^{7}x_3^{15}x_4^{25}x_5^{54} + x_1^{7}x_2^{11}x_3^{15}x_4^{21}x_5^{54} \\ x_1^{11}x_2^{7}x_3^{21}x_4^{7}x_5^{62} &\equiv x_1^{7}x_2^{7}x_3^{11}x_4^{21}x_5^{62} + x_1^{7}x_2^{7}x_3^{25}x_4^{7}x_5^{62} + x_1^{7}x_2^{11}x_3^{21}x_4^{7}x_5^{62} \\ x_1^{11}x_2^{7}x_3^{21}x_4^{14}x_5^{55} &\equiv x_1^{7}x_2^{7}x_3^{11}x_4^{21}x_5^{62} + x_1^{7}x_2^{7}x_3^{25}x_4^{14}x_5^{55} + x_1^{7}x_2^{11}x_3^{21}x_4^{14}x_5^{55} \\ x_1^{11}x_2^{7}x_3^{21}x_4^{15}x_5^{54} &\equiv x_1^{7}x_2^{7}x_3^{9}x_4^{23}x_5^{62} + x_1^{7}x_2^{7}x_3^{25}x_4^{15}x_5^{54} + x_1^{7}x_2^{11}x_3^{21}x_4^{15}x_5^{54} \\ x_1^{11}x_2^{7}x_3^{21}x_4^{30}x_5^{39} &\equiv x_1^{7}x_2^{7}x_3^{11}x_4^{29}x_5^{54} + x_1^{7}x_2^{7}x_3^{25}x_4^{30}x_5^{39} + x_1^{7}x_2^{11}x_3^{21}x_4^{30}x_5^{39} \\ x_1^{11}x_2^{7}x_3^{23}x_4^{5}x_5^{62} &\equiv x_1^{7}x_2^{7}x_3^{15}x_4^{17}x_5^{62} + x_1^{7}x_2^{7}x_3^{27}x_4^{5}x_5^{62} + x_1^{7}x_2^{11}x_3^{23}x_4^{5}x_5^{62} \\ x_1^{11}x_2^{7}x_3^{23}x_4^{29}x_5^{38} &\equiv x_1^{7}x_2^{7}x_3^{15}x_4^{25}x_5^{54} + x_1^{7}x_2^{7}x_3^{27}x_4^{29}x_5^{38} + x_1^{7}x_2^{11}x_3^{23}x_4^{29}x_5^{38} \\ x_1^{11}x_2^{7}x_3^{29}x_4^{6}x_5^{55} &\equiv x_1^{7}x_2^{7}x_3^{27}x_4^{5}x_5^{62} + x_1^{7}x_2^{7}x_3^{25}x_4^{14}x_5^{55} + x_1^{7}x_2^{11}x_3^{29}x_4^{6}x_5^{55} \\ x_1^{11}x_2^{7}x_3^{29}x_4^{7}x_5^{54} &\equiv x_1^{7}x_2^{7}x_3^{25}x_4^{7}x_5^{62} + x_1^{7}x_2^{7}x_3^{27}x_4^{13}x_5^{54} + x_1^{7}x_2^{11}x_3^{29}x_4^{7}x_5^{54} \\ x_1^{11}x_2^{7}x_3^{29}x_4^{22}x_5^{39} &\equiv x_1^{7}x_2^{7}x_3^{27}x_4^{13}x_5^{54} + x_1^{7}x_2^{7}x_3^{25}x_4^{30}x_5^{39} + x_1^{7}x_2^{11}x_3^{29}x_4^{22}x_5^{39}.
\end{align*}

From these equalities we get $\rho_5(p) + p \equiv 0$. 
Thus, the class $[p]$ is an $GL_5$-invariant.

{}

\bigskip
\end{document}